\theoremstyle{plain}
\newtheorem{thm}{Theorem}[section]
\newtheorem{lem}[thm]{Lemma}
\newtheorem{prop}[thm]{Proposition}
\newtheorem{cor}[thm]{Corollary}
\newtheorem{conj}[thm]{Conjecture}
\newtheorem{defn}[thm]{Definition}
\theoremstyle{definition}
\DeclareMathOperator{\PR}{PSL_2(\RR)}
\DeclareMathOperator{\LL}{\mathcal{L}}
\DeclareMathOperator{\GG}{\mathscr{G}}
\DeclareMathOperator{\Homeo}{Homeo}
\DeclareMathOperator{\Homeop}{Homeo^+}
\DeclareMathOperator{\Fix}{Fix}
\newcommand{\NN}{\mathbb{N}}      %
\newcommand{\ZZ}{\mathbb{Z}}      % for Integers
\newcommand{\RR}{\mathbb{R}}      % for Real numbers
\newcommand{\CC}{\mathbb{C}}      % for Real numbers
\newcommand{\HH}{\mathbb{H}}      % for Integers
\begin{document}

\title[Laminar groups]{Laminar groups and 3-manifolds}

\author{Hyungryul Baik \& KyeongRo Kim}
\address{Department of Mathematical Sciences, KAIST,  
291 Daehak-ro, Yuseong-gu, Daejeon 34141, South Korea }
\email{hrbaik@kaist.ac.kr \& cantor14@kaist.ac.kr}

%%%%%%%%%%%%%%%
\begin{abstract}
Thurston showed that the fundamental group of a close atoroidal
3-manifold admitting a co-oriented taut foliation acts faithfully on
the circle by orientation-preserving homeomorphisms. This action on the circle is
called a universal circle action due to its rich information. In this
article, we first review Thurston's theory of universal circles and
follow-up work of other authors. We note that the universal
circle action of a 3-manifold group always admits an invariant
lamination. A group acting on the circle with an invariant lamination
is called a laminar group. In the second half of the paper, we discuss
the theory of laminar groups and prove some interesting properties of
laminar groups under various conditions. 

\smallskip
\noindent \textbf{Keywords.} Tits alternative, laminations, circle homeomorphisms, Fuchsian groups, fibered 3-manifolds, pseudo-Anosov surface diffeomorphism.

\smallskip
\noindent \textbf{MSC classes:} 20F65, 20H10, 37C85, 37E10, 57M60. 
\end{abstract}

\maketitle

%%%%%%%%%%%%%%%%%%
%%%%%%%%%%%%%%%%%%
\section{Introduction}\label{intro}

A few years before Perelman came up with his proof of Poincar\'{e}
conjecture using the theory of Ricci flow \cite{perelman2002entropy}, \cite{perelman2003ricci} (built upon the work of Hamilton \cite{hamilton1995formation}), 
Thurston showed his vision to finish
the geometrization program using foliations in 3-manifolds in
\cite{Thurston97}. Although Thurston left the manuscript unfinished
after Perelman's resolution of geometrization conjecture,
\cite{Thurston97} contains abundant beautiful ideas which are closely related to
many interesting results by a number of authors including Ghys \cite{ghys1984flots}, 
Calegari-Dunfield \cite{CalDun03}, Calegari
\cite{calegari2006universal}, Fenley \cite{fenley2012ideal,
  fenley2016quasigeodesic}, Barbot-Fenley \cite{barbot2015free}, Gabai-Kazes \cite{gabai1997order, gabai1998group},
Mosher \cite{mosher1996laminations}, and Frankel
\cite{frankel2018coarse}. 

One of the main theme of \cite{Thurston97} is to combine a few approaches to 3-manifolds which are proven to be successful and fruitful. In particular, a deep connection between codimension-1 objects in 3-manifold and 3-manifold group actions on the low-dimensional spaces has been investigated. One of the main theorem in the paper is the following. 

\begin{thm}[Thurston's universal circle for co-orientable taut
  foliations \cite{Thurston97}]
\label{thm:univcirctautfol}
Let $M$ be a closed atoroidal 3-manifold admitting a co-orientable taut foliation $\mathcal{F}$. Then there exists a faithful homomorphism $\rho_{univ}: \pi_1(M) \to \Homeop(S^1)$. 
\end{thm}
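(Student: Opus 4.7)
The plan is to follow Thurston's original construction from \cite{Thurston97}. Passing to the universal cover $\widetilde{M}$, the foliation $\FF$ lifts to a foliation $\widetilde{\FF}$; tautness and atoroidality, combined with Novikov's theorem and its standard consequences, imply that every leaf of $\widetilde{\FF}$ is a properly embedded topological plane, and that the leaf space $L = \widetilde{M}/\widetilde{\FF}$ is a simply connected, possibly non-Hausdorff, one-manifold. Candel's uniformization theorem then lets us equip the leaves with a hyperbolic metric that varies continuously in the transverse direction. Every leaf $\lambda$ thereby acquires a canonical circle at infinity $S^1_\infty(\lambda) \cong \partial\HH^2$, and $\pi_1(M)$ acts on the disjoint union $\bigsqcup_\lambda S^1_\infty(\lambda)$ by leafwise isometries.

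The central step is to glue these leafwise circles into a single circle. Following Thurston, I would introduce \emph{markers}: short transverse arcs along which a large metric disk in a leaf $\lambda$ is shadowed by disks in nearby leaves $\mu$. Each marker determines corresponding pairs of ideal endpoints in $S^1_\infty(\lambda)$ and $S^1_\infty(\mu)$, yielding monotone (possibly discontinuous and non-injective) correspondences $\phi_{\lambda\mu}\colon S^1_\infty(\lambda) \to S^1_\infty(\mu)$. The universal circle $S^1_{univ}$ is then defined as the essentially minimal circle admitting a compatible family of monotone projections $\pi_\lambda\colon S^1_{univ} \to S^1_\infty(\lambda)$ that intertwine every $\phi_{\lambda\mu}$; concretely one can build it by an inverse-limit-type construction over a core section of $L$, followed by a quotient collapsing the redundant gap arcs. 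Because $\pi_1(M)$ permutes leaves while respecting markers and the leafwise hyperbolic structure, its leafwise boundary action descends to an orientation-preserving action on $S^1_{univ}$, producing the homomorphism $\rho_{univ}$.

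The remaining task is to show $\rho_{univ}$ is faithful, which is where atoroidality is essential. Suppose $g \in \pi_1(M)$ lies in the kernel, so that its induced action on every $S^1_\infty(\lambda)$ is trivial. If $g$ stabilizes some leaf $\lambda$, then its action on $\lambda$ is an isometry of $\HH^2$ determined by its boundary data, and triviality at infinity forces $g$ to act as the identity on $\lambda$, whence $g = 1$ since the deck action on $\widetilde{M}$ is free. If $g$ stabilizes no leaf, the induced action on $L$ is nontrivial, and I would use the marker structure to produce an explicit point of $S^1_{univ}$ that is moved by $g$. Atoroidality intervenes to exclude subtle intermediate configurations, in particular to rule out a $\ZZ^2$ subgroup that could act by translation along a single axis in $L$ while fixing leafwise ideal points, artificially trivializing the universal circle action.

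I expect the main obstacle to be neither the hyperbolization step nor the faithfulness dichotomy but rather the construction and bookkeeping of markers: one must show that marker endpoints are dense in each $S^1_\infty(\lambda)$, that they assemble $\pi_1(M)$-equivariantly and coherently over a non-Hausdorff leaf space, and that the resulting quotient is genuinely a topological $S^1$ rather than a wilder one-dimensional continuum. Once this leafwise-to-global transition is under firm control, the faithfulness argument reduces to careful case analysis.
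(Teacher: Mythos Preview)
Your broad architecture matches the paper's: Novikov--Rosenberg for properly embedded planar leaves, Candel for leafwise hyperbolic metrics, circles at infinity, markers, and the Leaf Pocket Theorem for density of marker endpoints. Where you diverge is in the actual assembly of $S^1_{univ}$. The paper does not build an inverse limit of leaf circles glued along monotone correspondences $\phi_{\lambda\mu}$; instead it constructs \emph{special sections} of the bundle $E_\infty \to L$ by a concrete ``left-most/right-most path through markers'' recipe, shows that the set of such sections carries a $\pi_1(M)$-invariant cyclic order, and then takes the order completion to obtain a genuine circle. Your inverse-limit sketch is plausible in spirit but would need a separate argument that the result is homeomorphic to $S^1$ rather than a more exotic continuum, whereas the special-section route gets this for free from the order completion.

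The more serious gap is in your faithfulness argument when $g$ fixes no leaf. First, a small correction: if $g$ does not stabilize $\lambda$, it does not act on $S^1_\infty(\lambda)$ at all---it sends that circle to $S^1_\infty(g\lambda)$---so saying its action on every $S^1_\infty(\lambda)$ is trivial is not the right formulation. More importantly, ``use the marker structure to produce a point moved by $g$'' and an appeal to atoroidality to exclude $\ZZ^2$ is not how the paper proceeds, and it is not clear your route can be completed. The paper instead splits into the $\RR$-covered case (handled via a transverse pseudo-Anosov flow) and the branched case, where it argues from the commutative square relating $\phi_\lambda$ and $\phi_{g\lambda}$ that $\lambda$ and $g\lambda$ must be \emph{comparable} in $L$ (using the core/gap structure of the universal circle), and then derives a contradiction by taking two leaves $\lambda,\mu$ in a common cataclysm and observing that the kernel would force infinitely many cataclysm-related pairs between two lines in $L$, which the tree-like structure of $L$ forbids. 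You would need to supply this comparability step and the cataclysm contradiction, or an equivalent mechanism, to close the argument.
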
 

In fact, it is not just any group action on the circle. Thurston
called the circle obtained in the above theorem a \textsf{universal
  circle} for the taut foliation $\mathcal{F}$. Let's denote it by
$S^1_{univ}$. The name suggests that
$\rho_{univ}$ is not just an action but it ``sees'' the structure of
the foliation. In fact, a universal circle consists of following data:
\begin{itemize}
\item[(1)] Let $\widetilde{\mathcal{F}}$ be a covering foliation of
  $\mathcal{F}$ in the universal cover $\widetilde{M}$ of $M$. For each
  leaf $\lambda$ of $\widetilde{\mathcal{F}}$, there exists a circle
  $S^1_\infty(\lambda)$ so that the action of $\pi_1(M)$ on the leaves
  extend continuously on the set of such circle.
\item[(2)] For each leaf $\lambda$ of $\widetilde{\mathcal{F}}$, there
  exists a monotone map $\phi_\lambda: S^1_{univ} \to
  S^1_\infty(\lambda)$, i.e., a continuous surjection so that the
  preimage of each point in the range is connected.
 \item[(3)] For each $\alpha \in \pi_1(M)$ and a leaf $\lambda$, the
   following diagram commutes:
   \[ \begin{tikzcd}   
 S^1_{univ} \arrow{r}{ \rho_{univ}(\alpha) } \arrow[swap]{d}{ \phi_\lambda } &
 S^1_{univ} \arrow{d}{ \phi_{\alpha(\lambda)} } \\%
S^1_\infty(\lambda) \arrow{r}{\alpha }& S^1_\infty(\alpha(\lambda))
\end{tikzcd}
\]
\item[(4)] (comparability condition) For each leaf $\lambda$ of
  $\widetilde{\mathcal{F}}$, the maximal connected intervals in
  $S^1_{univ}$ which are mapped to points by $\phi_\lambda$ are called 
  the gaps associated to $\lambda$ and the complement of the gaps is
  called the core associated to $\lambda$. For any two incomparable
  leaves $\mu, \lambda$, the core associated to $\mu$ is contained in
  a single gap associated to $\lambda$ and vice versa.  
\end{itemize}

For the construction of the universal circle, we borrow materials
largely from \cite{CalDun03}, so for the interested readers, please
consult \cite{CalDun03} for details. Here we recall main ingredients
and rough ideas to see the big picture. As we will see in the
construction, there are some choices involved and as a result, a
universal circle is not unique. Perhaps coming up with a canonical way
of obtaining a universal circle via some universal property would be
desirable. 

Many results analogous to Theorem \ref{thm:univcirctautfol} have been
obtain in the literature under the presence of other codimension-1
objects or flows in the 3-manifold. For instance, 
Calegari obtained the result
for 3-manifold with quasi-geodesic flow \cite{calegari2006universal},
and Calegari and Dunfield showed this result in the case for essential laminations with solid
torus guts \cite{CalDun03}. Later Hamm generalized Calegari-Dunfield's
work to more general class of essential laminations in his PhD thesis
\cite{hamm2009filling}. 

In Sections 2-4,  we briefly review these works. In
Section \ref{sec:invlamination}, we observe that in all
those cases, the action on the circle comes with an invariant
lamination. This motives the study on groups acting on the circle with
invariant laminations (and such groups are called laminar groups). In
Sections 6-11, we discuss some recent and on-going
work in the theory of laminar groups. 
We emphasize that by no means the review of the material in the literature in Sections 2-5 can serve as a thorough survey for all related work.

\subsection*{Acknowledgments} 
We thank Michele Triestino, Steven Boyer, Sanghyun Kim, Thierry Barbot and Michel Boileau for fruitful conversations. We would also like to thank Hongtaek Jung for helpful discussions especially regarding Section $9$.
The first half of the paper were written based on the mini-course the first author gave in the workshop `Low dimensional actions of 3-manifold groups' at
Universit\'{e} de Bourgogne in November, 2019. 
The first author was partially supported by Samsung Science \& Technology Foundation grant No. SSTF-BA1702-01, and 
the second author was partially supported by the Mid-Career Researcher Program (2018R1A2B6004003) through the National Research Foundation funded by the government of Korea.

\section{$S^1$-bundle over the leaf
  space}
\label{sec:circlebundle}

Let $\mathcal{F}$ be a co-oriented
taut foliation in a 3-manifold $M$. Let $\widetilde{\mathcal{F}}$ be
the foliation in the universal cover $\widetilde{M}$ of $M$ which
covers $\mathcal{F}$, and let $L = L(\widetilde{\mathcal{F}})$ be the
leaf space of this covering foliation. As a set, each point of $L$ corresponds to a leaf of $\widetilde{\mathcal{F}}$. 
To give a topology, we say a sequence of leaves $\mu_i$ converges to a leaf $\mu_\infty$ if for every compact subset $K$ of $\widetilde{M}$, 
$\mu_i \cap K$ converges to $\mu_\infty \cap K$ in the Hausdorff topology. 

The leaf space $L$ is an 1-dimensional manifold in the sense that each point has a neighborhood 
homeomorphic to $\mathbb{R}$ but $L$ does not have to be Hausdorff.
In fact, the leaf space $L$ is Hausdorff if and only if it is homeomorphic to $\mathbb{R}$. In that case, we say $\mathcal{F}$ is an $\mathbb{R}$-covered foliation. 

In all other cases, $L$ is not Hausdorff. The co-orientation of $\mathcal{F}$ gives an orientation on each embedded line segment of the leaf space. 
Therefore, it induces a partial order on the leaf space $L$. For two leaves $\alpha, \beta$ of $\widetilde{\mathcal{F}}$, 
we say $\alpha < \beta$ if there exist an embedded closed interval in
$L$ whose end points are $\alpha, \beta$, and it is an oriented path from $\alpha$ to $\beta$ 
with respect to the induced orientation. One caveat is that we need to know there exists no closed transversal to $\widetilde{\mathcal{F}}$ which 
will be shown later in this section. 
$\mathcal{F}$ is $\mathbb{R}$-covered if and only if the induced partial order on $L$ is a total order. 
In general, if $\mathcal{F}$ is not $\mathbb{R}$-covered, there are non-comparable leaves.  

We say $\mathcal{F}$ is branched in the forward direction if there are
three leaves $\alpha, \beta, \gamma$ of $\widetilde{\mathcal{F}}$ such
that $\alpha, \beta$ are non-comparable but $\alpha > \gamma$ and
$\beta > \gamma$.
Similarly, one can define a branching in the backward direction. In
the paper, when we think of a non $\mathbb{R}$-covered foliation, we
only consider the case $\mathcal{F}$ has two-sided branching i.e., it
is branched in both forward and backward direction for simplicity.
For what we discuss in this section, this assumption is not so
relevant. 

We would like to construct what can be called a $S^1$-bundle over
$L(\widetilde{\mathcal{F}})$ in some sense. In other words, we would like to
assign one copy of the circle to each leaf, but where does it come
from? To begin with, we recall the result of Candel. 

In general, for a manifold $M$ with dimension $n \geq 3$, a 2-dimensional lamination is called a Riemann surface lamination if each leaf is a Riemann surface. 
More precisely, suppose $M$ admits
an atlas with product charts $\phi_p
: U_p \to B_p \times K_p$ where $B_p$ is a domain in $\mathbb{C}$, $K_p$ is 
a closed subspace of $\mathbb{R}^{n-2}$, $U_p$ is an open subset of $M$, and $\phi_p$ is a homeomorphism. We further assume that 
each change of the coordinates have
the form $\phi_p \circ
\phi_q^{-1}(b, k) = (\psi(b, k),  \rho(k))$ where 
$\psi, \rho$ are continuous functions and $\psi$ is holomorphic in $b$. Such an atlas $\Lambda$ is called a Riemann surface lamination. 
We will focus in the case $M$ is a
3-manifold, and $\Lambda$ is a
surface lamination in $M$. In fact,
we assume $M$ to be a closed
hyperbolic 3-manifold throughout of
the rest of the paper. Also,
$B_p$ is always taken to be the unit
disk $D$. Hence we consider the
product charts $U_i = D \times K_i$.

Candel obtained a significant generalization of the classical uniformization theorem for Riemann surfaces in the setting of Riemann surface laminations. 
In particular, this provides a sufficient condition for $(M, \Lambda)$ to admits a Riemannian metric so that its restriction to $\Lambda$ is 
a leaf-wise hyperbolic metric. We only recall main ideas and for
detail of Candel's work, we refer the readers to
\cite{candel1993uniformization} or \cite{Calebook}. 

The classical uniformization theorem says that if a closed Riemann surface has a negative Euler characteristic, then it admits a hyperbolic metric. 
To state a similar result for laminations, we need to develop a notion
which plays a role similar to the Euler characteristic. To do this, we
first need to discuss invariant transverse measures on laminations. An
invariant transverse measure $\mu$ for a lamination $\Lambda$ is a
collection of nonnegative Borel measure on the leaf space of $\Lambda$
in each product chart which is compatible on the overlap of distinct
charts. 

Now when $\Lambda$ is a Riemann
surface lamination, the leafwise
metric determines a leafwise closed
2-form, say $\Omega$. The product
measure $\mu \times \Omega$ is a
signed Borel measure on the total
space $\Lambda$. We call the total
mass of this measure the Euler
characteristic $\chi(\mu)$ of
$\mu$. As in the case of the
classical uniformization theorem,
the sign of the Euler characteristic
is important.

Note that if $U = D \times K$ is a product chart
for $\Lambda$, then
$(\mu \times \Omega) (U) = \int_K
(\int_{D \times k} \Omega)
d\mu(k)$. When $\Lambda$ admits
a leafwise hyperbolic metric, then 
$\int_{D \times k} \Omega$ is
negative and $\mu$ is a nonnegative
measure by definition, hence 
$(\mu \times \Omega) (U)$ is
negative for each product chart
$U$. As a consequence, we have
$\chi(\mu) < 0$. What Candel proved
is that the converse is also true. 

\begin{thm}[Candel's uniformization theorem \cite{candel1993uniformization}] 
Let $\Lambda$ be a Riemann surface
lamination. Then $\Lambda$ admits a
leafwise hyperbolic metric if and
only if the Euler characteristic
$\chi(\mu)$ is negative for all
nontrivial invariant transverse
measure $\mu$. 
\end{thm}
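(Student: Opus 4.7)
The forward direction is already given in the paragraph preceding the theorem: on a product chart $U = D \times K$, a leafwise hyperbolic metric makes $\int_{D \times \{k\}} \Omega < 0$ by Gauss--Bonnet applied to the hyperbolic disk, so $(\mu \times \Omega)(U) < 0$ whenever $\mu$ puts positive mass on $K$, and summing over a chart cover yields $\chi(\mu) < 0$. The substantive content is therefore the converse, to which the sketch below is devoted.

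My plan for the converse is a leafwise conformal deformation. I would start from any continuous leafwise smooth Riemannian metric $g_0$ on $\Lambda$, constructed using a partition of unity on the compact ambient $M$ together with the product chart structure $U_i = D \times K_i$. Then I would look for a function $u \colon \Lambda \to \RR$, continuous transversely and smooth along leaves, such that the rescaled leafwise metric $e^{2u} g_0$ has Gaussian curvature identically $-1$ on every leaf. Writing $K_0$ for the Gaussian curvature of $g_0$, this amounts to a classical leafwise Liouville-type semilinear elliptic PDE relating $\Delta_{g_0} u$, $K_0$ and $e^{2u}$. Leaf by leaf this is just the analytic content of the classical uniformization theorem: lift to the leafwise universal cover, exhaust by relatively compact subdomains, solve the Dirichlet problem with boundary values tending to $+\infty$, and pass to the limit. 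A solution on the whole leaf exists if and only if that leaf is conformally hyperbolic, and depends continuously on $g_0$; so transverse continuity of the family $u$ would follow from continuous dependence of elliptic PDE solutions on their coefficients.

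The essential task is to rule out analytic failure at every leaf simultaneously, and this is where the transverse-measure hypothesis enters via contrapositive. Suppose the exhaustion solutions degenerate somewhere, meaning either that some leaf is conformally parabolic or spherical, or that the conformal factors blow up as one passes along an exhaustion in an uncontrolled transverse manner. From such a failure I would extract a sequence of degeneration points $p_n \in \Lambda$ and average the point masses $\delta_{p_n}$ along longer and longer plaque paths through them. By compactness of the ambient $M$ a weak-$\ast$ subsequential limit exists, and a Plante-style averaging argument, with holonomy distortion controlled by the compactness of $M$, promotes the limit to a holonomy-invariant transverse measure $\mu$ on $\Lambda$.

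The main obstacle, and indeed the technical heart of Candel's proof, is the final bookkeeping: one must reinterpret the analytic degeneration as a leafwise Gauss--Bonnet deficit, and show that after passing to the limit the resulting measure satisfies $\chi(\mu) \geq 0$ and is nontrivial. This requires checking that the averaging does not collapse all weight onto a trivially hyperbolic piece, and that the degeneration rate of the conformal factor is exactly matched by the rate at which the Euler characteristic integrand loses negativity. Once this contradiction with the hypothesis rules out leafwise failure, the function $u$ exists globally on $\Lambda$ with uniform local bounds, and $e^{2u} g_0$ is the desired leafwise hyperbolic metric.
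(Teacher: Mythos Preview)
The paper does not prove this theorem: it states Candel's uniformization theorem as a cited result and explicitly says ``We only recall main ideas and for detail of Candel's work, we refer the readers to \cite{candel1993uniformization} or \cite{Calebook}.'' Only the easy forward direction is sketched in the paragraph preceding the theorem, and you have correctly reproduced that. There is therefore no proof in the paper against which to compare your argument for the converse.

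That said, your outline is a fair high-level summary of Candel's actual strategy: deform an arbitrary leafwise metric conformally by solving the curvature equation leaf by leaf, and if uniform control fails, use a Plante-type averaging to produce a nontrivial invariant transverse measure with $\chi(\mu) \geq 0$. You are also right that the delicate part is the last step. One caution: the heuristic of ``averaging point masses $\delta_{p_n}$ along plaque paths'' is too vague to stand on its own, and the genuine argument in Candel's paper is somewhat different in its analytic input---it proceeds via the heat flow on leaves and an ergodic-theoretic extraction of the invariant measure rather than a direct exhaustion-and-blowup scheme. If you want to turn your sketch into a proof you should consult \cite{candel1993uniformization} directly, since the paper under review does not supply these details.
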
 

Let's go back to our case: $M$ is a
closed hyperbolic 3-manifold and
$\mathcal{F}$ is a co-orientable
taut foliation.
First, we observe that no leaf of $\mathcal{F}$
is the 2-sphere $S^2$. This follows from the Reeb stability theorem.

\begin{thm}[Reeb stability theorem]
 Let $\mathcal{F}$ be a cooriented taut foliation in a closed 3-manifold
 $M$.  Suppose $\mathcal{F}$ has a leaf homeomorphic to $S^2$. Then
 $M$ is homeomorphic to $S^2 \times S^1$ and $\mathcal{F}$ is the
 product foliation by the spheres. 
\end{thm}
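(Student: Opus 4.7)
The plan is to use the classical local version of Reeb stability to promote the single $S^2$ leaf to a global product structure, via an open-and-closed argument on the set of sphere leaves, and then to classify $S^2$-bundles over $S^1$.

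First I would invoke local Reeb stability: since $S^2$ is simply connected the holonomy of the given leaf $L\cong S^2$ is trivial, so there is a saturated open neighborhood $U$ of $L$ foliated as a product $S^2\times(-\epsilon,\epsilon)$, with each horizontal slice a leaf of $\mathcal{F}$. In particular the set $\Sigma\subset M$ of points lying on a sphere leaf of $\mathcal{F}$ is open and nonempty.

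Next I would show that $\Sigma$ is closed. Let $p$ be in the closure of $\Sigma$, let $L_p$ be the leaf through $p$, and pick a short transversal $\tau$ to $\mathcal{F}$ through $p$. By hypothesis there are points $q_n\in\tau\cap\Sigma$ with $q_n\to p$, and each $q_n$ lies on a sphere leaf $L_n$. Local Reeb stability at $L_n$ provides a saturated product neighborhood $S^2\times(-\epsilon_n,\epsilon_n)$ containing $L_n$; for $q_n$ close enough to $p$ this neighborhood also contains the segment of $\tau$ from $q_n$ to $p$, and therefore contains $L_p$ as a horizontal slice. Hence $L_p\cong S^2$, so $p\in\Sigma$. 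Since $M$ is connected, $\Sigma=M$, i.e. every leaf of $\mathcal{F}$ is a 2-sphere.

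Once every leaf is a sphere, the product charts from the first step show that the leaf space $L(\mathcal{F})$ is a Hausdorff $1$-manifold, compact because $M$ is compact, and oriented by the coorientation; hence $L(\mathcal{F})\cong S^1$. The quotient map $M\to L(\mathcal{F})$ realizes $M$ as an $S^2$-bundle over $S^1$. The orientation of the leaves (as spheres) together with the transverse coorientation gives an orientation of $M$, and the orientation-preserving mapping class group of $S^2$ is trivial, so the only orientable $S^2$-bundle over $S^1$ is the product. Thus $M\cong S^2\times S^1$ and $\mathcal{F}$ corresponds to the product foliation by spheres.

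The main obstacle is the closedness step: transporting the product structure from nearby sphere leaves onto the limit leaf $L_p$. This is precisely where the simple connectivity of $S^2$, hence the triviality of the holonomy of each $L_n$, is essential; without it the product neighborhoods along the $L_n$ need not extend uniformly as $q_n\to p$, and one could not conclude that $L_p$ itself is a sphere.
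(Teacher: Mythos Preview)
Your overall strategy — local Reeb stability for openness, then an open-and-closed argument on the set of sphere leaves — is exactly the paper's. The problem is in your closedness step. You claim that the saturated product neighborhood $S^2\times(-\epsilon_n,\epsilon_n)$ furnished by Reeb stability at $L_n$ will contain $p$ once $q_n$ is close enough, but nothing bounds $\epsilon_n$ from below independently of $n$. In fact the claim is circular: if $L_p$ were \emph{not} a sphere, then $L_p$ would sit on the boundary of the maximal saturated product region containing all the $L_n$, and the product neighborhood of every $L_{q_n}$, taken as large as possible, would stop exactly at $L_p$ and never contain $p$. So this step presupposes what it is meant to prove.

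The paper's sketch handles closedness by working at the limit leaf rather than at the $L_n$: it first notes (using compactness of $M$) that a limit of compact leaves is compact, covers $\lambda=L_p$ by finitely many foliation charts, and observes that vertical projection in those charts assembles into a covering map $\lambda_i\to\lambda$ for $i$ large. Since $\lambda_i\cong S^2$, the base is $S^2$ or $\mathbb{RP}^2$, and co-orientation forces the cover to be degree one, hence $\lambda\cong S^2$. This avoids any need for a uniform width of product neighborhoods. Your final paragraph (leaf space $\cong S^1$ and classification of orientable $S^2$-bundles over $S^1$) is correct and in fact goes further than the paper's sketch, which stops once every leaf is shown to be a sphere.
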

\begin{proof}[Sketch of the proof]
Since $\pi_1(S^2)$ is trivial, holonomy along any path on the
spherical leaf is trivial. Therefore, the spherical leaf has a
neighborhood which is foliated as a product. This shows that the set
of spherical leaves form an open subset of $M$.

Since $M$ is compact,
We know that if we have a sequence of closed leaves $\lambda_i$ which
converge to a leaf $\lambda$, then $\lambda$ is also closed. If all
$\lambda_i$ are spheres, then in a small neighborhood of $\lambda$,
the projection along the vertical direction in each product chart
defines a covering map from $\lambda_i$ to $\lambda$ for large enough
$i$. Since $\mathcal{F}$ is co-oriented, $\lambda$ is also necessarily
a sphere. Therefore, the set of spherical leaves form a closed subset
of $M$. Since the set is both open and closed, it should be $M$ itself. 
\end{proof} 

Since $M$ is assumed to be hyperbolic in our case, we do not have any
spherical leaf. 

Also no leaf is a torus. Since
$M$ is atoroidal, if any leaf is a
torus, then it would bound a solid
torus. One can foliate the solid
torus where the boundary is also a
leaf, and it is called a Reeb
component. First, one can foliate 
$H = \{ (x, y, z) \in \mathbb{R}^3 :
z \geq 0\}$ by the horizontal
planes $\{(x, y, z): z = c\}$.  
Quotient $H \setminus \{(0,0,0)\}$ by the equivalence
relation $(x, y, z) \sim (2x, 2y,
2x)$. In this case, one can easily
see that if a transversal in $M$
travels from the complement of the
Reeb component into the Reeb component
by passing through the boundary leaf
of the Reeb component (the torus
leaf), it cannot escape the Reeb
component again. Hence, if
$\mathcal{F}$ is a taut foliation,
it cannot have a Reeb component. 

From this, one can conclude that each leaf of
$\mathcal{F}$ is of hyperbolic
type. Therefore, the condition of
Candel's theorem is satisfied, and
$M$ admits a leafwise hyperbolic
metric. 

By a work of Rosenberg \cite{rosenberg1968foliations} which is an important improvement of the
seminal work of Novikov \cite{novikov1965topology}, we know the followings about $M$ and
$\mathcal{F}$:
\begin{itemize}
\item[(i)] $M$ is irreducible.
\item[(ii)] For each leaf $\alpha$ of $\mathcal{F}$, the inclusion map
  $\alpha \mapsto M$ induces an injective homomorphism $\pi_1(\alpha)
  \to \pi_1(M)$. 
  \item[(iii)] Every closed transversal to $\mathcal{F}$ is nontrivial
    in $\pi_1(M)$. 
\end{itemize} 

Here we can immediately see that the leaf space $L$ is a tree in the sense that 
there is no cycle embedded in $L$. If there exists such a cycle, 
it corresponds to a closed transversal to $\widetilde{\mathcal{F}}$, so it projects down to 
a closed transversal to $\mathcal{F}$. Then it must be nontrivial in $\pi_1(M)$ while it lifts to a loop 
in the universal cover of $M$, a contradiction.

From this result, one can deduce the following theorem immediately. 
\begin{thm}
\label{thm:properlyembedded}
  Let $M, \mathcal{F}$ be as above. Then every leaf of $\widetilde{\mathcal{F}}$ is a
  properly embedded plane in $\widetilde{M}$. 
\end{thm}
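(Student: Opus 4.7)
The plan is to show, for each leaf $\lambda$ of $\widetilde{\mathcal{F}}$ lying over a leaf $\alpha = p(\lambda)$ of $\mathcal{F}$ (where $p\colon \widetilde{M}\to M$ is the universal covering), that (a) $\lambda$ is simply connected, (b) $\lambda$ is not a sphere and is therefore homeomorphic to $\mathbb{R}^2$, and (c) $\lambda$ is a closed subset of $\widetilde{M}$, from which proper embeddedness follows via transversality.

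For (a), the local product structure of the foliation makes $p|_\lambda\colon \lambda\to\alpha$ a covering map. A loop $\gamma$ in $\lambda$ is null-homotopic in the simply connected $\widetilde{M}$, so $p\circ\gamma$ is null-homotopic in $M$; property (ii) of the Novikov--Rosenberg list gives injectivity of $\pi_1(\alpha)\to\pi_1(M)$, which forces $p\circ\gamma$ to be null-homotopic inside $\alpha$; lifting this null-homotopy through the covering $p|_\lambda$ yields a null-homotopy of $\gamma$ in $\lambda$. Hence $\pi_1(\lambda) = 1$. By the classification of simply connected surfaces, $\lambda$ is homeomorphic to $S^2$ or $\mathbb{R}^2$. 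The earlier Reeb stability argument excludes spherical leaves of $\mathcal{F}$ under the hyperbolicity of $M$, and $S^2$ only covers $S^2$ (the $\mathbb{RP}^2$ case being excluded by co-orientability of $\mathcal{F}$ combined with orientability of $M$), so $\lambda\cong\mathbb{R}^2$, proving (b).

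For (c), I would use the leaf space $L$. The quotient map $\pi\colon \widetilde{M}\to L$ is continuous, and $L$ is a (possibly non-Hausdorff) $1$-manifold, hence $T_1$; so $\{[\lambda]\}$ is closed in $L$ and $\lambda = \pi^{-1}([\lambda])$ is closed in $\widetilde{M}$. The step I expect to require the most care is upgrading closedness to proper embeddedness. Suppose a sequence $y_n\in\lambda$ exits every compact subset of $\lambda$ in the leaf topology while $y_n\to x\in\lambda$ in $\widetilde{M}$; in a product chart $D\times K$ at $x$, sliding along the foliation produces distinct points $z_n\in\lambda\cap\tau$ on a small transversal $\tau$ through $x$ converging to $x$, contradicting the fact that $\tau$ meets $\lambda$ only at $x$ in a small neighborhood of $x$. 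Hence the leaf topology on $\lambda$ coincides with the subspace topology, and the inclusion $\lambda\hookrightarrow\widetilde{M}$ is a proper embedding.
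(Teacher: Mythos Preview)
Your arguments for (a) and (b) match the paper's approach exactly. The divergence is in part (c).

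For (c), the paper argues directly: if $\lambda$ were not properly embedded, some product chart would meet $\lambda$ in at least two distinct plaques; joining a short transversal arc between them to a path in $\lambda$ and then tilting produces a closed transversal to $\widetilde{\mathcal{F}}$, which projects to a closed transversal in $M$ that must be nontrivial in $\pi_1(M)$ by Novikov--Rosenberg (iii), contradicting the fact that it bounds in the simply connected $\widetilde{M}$.

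Your route instead invokes that $L$ is a (possibly non-Hausdorff) $1$-manifold, hence $T_1$, to get closedness of $\lambda$, and then appeals to ``the fact that $\tau$ meets $\lambda$ only at $x$ in a small neighborhood of $x$'' to rule out self-accumulation. The problem is that this last ``fact'' is exactly the content of the theorem: a small transversal injecting into the leaf space is what it means for $L$ to be locally Euclidean, and the standard way one establishes that $L$ is a $1$-manifold is by first proving that leaves of $\widetilde{\mathcal{F}}$ are properly embedded---via precisely the closed-transversal argument the paper uses. Although the paper states informally earlier that $L$ is a $1$-manifold, this is not justified at that point, and taking it as an input to Theorem~\ref{thm:properlyembedded} makes your argument circular. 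To close the gap you need an independent reason why a short transversal cannot meet the same leaf twice, and that is where Novikov--Rosenberg (iii) and the tilting construction enter.
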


\begin{proof}[Sketch of a proof] 
First of all, every leaf of $\widetilde{\mathcal{F}}$ is simply connected. Let $\widetilde{\lambda}$ be a leaf of $\widetilde{\mathcal{F}}$,
and $\gamma$ be a loop on $\widetilde{\lambda}$. Note that $\lambda$ is a covering of some leaf $\lambda$ of $\mathcal{F}$. 

Since $\widetilde{M}$ is simply connected, it is homotopically trivial in $\widetilde{M}$,
so it must be homotopically trivial in $M$. On the other hand, by the theorem of Rosenberg-Novikov above, $\lambda$ is $\pi_1$-injectively embedded in $M$. 
Thus, it must be trivial in $\lambda$. By the homotopy lifting property, this implies that the original loop $\gamma$ is homotopically trivial in $\widetilde{\lambda}$. 
Since $\gamma$ is arbitrary, this implies that $\widetilde{\lambda}$ is simply connected. 

Now by the Reeb stability theorem, no leaf is a sphere. Hence all leaves of $\widetilde{\mathcal{F}}$ must be planes. 
For a leaf $\lambda$ of $\widetilde{\mathcal{F}}$, if it is covered by product charts so
that in each chart, the intersection with $\lambda$ is connected (each
connected component is called a plaque), then it must be properly
embedded. Therefore, if $\lambda$ is not properly embedded, there
exists a product chart where $\lambda$ intersects in at least two
plaques. In that case, one can make a closed loop in $\widetilde{M}$
such that first use the transversal in that product chart to connect
two points in different plaques of $\lambda$, and close it up by a
path contained in $\lambda$. Now this path in $\lambda$ is covered by
finitely many product charts, so one can tilt it to get a transversal
which is very close to the original path (see Figure
\ref{fig:tilting}. In our case, the charts $U_1$ and $U_n$ could
coincide). Using this technique, one
gets a closed transversal $\widetilde{\gamma}$ to $\widetilde{\mathcal{F}}$ which
intersects $\lambda$. It gets mapped to a closed transversal $\gamma$ in
$\mathcal{M}$ and by Part $(iii)$ of Novikow-Rosenburg theorem above,
$\gamma$ must be homotopically nontrivial. On the other hand, since
$\widetilde{M}$ is simply connected, $\widetilde{\gamma}$ is
homotopically trivial, a contradiction. We conclude that every leaf is
properly embedded.
\end{proof}

\begin{figure}
  \includegraphics[width=15cm]{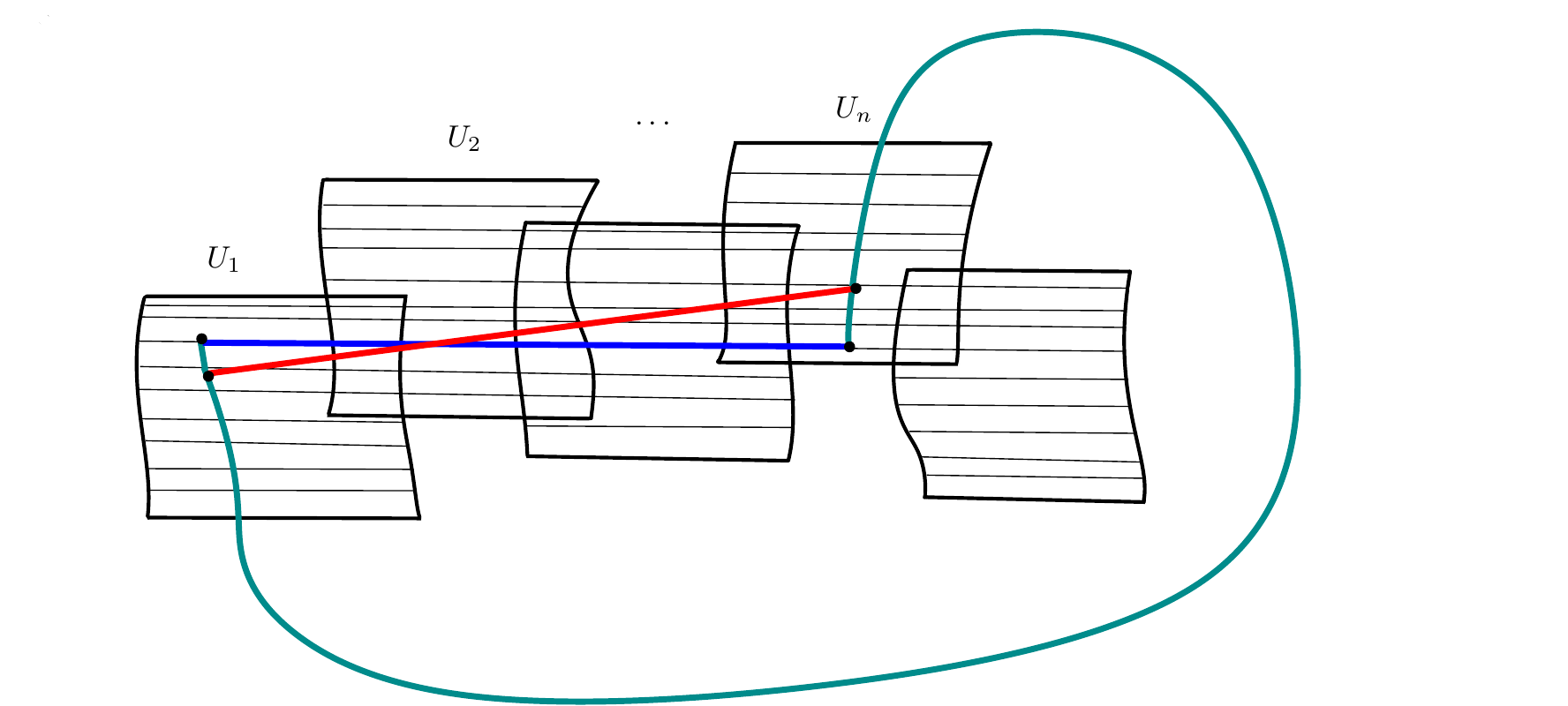}
  \caption{Consider the loop obtained by
    concatenating the blue arc which is contained in
    a leaf of the foliation with the green arc which is assumed to be
    transverse to the foliation.
    The blue arc is tilted to the red arc to make the whole loop transverse
  to the foliation. Note that one cannot draw the green arc so that
it intersects the chart $U_n$ from below, since it would contradict to
the fact that the foliation is co-oriented.}
  \label{fig:tilting}
\end{figure}

Combining this result with Candel's theorem, we find a metric on $M$
so that each leaf of $\widetilde{\mathcal{F}}$ equipped with the
induced path metric is isometric to the hyperbolic plane
$\mathbb{H}^2$. For each leaf $\lambda$ of $\widetilde{\mathcal{F}}$, since
$\lambda$ can be identified with $\mathbb{H}^2$ and the ideal boundary
of $\mathbb{H}^2$ is homeomorphic to the circle (called the circle at
infinity), we get the circle at infinity $S^1_{\infty}(\lambda)$ for
$\lambda$. Now we define the circle bundle at infinity$E_\infty$ as
the set of all circles at infinity for the leaves of
$\widetilde{\mathcal{F}}$. In other words, 
$E_\infty  = \cup_{\lambda \in L} S^1_\infty(\lambda)$. $E_\infty$ can be obtained from the ``cylinders'' over each transverse arc to
$\widetilde{\mathcal{F}}$ by patching them together
appropriately. We explain what this means in the next section.

\section{Leaf pocket theorem and the special sections}
\label{sec:specialsections}

Now we have circles, one for each leaf of
$\widetilde{\mathcal{F}}$. We need to combine them to make one big
mother circle which we will call a universal circle. This is done as
follows: in the last section, we defined $E_\infty$ as a set, so we
first give a description of its topology. Second, we note that there are some special sections for the
bundle $E_\infty$ which are preserved under the deck group action on
$\widetilde{M}$. Third, we observe that they can be circularly
ordered so that the deck group action is order-preserving. Finally,
taking a order completion of the set of special sections, we get a
circle.

To do this, we need to understand both ``tangential geometry'' and
``transverse geometry'' of $\mathcal{F}$. For the tangential geometry,
here is one useful lemma.
\begin{lem}
  There exists $\epsilon > 0$ such that every leaf of
  $\widetilde{\mathcal{F}}$ is quasi-isometrically embedded in its
  $\epsilon$-neighborhood. 
 \end{lem}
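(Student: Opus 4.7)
The plan is to combine compactness of $M$, which yields uniform local product charts for $\widetilde{\mathcal{F}}$, with the observation that each leaf of $\widetilde{\mathcal{F}}$ meets every product chart in at most one plaque. The latter is a direct consequence of the fact, established above, that the leaf space $L(\widetilde{\mathcal{F}})$ contains no embedded cycle.

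First I would use compactness of $M$ and smoothness of $\mathcal{F}$ to fix constants $\epsilon_0 > 0$ and $K \geq 1$ such that every point of $\widetilde{M}$ lies in some lifted product chart $U = D \times I$ of diameter at most $\epsilon_0$, on which the ambient Riemannian metric is $K$-bi-Lipschitz to the product metric. Next I would prove the single-plaque claim: for any leaf $\lambda$ of $\widetilde{\mathcal{F}}$ and any such chart $U$, the intersection $\lambda \cap U$ is either empty or a single plaque. Indeed, if not, some transversal arc $T = \{x\} \times I \subset U$ meets $\lambda$ at two distinct points; transversality makes the induced map $T \to L(\widetilde{\mathcal{F}})$ locally injective, and its image is then an immersed loop in the tree $L(\widetilde{\mathcal{F}})$ based at $\lambda$. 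A locally injective loop in a space with no embedded cycle must be constant, contradicting transversality of $T$.

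With the single-plaque claim in hand, choose any $\epsilon < \epsilon_0 / (10K)$. For any leaf $\lambda$ and any chart $U$ meeting $\lambda$, $N_\epsilon(\lambda) \cap U$ is a thin transverse slab around the unique plaque $\lambda \cap U$, and the product projection $\pi_U$ onto that plaque is a well-defined $K$-bi-Lipschitz map. Given $p, q \in \lambda$ and a path $\gamma$ in $N_\epsilon(\lambda)$ from $p$ to $q$, I would subdivide $\gamma$ into segments $\gamma_1, \ldots, \gamma_n$ each lying in a single chart $U_i$, apply $\pi_{U_i}$ to each $\gamma_i$, and concatenate the resulting leafwise arcs. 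The single-plaque property guarantees consistency on overlapping charts up to short leafwise corrections, yielding a path $\gamma'$ on $\lambda$ from $p$ to $q$ of leafwise length at most a uniform multiple of $\mathrm{length}(\gamma)$ plus an additive constant proportional to $\epsilon$. This is precisely the desired quasi-isometric embedding estimate.

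The main obstacle I anticipate is the single-plaque claim: without it, nearest-point projection onto $\lambda$ within a chart would be ambiguous, and a path in $N_\epsilon(\lambda)$ could oscillate between plaques in a way that blows up leafwise length. Fortunately the tree structure of $L(\widetilde{\mathcal{F}})$, inherited from the Novikov-Rosenberg theorem via the absence of closed transversals in $\widetilde{M}$, resolves this cleanly, and the remaining steps reduce to standard compactness-plus-projection arguments.
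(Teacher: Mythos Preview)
Your proposal is correct and follows essentially the same approach as the paper: compactness of $M$ yields finitely many product charts with uniform geometry, the single-plaque property (which the paper obtains via the closed-transversal argument of Theorem~\ref{thm:properlyembedded}, equivalent to your leaf-space formulation) makes leafwise projection well defined, and then any path in the $\epsilon$-neighborhood projects chart-by-chart to a leafwise path of comparable length. The paper is terser---it invokes the Lebesgue number lemma for the choice of $\epsilon$ and simply cites ``uniformly bounded geometry'' for the final quasi-isometry step---while you spell out the projection and concatenation explicitly, but the underlying argument is the same.
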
 
 \begin{proof} For each point $p$ in $M$, consider a product chart
   $U_p$ which is evenly covered by the universal covering map so that
   one connected component of the preimage of $U_p$ is a product chart
   where each leaf of $\widetilde{\mathcal{F}}$ intersects at most
   once. The last condition can be satisfied by the reason in the
   proof of Theorem \ref{thm:properlyembedded}. 

   By compactness of $M$, we cover $M$ with finitely many those
   product charts $U_1, \ldots, U_n$. Again since $M$ is compact, we
   can apply the Lebesgue number lemma to conclude that there exists
   $\epsilon > 0$ such that every ball of radius $2\epsilon$ is
   contained in one of the product charts $U_i$.

   Now let $\lambda$ be any leaf of $\widetilde{\mathcal{F}}$, and let
   $N$ be the $\epsilon$-neighborhood of $\lambda$. By our choice of
   $\epsilon$, lifts of the product charts $U_i$ cover entire
   $N$. Since these are lifts of finitely many product charts, they
   have uniformly bounded geometry. This shows that $\lambda$ is
   quasi-isometrically embedded in $N$. 
 \end{proof}

A positive number $\epsilon$ as in the above lemma is called a
separation constant of $\mathcal{F}$. 
 
The transverse geometry of $\mathcal{F}$ is described in so-called the leaf
pocket theorem. To state the theorem, we first need to define the
endpoint map. Let $\lambda$ be a leaf of $\widetilde{\mathcal{F}}$,
and $p$ be a point on it. Then for any vector $u$ in the unit tangent space $UT_p \lambda$
at $p$, let $e(u)$ be the endpoint in $S^1_\infty (\lambda)$ of the
geodesic ray in $\lambda$ determined by $u$. This defines a map, again
we call it $e$, from
the unit tangent bundle $UT \widetilde{\mathcal{F}}$ of
$\widetilde{\mathcal{F}}$ to $E_\infty$. Now we give $E_\infty$ the
finest topology so that the map $e: UT \widetilde{\mathcal{F}} \to
E_\infty$ is continuous.

Now we explain what we meant by ``patching cylinders'' in the last
section. Let $\tau$ be any transverse arc to
$\widetilde{\mathcal{F}}$. Then $UT \widetilde{\mathcal{F}}_\tau$ is
literally a cylinder. If $e(v_1) = e(v_2)$ for $v_1 \in UT
\widetilde{\mathcal{F}}_{\tau_1}$ and $v_2 \in UT
\widetilde{\mathcal{F}}_{\tau_2}$ for two transverse arcs $\tau_1,
\tau_2$, then we identify $v_1$ and $v_2$. Hence $E_\infty$ is
obtained from the disjoint union of cylinders of the form  $UT
\widetilde{\mathcal{F}}_\tau$ under these 
identifications. 

Going back to the transverse geometry of the foliation, we call a map $m: I \times \mathbb{R}_{\geq 0} \to \widetilde{M}$ a
\textsf{marker} if $m(\{k\} \times \mathbb{R}_{\geq 0})$ is a geodesic ray in
a single leaf of $\widetilde{\mathcal{F}}$ for each $k \in I$ and
$m(I \times \{t\})$ is a
transverse arc with length no greater than $\epsilon/3$ for all $t \in
\mathbb{R}_{\geq 0}$ where $\epsilon$ is a separation constant for
$\mathcal{F}$. 

Let $p \in \widetilde{M}$ and $\lambda$ be leaf
of  $\widetilde{\mathcal{F}}$ containing $p$. Suppose there exists a marker
$m$ such that $p = m((k, 0))$ for some $k \in I$.
This means that there exists a transversal $m(I \times \{0\})$ at $p$,
the holonomy along the geodesic ray $m(\{k\} \times
\mathbb{R}_{\geq 0} )$ emanating from $p$ is defined for the whole
time. Said differently, along this ray, nearby leaves are not pulled
away from the leaf $\lambda$ too fast. The following theorem of
Thurston shows that for arbitrary $p \in \widetilde{M}$, there exists
abundant of directions with this property. This describes the
transverse geometry of $\mathcal{F}$. 

Original proof of the leaf pocket theorem given by Thurston
in \cite{Thurston97} uses the existence of harmonic measures for
foliations. An alternative, purely topological proof is given by
Calegari-Dunfield \cite{CalDun03}. We omit the discussion of the proof
here and only briefly explain how this theorem is applied to get a set
of cyclically ordered set of sections. 

\begin{thm}[Leaf pocket theorem \cite{Thurston97}, \cite{CalDun03}]
For every leaf $\lambda$ of $\widetilde{\mathcal{F}}$, the set of
endpoints of markers is dense in $S^1_{\infty}(\lambda)$. 
\end{thm}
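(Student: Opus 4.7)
The plan is to fix a leaf $\lambda$ of $\widetilde{\mathcal{F}}$ and an arbitrary nonempty open arc $A \subset S^1_\infty(\lambda)$, and to produce a marker $m: I \times \mathbb{R}_{\geq 0} \to \widetilde{M}$ whose geodesic ray $m(\{k\} \times \mathbb{R}_{\geq 0})$ lies in $\lambda$ with endpoint in $A$. Since $A$ is arbitrary, this gives density. Picking a basepoint $p \in \lambda$, let $U \subset UT_p\lambda$ be the open set of unit vectors $u$ for which the geodesic ray $\gamma_u$ from $p$ in direction $u$ has endpoint in $A$. The task reduces to finding a single $u \in U$ together with a short transversal $\tau$ at $p$ such that holonomy of $\widetilde{\mathcal{F}}$ along $\gamma_u$ is defined for all time and the resulting transverse arcs stay of length at most $\epsilon/3$, where $\epsilon$ is a separation constant supplied by the previous lemma.

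First I would use compactness of $M$: cover $M$ by finitely many product charts of uniformly bounded transverse geometry, and extract a Lebesgue-number constant $\delta \leq \epsilon/3$ so that every ball of radius $2\delta$ in $\widetilde{M}$ sits inside a lift of one such chart. Along any geodesic ray $\gamma_u$ on $\lambda$, a sufficiently short transversal at $p$ can be pushed forward by holonomy through successive charts, and the marker condition fails only when nearby leaves diverge from $\lambda$ fast enough that the transversal must shrink to zero in finite time. Properness of every leaf of $\widetilde{\mathcal{F}}$, supplied by Theorem \ref{thm:properlyembedded}, guarantees that this divergence is controlled by the ambient geometry of $\widetilde{M}$ rather than by self-accumulation of $\lambda$.

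For the heart of the proof I would use a \emph{harmonic measure} on $\mathcal{F}$, whose existence (due to Garnett) follows from compactness of $M$ and the leafwise hyperbolic structure supplied by Candel's uniformization theorem. Disintegrating this harmonic measure produces, for each leaf $\lambda$, a probability measure $\nu_\lambda$ of full support on $S^1_\infty(\lambda)$, identifiable with the exit distribution of leafwise Brownian motion. Applying the leafwise ergodic theorem to Brownian paths, and combining with the recurrence forced by compactness of $M$, one shows that for $\nu_\lambda$-almost every $\xi \in S^1_\infty(\lambda)$ the geodesic ray from $p$ to $\xi$ remains in the $\epsilon$-separation regime with respect to nearby leaves for all times — precisely the marker condition. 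Full support of $\nu_\lambda$ then promotes ``almost every'' to ``dense'', finishing the argument.

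The hard part will be the translation step between measure and geometry: showing that the generic recurrence of Brownian trajectories in the compact manifold $M$ forces the leafwise geodesic ray to stay within the $\epsilon$-neighborhood of a nearby leaf in a uniformly controlled way. Here one must carefully compare Brownian and geodesic sample paths using the bounded leafwise geometry, and argue that the exceptional set where transversals blow up is charged zero mass by $\nu_\lambda$. An alternative is the purely topological route of Calegari--Dunfield, in which markers are produced by a pigeonhole argument on how often $\gamma_u$ revisits a fixed product chart in $M$; this trades harmonic analysis for a delicate combinatorial argument on flow boxes but concentrates the same difficulty in the recurrence step.
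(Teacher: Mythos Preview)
The paper does not actually prove the leaf pocket theorem: immediately before the statement it says that Thurston's original argument uses harmonic measures and that Calegari--Dunfield give a purely topological alternative, and then explicitly writes ``We omit the discussion of the proof here.'' So there is no in-paper proof to compare your proposal against; the paper treats this as a black box and moves on to how the theorem is \emph{used}.

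That said, your outline is faithful to what the paper attributes to the sources. Your main line---Garnett's harmonic measure, full support of the hitting measure $\nu_\lambda$ on $S^1_\infty(\lambda)$, and a recurrence/ergodic argument to show that $\nu_\lambda$-a.e.\ boundary point is a marker endpoint---is exactly Thurston's strategy as the paper describes it, and you correctly flag the delicate step (passing from Brownian recurrence in $M$ to uniform control of holonomy along leafwise geodesic rays). Your closing remark about the Calegari--Dunfield pigeonhole alternative also matches the paper's one-line summary. As a sketch this is accurate; just be aware that what you have written is an outline of the two known proofs rather than a self-contained argument, and the paper itself offers nothing further to fill in those details.
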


Abusing the notation, we also call the set of endpoints of a marker a
marker. Let $C$ be a cylinder in $E_\infty$. In other words, if $I$ is an
interval in $L$, then $C$ is a cylinder foliated by circles at infinities for
the leaves corresponding to points in $I$. First thing to observe is
that no two markers contained in $C$ are either disjoint or their
union is an interval transverse to the circle fibers in $C$. 

This is actually a consequence of the tangential geometry of
$\mathcal{F}$ (more precisely the existence of a separation constant
$\epsilon$). Suppose two markers $m_1, m_2$ intersect at a point in
$S^1_\infty(\lambda)$ but have distinct endpoints on $S^1_\infty(\mu)$
for some leaves $\lambda, \mu \in I$. On $\lambda$, the geodesic rays
of $m_i$'s become arbitrarily close to each other, since they have the
same endpoint on the ideal boundary. Hence, by shortening the markers
horizontally, we may assume that they are within $\epsilon/3$-distance
from each other on $\lambda$ with respect to the metric on
$\widetilde{M}$.
Since each marker is $\epsilon/3$-thin, the geodesic rays of $m_i$'s
on $\mu$ are within $\epsilon$-distance  from each other again with
respect to the metric on $\widetilde{M}$. However, those rays diverge
on $\mu$, hence with respect to the hyperbolic metric on $\mu$, the
rays get arbitrarily far away from each other. This contradicts to the
fact that $\mu$ is quasi-isometrically embedded in its
$\epsilon$-neighborhood. 

From this fact together with the leaf pocket theorem, we can start
constructing sections on $C$. First, pick a set $T$ of finitely many markers
on $C$ so that each non-boundary circle fiber of $C$ intersects at least one
marker at an interior point of the marker, and the boundary circle
fibers meet at least one marker at the endpoint. 

To make our description simple, let's parametrize $I$ (recall that $C$
is a circle bundle over an interval $I$ in $L$) to be the closed
interval [0, 1], and the leaf corresponding to point $t \in [0,1]$ is
denoted by $\lambda_t$.  
Let $p \in S^1_\infty(\lambda_t) \subset C$ for some $t$.  We can choose a
``left-most'' path through $p$ with respect to $T$ in the following way: 
On $ S^1_\infty(\lambda_t)$, we start from $p$ and move anti-clockwise
until we hit a marker. At the marker, follow the marker upward
(increasing the parameter $t$) until the end of the marker. At the
end, move anti-clockwise as much as one can until one hits another
marker. Follow the marker upward. That way we construct a path from
$p$ to $S^1_\infty(\lambda_1)$.

Let call this path $\gamma_{p,T}$. 
Now make the set $T$ bigger by adding more markers on $C$ to get a new
set $T'$ of markers. If new markers do not intersect the path
$\gamma_{p,T}$, there is nothing to do in the sense that $\gamma_{p,
  T} = \gamma_{p, T'}$. Hence let's assume that a new
marker $m$ intersects the path $\gamma_{p,T}$. This means that at some
$t$, $\gamma_{p, T}$ moves horizontally on $S^1_\infty(\lambda_t)$ but
  the marker $m$ crosses it vertically. Hence, when we formulate the
  path with respect to the set $T \cup \{m\}$ of markers, our path
  should stop at $m \cap S^1_\infty(\lambda_t)$ and follow $m$ upward,
  and then move horizontally anti-clockwise again until hitting other
  markers in the set. Then one can observe that the path $\gamma_{p,
    T'}$ is slightly perturbed to the right compared to $\gamma_{p,
    T}$. To make this more precise, one can unwrap the circle fibered
  of $C$ to the real line $\mathbb{R}$ to get a simply-connected cover
  of $C$ which is now foliated by horizontal lines. Here we see this
  cover so that on each line fiber, moving to the left corresponds to moving
  anti-clockwise on a circle fiber on $C$. Then clearly the new path $\gamma_{p,
    T'}$ is on the right compared to $\gamma_{p,
    T}$ (here one should fix a lift $\widetilde{p}$ of $p$ and consider the
  lifts of the paths passing through $p'$). An important point is that
  they cannot cross each other, although they are likely to coalesce. 
  
  Now for any two paths $\gamma$ and $\delta$ on $C$, we say $\gamma \leq \delta$ if 
  the lift of $\delta$ through $\widetilde{p}$ is on the right side to the lift of $\gamma$ through $\widetilde{p}$ in the universal cover of $C$. 
  This gives a partial ordering on the set of paths on $C$.  
 For any two sets of markers $T \subset T'$, we get $\gamma_{p, T} \leq \gamma_{p, T'}$.  
 Now we define a section $\tau_p: I \to C$ by $\tau_p(\nu) = \sup \{ \min \gamma_{p, T} \cap S^1_\infty(\nu) : T \mbox { is a set of markers.} \}$. 
 Here the minimum means the projection of the left-most point in the universal cover of $C$, and supremum exists 
because the lifts of paths $\gamma_{p, T}$ to the cover of $C$ through $\widetilde{p}$ are bounded from above by 
the vertical line through $\widetilde{p}$. 
This new path $\tau_p$ is continuous since the set of markers meet each circle fiber
  at a dense subset. Consequently, we get a continuous section
  $\tau_p$ of the circle-bundle $C$ over $I$ and call it a left-most
  section starting from $p$. 

Starting from $p$, one can also move downward in the leaf space $L$. In this case, instead of using the left-most paths, we take right-most paths 
by moving clockwise on each circle fiber and following markers downward. This is called a right-most section starting from $p$. Hence, for each embedded line $A$ in $L$, 
one can get a section $\tau_p$ of the bundle $E_\infty|_A$ over $A$ by taking left-most section when we move upward from $p$ along $A$, and taking right-most section when we move downward from $p$ along $A$. But we would like to extend $\tau_p$ as a section for the bundle $E_\infty \to L$. 

Before we proceed, we need one definition. Consider a sequence $(\mu_i)$ of
leaves of $\widetilde{\mathcal{F}}$ which are contained in a single
totally ordered segment of $L$ and increasing with respect to that
order. We say such a sequence \textsf{monotone ordered}. Suppose there
exists a collection of leaves $\{\lambda_j\}$ of
$\widetilde{\mathcal{F}}$ such that $\mu_i$ converges on compact
subsets of $\widetilde{M}$ in the Hausdorff topology to the union of
leaves $\lambda$, then we call the collection $\{\lambda_j\}$ together
with the monotone ordered sequence  $(\mu_i)$ a \textsf{cataclysm}. Here
the convergence means that for any compact subset $K$ of $M$, $\mu_i
\cap K$ converges to $(\cup_j \lambda_j) \cap K$ in the Hausdorff
topology. In fact, it is more appropriate to consider the cataclysms up to some
natural equivalence relation because the sequence $(\mu_i)$ are not an
essential part of the data. So as long as we have two monotone ordered
sequences  contained in a single totally ordered segment of $L$ which
converges to the same collection of leaves $\{\lambda_j\}$, we say
those two cataclysms are equivalent. Abusing the notation, we will
just call the collection $\{\lambda_j\}$ a cataclysm.

Let $\lambda$ be a leaf so that $p \in S^1_\infty(\lambda)$ and let $\mu$ be any other leaf in $L$. There exists a unique broken path from $\lambda$ in $\mu$ which is obtained in the following way: first collapse each cataclysm to a point in $L$ to get an actual tree $Y$, take the unique path from $\lambda$ to $\mu$ in $Y$, and pull back it to $L$.  This broken path is a union of embedded intervals in $L$ with occasional jumps between two leaves in the same cataclysm. 
Say, in this broken path, $\tau_p$ comes down to $\lambda_1$ and it jumps to $\lambda_2$ which is in the same cataclysm with $\lambda_1$ and then move upward from there. 
Say $\mu_i$ is a monotone ordered sequence converging to $\lambda_1$ and $\lambda_2$. 

Suppose $I_1, I_2$ are two intervals in $L$ such that they coincide in an half-open interval $I$ and differ
at only one vertex, $\mu_i$ are in $I$, and $I_i = I \cup \{\lambda_i\}$ for $i = 1, 2$. 
 For each $i$, let $m_i, m_i'$ be any two markers so that they have one endpoint on $S^1_\infty(\lambda)$
and the rest lie in the circle-bundle $C$ over $I$. For later use,
let's call the circle-bundle over $I_i$ $C_i$ for each $i$. 
First note that $m_1$ and $m_2$ are disjoint on $C$. Otherwise, since
they are $\epsilon/3$-thin, again we get a contradiction to the fact
that $\epsilon$ is a separation constant for $\mathcal{F}$.
Also, for each $\mu_j$ which intersects all the markers $m_1, m_1',
m_2, m_2'$, the pairs $(m_1, m_1')$ and $(m_2, m_2')$ are unlinked in
the circle $S^1_\infty(\mu_j)$. If they are linked, since $\lambda_1$
gets close to $\mu_j$ near the pair $(m_1, m_1')$ and $\lambda_2$
gets close to $\mu_j$ near the pair $(m_2, m_2')$, either $\lambda_1$ and
$\lambda_2$ are comparable in $L$ or they must intersect. We know
$\lambda_1$ and $\lambda_2$ are incomparable, so this is impossible.
Consequently, one can take disjoint arcs $J_1, J_2$ of
$S^1_\infty(\mu_j)$ so that the set of endpoints of the markers in
$C_i$ on $S^1_\infty(\mu_j)$  are completely contained in $J_i$. 

Let $S^1_{\lambda_1 \lambda_2}$ be the circle obtained from $S^1_\infty(\mu_i)$ 
by collapsing each connected component of the complement of the closure of the set of intersections with the markers through either $\lambda_1$ or $\lambda_2$. 
Then for each $i$, naturally there exists a monotone map $\phi_i : S^1_{\lambda_1 \lambda_2} \to S^1_\infty(\lambda_i) $. 
For instance, $\phi_1$ collapses the arc obtained from the image of the closure of the set of intersections with the markers through $\lambda_2$ under the monotone map 
$S^1_\infty(\mu_i) \to S^1_{\lambda_1 \lambda_2}$, and similarly for $\phi_2$. Then the preimage of $\tau_p(\lambda_1)$ under $\phi_1$ gets mapped to a single point in $S^1_\infty(\lambda_2$ 
via $\phi_2$. Let this point be $\tau_p(\lambda_2)$. We continue by constructing a left-most section starting at $\tau_p(\lambda_2)$. 
This procedure allows us to construct $\tau_p$ along the broken path from $\lambda$ to $\mu$, therefore we get a well-defined value $\tau_p(\mu)$. 
We call a section for $E_\infty \to L$ a \textsf{special section} if it is $\tau_p$ for some $p \in E_\infty$ and constructed as above. 

Let $\mathcal{S}$ be the set of all special sections. First of all, they are built upon the set of markers which is preserved under the $\pi_1(M)$-action, since the markers are constructed using the geometry of the foliation. One can also check easily that $\mathcal{S}$ admits a natural cyclic order. 
For a triple $(\tau_{p_1}, \tau_{p_2}, \tau_{p_3})$, there must exists $\mu \in L$ so that $\tau_{p_1}(\mu), \tau_{p_2}(\mu), \tau_{p_3}(\mu)$ are distinct. 
Hence they inherit a cyclic order from the orientation on $S^1_\infty(\mu)$. Clearly this cyclic order is preserved by the $\pi_1(M)$-action, 
since the cyclic order on each cataclysm is determined by the geometry of the foliation as well. 
Of course we put many details under the rug, and for the full detail of the proof, see Section 6 of \cite{CalDun03}. 

By taking the completion of the set of special sections of
$E_\infty$ as an ordered set, one gets a circle $S^1_{univ}$ where $\pi_1(M)$ acts
by order-preserving homeomorphisms. 

Recall the definition of a universal circle given as a set of data in the introduction. We also need a monotone 
map $\phi_\lambda: S^1_{univ} \to S^1_\infty(\lambda)$ for each leaf $\lambda$ of $\widetilde{\mathcal{F}}$. 
For a point $p$ on $S^1_{univ}$ corresponding to a special section, $\phi_\lambda(p)$ is just the evaluation of the section at $\lambda$. 
From the construction, it is clear that
$\phi_\lambda$ is monotone, and commutativity of the diagram in the definition of the universal circle holds. Also, for incomparable leaves $\lambda_1, \lambda_2$, 
since the core of $\phi_{\lambda_1}$ is the closure of the points in $S^1_{univ}$ corresponding to the special sections through a point on $S^1_\infty(\lambda_1)$
and they are entirely collapsed to a single point in $S^1_\infty(\lambda_2)$ (recall the part where we constructed the circle $S^1_{\lambda_1 \lambda_2}$ above), 
the core of $\phi_{\lambda_1}$  must be contained in a gap of $\phi_{\lambda_2}$. This is actually contained in a single 
gap because the fact that the markers through $S^1_\infty(\lambda_1)$ are unlinked with the markers through $S^1_\infty(\lambda_2)$ implies that the same fact holds for special sections. 

One last thing to check is that the action on $S^1_{univ}$ is
faithful. In the case of $\mathbb{R}$-covered foliations, one can find
a transverse pseudo-Anosov flow, and in that case the faithfulness can
be verified using the ideas in \cite{calegari2000geometry}. See also
Section \ref{sec:flows} to see the detail of the pseudo-Anosov flow
case. Hence we concern only the case that $\mathcal{F}$ has
branching. Let $H$ be the kernel of the action $\rho_{univ} : \pi_1(M) \to
\Homeo{S^1_{univ}} $.

Suppose $H$ is nontrivial. 
Let $h$ be any nontrivial element of $H$ and let
$\lambda$ be any leaf of $\widetilde{\mathcal{F}}$.
We have the following commutative diagram:
   \[ \begin{tikzcd}   
 S^1_{univ} \arrow{r}{ \rho_{univ}(h) } \arrow[swap]{d}{ \phi_\lambda } &
 S^1_{univ} \arrow{d}{ \phi_{h(\lambda)} } \\%
S^1_\infty(\lambda) \arrow{r}{h }& S^1_\infty(h(\lambda))
\end{tikzcd}
\]
But $h$ acts trivially on the universal circle, the top map
$\rho_{univ}(h)$ is the identity map.

If $h(\lambda) =
\lambda$, then by the above diagram, we know that $h$ acts trivially on
$S^1_\infty(\lambda)$. This implies that $h$ acts on $\lambda$ as the
identity but it is impossible since $h$ is a nontrivial element of
$\pi_1(M)$. Hence we know $h(\lambda)$ is different from $\lambda$. 

Second, we observe that $\lambda$ and $h(\lambda)$ are
comparable. Suppose they are incomparable. By the commutativity of the
above diagram, any gap associated with $\lambda$ is contained in a gap
associated with $h(\lambda)$, but also the core associated with
$\lambda$ is contained in a single gap associated with $h(\lambda)$, a
contradiction. Therefore, the leaves $\lambda$ and $h(\lambda)$ are
comparable.

Let $\lambda, \mu$ be two distinct leaves contained in the same
cataclysm in $L$. From above discussion, $H \lambda$ is an infinite set
contained in a line $X$ of $L$, and similarly, $H \mu$ is an infinite 
set contained in a line $Y$ of $L$. For each $h \in H$, then
$h(\lambda)$ and $h(\mu)$ are two distinct leaves contained in the
same cataclysm. This shows that there exists infinitely many pairs of
points $(x, y) \in X \times Y$ such that $x$ and $y$ are contained in
the same cataclysm. But this is impossible for two lines in $L$, since
 there cannot be a nontrivial cycle in $L$. This is a contradiction, so
we conclude that $H$ must be trivial, i.e., the $\pi_1(M)$-action on
$S^1_{univ}$ is faithful.

\section{In the case of quasi-geodesic and pseudo-Anosov flows}
\label{sec:flows} 

Let $\mathfrak{F}$ be a flow in the closed hyperbolic 3-manifold
$M$. As we lifted a taut foliation in the 3-manifold to the covering
foliation of the universal cover, we can consider the lifted flow
$\widetilde{\mathfrak{F}}$ in the universal cover of $M$. 
We say $\mathfrak{F}$ is a quasi-geodesic flow if each flow line of
$\widetilde{\mathfrak{F}}$ is a quasi-geodesic in $\widetilde{M}$
which can be identified with the hyperbolic 3-space $\mathbb{H}^3$.

Pseudo-Anosov flows form another important class of flows in
3-manifolds. A flow $\mathcal{F}$ is pseudo-Anosov if it locally looks
like a branched covering of an Anosov flow. 

These two notions are closely related. First, Steven Frankel \cite{Frankelpreprint} announced
the resolution of Calegari's flow conjecture which says that any quasi-geodesic flow on a closed hyperbolic
3-manifold can be deformed to a flow that is both quasi-geodesic and
pseudo-Anosov. On the other hand, not every pseudo-Anosov flow is
quasi-geodesic. Fenely
\cite{fenley1994anosov} constructed a large class of Anosov flows in
hyperbolic 3-manifolds which are not quasi-geodesic. Later he 
gave a necessary and sufficient condition for a pseudo-Anosov flow to
be quasi-geodesic in \cite{fenley2016quasigeodesic}. These are optimal
results. 

Calegari \cite{calegari2006universal} showed that if $M$ admits a
quasi-geodesic flow, then $\pi_1(M)$ acts faithfully on the circle
where the circle is the boundary of the group-equivariant
compactification of the space of flow lines of the covering flow
$\widetilde{\mathfrak{F}}$. In some sense, the work of Ghys in \cite{ghys1984flots} is a prototype of the result of Calegari. 
Roughly speaking, Ghys proved that the leaf space of the weak stable foliation of an Anosov flow on a circle bundle is a line, 
and established a map from the leaf space into the circle. 

On the other hand, Calegari-Dunfield \cite{CalDun03} 
showed the same result in the case $M$ admits a pseudo-Anosov flow. 
Hence, modulo Frankel's upcoming paper, the construction of the action on the circle for quasi-geodesic flows can be reduced 
to the one for pseudo-Anosov flows. In this section, we will review the work of Calegari-Dunfield for the 3-manifolds admitting a pseudo-Anosov flow. 

As shown in the seminal paper of Cannon and Thurston \cite{CannonThurston}, the suspension flow of hyperbolic mapping
tori can be chosen to be both quasi-geodesic and pseudo-Anosov. They
used this to show that lifts of surface fibers of a fibered
hyperbolic 3-manifold extend continuously to the ideal boundary of
$\widetilde{M}$ (therefore their boundaries give group-equivariant
surjections from $S^1$ to $S^2$, which are commonly called Cannon-Thurston maps). This was later generalized by Fenley \cite{fenley2012ideal}. 
Hence, it might be instructive to consider the suspension flows when we think of a pseudo-Anosov flow. 
In the case of a suspension flow for a hyperbolic mapping torus $M$, one can consider the suspension of stable and unstable singular measured foliations on the surface for the monodromy to obtain 2-dimensional stable and unstable singular foliations in $M$. 
Analogously, in the case of a general pseudo-Anosov flow, $M$ has 2-dimensional stable and unstable singular foliations. 

Let $\mathcal{F}^u$ be the unstable foliation in $M$ for a pseudo-Anosov flow $\mathfrak{F}$. 
$\mathcal{F}^u$  can be split open to a lamination $\Lambda$. 
$\Lambda$ can be obtained from $\mathcal{F}^u$ by first removing the singular leaves, and for each singular leaf removed, 
we insert a finite-sided ideal polygon bundle over  the circle so that the leaves of $\Lambda$ are precisely the nonsingular leaves of 
$\mathcal{F}^u$ together with one leaf for every face of a singular leaf of $\mathcal{F}^u$. 
Just like in the case of the taut foliations, one can consider the lifted lamination $\widetilde{\Lambda}$ in $\widetilde{M}$
and the leaf space $L$ of $\widetilde{\Lambda}$. 
One caution here is that a vertex in $L$ is either a non-boundary leaf or a closed complementary region of $\widetilde{\Lambda}$. 
Since a complementary region comes from a singular leaf, it is natural to identify the whole thing as a single point in the space of leaves. 

At a point in $L$, it does not locally look like an open interval of the real line, but instead 
each point of $L$ has a neighborhood which is totally orderable, and
between any two points, there exists a unique path which is a
concatenation of such orderable segments. This structure is called an
order tree.  

One of the key statements in \cite{CalDun03} the following: 
\begin{thm}[Calegari-Dunfield \cite{CalDun03}] 
Let $M$ be a closed hyperbolic 3-manifold. If $M$ admits a very full lamination with orderable cataclysms, then $\pi_1(M)$ acts faithfully on the circle by orientation-preserving homeomorphisms. 
\end{thm}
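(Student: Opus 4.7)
The plan is to adapt the universal-circle construction of Sections 2--3 from the case of a taut foliation to the setting of a very full lamination $\Lambda$ with orderable cataclysms. The hypothesis ``very full'' means the complementary regions of $\Lambda$ are finite-sided ideal polygon bundles over circles, while ``orderable cataclysms'' provides the extra cyclic-order data needed where the leaf space $L$ branches. First I would apply Candel's uniformization theorem to $\Lambda$: the usual combination of atoroidality, irreducibility, and Reeb stability rules out spherical and toral leaves, so every leaf has negative Euler characteristic and $\Lambda$ admits a leafwise hyperbolic metric. Each leaf $\lambda$ of the lifted lamination $\widetilde{\Lambda}$ is then isometric to $\HH^2$, has a well-defined circle at infinity $S^1_\infty(\lambda)$, and these circles assemble into a bundle $E_\infty \to L$ over the leaf space (which is now an order tree, not a 1-manifold, because a full complementary region collapses to a single point).

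Second, I would reprove the leaf pocket theorem in this setting so that markers exist densely on each $S^1_\infty(\lambda)$, and then run the construction of $\SSS$, the set of special sections, essentially verbatim. Within a totally ordered segment of $L$, left-most and right-most paths through a point $p \in E_\infty$ are built exactly as in Section 3 using finite marker sets and the partial order on paths in the unwrapped cylinder, and the $\sup$-and-$\min$ procedure produces a continuous section $\tau_p$. The genuinely new step is crossing a cataclysm: when the broken path from $\lambda$ to $\mu$ jumps between non-comparable leaves $\lambda_1,\lambda_2$ in the same cataclysm, I would use the ``orderable'' hypothesis to get a canonical cyclic order on the leaves of the cataclysm. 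Combined with the polygon-bundle structure of the complementary region from ``very full,'' this order guarantees that markers approaching $\lambda_1$ and markers approaching $\lambda_2$ are unlinked in each nearby $S^1_\infty(\mu_j)$, exactly as was needed in Section 3. Then $\tau_p(\lambda_1)$ and $\tau_p(\lambda_2)$ are matched via the monotone maps $\phi_i\colon S^1_{\lambda_1\lambda_2}\to S^1_\infty(\lambda_i)$, and $\tau_p$ extends continuously across the jump. Iterating along broken paths in the order tree $L$ produces a globally defined special section for every $p\in E_\infty$.

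Third, the set $\SSS$ inherits a $\pi_1(M)$-equivariant cyclic order: given any triple of special sections, there is a leaf $\mu\in L$ on which all three attain distinct values, and the cyclic order of $S^1_\infty(\mu)$ is used; orderability of cataclysms ensures this is consistent across jumps. Taking the order completion of $\SSS$ yields a circle $S^1_{univ}$ on which $\pi_1(M)$ acts by orientation-preserving homeomorphisms, with monotone evaluation maps $\phi_\lambda\colon S^1_{univ}\to S^1_\infty(\lambda)$ satisfying the comparability axiom. Faithfulness is then checked by the same argument as at the end of Section 3: if $h$ lies in the kernel and fixes $\lambda$, the commutative diagram forces $h$ to act trivially on $S^1_\infty(\lambda)$, hence on $\lambda$, contradicting that $\pi_1$ injects into the leaf. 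So every $h\ne 1$ moves every leaf, and the gap/core incomparability obstruction forces $\lambda$ and $h(\lambda)$ to be comparable; applying this to two leaves in a common cataclysm produces infinitely many cataclysm-related pairs across two distinct lines in $L$, contradicting the fact that $L$ has no embedded cycles.

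The main obstacle will be the cataclysm-crossing step. In the taut foliation case, the $\epsilon/3$-thinness of markers and the separation constant gave the key unlinking property for free, but here one must carry this through for a lamination whose complementary regions are polygon bundles, and verify that ``orderable cataclysms'' is precisely strong enough to make the monotone maps $\phi_1,\phi_2$ well-defined and the section $\tau_p$ continuous at the jump. Once this is in place, the order completion and faithfulness arguments proceed as in the foliation case, and the rest of the proof is essentially bookkeeping.
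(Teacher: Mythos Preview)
Your approach is genuinely different from the paper's, and considerably heavier. The paper does \emph{not} adapt the Candel/leaf-pocket/special-section machinery of Sections~2--3 to laminations. Instead it works purely with the combinatorics of the leaf space $L$, which for a very full lamination is an order tree (with one vertex for each non-boundary leaf and one for each closed complementary region). The ``very full'' hypothesis, via an orientation on the core curve of each polygon-bundle complementary region, gives a $\pi_1(M)$-invariant \emph{cyclic} order on the finitely many segments of $L$ meeting at each singular vertex; the ``orderable cataclysms'' hypothesis gives a $\pi_1(M)$-invariant \emph{linear} order on segments that differ only at a cataclysm vertex. These two pieces of data let one realize $L$ as a planar order tree, which in turn induces a $\pi_1(M)$-invariant cyclic order on the set $E$ of ends of $L$; the universal circle is the order completion $\overline{E}$. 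No circles at infinity, no bundle $E_\infty$, no markers, no special sections. Faithfulness is also argued differently: a nontrivial $\alpha$ in the kernel would have to preserve every complementary region of $\widetilde{\Lambda}$, and hence would admit two quasi-geodesic axes with disjoint endpoint pairs in $\partial\HH^3$, which is impossible for a single isometry.

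Your route has genuine gaps beyond the one you flag. Reproving the leaf pocket theorem for an essential lamination with open complementary regions is not a formality: both Thurston's harmonic-measure argument and Calegari--Dunfield's topological argument rely on the foliation filling $M$, and you give no indication of how to replace this. You also swap the roles of the two hypotheses: orderable cataclysms supplies a \emph{linear} order on the leaves of each cataclysm, not a cyclic one; the cyclic data at branch points comes from the polygon-bundle structure implicit in ``very full.'' Finally, your faithfulness argument invokes the gap/core comparability axiom, but you have not established that the circle you build actually satisfies that axiom in the lamination setting. The paper's order-tree approach avoids all of these issues by never constructing $E_\infty$ at all, at the price of a circle that is tied to the combinatorics of $L$ rather than to the leafwise hyperbolic geometry.
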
 
\begin{proof}[Sketch of the proof] 
We remark that Calegari-Dunfield showed a stronger result by weakening the assumption that the lamination is very full. 
They allowed the complementary regions of the lamination to be so-called solid torus guts, and in that case, it is shown that one can fill in the lamination with additional leaves 
to get a very full lamination while preserving many nice properties. 

As we explained above, the laminations we obtain from pseudo-Anosov flows (including the stable and unstable laminations in the hyperbolic mapping tori) 
are very full which means each complementary region is a finite-sided ideal polygon bundle over the circle. 
To see how this condition is used, we first fix orientations on the core curves of the complementary regions of $\Lambda$. 
This determines a natural cyclic order on the faces of each cataclysm. By formulation of the vertices of the leaf space $L$, this gives a natural 
cyclic order on the set of segments sharing exactly one vertex, and this order is $\pi_1(M)$-invariant by construction. 

The second condition of having orderable cataclysms means that 
there exists an ordering on each cataclysm which is invariant under the action of the stabilizer of the cataclysm in $\pi_1(M)$. 
A set of segments of $L$ which differ only by a single vertex correspond to a cataclysm, so they also have natural ordering which 
is $\pi_1(M)$-invariant  by definition of orderable cataclysms.

In summary, a set of segments of L which share exactly one vertex are
cyclically ordered and a set of segments of L which differ only at a
vertex are linearly ordered. Furthermore, these orderings are
$\pi_1(M)$-invariant. From this data, one can realize $L$ as a
``planar order tree''.
There are three types of points in $L$: first a cataclysm point, i.e.,
a point corresponding to a leaf in a cataclysm, second a singular
point which corresponds to a closed complementary region, and finally
an ordinary point which belongs to none of the previous two cases. 
Let $p$ be an arbitrary point in $L$. To be
concrete, let's assume $p$ is an ordinary point. Draw the point $p$ as an arbitrary point in
$\mathbb{R}^2$, maybe the origin, and let $I$ be the orderable segment
containing $p$ such that endpoints are either cataclysm points or
singular points but any other points are ordinary points. If an
endpoint is singular, one can draw the incident segments so that the
cyclic order on them matches with the cyclic order on their
realization inherited from the plane. If an endpoint is a cataclysm
point, again one can draw the the other segments ``incident'' at the
cataclysm with respect the linear order on them. Continuing this
process, we can realize $L$ as an order tree on the plane.

Let $e_1, e_2, e_3$ be three distinct ends of $L$. Pick a point $p$ in $L$ and let $r_i$ be the ray from $p$ to $e_i$ for $i = 1, 2, 3$.
Since $e_1, e_2, e_3$ all distinct, $r_i$'s must get separated at some
point, and form a subtree of $L$. Based on our realization of $L$ on
$\mathbb{R}^2$, then the rays $r_i$ are naturally cyclically ordered,
which gives a cyclic ordering on the triple $(e_1, e_2, e_3)$. 
Note that the ordering on the trip $(e_1, e_2, e_3)$ does not depend on the choice of $p$. 

This defines a cyclic ordering on the set $E$ of ends of $L$, and by construction, it is $\pi_1(M)$-invariant. Hence we obtained a cyclically ordered set $E$ where $\pi_1(M)$ acts 
by order-preserving maps. $E$ is equipped with the topology determined by its order: 
for $e \in E$, the sets $\{ x \in E \setminus \{a, b\} | (b, x, a) \mbox{ is positively oriented.} \}$ for some $a, b \in E$ where $(a, e, b)$ is positively oriented form a basis for the topology on $E$. Then there exists a unique continuous order-preserving embedding of $E$ into $S^1$ up to homeomorphisms. By collapsing each connected component of the complement of the closure of the image of $E$, we get a circle where $\pi_1(M)$ acts by orientation-preserving homeomorphisms. Here the circle is obtained as the order-completion of $E$, and we will denote it as $\overline{E}$. 

Suppose a nontrivial element $\alpha$ of $\pi_1(M)$ acts trivially on this circle. For each complementary region of $\widetilde{\Lambda}$, 
let $p$ be the vertex of $L$ corresponding to the complementary region. Consider all infinite rays in $L$ starting at $p$, and this defines a subset of $E$. 
The fact that $\alpha$ fixes this set implies that $\alpha$ actually fixes $p$. In other words, when we consider the action of $\alpha$ on $\widetilde{M}$, 
it preserves the given complementary region. Hence, all complementary
regions are preserved by $\alpha$. Each complementary region of
$\widetilde{\Lambda}$ is $\mathbb{Z}$-cover of a complementary region
of $\Lambda$. Hence if $\alpha$ preserves a complementary region of
$\widetilde{\Lambda}$, there it admits an invariant quasi-geodesic. If
$\alpha$ preserves another complementary region, $\alpha$ would admit
another quasi-geodesic axis who endpoints are disjoint from the one
we already had, a contradiction. We have shown that the $\pi_1(M)$-action on the circle constructed above is faithful. 
\end{proof}

To apply the above theorem to our case, it remains to see that our lamination $\Lambda$ has orderable cataclysms. 
This observation is due to Fenley \cite{fenley1998structure}. 
Note that each leaf of $\widetilde{\Lambda}$ is foliated by the flow lines of $\widetilde{\mathfrak{F}^u}$ contained in that leaf. Whenever 
we talk about the foliation on a leaf, we refer to this foliation coming from $\widetilde{\mathfrak{F}^u}$. 
Let $\{\lambda_j\}$ be (an equivalence class of) a cataclysm and let $(\mu_i)$ be a monotone ordered sequence of nonsingular leaves of $\widetilde{\Lambda}$ converging to 
$\{\lambda_j\}$ on compact subsets of $\widetilde{M}$. For each $j$, choose a sequence of points $p_{ij} \in \mu_i$ 
so that $p_{ij}$ converges to a point $q_j$ in $\lambda_j$ as $i$ tends to $\infty$. 

Candel's theorem again applies here: $M$ admits a metric so that each $\mu_i$ is isometric to $\mathbb{H}^2$. Then the foliation on $\mu_i$ from $\widetilde{\Lambda}$
is a foliation by bi-infinite geodesics which all share one endpoint (this is an unstable lamination so the flow lines are oriented so that it flows from this common endpoint). 
Hence the leaf space of the foliation on each $\mu_i$ is $\mathbb{R}$, hence naturally totally ordered. 
The set $\{p_{ij}\}$ of points on $\mu_i$ has a natural order on the indices $j$ with respect to this order. For each $j$, we can take a small product chart $U_j$ around $q_j$. 
For all large enough $i$, the plaque $P_j$ obtained as the intersection $U_j \cap \mu_i$ contains $p_{ij}$ and $P_j$ converges to $U_j \cap \lambda_j$ as foliated disks. 
Hence, the order relation between $p_{ij}$ and $p_{ij'}$ remains the same for all sufficiently large $i$. Hence, this gives an ordering on the set $\{q_j\}$ which can be used as an ordering on the cataclysm $\{\lambda_j\}$. Since the flow lines of $\widetilde{\mathcal{F}^u}$ are preserved under $\pi_1(M)$-action, our ordering on the cataclysm 
is invariant under the action of its stabilizer in $\pi_1(M)$. Hence, the unstable lamination for a pseudo-Anosov flow has orderable cataclysms so the above theorem applies. 
We finally obtain 

\begin{thm}[Calegari-Dunfield \cite{CalDun03}]  
Let $M$ be a closed hyperbolic 3-manifold which admits a pseudo-Anosov flow. Then $\pi_1(M)$ acts faithfully on the circle by orientation-preserving homeomorphisms. 
\end{thm}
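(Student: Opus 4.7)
The plan is to reduce the theorem to the preceding Calegari–Dunfield result on very full laminations with orderable cataclysms, so the bulk of the work is to produce, from a pseudo-Anosov flow $\mathfrak{F}$ on $M$, a $\pi_1(M)$-invariant lamination in $M$ that satisfies both hypotheses. As indicated in the excerpt, I would start from the 2-dimensional unstable singular foliation $\mathcal{F}^u$ associated to $\mathfrak{F}$ and split it along each singular leaf, replacing the singular leaf by a finite-sided ideal polygon bundle over the circle; the result is an essential lamination $\Lambda$ whose complementary regions are exactly these polygon bundles. By construction $\Lambda$ is \emph{very full}, which is the first hypothesis of the previous theorem. (Strictly speaking one should also comment on why the resulting object is a genuine essential lamination in a hyperbolic 3-manifold, using standard pseudo-Anosov flow technology.)

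The second, and main, task is to show that $\Lambda$ has orderable cataclysms. Here I would invoke Fenley's observation together with Candel's uniformization theorem exactly as sketched in the text. Concretely: lift $\Lambda$ to $\widetilde{\Lambda}$ in $\widetilde{M}$, and fix a cataclysm $\{\lambda_j\}$ together with a monotone ordered approximating sequence of nonsingular leaves $(\mu_i)$. By Candel's theorem each $\mu_i$ carries a leafwise hyperbolic metric identifying it isometrically with $\mathbb{H}^2$, and the foliation on $\mu_i$ induced by the flow lines of $\widetilde{\mathfrak{F}^u}$ is a foliation by bi-infinite geodesics sharing a common endpoint at infinity (the unstable direction). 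Consequently the leaf space of this foliation on $\mu_i$ is homeomorphic to $\mathbb{R}$, hence totally ordered.

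From this I transfer the order to the cataclysm as follows. Pick, for each $j$, a basepoint $q_j \in \lambda_j$ and a sequence $p_{ij} \in \mu_i$ with $p_{ij} \to q_j$. For large $i$ the points $p_{ij}$ all lie on $\mu_i$, so they inherit a linear order from the leaf space of the foliation on $\mu_i$. Using small product charts $U_j$ around each $q_j$ and the Hausdorff convergence of plaques $U_j \cap \mu_i$ to $U_j \cap \lambda_j$, the relative order of $p_{ij}$ and $p_{ij'}$ stabilizes for large $i$, and the common value gives a well-defined linear order on $\{\lambda_j\}$ that is independent of the auxiliary choices. Since the collection of flow lines of $\widetilde{\mathfrak{F}^u}$ is preserved by $\pi_1(M)$, this ordering is invariant under the stabilizer of the cataclysm in $\pi_1(M)$, so $\Lambda$ has orderable cataclysms.

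Having verified both hypotheses, the preceding Calegari–Dunfield theorem applies and produces a faithful orientation-preserving action $\pi_1(M) \to \Homeop(S^1)$, finishing the proof. The main obstacle I anticipate is the verification that the order on the cataclysm is genuinely well-defined and stabilizer-invariant: one has to check that different choices of the approximating sequence $(\mu_i)$ and the basepoints $p_{ij}$ yield the same order, which is where the compatibility between Hausdorff convergence of leaves and the leafwise hyperbolic geometry furnished by Candel's theorem is essential; everything else is either standard pseudo-Anosov flow bookkeeping or a direct appeal to the previously stated results.
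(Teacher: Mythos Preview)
Your proposal is correct and follows essentially the same approach as the paper: you split open $\mathcal{F}^u$ to obtain a very full lamination $\Lambda$, then verify orderable cataclysms via Fenley's argument using Candel's uniformization (so that each $\mu_i$ is $\mathbb{H}^2$ foliated by geodesics sharing a common ideal endpoint, giving an $\mathbb{R}$ leaf space), transfer the order to the cataclysm via points $p_{ij}\to q_j$ and product charts, and finally invoke the preceding Calegari--Dunfield theorem. This matches the paper's argument in structure, notation, and detail.
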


\section{Invariant laminations for the universal circles and laminar groups}
\label{sec:invlamination}
A lamination $\Lambda$ on $S^1$ is defined to be a closed subset of
the set of all unordered pairs of two distinct points of $S^1$ so that
any two elements are unlinked. Here two pairs $(a, b)$ and $(c, d)$ of
points of the circle are unlinked if both $a, b$ are contained in the
closure of a single connected component of $S^1
\setminus \{c, d\}$. Note that if $a = c$ and $b \neq d$, the pairs $(a, b)$ and
$(c,d)$ are still unlinked according to our definition.

One can visualize $\Lambda$ by identifying the circle with the ideal
boundary of $\mathbb{H}^2$ and then realize each element as the
endpoints of a bi-infinite geodesic. We call this geodesic lamination
a \textsf{geometric realization} of $\Lambda$. Since the geometric
realization is unique up to isotopy, we will freely go back and forth
between a lamination on the circle and its geometric realization to
discuss its properties. 

We first consider the case that $M$ is a closed hyperbolic 3-manifold
and $\mathcal{F}$ is a co-orientable taut foliation with a branching.
In Section \ref{sec:specialsections}, we saw that there
exists the set of special sections which has a $\pi_1(M)$-invariant
cyclic order and it can be completed to get a universal circler
$S^1_{univ}$ where $\pi_1(M)$ acts faithfully by
orientation-preserving homeomorphisms.

Now we see that this action
preserves laminations. We will construct a lamination $\Lambda^+$
assuming the leaf space $L$ is branched in the forward direction. In
the case $L$ has a branching in the backward direction, one can
construct another lamination $\Lambda^-$ in a completely analogous
way. For each leaf $\lambda$ in $L$, let $L^+(\lambda)$ denote the
connected component of $L$ containing at least one leaf $\mu$ with
$\mu > \lambda$. For a subset $X$ of $L$, we say $core(X)$ is the
union of the cores associated with the leaves in $X$. Let
$\Lambda(core(X))$ be the boundary of the convex hull of the closure
of $core(X)$ in $\mathbb{H}^2$. Finally, define $\Lambda^+(\lambda)$
to be $\Lambda(core(L^+(\lambda)))$, and $\Lambda^+$ to be the closure
of the union $\cup_{\lambda \in L} \Lambda^+(\lambda)$. Note that
$\Lambda^+$ is completely determined by the structure of $L$.

To see this is indeed a lamination, we need to show that for $\lambda,
\mu \in L$, no leaf o$\Lambda^+(\lambda)$ is linked with a leaf of
$\Lambda^+(\mu)$. This is easy to see when $\lambda, \mu$ are
comparable, since one of o$\Lambda^+(\lambda)$ and $\Lambda^+(\mu)$ is
contained in the other. When they are incomparable, there are two
cases. One case is that $\lambda \notin \Lambda^+(\mu)$ and $\mu
\notin \Lambda^+(\lambda)$. In this case, $\Lambda^+(\mu)$ and
$\Lambda^+(\lambda)$ are disjoint, so again straightforward. Finally,
let us assume that $\lambda \in \Lambda^+(\mu)$ and $\mu
\in \Lambda^+(\lambda)$. In this case, $\Lambda^+(\lambda) \cup
\Lambda^+(\mu) = L$. Hence $core(L) = core(\Lambda^+(\lambda)) \cup
core(\Lambda^+(\mu))$, so the boundaries of the convex hulls do not
cross in $\mathbb{H}^2$.

Up to here, we did not really need to assume that $L$ is branched in
the forward direction. To see $\Lambda^+$ is nonempty, we need.
%It suffices to show that for each $\lambda \in L$, the core associated
%with $\lambda$ is not the entire $S^1_{univ}$. 
From the assumption that $L$ has a branching in the forward direction,
there exist leaves $\mu, \lambda$ so that $\lambda \notin \Lambda^+(\mu)$ and $\mu
\notin \Lambda^+(\lambda)$. As we noted above, $\Lambda^+(\mu)$ and
$\Lambda^+(\lambda)$ are disjoint, so their cores are unlinked. In
particular, $core(\Lambda^+(\lambda))$ is not dense in $S^1_{univ}$,
which is sufficient to conclude that $\Lambda^+$ is nonempty.

Now we get an invariant lamination for the universal circle action for
the pseudo-Anosov flow. Let's consider the setup of Section
\ref{sec:flows}. 
Let $p_1, \ldots, p_k$ be points in $L$ corresponding to a set of
representative of orbits of cataclysm points under
$\pi_1(M)$-action. Say each $p_i$ corresponds to a complementary
region which is an ideal $n_i$-gon bundle over the circle. Then $L
\setminus \{p_i\}$ consists of $n_i$ subtrees of $L$. 
Choose $q_1, \ldots, q_{n_i}$ on $\overline{E}$ which separate the
ends of distinct subtrees of $L \setminus \{p_i\}$. We may assume that 
the $n_i$-tuple $(q_1, \ldots, q_{n_i})$ is positively oriented with
respect to the cyclic order on $\overline{E}$. Then we consider the
pairs $(q_j, q_{j+1})$ for each $j = 1, \ldots, n_i \mod
n_i+1$. We do this for each $p_i$ and take the union of
$\pi_1(M)$-orbits of all those pairs, and call it $\Lambda$. This
process can be done so that elements of $\Lambda$ are pairwise
unlinked. By taking a closure of $\Lambda$ in the space of unordered
pairs of points of $\overline{E}$, we get a $\pi_1(M)$-invariant
lamination. 

In summary, 
\begin{thm} Let $M$ be a closed hyperbolic 3-manifold with either a
  taut foliation, a quasi-geodesic flow or a pseudo-Anosov flow. Then $\pi_1(M)$ acts
  faithfully on the circle by orientation-preserving homeomorphisms
  with an invariant lamination. 
\end{thm}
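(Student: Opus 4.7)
The plan is as follows. The faithfulness portion of the theorem has been verified in the preceding sections case by case: Thurston's universal circle construction (Theorem \ref{thm:univcirctautfol}), together with the final paragraphs of Section \ref{sec:specialsections}, handles a taut foliation, while the Calegari--Dunfield theorems in Section \ref{sec:flows} handle pseudo-Anosov flows, with quasi-geodesic flows either handled directly by Calegari \cite{calegari2006universal} or reduced to the pseudo-Anosov case via Frankel's announced theorem \cite{Frankelpreprint}. So I only need to exhibit, in each of the three hypotheses, a nonempty $\pi_1(M)$-invariant lamination of the corresponding circle, and bundle the three into a single statement.

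For the taut foliation case, I would first assume $\mathcal{F}$ has two-sided branching and adopt the construction of $\Lambda^+$ laid out just before the theorem: for each $\lambda\in L$ set $\Lambda^+(\lambda)=\Lambda(\mathrm{core}(L^+(\lambda)))$, the boundary in $\mathbb{H}^2$ of the convex hull of the closure of the core associated with $L^+(\lambda)$, and then put $\Lambda^+=\overline{\bigcup_{\lambda\in L}\Lambda^+(\lambda)}$. Verifying that $\Lambda^+$ is a lamination reduces to the three comparability cases already carried out in Section \ref{sec:invlamination} (comparable; incomparable with disjoint $\Lambda^+$'s; and incomparable with $\Lambda^+(\lambda)\cup\Lambda^+(\mu)=L$). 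Nonemptiness is forced by forward branching: it produces $\lambda,\mu$ with disjoint cores, whence $\mathrm{core}(\Lambda^+(\lambda))$ is not dense. Invariance under $\rho_{univ}$ is automatic, since $L$, the partial order, the core, and the convex hull operation are all preserved by the $\pi_1(M)$-action on $S^1_{univ}$. An analogous $\Lambda^-$ takes care of backward branching.

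For the pseudo-Anosov flow case I would reuse the construction from Section \ref{sec:invlamination}: enumerate orbit representatives $p_1,\ldots,p_k$ of cataclysm points of $L$, where each $p_i$ corresponds to an ideal $n_i$-gon bundle complementary region of $\widetilde{\Lambda}$; pick ends $q_1,\ldots,q_{n_i}\in\overline{E}$ separating the $n_i$ subtrees of $L\setminus\{p_i\}$ in the planar cyclic order; form the consecutive pairs $(q_j,q_{j+1\bmod n_i})$; saturate under $\pi_1(M)$; and close up in the space of unordered pairs of points of $\overline{E}$. Pairwise unlinking follows because distinct complementary regions correspond to disjoint subtrees, and the chosen orientations on core curves make the cyclic selection of separating ends compatible across orbits. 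Finally, for the quasi-geodesic flow case I would simply cite Frankel's theorem and reduce to the previous case.

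The main obstacle, as already flagged in Section \ref{sec:invlamination}, is the $\mathbb{R}$-covered taut foliation subcase: the leaf space is a single line, so both $\Lambda^+$ and $\Lambda^-$ are empty, and building an invariant lamination needs additional geometric input. The fix is to invoke the existence of a transverse (pseudo-)Anosov flow coming from Calegari's theory of slitherings and Thurston's analysis of $\mathbb{R}$-covered foliations, and then pull back the lamination produced in the pseudo-Anosov case. All other verifications are straightforward unwindings of the constructions already presented, but this subcase genuinely requires importing machinery beyond Sections \ref{sec:circlebundle}--\ref{sec:invlamination}, which is why I expect it to be the delicate step.
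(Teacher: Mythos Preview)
Your proposal is correct and follows essentially the same approach as the paper: the theorem is stated there as a summary of the constructions already carried out in Sections~\ref{sec:circlebundle}--\ref{sec:invlamination}, and you have accurately recapitulated those constructions (the $\Lambda^+$ lamination for branching taut foliations, the separating-ends lamination for pseudo-Anosov flows, and the reduction of quasi-geodesic flows to pseudo-Anosov via Frankel). You also correctly flag the $\mathbb{R}$-covered subcase as the one requiring external input, and your proposed fix---passing to a transverse pseudo-Anosov flow---is exactly what the paper invokes (via \cite{calegari2000geometry}) to handle that case.
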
 

From this result, it is natural to ask if a group acts faithfully on the circle by orientation-preserving homeomorphisms with 
invariant laminations, does it have any interesting property? We call such a group a \textsf{laminar group}. 

One might first wonder whether there are some other natural examples of laminar groups other than 3-manifold groups we have seen. 
In fact, all surface groups are laminar groups. Let $S_g$ be a closed connected orientable surface of genus $g \geq 2$, and fix a hyperbolic metric on $S_g$. 
The deck group action of $\pi_1(S_g)$ on $\mathbb{H}^2$ extends to an action on $\partial_\infty \mathbb{H}^2$ by homeomorphisms. 
In this case, any geodesic lamination on $S_g$ defines a lamination on $\partial_\infty \mathbb{H}^2$ which is $\pi_1(S_g)$-invariant. In this case, one can easily construct
infinitely many invariant laminations with a lot of structures. 

A lamination $\Lambda$ on $S^1$ is called very full if when it is realized as a geodesic lamination on $\mathbb{H}^2$ via arbitrary identification of $S^1$ with $\partial_\infty \mathbb{H}^2$, all the complementary regions are finite-sided ideal polygons. For the later use, let's call this geodesic lamination on $\mathbb{H}^2$ a geometric realization of $\Lambda$. In the case of $\pi_1(S_g)$, there are infinitely many very full invariant laminations on $\partial_\infty \mathbb{H}^2$. One way to get a very full lamination is to start with a pants-decomposition by simple closed geodesics and then decompose each pair of pants into two ideal triangles by three bi-infinite geodesics which spiral toward boundary components. Then all complementary regions of the resulting lamination are ideal triangles. Since there are infinitely many different pants-decompositions, we get infinitely many different very full invariant laminations. In fact, this argument can be easily generalized to any (complete) hyperbolic surface except the three-punctured sphere even the one with infinite area. 

In \cite{BaikFuchsian}, the first author showed that this is actually the characterizing property for hyperbolic surface groups. In fact, we only need three invariant laminations instead of infinitely many invariant laminations.  Roughly speaking, a group acting faithfully on the circle acts like a hyperbolic surface group if and only if it admits three different very full invariant laminations. 
Via arbitrary identification of $S^1$ with $\partial_\infty \mathbb{H}^2$, we always identify $\PR$ with a subgroup of $\Homeop{S^1}$.
A precise version of this theorem is the following: 
\begin{thm}[Baik \cite{BaikFuchsian}]\label{thm:baikmainthm}
Let $G < \Homeop{S^1}$ be torsion-free discrete subgroup.  
Then $G$ is conjugated into $\PR$ by an element of $\Homeop{S^1}$ so that $\mathbb{H}^2/G$ is not a three-punctured sphere if and only if $G$ admits three very full invariant laminations $\Lambda_1, \Lambda_2, \Lambda_3$ where a point $p$ of $S^1$ is a common endpoint of leaves from $\Lambda_i$ and $\Lambda_j$ for $i \neq j$ if and only if it is a cusp point of $G$ (i.e., a fixed point of a parabolic element). 
\end{thm}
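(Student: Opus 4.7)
The plan is to prove the two implications separately. The forward direction is constructive from a Fuchsian realization, while the reverse is reduced to the convergence group theorem of Tukia, Casson--Jungreis, and Gabai.

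For the forward direction $(\Rightarrow)$, after identifying $G$ with a Fuchsian group and setting $S := \HH^2/G$, I would construct three maximal geodesic laminations $\mathcal{L}_1, \mathcal{L}_2, \mathcal{L}_3$ on $S$ whose lifts to $\partial_\infty \HH^2 \cong S^1$ are very full, $G$-invariant, and share endpoints on $S^1$ only at parabolic fixed points of $G$. If $S$ is closed, start from three pants decompositions in sufficiently general position and extend each to a maximal lamination by inserting three disjoint bi-infinite spiraling geodesics in every pair of pants; a generic choice guarantees that the three resulting endpoint sets are pairwise disjoint. If $S$ has cusps, take three $G$-equivariant ideal triangulations of $\HH^2$ whose ideal vertices are exactly the parabolic fixed points of $G$, chosen so that edges of different triangulations share only ideal vertices. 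The exclusion of the three-punctured sphere enters precisely here: since its space of maximal ideal triangulations collapses to a single mapping-class-group orbit, three distinct laminations with the required coincidence pattern cannot simultaneously exist.

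For the reverse direction $(\Leftarrow)$, assume three very full invariant laminations $\Lambda_1, \Lambda_2, \Lambda_3$ satisfying the cusp condition. The plan is to show that $G$ acts on $S^1$ as a convergence group and then apply the Tukia--Casson--Jungreis--Gabai theorem to obtain a topological conjugation into $\PR$. By Bowditch's criterion, the convergence property is equivalent to proper discontinuity of the $G$-action on the space of distinct triples of points of $S^1$. I would attach to each triple $(x,y,z)$ a canonical $G$-equivariant combinatorial datum built from the three laminations---say a triple of complementary ideal polygons, one from each $\Lambda_i$, each of which ``encloses'' the three points in a prescribed configuration, together with a choice of separating leaf from each $\Lambda_i$. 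The cusp-coincidence hypothesis is used to control all possible degeneracies in this assignment, since two laminations share an endpoint only at cusps, where the putative stabilizers are cyclic parabolic subgroups. Proper discontinuity then follows because escape of a sequence $g_n \in G$ would force $(g_n(x), g_n(y), g_n(z))$ to realize infinitely many distinct combinatorial types, contradicting the $G$-invariance of the three laminations. Finally, a cusp/Euler-characteristic count rules out the three-punctured sphere as quotient surface, since such a surface does not support three very full invariant laminations meeting only at its three cusps.

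The main obstacle is the reverse direction, specifically translating the static combinatorial information carried by the three laminations into the dynamical conclusion of proper discontinuity on triples. The role of having three, rather than one or two, laminations is essential: earlier sections of the paper exhibit laminar $3$-manifold group actions on $S^1$ with one or two invariant laminations that are manifestly not Fuchsian, so the decisive rigidity must come from the third lamination together with the strict cusp-coincidence condition. The technical heart of the argument will be constructing the triple-to-datum assignment in enough generality and verifying that escape of $g_n$ in $G$ forces escape through infinitely many configurations of this datum, which is where the ``very full'' hypothesis (density of complementary polygon vertices in $S^1$) will be used crucially.
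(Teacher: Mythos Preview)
Your overall architecture matches the paper's: the forward direction is constructive from the Fuchsian realization (pants decompositions spiraled out to maximal laminations, ideal triangulations in the cusped case), and the reverse direction is reduced to the convergence group theorem of Tukia, Gabai, Casson--Jungreis, and Hinkkanen. That much is on target.

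The gap is in your mechanism for the reverse direction. Your proposed ``combinatorial datum'' attached to a triple $(x,y,z)$---a triple of complementary polygons, one from each $\Lambda_i$, together with separating leaves---is not well-defined as stated. A generic point of $S^1$ is not a vertex of any complementary polygon of a very full lamination, and a generic triple does not pick out a canonical polygon or a canonical separating leaf in any of the $\Lambda_i$. Even if you made such an assignment locally, the conclusion you draw (``escape of $g_n$ would force infinitely many distinct combinatorial types'') does not follow from $G$-equivariance alone: equivariance only tells you the datum of $g_n(x,y,z)$ is the $g_n$-image of the datum of $(x,y,z)$, which says nothing about proper discontinuity unless you already know the stabilizer of the datum is finite.

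The paper's actual argument (from \cite{BaikFuchsian}, only sketched here) proceeds differently and hinges on a lemma you do not mention: for a very full lamination, every point $p\in S^1$ is either an endpoint of some leaf or admits a \emph{rainbow}, a nested sequence of leaves whose intersection is $\{p\}$. One then supposes the action is not convergence, so there are triples $(x_i,y_i,z_i)\to(x_\infty,y_\infty,z_\infty)$ and $g_i\in G$ with $(g_ix_i,g_iy_i,g_iz_i)\to(x_\infty',y_\infty',z_\infty')$, both limit triples consisting of distinct points. Applying the rainbow/endpoint dichotomy in each $\Lambda_j$ to the six limit points, one runs a finite case analysis: in every case one locates a concrete leaf which, under $g_i$ for large $i$, is forced to land on a pair linked with an existing leaf of the same lamination---contradicting unlinkedness. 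The cusp-coincidence hypothesis is what prevents the bad coincidences across different $\Lambda_j$ in this case analysis. This is where the ``very full'' hypothesis is really used: it is exactly what supplies rainbows at non-endpoint points, giving you leaves arbitrarily close to any limit point to feed into the linking contradiction.
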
 

One can deduce the following simplified version immediately from the above theorem. 
\begin{cor}[Characterization of cusp-free hyperbolic surface groups] 
Let $G < \Homeop{S^1}$ be torsion-free discrete subgroup.  Then $G$ is conjugated into $\PR$ by an element of $\Homeop{S^1}$ so that $\mathbb{H}^2/G$ has no cusps if and only if $G$ admits three very full invariant laminations $\Lambda_1, \Lambda_2, \Lambda_3$ so that leaves from $\Lambda_i$ and $\Lambda_j$ with $i \neq j$ do not share an endpoint. 
\end{cor}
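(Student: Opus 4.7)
The plan is to deduce the corollary as a direct specialization of Theorem~\ref{thm:baikmainthm} to the cusp-free setting. The key observation is that the biconditional in the theorem, asserting that a point $p \in S^1$ is a shared endpoint of leaves from distinct $\Lambda_i,\Lambda_j$ if and only if $p$ is a cusp point of $G$, collapses to ``no shared endpoints and no cusp points'' precisely when either side of the equivalence is empty.

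For the forward direction, I would suppose $G$ is conjugated into $\PR$ so that $\HH^2/G$ carries no cusps. Since the three-punctured sphere itself has three cusps, $\HH^2/G$ is in particular not a three-punctured sphere, and Theorem~\ref{thm:baikmainthm} produces three very full invariant laminations $\Lambda_1,\Lambda_2,\Lambda_3$ whose pairwise shared endpoints coincide with the cusp points of $G$. The no-cusp hypothesis means $G$ contains no parabolic elements, hence has no cusp points at all, so the biconditional forces no two distinct $\Lambda_i,\Lambda_j$ to share an endpoint.

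For the reverse direction, I would assume that $G$ admits three very full invariant laminations $\Lambda_1,\Lambda_2,\Lambda_3$ whose distinct pairs share no endpoints. To invoke Theorem~\ref{thm:baikmainthm} on these laminations, it suffices to show that $G$ has no cusp points, for then the biconditional ``shared iff cusp'' holds vacuously on both sides. Suppose for contradiction that $g \in G$ is parabolic with fixed point $p \in S^1$; since each $\Lambda_i$ is $G$-invariant, $g$ preserves each $\Lambda_i$ setwise. Using the dynamics of $g$ near $p$ together with the very-fullness of each $\Lambda_i$ (leaf endpoints are dense and gaps are finite-sided ideal polygons), one argues that $p$ must itself be an endpoint of a leaf in each $\Lambda_i$; this produces a shared endpoint between $\Lambda_1$ and $\Lambda_2$, contradicting the hypothesis. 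Hence $G$ has no parabolic elements, the biconditional is satisfied, and Theorem~\ref{thm:baikmainthm} delivers the desired conjugation into $\PR$; the absence of cusps in the quotient is then immediate.

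The main obstacle is the dynamical step in the reverse direction: establishing that a parabolic fixed point of an element of $G$ must be an endpoint of a leaf in every very full $G$-invariant lamination. Once this is in hand, the remainder of the argument is a routine unpacking of the biconditional in Theorem~\ref{thm:baikmainthm}, which is why the author can claim the corollary as an ``immediate'' consequence.
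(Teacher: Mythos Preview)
Your approach is correct and is exactly the intended deduction; the paper offers no proof beyond calling the corollary immediate. The step you flag as the obstacle --- that a parabolic fixed point $p$ must lie in each $E(\Lambda_i)$ --- does go through via the rainbow dichotomy: were $p\notin E(\Lambda_i)$, then for a sufficiently deep rainbow leaf $\ell(I_n)$ at $p$ the image $g(\ell(I_n))$ is linked with $\ell(I_n)$, since $g$ acts without fixed points on $S^1\setminus\{p\}$ and hence moves one endpoint of $I_n$ into $I_n^*$ while moving the other further into $I_n$.
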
 

The proof is pretty long so we do not try to recall it here, but we would like to talk about some key ingredients. 
One very important observation on the very full laminations $\Lambda$ is that each point $p$ on $S^1$ which is not an endpoint of any leaf of $\Lambda$ has a nested sequence of neighborhoods $(I_j)$ so that $I_j$ shrinks to $p$ and for each $j$ there exists a leaf $\ell_j$ of $\Lambda$ whose endpoints are precisely the endpoints of $I_j$. Such a sequence of leaves $(\ell_j)$ is called a rainbow at $p$. In short, 
\begin{lem}[Baik \cite{BaikFuchsian}] 
Let $\Lambda$ be a very full lamination on $S^1$. For each $p \in S^1$, either there exists a leaf of $\Lambda$ which has $p$ as an endpoint, or there exists a rainbow at $p$. 
\end{lem}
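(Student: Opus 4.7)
The plan is to construct the rainbow directly using the geometric realization. Assume $\Lambda$ is realized as a geodesic lamination in $\mathbb{H}^2$ via an identification $S^1 = \partial_\infty \mathbb{H}^2$. Let $p \in S^1$ not be an endpoint of any leaf, fix a base point $o \in \mathbb{H}^2$, and let $\gamma$ denote the geodesic ray from $o$ to $p$. The idea is that $\gamma$ will cross a nested sequence of leaves whose endpoints converge to $p$ from both sides, giving the desired rainbow.

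First I would verify that $\gamma$ crosses infinitely many leaves of $\Lambda$. If not, then $\gamma$ is eventually trapped in a single complementary region $P$; since $P$ is a finite-sided ideal polygon (by the very full hypothesis) and $p \in \overline{P} \cap S^1$ is not a vertex of $P$ (as $p$ is not an endpoint of any leaf), $p$ would lie in the interior of a boundary arc of $P$. I would argue that the very full hypothesis, properly construed to exclude degenerate complementary regions such as half-planes, forces the set $V$ of leaf endpoints to be dense in $S^1$, ruling this case out. Enumerate the crossed leaves in order as $\ell_1, \ell_2, \ldots$ with endpoints $a_i, b_i \in S^1$, and let $I_i$ be the open arc of $S^1 \setminus \{a_i, b_i\}$ containing $p$. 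Because successive leaves are disjoint and $\ell_{i+1}$ lies on the same side of $\ell_i$ as $p$, we obtain a nested chain $I_1 \supset I_2 \supset \cdots$, and the endpoint sequences $(a_i)$ and $(b_i)$ converge monotonically from each side of $p$ to limits $a_\infty$ and $b_\infty$.

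To conclude, I would rule out $a_\infty \neq p$ and $b_\infty \neq p$ using that $\Lambda$ is closed in the space of unordered pairs of distinct points of $S^1$. If both $a_\infty, b_\infty \neq p$, the limit pair $\{a_\infty, b_\infty\}$ lies in $\Lambda$, giving a leaf at finite hyperbolic distance from $o$; this contradicts the fact that $\gamma$ crosses $\ell_i$ at points escaping to the ideal point $p$ at infinity. If exactly one of them, say $b_\infty$, equals $p$, then $\{a_\infty, p\}$ is a limit pair of distinct points in $\Lambda$, so $p$ is an endpoint of a leaf, contrary to assumption. Hence $a_\infty = b_\infty = p$, the arcs $I_i$ shrink to $\{p\}$, and $(\ell_i)$ is the desired rainbow at $p$.

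The main obstacle I anticipate is the first step: extracting from the very full hypothesis that $V$ is dense in $S^1$, or equivalently that $\gamma$ must cross infinitely many leaves. This hinges on a careful reading of ``finite-sided ideal polygon'' excluding degenerate regions; once that is settled, the rest is a clean consequence of the geometry of nested disjoint geodesics accumulating on a common ideal point together with the closedness of $\Lambda$.
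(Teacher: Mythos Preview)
Your geometric approach via the ray $\gamma$ is sound in spirit and genuinely different from the paper's proof. The paper argues entirely in the combinatorial language of lamination systems: it takes the minimal element $J^*$ of the totally ordered set $C_p^I = \{J \in \LL : p \in J \subseteq I\}$, shows this minimum is isolated, invokes Lemma~\ref{eog} to produce a non-leaf gap containing $J$, and then uses that this gap is a finite ideal polygon to contradict minimality. Your argument is more direct and visual; the paper's builds machinery (the sets $C_p^I$, isolated intervals, the existence-of-gap lemma) that is reused throughout the later sections.

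There is, however, a real gap in your handling of the case $a_\infty \neq p$, $b_\infty \neq p$. You assert that the crossing points $\gamma \cap \ell_i$ escape to $p$ at infinity, but this does not follow from ``enumerate the crossed leaves in order'': the set of crossing times need not have order type $\omega$ and could be, for instance, a Cantor set in $[0,1]$, so the first $\omega$ crossed leaves accumulate at a finite time and the limit leaf $\ell_\infty$ sits at finite distance with no contradiction. What you actually need is that the crossing times are \emph{unbounded}; then choose $\ell_i$ with $t_i \to \infty$. Unboundedness follows immediately from the very full hypothesis: if $\gamma|_{(T,\infty)}$ misses the lamination it lies in a single complementary region, which is a finite-sided ideal polygon whose closure meets $S^1$ only at its vertices, forcing $p$ to be a leaf endpoint. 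With $t_i \to \infty$ in hand, your argument goes through: the limit leaf $\ell_\infty$ exists by closedness, $\gamma$ meets it at a finite time, and continuity forces $t_i$ to converge there, contradicting $t_i\to\infty$. Note also that the ``main obstacle'' you flag (density of the endpoint set $V$) is a red herring: it is a \emph{consequence} of this lemma (Corollary~\ref{ed} in the paper), not an input, and the ideal-polygon hypothesis already excludes half-planes by definition.
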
 

Another key ingredient is actually a big hammer called convergence group theorem. 
Let $G$ be a group acting on a compactum $X$. We say the $G$-action is a convergence group action if the induced diagonal action of $G$ on $X \times X \times X - \Delta$ 
where $\Delta$ is the big diagonal is properly discontinuous. 
\begin{thm}[Convergence group theorem (Gabai), (Casson-Jungreis), (Tukia), (Hinkkanen), .... ] 
Suppose a group $G$ acts on $S^1$ as a convergence group. Then $G$ is conjugated into $\PR$. 
\end{thm}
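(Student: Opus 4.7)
The plan is to follow the strategy assembled from the work of Tukia, Hinkkanen, Gabai, and Casson-Jungreis: first extend the given topological $G$-action on $S^1$ to a properly discontinuous action on $\HH^2$, and then upgrade this topological action to a conjugate of an isometric action.

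The first step is to extract a dynamical classification from the convergence hypothesis. For any infinite-order $g \in G$, applying the convergence condition to the sequence $(g^n)$ produces a subsequence and points $a,b\in S^1$ with $g^n$ collapsing $S^1\setminus\{b\}$ uniformly to $a$; analyzing $(g^{-n})$ gives the reciprocal picture. This forces each non-torsion element to be either parabolic (one fixed point, north-pole dynamics) or loxodromic (two fixed points, one attracting and one repelling), and each finite-order element to be elliptic; this matches exactly the trichotomy in $\PR$. The next step is to build an action on $\HH^2$. Identify the space $\Theta$ of distinct ordered triples on $S^1$ with $\HH^2 \times S^1$ by sending $(x,y,z)$ to the pair consisting of the incenter of the ideal triangle with vertices $x,y,z$ in the Poincar\'{e} disk together with a cyclic tangent framing. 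The convergence condition is precisely that $G$ acts properly discontinuously on $\Theta$, and because this identification is $G$-equivariant and preserves the $\HH^2$-fibration, we obtain a properly discontinuous action of $G$ on $\HH^2$ by orientation-preserving homeomorphisms whose boundary extension is the original circle action.

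The main obstacle, and the heart of the proof, is upgrading this topological $\HH^2$-action to a conjugate of an isometric one. Following Hinkkanen's approach, I would aim to prove a uniform quasi-symmetry estimate for elements of $G$ acting on $S^1$: the dynamical classification, together with convergence applied to sequences accumulating at attracting/repelling pairs, should bound the distortion of cross-ratios under elements of $G$. Once uniform quasi-symmetry is established, applying the measurable Riemann mapping theorem to a $G$-invariant Beltrami coefficient on the disk produces a quasi-conformal homeomorphism of the closed disk conjugating $G$ into the M\"{o}bius group of the disk, i.e., into $\PR$. Alternative routes pass through Tukia's direct limit construction of the conjugating homeomorphism, or through the flow-theoretic Seifert-fibered arguments of Gabai and Casson-Jungreis when $G$ is cocompact on triples. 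In every approach the hard step is the same: bootstrapping the purely qualitative convergence hypothesis into genuine quantitative control (uniform quasi-symmetric distortion, or a transverse foliation in a flow model) strong enough to recognize $G$ as Fuchsian. This is the reason the theorem was established only through combined efforts of several authors over nearly a decade.
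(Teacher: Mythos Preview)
The paper does not prove this theorem at all: it is quoted as a ``big hammer'' from the literature and used as a black box in the proof of Theorem~\ref{thm:baikmainthm}. So there is nothing in the paper to compare your attempt against; the authors simply invoke the result and move on.

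As for the content of your sketch, it is a fair high-level survey of the known approaches, but it is not a proof, and you say so yourself. The passage from the convergence hypothesis to a uniform quasi-symmetry bound is precisely the deep step, and you do not carry it out; the alternatives you mention (Tukia's construction, the Seifert-fibered/flow arguments of Gabai and Casson--Jungreis) are each substantial papers in their own right. In short, what you have written is an outline of the literature rather than an argument, which is appropriate given that the paper itself treats the statement as an external citation rather than something to be proved.
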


Due to this remarkable theorem, one only needs to check that if $G$ admits three very full laminations, then $G$ acts on $S^1$ as a convergence group. 
Suppose not. By definition, this means that there exist a sequence $( (x_i, y_i, z_i) )$ of three distinct points in $S^1$ and a sequence $(g_i)$ of elements of $G$ such that 
$(x_i, y_i, z_i) \to (x_\infty, y_\infty, z_\infty)$ and $(g_i x_i, g_i y_i, g_i z_i) \to (x_\infty', y_\infty', z_\infty')$ where $(x_\infty, y_\infty, z_\infty)$ and $(x_\infty', y_\infty', z_\infty')$ 
are triples of distinct points in $S^1$. 
%From a simple argument, one can see that two of $x_\infty, y_\infty, z_\infty$ are accumulation points of the 
%fixed points of $g_{i+1}^{-1}\circ g_i$ and two of $x_\infty', y_\infty', z_\infty'$ are accumulation points of the fixed points of $g_i \circ g_{i+1}^{-1}$. 
%With out loss of generality, $x_\infty', y_\infty'$ are accumulation points of the fixed points of $g_i \circ g_{i+1}^{-1}$. 
One can then check that for various possibilities for $x_\infty, y_\infty, z_\infty, x_\infty', y_\infty', z_\infty'$ 
either being an endpoint of leaves or having rainbows in each
$\Lambda_i$, each case cannot happen by finding a leaf which
is forced to be mapped to a pair which is linked to the given
leaf. For details, consult \cite{BaikFuchsian}.

\section{Basic notions and notations to study the group action on the
  circle}
\label{sec:basicnotions}
So far we have provided a brief review on previously known
results. Starting from this section, we now move toward some
recent results on this topic. First we need to review some basic
notions and set up notations. 

Let $S^1$ be a multiplicative topological subgroup of $\CC$ defined as 
$$S^1=\{z\in \CC : |z|=1 \}.$$ In this section, we define some notions
on $S^1$. So far we have used the term cyclic order, but from now on,
we will call it a circular order, since it is more suitable for the
context. To give more precise definitions, let us consider the
stereographic projection $p: S^1 \setminus \{ 1 \} \to \RR$ defined as 
$$p(z)=\frac{Im(z)}{Re(z)-1}.$$
Obviously, $p$ is homeomorphism under standard topologies. For convenience, we define the degenerate set $\Delta_n(G)$ of a set $G$ to be the set 
$$\Delta_n(G)=\{ (g_1, \cdots, g_n)\in G^n : g_i=g_j \ for \ some \ i \neq j \}.$$
of all $n$-tuples with some repeated elements. 

\begin{defn}
For $n\geq 3$, an element $(x_1, \cdots, x_n)$ in $(S^1)^n-\Delta_n(S^1)$ is \textsf{positively oriented n-tuple} on $S^1$ if for each $i \in \{2, \cdots, n-1\}$, $p(x_1^{-1}x_i)<p(x_1^{-1}x_{i+1}).$  An element $(x_1, \cdots, x_n)$ in $(S^1)^n-\Delta_n(S^1)$ is \textsf{negatively oriented n-tuple} on $S^1$ if for each $i \in \{2, \cdots, n-1\}$, $p(x_1^{-1}x_{i+1})<p(x_1^{-1}x_{i}).$ 
\end{defn}  

We use the definition of circular orders in the following form. 
\begin{defn}
\textsf{A circular order} on a set $G$ is a map $\phi: G^3\rightarrow \{ -1, 0, 1 \}$ with the following properties:
\begin{enumerate}
\item[(DV)] $\phi$ kills precisely the degenerate set, i.e. 
$$\phi^{-1}(0)=\Delta_3(G).$$
\item[(C)] $\phi$ is a $2$-cocycle, i.e. 
$$\phi(g_1, g_2, g_3)-\phi (g_0,g_2, g_3)+\phi(g_0,g_1,g_3)-\phi(g_0, g_1, g_2)=0$$
for all $g_0,g_1,g_2, g_3 \in G$.  
\end{enumerate} 
Furthermore, if $G$ is a group, then  a \textsf{left-invariant circular order} on $G$ is a circular order on $G$ as set that also satisfies the homogeneity property:
\begin{enumerate}
\item[(H)] $\phi$ is homogeneous, i.e. 
$$\phi(g_0,g_1,g_2)=\phi(hg_0, hg_1, hg_2)$$
for all $h\in G$ and $(g_0,g_1,g_2)\in G^3$. 
\end{enumerate}
\end{defn}

By abuse of language, we will refer to "left-invariant circular order of a group" simply as a "circular order."
To learn about invariant circular orders of groups, see \cite{baik2018spaces}. 

Let's define a circular order $\phi$ on a multiplicative group $S^1$
as the following way. For $p\in \Delta_3(S^1)$, $\phi(p)=0$. When
$p\in (S^1)^3 - \Delta_3(S^1)$, we assign $\phi(p)=1$ if $p$ is
positively oriented in $S^1$, and $\phi(p)=-1$ if $p$ is negatively
oriented in $S^1$. We can easily check that $\phi$ is a circular order of a group $S^1$.

We also set up terminologies for the intervals on $S^1$. The reason is
that in the rest of the paper, we will use a different perspective on laminations on $S^1$
to view them as sets of intervals with certain conditions. Using this new
perspective, we will give detailed discussion of laminar groups.
First, we call a nonempty proper connected open subset of $S^1$ an
\textsf{open interval} on $S^1$. Technically, we distinguish the following two. 

\begin{defn} Let $u, v$ be two elements of $S^1$. 
\begin{enumerate}
\item If $u\neq v$, $(u,v)_{S^1}$ is the set 
$$(u,v)_{S^1}=\{ p\in S^1 : \phi(u,p,v)=1 \}.$$ We call it a \textsf{nondegenerate open interval} on $S^1$.
\item If $u=v$, $(u,v)_{S^1}$ is the set 
$$(u,v)_{S^1}=S^1-\{ u \}.$$ We call it a \textsf{degenerate open interval} on $S^1$.
\end{enumerate}
If $(u,v)_{S^1}$ is a nondegenerate open interval, then we denote $(v,u)_{S^1}$ by $(u,v)_{S^1}^*$, and call it the \textsf{dual interval} of $(u,v)_{S^1}$.
\end{defn}

We can check that the set of all nondegenerate open intervals of $S^1$ is a base for a topology of $S^1$ which is induced from the standard topology of $\CC$.  
For convenience,  we also use the following list of notations. Let $(u,v)_{S^1}$ be a nondegenerate open interval. Then, we denote 
\begin{enumerate} 
\item for $z\in S^1$, $z(u,v)_{S^1}=(zu, zv)_{S^1}$
\item $[ u, v )_{S_1}=\{u\} \cup (u,v)_{S^1}$,
\item $(u,v]_{S^1}= (u,v)_{S^1} \cup \{ v \}$, and 
\item $[u,v]_{S^1}= (u,v)_{S^1} \cup \{u, v \}$.
\end{enumerate}

Obviously, we can derive the following list of properties about dual intervals. Let $I$ and $J$ be  two nondegenerate open intervals.  
\begin{enumerate}
\item $(I^*)^*=I$,
\item $I^c=\overline{I^*}$,
\item If $I\subseteq J$, then $J^*\subseteq I^*$,
\item If $I\cap J = \phi$, then $I \subseteq J^*$,
\item If $I\cap J =\phi$, then $| \bar{I} \cap \bar{J} | \leq 2$,

\end{enumerate}
where $\bar{I}$ is the closure of $I$.
Recall that the every open subset of $\RR$ can be obtained by the countable disjoint union of open intervals of $\RR$. Likewise, it is also true in $S^1$. 

\begin{prop}
Every proper open set of $S^1$ is an at most countable union of disjoint open intervals.  
\end{prop}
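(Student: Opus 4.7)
The plan is to reduce the claim to the classical fact that every open subset of $\RR$ is an at most countable disjoint union of open intervals. Let $U$ be a proper open subset of $S^1$. Since $U$ is proper, I can choose a point $z \in S^1 \setminus U$. The map $r_z : S^1 \to S^1$, $r_z(w) = z^{-1}w$, is a homeomorphism sending $z$ to $1$, so $r_z(U)$ lies in $S^1 \setminus \{1\}$, and the stereographic projection $p : S^1 \setminus \{1\} \to \RR$ (already shown to be a homeomorphism onto its image) identifies $r_z(U)$ with an open subset of $\RR$.

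Applying the classical decomposition on $\RR$, I will write $p(r_z(U)) = \bigsqcup_{i \in I} J_i$, where $I$ is at most countable and each $J_i$ is an open interval of $\RR$ (possibly unbounded). Pulling back yields
$$ U \;=\; \bigsqcup_{i \in I} r_z^{-1}\bigl(p^{-1}(J_i)\bigr), $$
which is an at most countable disjoint union of open, connected subsets of $S^1$. It then remains to verify that each summand is an open interval of $S^1$ in the sense of the definition above.

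This verification splits into cases. If $J_i$ is a bounded open interval of $\RR$, then $p^{-1}(J_i)$ is a nondegenerate open interval contained in $S^1 \setminus \{1\}$, and applying the homeomorphism $r_z^{-1}$ yields a nondegenerate open interval of $S^1$. If $J_i$ is a half-line of the form $(a,\infty)$ or $(-\infty,b)$, the same conclusion holds, with $1$ appearing as a boundary point of the closure of $p^{-1}(J_i)$. Finally, if some $J_i$ equals $\RR$ itself, then the indexing set $I$ must be a singleton and $U = S^1 \setminus \{z\}$, which is the degenerate open interval $(z,z)_{S^1}$. The only point needing care is this bookkeeping between the types of intervals on $\RR$ and the degenerate/nondegenerate distinction on $S^1$; no genuine obstacle arises beyond the classical result on $\RR$.
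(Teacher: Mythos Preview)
Your proof is correct and follows precisely the route the paper suggests: the paper does not give a detailed proof but simply remarks, just before the proposition, that the classical decomposition of open subsets of $\RR$ carries over to $S^1$, and your argument via rotation and the stereographic projection $p$ is exactly how that remark is made rigorous.
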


%Let's recall the basic notions of analysis. The following definitions is in the real analysis book of Folland. If $\mathscr{E}$ is a family of sets, then
%\begin{enumerate}
%\item $\displaystyle\bigcup_{E\in \mathscr{E}}E=\{x : x \in E \ for \ some \ E\in \mathscr{E} \}$, and 
%\item $\displaystyle\bigcap_{E \in \mathscr{E}}E=\{ x : x \in for \ all \ E \in \mathscr{E}\}$.
%\end{enumerate}
%If $\mathscr{E}=\{ E_{\alpha} : \alpha \in A\}=\{E_{\alpha}\}_{\alpha \in A}$,  in this case, we denote $\displaystyle \bigcup_{\alpha \in A} E_{\alpha}$, and $\displaystyle \bigcap_{\alpha \in A}E_{\alpha}$. If $E_{\alpha}\cap E_{\beta} = \phi $ whenever $\alpha \neq \beta$, $E_{\alpha}$ are called \textsf{disjoint}. If $A=\NN$, then we denote $\{E_{\alpha}\}_{\alpha \in A}$ by $\{E_n\}_{n=1}^{\infty}$, and we define 
%$\limsup E_n= \displaystyle \bigcap_{k=1}^{\infty} \bigcup_{n=k}^{\infty} E_n$, and 
%$\liminf E_n= \displaystyle \bigcup_{k=1}^{\infty} \bigcap_{n=k}^{\infty} E_n$.

\section{Lamination systems on $S^1$ and Laminar groups of $\Homeop(S^1)$}

Using the notations and terminologies defined in the previous section,
we introduce the notion of a lamination system on $S^1$. This is a set
of intervals on $S^1$ with certain conditions which corresponds to
leaves of our usual notion of a lamination on $S^1$. Before defining lamination systems, we need the following definition which  is analogous to the unlinkedness condition in laminations on $S^1$.

\begin{defn}
Let $I$ and $J$ be two nondegenerate open intervals. If  $I\subseteq J$ or $I^* \subseteq J$, then we say that two points set $\{ I, I^*\}$ \textsf{lies on} $J$(see Figure \ref{fig:lie}). If  $\bar{I}\subseteq J$ or $\overline{I^*} \subseteq J$, then 
 we say that two points set $\{ I, I^*\}$ \textsf{properly lies on} $J$. 
\end{defn}

\begin{figure}
  \includegraphics[width=12cm]{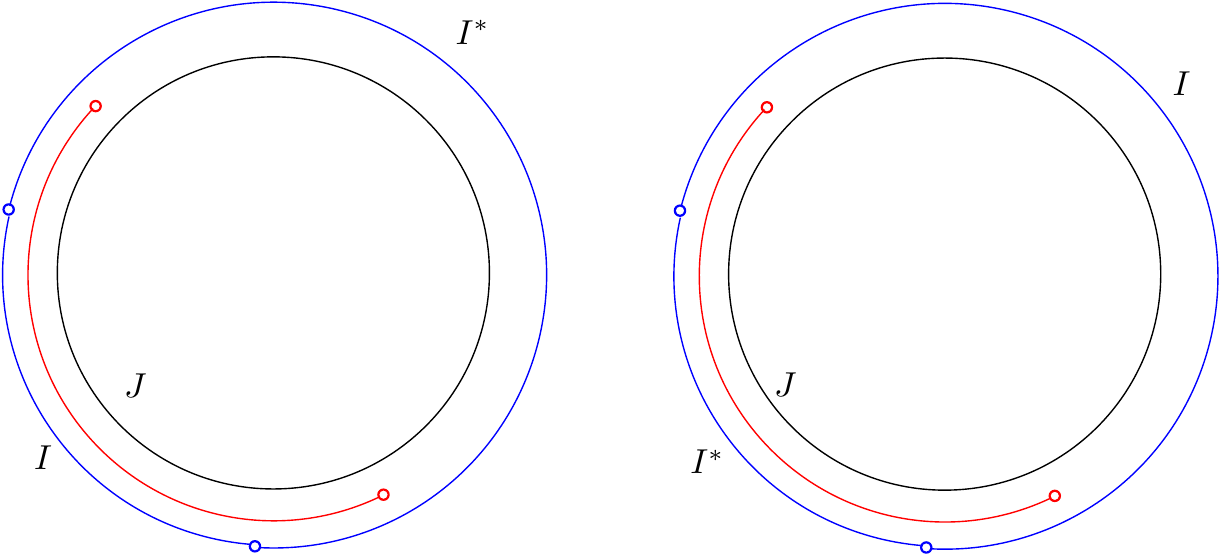}
  \caption{The red segment represents the nondegenerate open interval $J$ and the blue parts represent $I$ and $I^*$. Two figures show  all possible cases that $\{I,I^*\}$ lies on $J$.}
  \label{fig:lie}
\end{figure}

Let us define the lamination system. 
\begin{defn}
Let $\mathcal{L}$ be a nonempty family of nondegenerate open intervals of $S^1$. $\mathcal{L}$ is called 
a \textsf{lamination system} on $S^1$ if it satisfies the following three properties :
\begin{enumerate}
\item If $I\in \mathcal{L}$, then $I^* \in \mathcal{L}$. 
\item For any $I, J\in \mathcal{L}$, $\{I,I^*\}$ lies on $J$ or $J^*$.
\item If there is a sequence $\{I_n\}_{n=1}^{\infty}$ on $\mathcal{L}$ such that for $n\in \NN$, $I_n\subseteq I_{n+1}$, and $\displaystyle \bigcup_{n=1}^{\infty} I_n$ is a nondegenerate open interval in $S^1$, then $\displaystyle \bigcup_{n=1}^{\infty} I_n\in \mathcal{L}$.
\end{enumerate}  
\end{defn}

The original definition of laminations on $S^1$ is a closed subset of the set of all two points subsets of
$S^1$ with unlinkedness condition. In a lamination system, each two points set corresponds to the set of  two connected components of the complement of the two points. In this sense, we define leaves and gaps on a lamination system $\LL$ as the followings.

\begin{figure}
  \includegraphics[width=6cm]{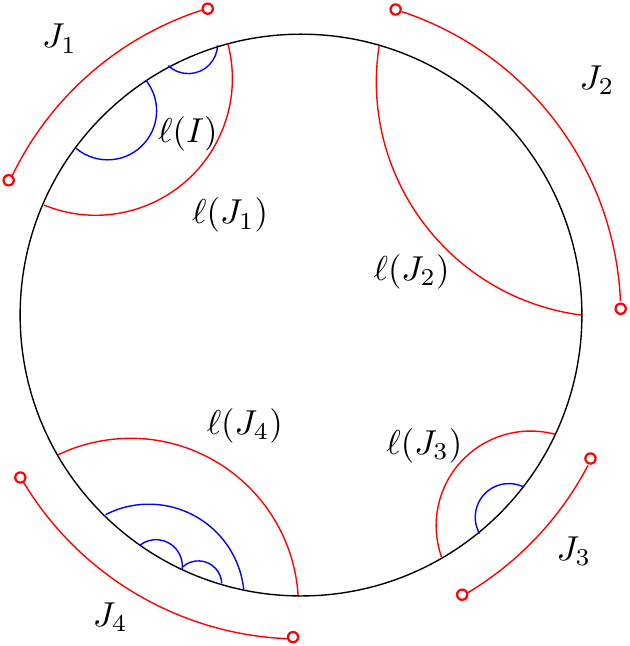}
  \caption{The red chords on the disk are the geometric realization of $\ell(J_i)$.  In this figure, the geodesic lamination is the union of red and blue chords and a gap is $\{J_1,J_2,J_3,J_4\}$. Note that all blue chord $\ell(I)$ lies on $J_i$. }
  \label{fig:gap}
\end{figure}

%\begin{defn}
%Let $\LL$ be a lamination system, and $p\in S^1$. We define $\LL_p$ as the following : 
%\begin{enumerate}
%\item If there is no $I$ in $\LL$ such that $p\in \partial I$, then $\LL_p=\phi$, and 
%\item If not, $\LL_p=\{I\in \LL : p\in \partial I \}$.
%\end{enumerate}
%We call $\LL_p$ a \textsf{lamination system at $p$}
%\end{defn}
A subset $\GG$ of $\LL$ is  a \textsf{leaf} of $\LL$ if  $\GG=\{ I,I^*\}$ for some $I\in \LL$. We denote such  a leaf  $\GG$ by $\ell(I)$. With this definition of leaves, we can see that the second condition of lamination system implies unlinkedness of leaves of laminations of $S^1$. Likewise, a subset $\GG$ of $\LL$ is a \textsf{gap} of $\LL$ if $\GG$ satisfies the following two conditions: 
\begin{enumerate}
\item Elements of $\GG$ are disjoint. 
\item For any $I\in\LL$, there is $J$ in $\GG$ on which $\ell(I)$ lies (see Figure \ref{fig:gap}). 
\end{enumerate}
By the second condition of gaps, every gap is nonempty.   Obviously,  a leaf is also a gap with two elements. Then, we denote $S^1- \displaystyle \bigcup_{I \in \GG} I$ as $v(\GG)$ and call it a \textsf{vertex set} of $\GG$ or a  \textsf{end points set} of $\GG$. And each element of a vertex set is called a \textsf{vertex} or an \textsf{end point}. Note that in general, a vertex set  need not be  a discrete subset of $S^1$. Geometrically, in $\overline{\HH^2}$, $conv(v(\GG))$ is the geometric realization of a gap $\GG$ where $conv(A)$ is the convex hull of a set $A$ in $\overline{\HH^2}$. 

The third condition of lamination systems is analogous to the closedness of laminations on $S^1$. From now on, to describe the limit of a sequence of leaves,  we define the notion of convergence of a sequence of leaves. 

\begin{defn}
Let $\LL$ be a lamination system, and $\{\ell_n\}_{n=1}^{\infty}$ be a sequence of leaves on 
$\LL$. Let $J$ be a nondegenerate open interval. We say that $\{\ell_n\}_{n=1}^{\infty}$ \textsf{converges to} $J$ if there is a sequence $\{I_n\}_{n=1}^{\infty}$ on $\LL$ such that 
for each $n\in \NN$, $\ell_n=\ell(I_n)$, and 
$$J\subseteq \liminf I_n \subseteq \limsup I_n \subseteq \overline{J}.$$ We denote this by 
$\ell_n\rightarrow J$.
\end{defn}
This definition is symmetric in the following sense.

\begin{prop}\label{prop:symmetry of sequences}
Let $\LL$ be a lamination system and $\{\ell_n\}_{n=1}$ be a sequence of leaves on $\LL$. Let $J$ be a nondegenerate open interval. Suppose that  there is a sequence $\{I_n\}_{n=1}^{\infty}$ on $\LL$ such that for each $n\in \NN$, $\ell_n=\ell(I_n)$ and $$J\subseteq \liminf I_n \subseteq \limsup I_n \subseteq \overline{J}.$$ Then $$J^*\subseteq \liminf I_n^* \subseteq \limsup I_n^* \subseteq \overline{J^*}.$$
\end{prop}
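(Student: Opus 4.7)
The plan is to verify the two conclusion inclusions $J^* \subseteq \liminf I_n^*$ and $\limsup I_n^* \subseteq \overline{J^*}$ separately, using only the elementary facts $I_n \cap I_n^* = \emptyset$ and the partition $S^1 = J \sqcup \{u,v\} \sqcup J^*$, where $u,v$ denote the endpoints of the open arc $J$; in particular $S^1 \setminus \overline{J^*} = J$.

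The upper inclusion $\limsup I_n^* \subseteq \overline{J^*}$ is immediate. If a point $x \in \limsup I_n^*$ did not lie in $\overline{J^*}$, then $x$ would belong to $J$, hence to $I_n$ for all sufficiently large $n$ by the hypothesis $J \subseteq \liminf I_n$, directly contradicting $x \in I_n^*$ for infinitely many $n$.

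The lower inclusion $J^* \subseteq \liminf I_n^*$ is slightly more delicate, because $x \in I_n^*$ demands not only $x \notin I_n$ but also that $x$ differs from the two endpoints of $I_n$. My plan here is a trapping argument. Given $x \in J^*$, I first pick a small open arc $U$ with $\overline{U} \subseteq J^*$ and choose $w_1, w_2 \in U$ so that $x$ lies strictly between them in the cyclic order on $U$; I also pick any $y \in J$. The liminf hypothesis forces $y \in I_n$ for all large $n$, while applying the limsup hypothesis to $w_1, w_2 \in J^* \subseteq S^1 \setminus \overline{J}$ forces $w_1, w_2 \notin I_n$ for all large $n$. For such $n$ the arc $I_n$ is a connected open subset of $S^1 \setminus \{w_1,w_2\}$ containing $y$, so by connectedness $I_n$ is trapped inside the component $(w_2,w_1)_{S^1}$ running through $J$; its closure $\overline{I_n}$ can add only the points $w_1, w_2$ to that component. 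Since $x$ lies in the other component $(w_1,w_2)_{S^1}$ and differs from $w_1,w_2$, we conclude $x \notin \overline{I_n}$, i.e.\ $x \in I_n^*$. The one nontrivial idea here is precisely the two-point straddle: the pointwise hypothesis $\limsup I_n \subseteq \overline{J}$ excludes each fixed $x \in J^*$ from lying in infinitely many $I_n$, but does not by itself prevent the endpoints of $\overline{I_n}$ from accumulating at $x$, and inserting the auxiliary pair $\{w_1,w_2\}$ on either side of $x$ is what converts this pointwise information into a uniform geometric barrier.
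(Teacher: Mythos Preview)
Your proof is correct and follows essentially the same strategy as the paper's: the upper inclusion is the same complementation argument, and for $J^*\subseteq\liminf I_n^*$ both proofs straddle a target in $J^*$ by two auxiliary points of $J^*$ while using a witness $y\in J$ inside $I_n$. The paper carries this out by exhausting $J^*$ with intervals $(u_m,v_m)_{S^1}$, showing $\{u_m,v_m\}\subseteq\overline{I_k^*}$ and $w\notin\overline{I_k^*}$, and then doing an explicit endpoint case analysis on $\partial I_k^*$ to conclude $(u_m,v_m)_{S^1}\subseteq I_k^*$; your version replaces that case analysis with a single connectedness step (since $I_n$ is a connected subset of $S^1\setminus\{w_1,w_2\}$ containing $y$, it lies in the component through $J$), which is a bit cleaner but not a genuinely different route.
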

\begin{proof}
Since $J\subseteq \liminf I_n = \displaystyle \bigcup_{k=1}^{\infty}\bigcap_{n=k}^{\infty} I_n \subseteq \overline{J},$ so  
$J^*=\overline{J}^c \subseteq \displaystyle \bigcap_{k=1}^{\infty}\bigcup_{n=k}^{\infty}  I_n^c\subseteq  J^c=\overline{J^*}.$  So, $ \displaystyle \limsup I_n^*=\bigcap_{k=1}^{\infty}\bigcup_{n=k}^{\infty} I_n^*\subseteq\bigcap_{k=1}^{\infty}\bigcup_{n=k}^{\infty}  \overline{I_n^*}= \displaystyle \bigcap_{k=1}^{\infty}\bigcup_{n=k}^{\infty}  I_n^c \subseteq J^c=\overline{J^*}.$ The remain part is to show $J^*\subseteq \liminf I_n^*.$
Since $J\subseteq \limsup I_n =\displaystyle \bigcap_{k=1}^{\infty}\bigcup_{n=k}^{\infty} I_n\subseteq \overline{J}$,
so $J^*=\overline{J}^c\subseteq \displaystyle \bigcup_{k=1}^{\infty}\bigcap _{n=k}^{\infty}I_n^c= \bigcup_{k=1}^{\infty}\bigcap _{n=k}^{\infty}\overline{I_n^*}\subseteq J^c=\overline{J^*}.$  Denote $J^*=(u,v)_{S^1}$ and choose $w\in J.$  For each $n\in \NN$, define $(u_n,v_n)_{S^1}$ as the following :
 $$(u_n,v_n)_{S^1}=wp^{-1}((p(w^{-1}u)+\frac{L}{3n}, p(w^{-1}v)-\frac{L}{3n}))$$
 where $p$ is the stereographic projection and $L=p(w^{-1}v)-p(w^{-1}u).$
 Then for all $n\in \NN$, $[u_n,v_n]_{S^1}\subseteq (u_{n+1},v_{n+1})_{S^1} \subseteq J^*.$
 
 Fix $m\in \NN$. Since $[u_m,v_m]_{S^1}\subseteq J^*\subseteq \displaystyle \bigcup_{k=1}^{\infty}\bigcap_{n=k}^{\infty}\overline{I_n^*},$ 
 there is a natural number $N_m$  such that $\{u_m,v_m\} \subseteq \displaystyle \bigcap_{n=N_m}^{\infty} \overline{I_n^*}.$  Note that since $\displaystyle \bigcap_{k=1}^{\infty}\bigcup_{n=k}^{\infty}\overline{I_n^*} \subseteq \overline{J^*}$, there is a natural number $N$ such that $w\notin\displaystyle \bigcup_{n=N}^{\infty}\overline{I_n^*}.$ Let $M_m=\max \{N,N_m\}.$ Then for all $k\geq M_m$, $\{u_m,v_m\} \subseteq \overline{I_k^*}$ and $w\notin \overline{I_k^*}.$ 
% for all $k\in \NN$, $\displaystyle \bigcap_{n=k}^{\infty}\overline{I_n^*}\subseteq \overline{J^*}$ since $\displaystyle \bigcup_{k=1}^{\infty} \bigcap_{n=1}^{\infty}\overline{I_n^*}\subseteq \overline{J^*}$ and

 From now on, we show that for all $k\geq M_m$, $(u_m,v_m)_{S^1}\subseteq I_k^*.$ Fix $k\geq M_m$ and denote $I_k^*=(a_k,b_k)_{S^1}$. Note that  since $[u_m,v_m]_{S^1}\subseteq J^*$ and $w\in J$, $w\notin [u_m,v_m]_{S^1}.$ If $\{a_k,b_k\}=\{u_m,v_m\}$, then $(a_k,b_k)_{S^1}=(u_m,v_m)_{S^1}$ since $w\notin [u_m,v_m]_{S^1}.$ If not, there is an element $v\in \{u_m, v_m\} -\{a_k,b_k\}.$ First, consider the case $v=u_m.$ since $\{u_m,v_m\} \subseteq \overline{I_k^*}$, it is $v_m\in [a_k, u_m)_{S^1}$ or $v_m\in (u_m,b_k]_{S^1}.$ If  $v_m\in [a_k, u_m)_{S^1}$, then $(v_m,u_m)_{S^1}\subseteq [a_k,u_m)_{S^1}\subseteq [a_k,b_k]_{S^1}$. However, it is a contradiction since $w\in (v_m,u_m)_{S^1}$ and $w\notin [a_k,b_k]_{S^1}$. Therefore, $v_m\in (u_m,b_k]_{S^1}$ and so $(u_m,v_m)_{S^1}\subseteq (u_m, b_k]_{S^1}\subseteq [a_k,b_k]_{S^1}.$ Thus, $(u_m, v_m)_{S^1}\subseteq (a_k,b_k)_{S^1}=I_k^*.$ Similarly, we can prove the case $v=v_m.$
 
Therefore, for all $m\in \NN$, $(u_m,v_m)_{S^1}\subseteq \displaystyle \bigcap_{n=M_m}^{\infty} I_n^*\subseteq \displaystyle \bigcup_{k=1}^{\infty} \bigcap_{n=k}^{\infty}I_n^*=\liminf I_n^*.$ Thus, $J^*=\displaystyle \bigcup_{m=1}^{\infty}(u_m,v_m)_{S^1}\subseteq \liminf I_n^*.$
 
\end{proof}

%The following proposition tells about the limit of subsequences of a converging sequence of leaves. 

%\begin{prop}
%Let $\LL$ be a lamination system, and $\{\ell_n\}_{n=1}^{\infty}$ be a sequence of leaves on $\LL$ such that 
%$\ell_n\rightarrow \ell $ for some $\ell$. Then any subsequence $\{\ell_{n_k}\}_{k=1}^{\infty}$ converges to $\ell$.
%\end{prop}
%\begin{proof}
%Since $\ell_n\rightarrow \ell $ for some $\ell$, there are a sequence $\{ I_n\}_{n=1}^{\infty}$ on $\LL$ and $I$ in $\ell$ such that for each $n\in \NN$, $\ell_n=\ell(I_n)$, and 
%$I\subseteq \liminf I_n \subseteq \limsup I_n \subseteq \bar{I}.$ Then for $l\in \NN$,
%$$\displaystyle \bigcup_{k=l}^{\infty}I_{n_k}\subseteq \bigcup_{k=l}^{\infty} I_k$$
%, so 
%$$\displaystyle \bigcap_{l=1}^{\infty}\bigcup_{k=l}^{\infty}I_{n_k}\subseteq \bigcap_{l=1}^{\infty} \bigcup_{k=l}^{\infty} I_k .$$
%Likewise, 
%$$\displaystyle \bigcup_{l=1}^{\infty}\bigcap_{k=l}^{\infty}I_{k}\subseteq \bigcup_{l=1}^{\infty} \bigcap_{k=l}^{\infty} I_{n_k}. $$ 

%Therefore,

%$$I\subseteq \displaystyle \bigcup_{l=1}^{\infty}\bigcap_{k=l}^{\infty}I_{k}\subseteq \bigcup_{l=1}^{\infty} \bigcap_{k=l}^{\infty} I_{n_k} \subseteq  \bigcap_{l=1}^{\infty}\bigcup_{k=l}^{\infty}I_{n_k}\subseteq \bigcap_{l=1}^{\infty} \bigcup_{k=l}^{\infty} I_k \subseteq \bar{I}.$$

%\end{proof}

Since the third condition of lamination systems guarantees  that the limit of an ascending sequence on a lamination system is in the lamination system,  we need to consider descending sequences to say about  limits of arbitrary sequences on lamination systems. 
The following lemma implies closedness of   descending sequences in a lamination system $\LL$. 
\begin{lem}\label{lods}
Let $\{ I_n\}_{n=1}^{\infty}$ be a sequence on a lamination system $\LL$ such that $I_{n+1}\subseteq I_n$ for all $n\in \NN$, and $\displaystyle \bigcup_{n=1}^{\infty} I_n^* =J \in \LL$. Then $Int\Big( \displaystyle\bigcap_{n=1}^{\infty} I_n \Big)=J^*\in \LL$. 
\end{lem}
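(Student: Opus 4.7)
The plan is to dualize everything and reduce the claim to the ascending-sequence condition (3) of a lamination system, which the hypothesis has already packaged for us. Since $I_{n+1}\subseteq I_n$ forces $I_n^*\subseteq I_{n+1}^*$, the sequence $(I_n^*)$ is ascending with union $J$, which is assumed to be a nondegenerate open interval in $\LL$. Axiom (1) then gives $J^*\in\LL$ for free, so the only real content is the set-theoretic equality $\mathrm{Int}\bigl(\bigcap_n I_n\bigr)=J^*$.

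To get this equality I would use the identity $I^c=\overline{I^*}$ listed among the basic properties of dual intervals. This rewrites the intersection as
\[
\bigcap_{n=1}^\infty I_n \;=\; \Bigl(\bigcup_{n=1}^\infty \overline{I_n^*}\Bigr)^{\!c}.
\]
From $I_n^*\subseteq J$ we get $\overline{I_n^*}\subseteq \overline J$, and clearly $J=\bigcup_n I_n^*\subseteq \bigcup_n\overline{I_n^*}$, so
\[
J\;\subseteq\;\bigcup_{n=1}^\infty \overline{I_n^*}\;\subseteq\;\overline{J}.
\]
Taking complements yields the sandwich $J^*\subseteq \bigcap_n I_n\subseteq \overline{J^*}$.

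Now I would pass to interiors. The left inclusion, together with the openness of $J^*$, gives $J^*\subseteq \mathrm{Int}\bigl(\bigcap_n I_n\bigr)$. For the reverse inclusion, $J^*$ is a nondegenerate open interval, i.e.\ $J^*=(v,u)_{S^1}$ for $u\neq v$, so $\overline{J^*}=[v,u]_{S^1}$ is a proper closed arc whose interior is precisely $J^*$; hence $\mathrm{Int}\bigl(\bigcap_n I_n\bigr)\subseteq \mathrm{Int}(\overline{J^*})=J^*$.

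The only place one has to be careful is this last step: the intersection $\bigcap_n I_n$ itself need not equal $J^*$, because boundary points of the $I_n^*$ can accumulate and leave holes in $\bigcup_n\overline{I_n^*}$ inside $\overline{J}\setminus J$; that is precisely why the statement takes the interior. So I expect no genuine obstacle here, just the bookkeeping of tracking closures versus interiors of dual intervals, and the invocation of $\mathrm{Int}(\overline{J^*})=J^*$, which uses that $J^*$ is a nondegenerate open arc on $S^1$ rather than all of $S^1$.
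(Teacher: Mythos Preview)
Your proof is correct and follows essentially the same approach as the paper: both establish the sandwich $J^*\subseteq\bigcap_n I_n\subseteq\overline{J^*}$ via the duality $I^c=\overline{I^*}$ and then take interiors, using that $\mathrm{Int}(\overline{J^*})=J^*$ for a nondegenerate arc. The only cosmetic difference is that the paper computes $\bigcap_n\overline{I_n}=(\bigcup_n I_n^*)^c=J^c=\overline{J^*}$ as an exact equality and separately notes $J^*\subseteq I_n$, whereas you sandwich $\bigcup_n\overline{I_n^*}$ between $J$ and $\overline{J}$ and then complement; these are dual bookkeeping choices with identical content.
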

\begin{proof}
Since $ \displaystyle \bigcup_{n=1}^{\infty} I_n^* =J$, so 
$$\overline{J^*}=J^c = \displaystyle \bigcap_{n=1}^{\infty} (I_n^*)^c= \bigcap_{n=1}^{\infty} \overline{I_n} .$$
So, $\displaystyle \bigcap_{n=1}^{\infty} I_n \subseteq  \bigcap_{n=1}^{\infty} \overline{I_n}= \overline{J^*}$. Since $\displaystyle \bigcup_{n=1}^{\infty} I_n^* =J $, so 
 for all $n\in \NN$, $I_n^* \subseteq J$ and so $J^*\subseteq I_n$. Therefore,  
$J^*\subseteq \displaystyle \bigcap_{n=1}^{\infty} I_n$. Thus, $J^*\subseteq \displaystyle \bigcap_{n=1}^{\infty} I_n = \overline{J^*} $ and so $Int\Big( \displaystyle\bigcap_{n=1}^{\infty} I_n \Big)=J^*$.
\end{proof}

With this lemma, the following proposition shows the closedness of  lamination systems.

\begin{prop}\label{c}
If a sequence $\{\ell_n\}_{n=1}^{\infty}$ of leaves of  a lamination system $\LL$ converges to a nondegenerate open interval $J$, then $J\in \LL$.
\end{prop}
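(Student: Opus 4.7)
The plan is to realize $J$ (or dually $J^*$) as an ascending union of elements of $\LL$, so that the closure axiom (3) (possibly followed by (1)) finishes the job. By hypothesis we have a sequence $(I_n) \subseteq \LL$ with $\ell_n = \ell(I_n)$ and
\[ J \subseteq \liminf I_n \subseteq \limsup I_n \subseteq \overline{J}, \]
and by Proposition \ref{prop:symmetry of sequences} the dual sequence satisfies
\[ J^* \subseteq \liminf I_n^* \subseteq \limsup I_n^* \subseteq \overline{J^*}. \]
Write $J = (u,v)_{S^1}$.

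My first move is to show that the tail of $(I_n)$ is linearly ordered by inclusion. Pick $p \in J$ and $q \in J^*$. By the liminf inclusions there is an $N$ with $p \in I_n$ and $q \in I_n^*$ for all $n \geq N$. For $n, m \geq N$, axiom (2) of the lamination system forces at least one of the four relations $I_n \subseteq I_m$, $I_n^* \subseteq I_m$, $I_n \subseteq I_m^*$, $I_n^* \subseteq I_m^*$. The witness $p \in I_n \cap I_m$ rules out $I_n \subseteq I_m^*$, the witness $q \in I_n^* \setminus I_m$ rules out $I_n^* \subseteq I_m$, and the remaining relation $I_n^* \subseteq I_m^*$ is equivalent (by taking complements of nondegenerate open intervals) to $I_m \subseteq I_n$. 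Hence any two tail terms are $\subseteq$-comparable.

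Next, a standard Erd\H{o}s--Szekeres-type extraction yields an infinite monotone subsequence $(I_{n_k})$, either ascending or descending. In the ascending case, set $U = \bigcup_k I_{n_k}$; since the subsequence is ascending, $\liminf I_{n_k} = \limsup I_{n_k} = U$, and the sandwich inherited from the full sequence gives $J \subseteq U \subseteq \overline{J}$. But $U$ is an open subset of $S^1$ and the interior of $\overline{J}$ in $S^1$ is $J$, so $U = J$. Axiom (3) then gives $J \in \LL$. In the descending case, $(I_{n_k}^*)$ is ascending, and the same argument applied to it, using the symmetric sandwich together with axioms (3) and (1), yields $\bigcup_k I_{n_k}^* = J^*$ and hence $J^* \in \LL$, so $J \in \LL$ by (1).

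The most delicate point is the comparability step, specifically excluding $I_n^* \subseteq I_m$: this relies on the witness $q \in J^* \cap I_n^*$ provided by Proposition \ref{prop:symmetry of sequences}. Without the symmetry statement, formal ``sliding'' configurations where consecutive intervals overlap heavily but are pairwise $\subseteq$-incomparable would not immediately be excluded; it is the combination of symmetry with axiom (2) that forces the tail into a chain and makes the monotone extraction, and hence the whole proof, go through.
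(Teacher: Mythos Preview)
Your proof is correct and somewhat cleaner than the paper's. Both arguments hinge on showing that a tail of $(I_n)$ is totally ordered by inclusion and then invoking axiom (3), but the implementations differ. The paper establishes comparability using only a witness $p\in J$ together with an auxiliary interval $L=\bigcup_{n\ge K}I_n$ (so that $L^*\subseteq I_n^*$ rules out $I_m^*\subseteq I_n$); it then avoids extracting a monotone subsequence by showing inductively that each tail union $\bigcup_{n\ge k}I_n$ lies in $\LL$, forming the descending family $L_i=\bigcup_{n\ge K+i}I_n$, dualizing to an ascending family $L_i^*$, and finishing via Lemma~\ref{lods}. You instead invoke Proposition~\ref{prop:symmetry of sequences} to obtain a second witness $q\in J^*$, which kills the bad case $I_n^*\subseteq I_m$ directly, and then a monotone-subsequence extraction lets you apply axiom (3) (or (3)+(1) in the descending case) without ever touching Lemma~\ref{lods}. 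Your route is shorter and more transparent; the paper's route has the minor virtue of not appealing to the symmetry proposition, but at the cost of a more elaborate endgame.
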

\begin{proof} Since the sequence $\{\ell_n\}$ converges to $J$, there is a sequence $\{I_n\}_{n=1}^{\infty}$ on the lamination system $\LL$ such that  for all $n\in \NN$, $\ell_n=\ell(I_n)$ and 
$$J\subseteq \liminf I_n \subseteq \limsup I_n \subseteq \overline{J}.$$
Since $\displaystyle\bigcap_{k=1}^{\infty}\displaystyle\bigcup_{n=k}^{\infty}I_n \subseteq \overline{J}$, and so $J^*=\overline{J}^c\subseteq \displaystyle \bigcup_{k=1}^{\infty}\bigcap_{n=k}^{\infty}I_n^c$, there is $N\in \NN$ such that $\displaystyle \bigcap_{n=N}^{\infty}I_n^c \neq \phi. $ Then we can get 
$\Big(\displaystyle\bigcup _{n=k} ^{\infty} I_n \Big)^c \ne \emptyset$ for all $k\geq N$ since 
$\displaystyle\bigcap _{n=k} ^{\infty} I_n ^c \subseteq \bigcap_{n=k+1}^{\infty} I_n^c$ for all $k\ge N$. On the other hand, since $J\subseteq \displaystyle \bigcup_{k=1}^{\infty}\bigcap_{n=k}^{\infty}I_n$, there is $N'\in \NN$ such that $\displaystyle \bigcap_{n=N'}^{\infty}I_n\neq \phi$. Choose $p\in \displaystyle \bigcap_{n=N'}^{\infty}I_k$, and set $M=max\{N, N'\}$. Since for $n\ge M$, $I_n$ is a connected open subset of $S^1$, and contains $p$, so for $k\ge M$, $\displaystyle \bigcup_{n=k}^{\infty}I_n$ is nonempty open connected subset of $S^1$. Since for all $k\geq M$, $\Big(\displaystyle\bigcup _{n=k} ^{\infty} I_n \Big)^c \ne \phi$ and so $\displaystyle\bigcup _{n=k} ^{\infty} I_n $ is a proper subset of $S^1$ , so for each $k \geq M$, $\displaystyle \bigcup_{n=k}^{\infty}I_n$ is an open interval.   If for all $k\ge M$, $\displaystyle \bigcup_{n=k}^{\infty}I_n$ is a degenerate open interval, then
 $\displaystyle \bigcap_{k=1}^{\infty}\bigcup_{n=k}^{\infty}I_n=\bigcap_{k=M}^{\infty}\bigcup_{n=k}^{\infty}I_n$ is a degenerate open interval, but it contradicts to $\displaystyle \bigcap_{k=1}^{\infty}\bigcup_{n=k}^{\infty}I_n \subseteq \overline{J}$. Therefore, there is $K\in \NN$ such that $K\ge M$, and for all $k\ge K$, $\displaystyle \bigcup_{n=k}^{\infty}I_n$ is nondegenerate. Let $L=\displaystyle \bigcup_{n=K}^{\infty}I_n$. Then, for all $n\ge K$, $I_n\subseteq L$ and so $L^*\subseteq I_n^*$. Therefore, we get that for all $n\ge K$, $p \in I_n \subseteq L$. 
 
 \
 
 From now on, we show that for any $n,m\ge K$, $I_n\subseteq I_m$ or $I_m\subseteq I_n$. Choose $n, m \geq K$.  If $I_n \subseteq I_m^*$, then $p\in I_m^*$, and so it contradicts to $p\in I_m$. If $I_m^*\subseteq I_n$, then $L^*\subseteq I_m^* \subseteq I_n$, and so it contradicts to $ I_n \subseteq L $. So, we can get what we want.

 \
 
 Then, we show that $\displaystyle \bigcup_{n=k}^{\infty} I_n \in \LL$ for all $k\ge K$. Fix $k$ with $k\ge K$. 
 Let $J_i=\displaystyle \bigcup_{n=k}^{k+i} I_n$ for all $i\in \NN$. Then $J_1=I_k \cup I_{k+1}$. Since $I_k\subseteq I_{k+1}$ or $I_{k+1}\subseteq I_k$, so $J_1=I_k$ or $J_1=I_{k+1}$, and so $J_1\in \LL$.  Assume that $J_m\in \LL$ for some $m\in\NN$. If $I_{k+m+1}\subseteq I_j$ for some $j\in \{ k, k+1 , \cdots,  k+m\}$, then $J_{m+1}=J_{m}\in \LL$. If not, $I_j\subseteq I_{k+m+1}$ for all $j\in \{ k , k+1 , \cdots,  k+m\}$, and so $J_{m+1}=I_{k+m+1}\in \LL$. Therefore, by the mathmatical induction, $J_i\in \LL$ for all $i\in \NN$. Moreover, $J_i\subseteq J_{i+1}$ for all $i\in \NN$. Since by the condition of $K$, $\displaystyle \bigcup_{i=1}^{\infty}J_i=\bigcup_{n=k}^{\infty} I_n$ is a nondegenerate open interval and so $\displaystyle \bigcup_{i=1}^{\infty}J_i\in \LL$, so  $\displaystyle \bigcup_{n=k}^{\infty} I_n\in\LL$.
 
 \
 From now on, let $L_i=\displaystyle \bigcup_{n=K+i}^{\infty}I_n$ for $i\in \NN$. Then, for all $i\in \NN$,  $L_i\in\LL$ by the above, and $L_{i+1}\subseteq L_i$. Since $J\subseteq \displaystyle \bigcap_{k=1}^{\infty}\bigcup_{n=k}^{\infty}I_n \subseteq \bar{J}$, and 
 $\displaystyle \bigcap_{k=1}^{\infty}\bigcup_{n=k}^{\infty}I_n= \bigcap_{k=K+1}^{\infty}\bigcup_{n=k}^{\infty}I_n
 =\bigcap_{i=1}^{\infty}L_i$, so $J\subseteq \displaystyle \bigcap_{i=1}^{\infty} L_i \subseteq \bar{J}$. So,  
 $J^*\subseteq \displaystyle \bigcup_{i=1}^{\infty} \overline{L_i^*} \subseteq \overline{J^*}$, and so 
 $ \displaystyle \bigcup_{i=1}^{\infty} L_i^*  \subseteq \bigcup_{i=1}^{\infty} \overline{L_i^*}\subseteq \overline{J^*}$. Therefore,  $ \displaystyle \bigcup_{i=1}^{\infty} L_i^*$ is nondegenerate. Since for all $i\in \NN$, $L_i^*\in \LL$, and $L_i^*\subseteq L_{i+1}^*$,    $ \displaystyle \bigcup_{i=1}^{\infty} L_i^*\in \LL$. Thus by Lemma \ref{lods}, $Int\Big(  \displaystyle \bigcap_{i=1}^{\infty} L_i\Big)=\Big( \bigcup_{i=1}^{\infty} L_i^*\Big)^*\in \LL$. Since 
 $\displaystyle J\subseteq \bigcap_{i=1}^{\infty} L_i\subseteq \bar{J}$, $J=Int\Big(  \displaystyle \bigcap_{i=1}^{\infty} L_i\Big)\in \LL$. 
\end{proof}

Moreover, by Proposition \ref{prop:symmetry of sequences} we can prove that if $\ell_n \rightarrow J$, then $\ell_n \rightarrow J^*$, and so $J^*\in \LL$. 
So, we can make the following definition.

\begin{defn}
Let $\LL$ be a lamination system, and $\{\ell_n\}_{n=1}^{\infty}$ be a sequence of leaves on $\LL$. Let $\ell$ be a leaf of $\LL$. Then, we say that $\{\ell_n\}_{n=1}^{\infty}$ \textsf{converges to} $\ell$ if $\ell_n \rightarrow I$ for some $I\in \ell$. 
\end{defn}

So far, we have talked about the definition of a lamination system. From now on, we discuss about the shape of lamination systems. First, as we can see in the proof of Proposition \ref{c}, the following structure is useful to deal with  configuration of leaves.

\begin{defn}
Let $\LL$ be a lamination system on $S^1$ and $I$ be an nondegenerate open interval. Then, for $p\in I$, we define $C_p^I$ as the set 
$C_p^I=\{ J\in \LL : p\in J \subseteq I \}.$
\end{defn}

As we observed in the proof of Proposition \ref{c}, $C_p^I$ is totally ordered by inclusion. 

\begin{prop}\label{a}
$C_p^{I}$ is totally ordered by the set inclusion $\subseteq$. 
\end{prop}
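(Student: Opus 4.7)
The plan is to show that for any two elements $J_1, J_2 \in C_p^I$, the unlinkedness axiom of a lamination system together with the fact that both intervals contain $p$ and sit inside the proper interval $I$ forces comparability under inclusion. This reduces to a four-way case analysis, two of which are ruled out on geometric grounds.

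Fix arbitrary $J_1, J_2 \in C_p^I$. By axiom (2) of lamination systems, $\{J_1, J_1^*\}$ lies on $J_2$ or on $J_2^*$. Unpacking the definition of ``lies on,'' this produces exactly four configurations:
\begin{enumerate}
\item[(a)] $J_1 \subseteq J_2$,
\item[(b)] $J_1^* \subseteq J_2$,
\item[(c)] $J_1 \subseteq J_2^*$,
\item[(d)] $J_1^* \subseteq J_2^*$.
\end{enumerate}
Case (a) is already the desired conclusion. Case (d) gives it as well after dualizing: the listed property that $A \subseteq B$ implies $B^* \subseteq A^*$, together with $(I^*)^* = I$, yields $J_2 \subseteq J_1$.

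It remains to rule out (b) and (c). Case (c) is immediate: since $p \in J_1$, the inclusion $J_1 \subseteq J_2^*$ would put $p \in J_2^*$, contradicting $p \in J_2$ (as $J_2$ and $J_2^*$ are disjoint). For case (b), note that $J_1 \subseteq I$ by membership in $C_p^I$, and $J_1^* \subseteq J_2 \subseteq I$ by assumption. Hence
\[
S^1 \setminus \{u_1, v_1\} \;=\; J_1 \cup J_1^* \;\subseteq\; I,
\]
where $J_1 = (u_1, v_1)_{S^1}$. But a nondegenerate open interval $I = (a,b)_{S^1}$ has complement $[b,a]_{S^1}$, an arc containing infinitely many points, so $I$ cannot omit only finitely many points of $S^1$. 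This contradiction excludes (b).

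There is no substantive obstacle here; the only thing to be careful about is correctly enumerating the four cases from the axiom ``$\{J_1, J_1^*\}$ lies on $J_2$ or $J_2^*$'' and invoking the right fact (disjointness of $J_2, J_2^*$ for (c); properness of $I$ as an open interval for (b)) to discard the two offending configurations.
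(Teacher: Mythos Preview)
Your proof is correct and follows essentially the same approach as the paper: enumerate the four configurations coming from the unlinkedness axiom, accept the two comparable ones, and rule out the two crossing ones using respectively the common point $p$ and the containment in $I$. The only cosmetic difference is in how you dispatch case (b): the paper observes that $J_1^*\subseteq J_2\subseteq I$ gives $I^*\subseteq J_1\subseteq I$, contradicting disjointness of $I$ and $I^*$, whereas you argue via the complement of $I$ having more than two points---these are the same obstruction phrased differently.
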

\begin{proof}
If $C_p^I$ has at most one element, it is true. Assume that $C_p^I$ has at least two elements. Let $J$ and $K$ be two distinct elements of $\LL$. If $J\subseteq K^*$, then $p\in L\subseteq K^*$  and so it is a contradiction since $p\in K$. If $K^*\subseteq J$, then $K^*\subseteq J \subseteq I$, and so $I^*\subseteq K$. But it contradicts to $K\subseteq I$. Thus, $J\subseteq K$ or $K\subseteq J$.
\end{proof}

The following lemma tells about the maximal and minimal elements of $C_p^I$.

\begin{lem}\label{loc}
Let $\LL$ be a lamination system on $S^1$, and $I$ be an nondegenerate open interval. Let $x$ be an element of $I$. Assume that $C_x^{I}$ is nonempty. Then, there is a sequence $\{J_n\}_{n=1}^{\infty}$ on $C_x^{I}$ such that for all $n\in \NN$, $J_n\subseteq J_{n+1}$, and $\displaystyle \bigcup_{n=1}^{\infty} J_n= \bigcup_{K\in C_x^{I}}K $. Also, there is a sequence 
$\{K_n\}_{n=1}^{\infty}$  on $C_x^{I}$  such that for all $n\in \NN$,  $K_{n+1}\subseteq K_n$, and $\displaystyle\bigcap_{n=1}^{\infty} K_n= \bigcap_{K\in C_x^{I}}K $.
\end{lem}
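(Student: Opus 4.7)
The plan is to pass to real coordinates via a stereographic projection and then extract the desired monotone sequences by approximating the infimal and supremal endpoints of the chain. Pick a point $q \in S^1 \setminus \overline{I}$ (which exists because $I$ is a nondegenerate open interval, so $\overline{I}$ is a proper closed arc). Setting $h(z) = p(q^{-1}z)$ gives a homeomorphism $h : S^1 \setminus \{q\} \to \RR$ sending each $K = (a_K, b_K)_{S^1} \in C_x^I$ to a bounded open interval $(\tilde a_K, \tilde b_K) \subset \RR$ containing $h(x)$. By Proposition \ref{a}, $C_x^I$ is totally ordered by inclusion, which translates to $K \subseteq K'$ iff $\tilde a_{K'} \le \tilde a_K$ and $\tilde b_K \le \tilde b_{K'}$; in particular every finite subfamily has a maximum and a minimum in the inclusion order.

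For the ascending sequence, set $\alpha = \inf_K \tilde a_K$ and $\beta = \sup_K \tilde b_K$; one checks $h\bigl(\bigcup_{K \in C_x^I} K\bigr) = (\alpha, \beta)$. For each $n \in \NN$, pick $K_n', K_n'' \in C_x^I$ with $\tilde a_{K_n'} < \alpha + 1/n$ and $\tilde b_{K_n''} > \beta - 1/n$, and define $J_n$ to be the largest element of the finite chain $\{K_1', K_1'', \ldots, K_n', K_n''\}$. Then $\{J_n\}$ is monotone increasing, $\tilde a_{J_n} \to \alpha$ and $\tilde b_{J_n} \to \beta$, so $\bigcup_n h(J_n) = (\alpha, \beta)$, and applying $h^{-1}$ gives $\bigcup_n J_n = \bigcup_{K \in C_x^I} K$.

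For the descending sequence, set $\gamma = \sup_K \tilde a_K$ and $\delta = \inf_K \tilde b_K$. Pick $K_n', K_n'' \in C_x^I$ with $\tilde a_{K_n'} > \gamma - 1/n$ and $\tilde b_{K_n''} < \delta + 1/n$, and let $K_n$ be the smallest element of $\{K_1', K_1'', \ldots, K_n', K_n''\}$; this gives a monotone decreasing sequence. The main obstacle is to verify $\bigcap_n K_n = \bigcap_{K \in C_x^I} K$; the inclusion $\supseteq$ is automatic, and for $\subseteq$ it suffices to show cofinality downward, namely that for each $K \in C_x^I$ some $K_n$ is contained in $K$. When $\tilde a_K < \gamma$ and $\tilde b_K > \delta$, this is immediate from the convergence $\tilde a_{K_n} \to \gamma$ and $\tilde b_{K_n} \to \delta$. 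The subtle case is when $\gamma$ (or symmetrically $\delta$) is attained by some element of the chain: here the sup must be realized exactly, which we arrange by inserting a fixed attaining element $K_\gamma$ among the approximants, say as $K_1'$. Since then $K_n \subseteq K_\gamma$ for every $n$, we obtain $\tilde a_{K_n} \ge \tilde a_{K_\gamma} = \gamma$, and the chain order forces $K_n \subseteq K$ for any element $K$ attaining the sup on the left; the symmetric modification with $K_\delta$ handles the right. Passing back through $h$ yields $\bigcap_n K_n = \bigcap_{K \in C_x^I} K$, completing the proof.
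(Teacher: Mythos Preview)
Your argument is correct and follows a genuinely different route from the paper's proof. Both proofs exploit that $C_x^I$ is a chain (Proposition~\ref{a}), but they extract the monotone sequences differently.

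For the ascending sequence the paper explicitly builds an exhausting family of closed subintervals of $\bigcup_{K\in C_x^I}K=(u,v)_{S^1}$, covers the endpoints of each by elements of $C_x^I$, and sets $J_n$ to be the running maximum. For the descending sequence the paper does not work with endpoint coordinates at all: it observes that $\bigcup_{K\in C_x^I}K^*$ is an open interval, applies the Lindel\"of property to extract a countable subcover $\{K_n^*\}$, and then runs a four--case analysis according to whether the intersection $A=\bigcap_{K}K$ is $(u,v)_{S^1}$, $[u,v]_{S^1}$, $[u,v)_{S^1}$, or $(u,v]_{S^1}$, producing the decreasing sequence separately in each case.

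Your approach is more uniform: after one stereographic projection you reduce both statements to approximating the extremal left and right endpoints of a nested family of real intervals, and you take running maxima or minima of finite subfamilies. This avoids the Lindel\"of step and the case split, at the cost of having to handle the ``attained'' situation for the descending part. One small point there deserves a line of justification: when $\gamma$ is attained and you insert $K_\gamma$, the claim that $K_n\subseteq K$ for every $K$ with $\tilde a_K=\gamma$ does not follow from the left endpoint alone; you still need $\tilde b_{K_n}\to\delta$ (which continues to hold since $K_n\subseteq K_n''$) together with $\tilde b_K>\delta$ when $\delta$ is not attained, or the insertion of $K_\delta$ when it is. With that sentence added your proof is complete and somewhat shorter than the paper's.
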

\begin{proof}
First, we show the first statement. Since $x\in K$ for all $K\in C_x^I$, $\displaystyle \bigcup_{K\in C_x^I}K$ is a connected open set and since $\displaystyle \bigcup_{K\in C_x^{I}}K \subseteq I$ and so $I^c \subseteq \Big( \displaystyle \bigcup_{K\in C_x^{I}}K \Big)^c$,  $\displaystyle \bigcup_{K\in C_x^{I}}K$ is a nondegenerate open interval. So, 
we can write $ \displaystyle \bigcup_{K\in C_x^{I}}K =(u,v)_{S^1}$ for some $u, v\in S^1$ with $u\neq v$. Choose $z\in (v,u)_{S^1}$. We define a sequence $\{I_n\}_{n=1}^{\infty}$ of nondegenerate intervals as
$$I_n=zp^{-1}((p(z^{-1}u)+\frac{L}{3n}, p(z^{-1}v)-\frac{L}{3n}))$$
where $p$ is the stereographic projection map used in the definition of orientation, $L=p(z^{-1}v)-p(z^{-1}u)$. 
Then, for all $n\in \NN$, $I_n\subseteq I_{n+1}$ and $\displaystyle \bigcup_{n=1}^{\infty}I_n=(u,v)_{S^1}$. 

\

From now on, we construct  a sequence $\{K_n\}_{n=1}^{\infty}$ in $C_x^I$ such that $I_n\subseteq K_n$.  
For $n\in \NN$, we denote $ I_n=(p_n, q_n)_{S^1}$. Then since $\partial I_n\subseteq (u,v)_{S^1}$ and  $ \displaystyle \bigcup_{K\in C_x^{I}}K =(u,v)_{S^1}$, there are $K_{p_n}$ and $K_{q_n}$ in $C_x^I$ such that $p_n\in K_{p_n}$, and $q_n\in K_{q_n}$. By Proposition \ref{a}, 
$K_{p_n}\subseteq  K_{q_n}$ or $K_{q_n} \subseteq K_{p_n}$.  If $K_{q_n} \subseteq K_{p_n}$, then $\partial I_n \subset K_{p_n}$. So $(p_n, q_n)_{S^1}\subseteq K_{p_n}$ or $(q_n,p_n)_{S^1}\subseteq K_{p_n}$. Since $z\notin K_{p_n}$, so $I_n= (p_n, q_n)_{S^1}\subseteq K_{p_n}$. In this case, we set $K_n=K_{p_n}$ Likewise, if $K_{p_n}\subseteq K_{q_n}$, then $I_n\subseteq K_{q_n}$, and so we set $K_n=K_{q_n}$. 
\

For $n\in \NN$, we define $J_n$ as $J_n=\displaystyle \bigcup_{m=1}^{n} K_m$. As in the argument in Proposition \ref{c}, $J_n\in C_x^I$ for all $n\in \NN$. Then $\{J_n\}_{n=1}^{\infty}$ is a sequence on $C_x^I$ such that for all $n\in \NN$, $I_n\subseteq J_n\subseteq J_{n+1}\subseteq (u, v)_{S^1}$. Therefore, $\displaystyle \bigcup_{n=1}^{\infty} J_n=\bigcup_{n=1}^{\infty} I_n=(u,v)_{S^1}= \bigcup_{K\in C_x^{I}}K $.

\

Second statement can be also proved in the similar way. But it is more subtle. Let $A=\displaystyle \bigcap_{K\in C_x^I}K.$ Then, 
$$\displaystyle A=\bigcap_{K\in C_x^I}K\subseteq \bigcap_{K\in C_x^I}\overline{K}$$
and so 
$$ \displaystyle \bigcup_{K\in C_x^I} K^*=\bigcup_{K\in C_x^I}\overline{K}^c  \subseteq A^c
=\bigcup_{K\in C_x^I} K^c= \bigcup_{K\in C_x^I} \overline{K^*} \subseteq \overline{ \bigcup_{K\in C_x^I} K^* }.$$
So we get 
$$  \displaystyle \bigcup_{K\in C_x^I} K^* \subseteq A^c \subseteq   \overline{ \bigcup_{K\in C_x^I} K^* }.$$
Since for all $K\in C_x^I$, $x\in K \subseteq I$ and so $x\notin K^*$ and $I^*\subseteq K^*$, $ \displaystyle \bigcup_{K\in C_x^I} K^*$ is a  nonempty proper connected open set and so it is open interval in $S^1$.  Then, we can write
 $ \displaystyle \bigcup_{K\in C_x^I} K^*=(v,u)_{S^1}$ for some $u,v \in S^1$. Then 
 $$(v,u)_{S^1}\subseteq A^c \subseteq \overline{(v,u)}_{S^1}$$
 and so 
 $$ \overline{(v,u)}_{S^1}^c\subseteq A \subseteq (v,u)_{S^1}^c. $$
  Since $(v,u)_{S^1}$ is homeomorphic to $\RR$,  it is Lindelöff. So, there is a sequence $\{K_n\}_{n=1}^{\infty}$ on $C_x^I$ such that 
 $ \displaystyle \bigcup_{n=1}^{\infty} K_n^*=\displaystyle \bigcup_{K\in C_x^I} K^*=(v,u)_{S^1}$
 since $ \displaystyle \bigcup_{K\in C_x^I} K^*$ is an open cover of $(v,u)_{S^1}$.\\
 
 If $u=v$, then $\phi= \overline{(v,u)}_{S^1}^c\subseteq A \subseteq (v,u)_{S^1}^c=\{u\}.$ Since 
 $$\{x\} \subseteq \displaystyle \bigcap_{n=1}^{\infty} K_n\subseteq  \bigcap_{n=1}^{\infty} \overline{K_n}= \bigcap_{n=1}^{\infty} (K_n^*)^c=\Big( \bigcup_{n=1}^{\infty} K_n^* \Big)^c =(v,u)_{S^1}^c =\{u\},$$ so $\{x\}\subseteq \displaystyle \bigcap_{n=1}^{\infty} K_n \subseteq \{u\}.$ Therefore, 
 $\{x\}=\displaystyle \bigcap_{n=1}^{\infty} K_n = \{u\}.$ Thus, since $x\in A$ and $A\subseteq \{u\}$, 
and so $\{x\}=A=\{u\}$,  $\displaystyle \bigcap_{n=1}^{\infty} K_n =A=\{x\}.$ For each $n\in \NN$, define $J_n=\displaystyle \bigcap_{m=1}^{n}K_m$. Then, for all $n\in \NN$, $J_n \in C_x^I$ and $J_{n+1}\subseteq J_n$. Thus,  since $\displaystyle \bigcap_{n=1}^{\infty}K_n=\bigcap_{n=1}^{\infty}J_n$ , the sequence $\{J_n\}_{n=1}^{\infty}$ is a sequence that we want.  \\
 
 If $u\neq v$, then 
  $$(u,v)_{S^1}= \overline{(v,u)}_{S^1}^c\subseteq A \subseteq (v,u)_{S^1}^c=[u,v]_{S^1}. $$
There are four cases :   $A=(u,v)_{S^1}$, $A=[u,v]_{S^1}$, $A=[u,v)_{S^1}$ and $A=(u,v]_{S^1}.$

First, if  $A=[u,v]_{S^1}$, then 
$$[u,v]_{S^1}=A=\displaystyle \bigcap_{K\in C_x^I}K\subseteq \displaystyle \bigcap_{n=1}^{\infty}K_n\subseteq \bigcap_{n=1}^{\infty}\overline{K_n}=\bigcap_{n=1}^{\infty}(K_n^*)^c=\Big(\bigcup_{n=1}^{\infty} K_n^*\Big)^c =[u,v]_{S^1}.$$ Therefore, 
$$[u,v]_{S^1}=A=\displaystyle \bigcap_{n=1}^{\infty}K_n.$$ Then, for each $n\in \NN$, define $J_n=\displaystyle \bigcap_{m=1}^{n}K_m$. By the construction of $\{J_n\}_{n=1}^{\infty}$,  for all $n\in \NN$, $J_n \in C_x^I$ and $J_{n+1}\subseteq J_n$. Thus, since $\displaystyle \bigcap_{n=1}^{\infty}K_n=\bigcap_{n=1}^{\infty}J_n$, the sequence $\{J_n\}_{n=1}^{\infty}$ is a sequence that we want.  

Next,  if $A=(u,v)_{S^1}$, then there are $K_u$ and $K_v$ in $C_x^I$ such that $u\notin K_u$ and $v\notin K_v$. Since $C_x^I$ is totally ordered, so $K_u\subseteq K_v$ or $K_v\subseteq K_u$. Therefore, one of $K_u$ and $K_v$, say $K'$, does not intersect with $\{u,v\}$. Since
 $$(u,v)_{S^1}=A\subseteq K' \subseteq S^1 -\{u,v\}=(u,v)_{S^1} \cup (v,u)_{S^1} $$
 and $K'$ is connected, so $K'=(u,v)_{S^1}$. Therefore, $(u,v)_{S^1} \in C_x^I$ and so, for each 
 $n\in \NN$, define $J_n=(u,v)_{S^1}$. Then the sequence $\{J_n\}_{n=1}^{\infty}$ is a sequence that we want.  
 
 If $A=(u,v]_{S^1}$, then there is an element $L$ in $C_x^I$ such that $u \notin L$. Define $C=\{K\in C_x^I : K\subseteq L\}.$ Since $C\subseteq C_x^I,$ $C$ is also totally ordered by the inclusion. Note that $A=\displaystyle \bigcap_{K\in C_x^I} K=\bigcap_{K\in C} K$ and $\displaystyle \bigcup_{K\in C_x^I} K^*=\bigcup_{K\in C} K^*$. So, since $\displaystyle \bigcup_{K\in C}K^*$ is a nondegenerate open interval and so is Lindelöff, there is a sequence $\{L_n\}_{n=1}^{\infty}$ of $C$ such that $\displaystyle \bigcup_{K\in C}K^*= \bigcup_{n=1}^{\infty}L_n^*.$
 Then 
 $$(u,v]_{S^1}=A=\bigcap_{K\in C} K \subseteq \bigcap_{n=1}^{\infty} L_n  \subseteq \bigcap_{n=1}^{\infty} \overline{L_n}=  \bigcap_{n=1}^{\infty} (L_n^*)^c=\Big(\bigcup_{n=1}^{\infty} L_n^*\Big)^c=[u,v]_{S^1}.$$ 
 So we get 
 $$(u,v]_{S^1}=A \subseteq  \displaystyle \bigcap_{n=1}^{\infty} L_n \subseteq [u,v]_{S^1}.$$
 Since for all $n\in \NN$, $u\notin L_n$, $\displaystyle (u,v]_{S^1}=A=\bigcap_{n=1}^{\infty} L_n.$ Then, for each $n\in \NN$, define $J_n=\displaystyle \bigcap_{m=1}^{n}L_m$. By the construction of $\{J_n\}_{n=1}^{\infty}$,  for all $n\in \NN$, $J_n \in C_x^I$ and $J_{n+1}\subseteq J_n$. Thus, the sequence $\{J_n\}_{n=1}^{\infty}$ is a sequence that we want. The proof of the case $A=[u,v)_{S^1}$ is similar to the case $A=(u,v]_{S^1}.$

\end{proof}

Note that if $C_p^{I}\neq \phi$, then by Lemma \ref{loc}, $\displaystyle \bigcup_{J\in C_p^{I}}J\in \LL$.
On a lamination system $\LL$, when a sequence of leaves converges to a leaf $\ell$, it approaches to $\ell$ in two different sides of $\ell$. Geometrically, if there is no converging sequence of leave in one side, then there is a non leaf gap in that side. To describe this situation, we use the following definition.
\begin{defn}
Let $\LL$ be a lamination system on $S^1$, and $I\in \LL$. Let $\{ \ell_n\}_{n=1}^{\infty}$ be a sequence of leaves of $\LL$. Then we call $\{\ell_n \}_{n=1}^{\infty}$ a \textsf{$I$-side sequence} if for all $n\in \NN$, $I\notin \ell_n$, and $\ell_n$ lies on $I$, and $\ell_n\rightarrow I$. And we say that $I$ is \textsf{isolated} if there is no $I$-side sequence on $\LL$. Moreover, a leaf $\ell$ is \textsf{isolated} if each elements of $\ell$ is isolated. 
\end{defn}

The following Lemma show that the previous statement is true. 

\begin{lem}\label{eog}
Let $\LL$ be a lamination system on $S^1$ and $I\in \LL$. Suppose that $I^*$ is isolated. Then, there is a non leaf gap $\GG$ such that $I \in \GG$.
\end{lem}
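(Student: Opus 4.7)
The plan is to construct $\GG$ as $\{I\}$ together with every \emph{maximal} element of $\LL$ that sits properly inside $I^*$; the isolatedness of $I^*$ is exactly what is needed to produce and control these maxima.

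Formally, for each $p\in I^*$ set
\[
D_p \;=\; \{J\in\LL : p\in J,\ J\subsetneq I^*\},
\]
and, whenever $D_p\neq\emptyset$, let $M_p := \bigcup_{J\in D_p} J$. Since $D_p\subseteq C_p^{I^*}$, it is totally ordered by inclusion (Proposition \ref{a}), so the construction of Lemma \ref{loc} produces an ascending sequence $J_n\in D_p$ with $\bigcup_n J_n = M_p$. Because $M_p$ contains $p$ and sits inside the nondegenerate interval $I^*$, it is itself a nondegenerate open interval, and axiom (3) gives $M_p\in\LL$. The crucial step is to rule out $M_p = I^*$: in that case each $J_n$ lies on $I^*$ and satisfies $J_n\neq I,I^*$, so $I^*\notin\ell(J_n)$; moreover $\liminf J_n = \limsup J_n = I^* \subseteq \overline{I^*}$, which means $\ell(J_n)\to I^*$. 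Thus $\{\ell(J_n)\}$ is an $I^*$-side sequence, contradicting the hypothesis. Hence $M_p\subsetneq I^*$, and $M_p\in D_p$ is the maximum.

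Now define
\[
\GG \;=\; \{I\}\cup\{M_p : p\in I^*,\ D_p\neq\emptyset\}.
\]
Three things need verification. \emph{Disjointness}: $I\cap M_p=\emptyset$ since $M_p\subseteq I^*$; for distinct $M_p,M_q$, applying axiom (2) to $\ell(M_p)$ versus $M_q$, the alternatives $M_p\subseteq M_q$ and $M_q\subseteq M_p$ collapse by maximality, and $M_p^*\subseteq M_q$ would force $I\subseteq M_q\subsetneq I^*$; only $M_p\subseteq M_q^*$ survives, giving disjointness. \emph{Covering}: for $K\in\LL$, axiom (2) says $\ell(K)$ lies on $I$ or on $I^*$. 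The first case is handled by $I\in\GG$. In the second, $K$ or $K^*$ is contained in $I^*$; if that element equals $I^*$ (resp.\ $I$) then $\ell(K)=\ell(I)$ lies on $I$; otherwise it is a proper subinterval of $I^*$, so it belongs to some $D_p$ and is therefore contained in $M_p$, and $\ell(K)$ lies on $M_p$. \emph{Non-leaf}: if every $D_p$ is empty then $\GG=\{I\}$, a one-element gap, which is not a leaf; otherwise $\GG$ contains some $M_p$ with $M_p\subsetneq I^*$, and $\GG=\{J,J^*\}$ would force $I=M_p^*$, i.e.\ $M_p=I^*$, a contradiction.

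The main obstacle is the middle step: producing $M_p$ and ruling out $M_p = I^*$. This is the only place where isolatedness is used in an essential way; everything else is mechanical bookkeeping using axiom (2), Proposition \ref{a}, and Lemma \ref{loc}.
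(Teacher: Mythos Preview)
Your proof is correct and follows essentially the same route as the paper's: your $D_p$ and $M_p$ are exactly the paper's $C_p^{I^*}\setminus\{I^*\}$ and $J_p$, and the gap $\GG=\{I\}\cup\{M_p\}$ is identical. The paper is terser---it asserts ``$J_p$ and $J_q$ are disjoint or coincide'' and ``since $I^*$ is isolated, $J_p\subsetneq I^*$'' without further argument---whereas you spell out the disjointness case analysis, the $I^*$-side sequence contradiction, and the non-leaf verification explicitly.
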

\begin{proof}
For any point $p$ in $I^*$ at which $2\leq|C_p^{I^*}|$, we define $J_p=\displaystyle \bigcup_{J\in C_p^{I^*}-\{I^*\}} J$. Then $J_p$ is a nondegenerate open interval since  $J_p$ is a nonempty connected open subset of $S^1$ and $I\subseteq {J_p} ^c$. By applying  Lemma $\ref{loc}$ to $C_p^{J_p}$, $J_p\in \LL$ and since $I^*$ is isolated, $J_p\subsetneq I^*$.  
\\
Note that $J_p$ and $J_q$ are disjoint or coincide whenever $p\neq q\in I^*$ .
\\
Define $\GG=\{I\} \cup \{ J_p : p\in I^* \  and \ 2 \leq |C_p^{I^*}| \}$. Then if there is  $K\in \LL$ with $\ell(K)\neq \ell(I)$, then $\ell(K)$ lies on $I$ or $I^*$. If $\ell(K)$ lies on $I^*$, then there is $L$ in $\ell(K)$ which is contained on $I^*$. Since for any $x$ in $L$, $L\in C_x^{I^*}$, $\ell(K)$ lies on $J_x$ for any $x\in L$. Thus, $\GG$ is the non-leaf gap in which $I$ is.

\end{proof}

The following lemma is about the configuration of two gaps. It is a kind of generalization of unlinkedness condition of two leaves to unlinkedness  condition of two gaps.

\begin{lem}\label{tg} %configuration of two gaps
Let $\LL$ be a lamination system on $S^1$, and $\GG$, $\GG'$ be two gaps with $|\GG|,|\GG'|\ge2$. Then, 
$\GG=\GG'$ or there are $I$ in $\GG$, and $I'$ in $\GG'$ such that $I^*\subseteq I'$, and for all $J\in \GG$, $\ell(J)$ lies on $I'$ ,and for all $J'\in \GG'$, $\ell(J')$ lies on $I$.   
\end{lem}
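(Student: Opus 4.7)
The plan is to assume $\GG \neq \GG'$ and, by symmetry, pick some $I \in \GG \setminus \GG'$. Applying the defining property of gaps to $\GG'$, the leaf $\ell(I)$ lies on some $J' \in \GG'$, i.e.\ either $I^* \subseteq J'$ or $I \subseteq J'$. If $I^* \subseteq J'$, take the pair $(I, J')$ directly. If instead $I \subseteq J'$, I apply the same property to $\GG$ with $\ell(J')$: there is $K \in \GG$ with $J' \subseteq K$ or $(J')^* \subseteq K$. The alternative $J' \subseteq K$ forces $I \subseteq J' \subseteq K$; since $I$ and $K$ both lie in $\GG$ and elements of $\GG$ are pairwise disjoint nonempty open intervals, this is only possible if $I = K$, whence $I = J' \in \GG'$, contradicting the choice of $I$. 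So we must have $(J')^* \subseteq K$, equivalently $K^* \subseteq J'$; relabelling $I := K$ and $I' := J'$ gives the required pair with $I^* \subseteq I'$.

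It remains to verify the two ``lies on'' conclusions for the pair $(I, I')$ thus produced. For $J \in \GG$ with $J \neq I$, disjointness of gap elements gives $J \cap I = \emptyset$, and by the elementary dual-interval property (item (4) of the list following the definition of duals) we obtain $J \subseteq I^* \subseteq I'$, so $\ell(J)$ lies on $I'$; for $J = I$, the containment $I^* \subseteq I'$ itself says $\ell(I)$ lies on $I'$. Symmetrically, using $(I')^* \subseteq I$ (which is $I^* \subseteq I'$ dualised), every element of $\GG'$ lies on $I$: for $M \in \GG'$ with $M \neq I'$ one has $M \cap I' = \emptyset$, hence $M \subseteq (I')^* \subseteq I$, and for $M = I'$ the inclusion $(I')^* \subseteq I$ is exactly the statement that $\ell(I')$ lies on $I$.

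The main obstacle is the bookkeeping in the ``$I \subseteq J'$'' case: one has to extract a genuine contradiction from the scenario $J' \subseteq K$, and this is where the pairwise disjointness of elements of $\GG$ is indispensable, since it is what collapses the nested chain $I \subseteq J' \subseteq K$ to equality and then throws the would-be outsider $I$ back into $\GG'$. Once that case is eliminated, producing the separating pair is immediate, and the final verification is a routine application of the identity $I \cap J = \emptyset \Rightarrow J \subseteq I^*$ together with transitivity of inclusion for duals.
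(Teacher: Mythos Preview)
Your proof is correct and follows the same underlying idea as the paper's (apply the gap axiom back and forth to locate a pair $I\in\GG$, $I'\in\GG'$ with $I^*\subseteq I'$), but your execution is noticeably cleaner. The paper first rules out $\GG\subsetneq\GG'$ separately, then introduces four auxiliary elements $K,K',L,L'$ and runs through several sub-cases (including degenerate ones where a gap collapses to a leaf); you bypass all of this by (a) invoking symmetry to need only $\GG\setminus\GG'\neq\emptyset$, and (b) observing that once $I^*\subseteq I'$ is secured, the two ``lies on'' conclusions are automatic from disjointness of gap elements together with the dual-interval properties. That last observation is the real economy: the paper re-derives those conclusions inside each branch of its case analysis, whereas you factor them out once at the end.
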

\begin{proof}
Assume that $\GG\neq \GG'$. If $\GG \subsetneq \GG'$, then there is $I$ in $\GG' -\GG$. Then, since 
$\GG$ is a gap, there is $J$ in $\GG$ on which $\ell(I)$ lies. Since $\GG'$ is a gap, so for all $K\in \GG'-\{I\}$, $I\cap K=\phi $, and so $I\cap J=\phi$. Since $I\cap J=\phi$, $I\subseteq J^*$and since $\ell(I)$ lies on $J$  and $I\cap J=\phi$, so $I^*\subseteq J$ and $J^*\subseteq I$. Therefore, $I=J^*$, and so  $\{I, I^*\} \subseteq  \GG'$. Thus, $\GG'$ is a leaf $\ell(I)$, and so $\GG$ is a one point subset of $\GG'$. It is not an our case. Also, we can get that $\GG'\subsetneq \GG$ is not possible. 
\

So, there are $J$ in $\GG-\GG'$ and $J'$ in $\GG'-\GG$. Since $\GG$ and $\GG'$ are gaps, there are $K$ in $\GG$ and $K'$ in $\GG'$ such that $\ell(J')$ lies on $K$ and $\ell(J)$ lies on $K'$. Since $\GG'$ is a gap, $\ell(K)$ lies on $L'$ for some $L'\in\GG'$. Likewise, since $\GG$ is a gap, $\ell(K')$ lies on $L$ for some $L\in \GG$.
\

 First, consider the case $L'\neq J' $ . Since $\ell(K)$ lies on $L'$, $K\subseteq L'$ or $K^*\subseteq L'$. If $K\subseteq L'$, then $J' \subseteq K$ can not occur, and so $(J')^*\subseteq K\subseteq L'$ since $\ell(J')$ lies on $K$. Since by the assumption, $L'\cap J'=\phi$, so $L'\subseteq (J')^*$. Therefore $(J')^*=K=L'$, and so $\GG'$ should be a leaf $\ell(J')$. Then since $K=(J')^*$, and $I\subseteq K^*$ for all $I\in \GG-\{K \}$ , so  $I\subseteq J'$ for all $I\in \GG -\{K\}$. Therefore, for all $I\in \GG$, $\ell(I)$ lies on $J'$. Since $K^*=J'\subseteq J'$ and, trivially, $\ell(J')$ lies on $K$, $K$ and $J'$ are the elements that we want to find. On the other hands, if $K^* \subseteq L'$, then  for all $I\in \GG-\{K\}$, $I\subseteq K^* \subseteq L' $ since $I\subseteq K^*$. Therefore, for all $I\in \GG$, $\ell(I)$ lies on $L'$. And by the assumption, $(L')^*\subseteq K$. Likewise, for all $I'\in \GG'$, $\ell(I')$ lies on $K$. So, in this case, $K$ and $L'$ are the elements that we want.
 \
 
 Second, consider the case $L'=J'$. Then $\ell(J')$ lies on $K$ and $\ell(K)$ lies on $J'$. If $J'\subseteq K$, then $K^*\subseteq (J')^*$. Since $\ell(K)$ lies on $J'$, there are two possibility. One is $K\subseteq J'$, and so $K=J'$. However, it contradicts to $J' \in \GG' - \GG$. The other is $K^*\subseteq J'$, but it also contradicts to $K^*\subseteq (J')^*$. Therefore, $(J')^*\subseteq K$. Since $\GG'$ is a gap, for all $I'\in \GG'-\{J'\}$, $I'\subseteq (J')^*$, and so for all $I'\in \GG'$, $\ell(I')$ lies on $K$. And by the assumption, $K^*\subseteq J'$. Likewise, for all $I\in \GG$, $\ell(I)$ lies on $J'$. Thus, $K$ and $J'$ are the elements that we want.

\end{proof}

%Here are some definitions/terminologies on lamination systems which will be used extensively in the following few sections. These tell about the shape of lamination systems. 

On a lamination system $\LL$ on $S^1$,   a gap $\GG$ with $|v(\GG)| < \infty$ is called 
 an \textsf{ideal polygon}. In an ideal polygon $\GG$, since $v(\GG)$ is a finite set, we can write $v(\GG)=\{x_1, x_2, \cdots ,x_n\}$ where $|v(\GG)|=n$, and $(x_1, x_2, \cdots, x_n)$ is positively oriented $n$-tuple. Moreover, we can represent $\GG=\{ (x_1,x_2)_{S^1}, (x_2, x_3)_{S^1}, \cdots, (x_{n-1},x_n)_{S^1}, (x_n,x_1)_{S^1} \}$. Then we say that a lamination system  $\LL$ is \textsf{very full} if every gap of $\LL$ is an ideal polygon. 
 Let $E(\LL)=\displaystyle \bigcup_{I\in \LL}v(\ell(I))$ and call it the \textsf{end points set} of $\LL$. A lamination system $\LL$ is called \textsf{dense} if $E(\LL)$ is a dense subset of $S^1$. Let $p\in S^1$ and $\LL$ be a dense lamination system. Suppose that there is a sequence $\{I_n\}_{n=1}^{\infty}$ on $\LL$ such that for
all $n\in \NN$, $I_{n+1}\subseteq I_n$, and $\displaystyle
\bigcap_{n\in \NN} I_n=\{p\}$. We call such a sequence a
\textsf{rainbow} at $p$. In \cite{BaikFuchsian}, it is observed that
very full laminations have abundant rainbows (see Theorem \ref{er} for
a precise statement). We recall some results from \cite{BaikFuchsian}
and \cite{alonso2019laminar} about invariant laminations in the rest
of the section, and give alternative proofs in the language of
lamination systems. 

\begin{thm}[\cite{BaikFuchsian}]\label{er}%enough rainbow
Let $\LL$ be a very full lamination system. For $p\in S^1$, either $p$ is in $E(\LL)$ or $p$ has a rainbow. These two possibilities are mutually exclusive. 
\end{thm}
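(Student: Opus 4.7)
I would prove the two halves of the theorem separately, the mutual exclusivity being short and the existence being the substantive direction.

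For mutual exclusivity, assume $p \in E(\LL)$ and simultaneously that $\{I_n\}$ is a rainbow at $p$. Pick a leaf $\ell(J)$ with $p$ as one endpoint, and let $q \neq p$ denote the other endpoint. Choose $n$ so large that $q \notin \overline{I_n}$ (possible since $\bigcap_k I_k = \{p\}$). I would then rule out each of the four conditions in the unlinkedness axiom applied to $\ell(J)$ and $I_n$: the containments $J \subseteq I_n$ or $J^* \subseteq I_n$ place $q \in \overline{I_n}$, while $J \subseteq I_n^*$ or $J^* \subseteq I_n^*$ place the common boundary point $p \in \overline{I_n^*}$, contradicting that $p$ is interior to $I_n$.

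For the existence direction, assume $p \notin E(\LL)$. Because $p$ is never an endpoint, for every leaf $\ell(K)$ the point $p$ lies strictly inside exactly one of $K$ or $K^*$; in particular $\mathcal{A}_p := \{I \in \LL : p \in I\}$ is nonempty. Fix any $I_0 \in \mathcal{A}_p$. Proposition \ref{a} says $C_p^{I_0}$ is totally ordered by inclusion, and Lemma \ref{loc} supplies a descending sequence $\{K_n\} \subseteq C_p^{I_0}$ with $\bigcap_n K_n = \bigcap_{K \in C_p^{I_0}} K$. If this intersection equals $\{p\}$, then $\{K_n\}$ is a rainbow and we are done. Otherwise the ascending sequence $\{K_n^*\}$ has as union a nondegenerate open interval belonging to $\LL$ by axiom (3), and Lemma \ref{lods} then identifies the interior $J_0 = \mathrm{Int}(\bigcap_n K_n)$ as a leaf of $\LL$. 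The point $p$ cannot be a boundary point of $J_0$, for then $p$ would be an endpoint of $\ell(J_0)$, contradicting $p \notin E(\LL)$. Hence $p \in J_0$, which forces $J_0$ to be the minimum element of $C_p^{I_0}$.

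The main obstacle is deriving a contradiction from the existence of this minimum $J_0 \in \mathcal{A}_p$ by using very fullness. The strategy I would pursue is: very fullness prohibits the singleton collection $\{J_0^*\}$ from being a gap (its would-be vertex set $\overline{J_0}$ is uncountable, so not an ideal polygon), so there must exist a leaf $\ell(M)$ that fails to lie on $J_0^*$; by the unlinkedness with $\ell(J_0)$ this forces $M$ (or $M^*$) to be strictly contained in $J_0$. By the minimality of $J_0$ such an inner leaf cannot contain $p$, so its endpoints sit in $\overline{J_0}\setminus\{p\}$. Iterating on the sub-arcs of $J_0$ on either side of $p$—at each stage invoking very fullness to force further inner leaves, and using the closure property (Proposition \ref{c}) to pass to a limiting leaf—I expect endpoints to accumulate on $p$ simultaneously from both sides, producing a leaf strictly contained in $J_0$ that does contain $p$, contradicting minimality. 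The delicate step is establishing the \emph{two-sided} accumulation: one must show that the recursive obstruction from very fullness never localizes on a single side of $p$. This is the combinatorial shadow of the geometric fact that in a very-full lamination realized in $\mathbb{H}^2$, each complementary polygon has only finitely many ideal vertices, so every $p \in S^1 \setminus E(\LL)$ is necessarily approached by endpoints from both sides.
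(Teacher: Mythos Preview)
Your mutual-exclusivity argument is correct and essentially the same as the paper's.

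For the existence direction you set things up exactly as the paper does, obtaining a minimum element $J_0$ of $C_p^{I_0}$ containing $p$. The gap is in the final step. Your plan to iterate the ``$\{J_0^*\}$ is not a gap'' observation and force two-sided accumulation at $p$ is not a proof: the singleton argument only hands you \emph{one} leaf $M\subsetneq J_0$ missing $p$, and nothing prevents every subsequent leaf you extract from living on the same side of $p$. You yourself flag this as the delicate point, and it is not resolved. The difficulty is real: the singleton-gap trick uses only a tiny consequence of very fullness, and that consequence alone is not strong enough to conclude.

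The paper closes the argument much more directly, and the missing idea is Lemma~\ref{eog}. First observe that $J_0$ is \emph{isolated}: any $J_0$-side sequence $\{\ell(I_n)\}$ would eventually have $p\in I_n\subsetneq J_0$, contradicting minimality of $J_0$ in $C_p^{I_0}$. Lemma~\ref{eog} (applied with $I=J_0^*$) then produces a non-leaf gap $\GG$ with $J_0^*\in\GG$. Now very fullness is used at full strength: $\GG$ is an ideal polygon, so $v(\GG)$ is finite and $S^1$ is partitioned into $v(\GG)$ together with the elements of $\GG$. Since $p\notin E(\LL)\supseteq v(\GG)$, there is some $K\in\GG\setminus\{J_0^*\}$ with $p\in K$; but $K\subsetneq J_0\subseteq I_0$ places $K\in C_p^{I_0}$ strictly below $J_0$, the desired contradiction. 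This replaces your open-ended iteration with a single appeal to the gap structure that very fullness actually controls.
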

\begin{proof}
Let $p$ be a point of $S^1$. First we show that if there is no $I\in \LL$ such that $p\in v(\ell(I))$, $p$ has a rainbow. Assume that there is no $I$ in  $\LL$ such that $p\in v(\ell(I))$. Since $\LL$ is nonempty, there is an element $I$ in $\LL$. Then,  by the assumption, $p\notin v(\ell(I))=\partial I$. Since $S^1$ has a partition $\{I , \partial I , I^*\}$, so $p$ belongs to either $I$ or $I^*$. Say that $p\in I$. Then, $C_p^{I}$ is nonempty. By Lamma \ref{loc}, there is a sequence $\{K_n\}_{n=1}^{\infty}$ on $C_p^{I}$ such that for all $n\in \NN$, $K_{n+1}\subseteq K_n$, and $\displaystyle \bigcap_{n=1}^{\infty}K_n=\bigcap_{K\in C_p^{I}} K$.  If $ \displaystyle \bigcap_{n=1}^{\infty}K_n =\{p\}$, we are done. If not, $\displaystyle \bigcup _{n=1}^{\infty}K_n^*$ is a nondegenerate open interval $J$ with $p\in J^c=\overline{J^*}$ and $J\in \LL$, and by Lemma \ref{lods}, $int\Big(\displaystyle \bigcap_{n=1}^{\infty}K_n \Big)=J^*\in \LL$. If $p\in \partial J^*$,then $p\in E(\LL)$ and so it contradicts to the assumption. So $p\in J^*$. Then $p \in J^* \subseteq I$, and so $J^*$ is the minimal element of $C_p^I$. Now, we want to show that $J^*$ is isolated. Suppose that there is a $J^*$-side sequence $\{\ell_n\}_{n=1}^{\infty}$. So there is a sequence $\{I_n\}_{n=1}^{\infty}$ on $\LL$ such that for all $n\in \NN$, $\ell_n=\ell(I_n)$, and 
$$J^*\subseteq \liminf I_n \subseteq \limsup I_n \subseteq \overline{J^*}.$$
So, since $p\in J^*\subseteq \liminf I_n$, there is $m \in \NN$ such that $\displaystyle p\in \bigcap_{n=m}^{\infty} I_n. $ Therefore, for all $k\geq m$, $p \in I_k$ and so $I_k\nsubseteq J$. And then choose $q \in J$. Since $\limsup I_n \subseteq \overline{J^*}$, there is $m' \in \NN$ such that $\displaystyle q\notin \bigcup_{n=m'}^{\infty}I_n $. Therefore, for all $k\geq m'$, $J \nsubseteq I_k$. So, for $k\geq \max\{m, m'\}$, $J^*\subseteq I_k$ or $I_k\subseteq J^*$. Since for all $n\in \NN$, $\ell_n\neq \ell(J^*)$, so  for $k\geq \max\{m, m'\}$, $J^*\subsetneq I_k$ or $I_k\subsetneq J^*$. Since for all $n\in \NN$, $\ell_n$ lies on $J^*$, so for $k\geq \max\{m, m'\}$,  $I_k\subsetneq J^*$. Moreover, for $k\geq \max\{m, m'\}$,  $p\in I_k\subsetneq J^*\subseteq I$, and so $I_k\in C_p^{I}$. It is a a contradiction to minimality of $J^*$ on $C_p^{I}$. Therefore, $J^*$ is isolated. Then by Lemma \ref{eog}, there is the non leaf gap $\GG$ such that $J\in \GG$. Since $\LL$ is very full, $v(\GG)$ is finite, and so $\displaystyle \bigcup_{I\in \GG}\partial I=v(\GG)$. Note that  $S^1$ has a partition $\GG \cup \{ v(\GG)\}$. By the assumption,  $p\notin v(\GG)$,  and so there is $K\in \GG - \{J\}$ such that $p\in K$. Since $K\subsetneq J^*\subseteq I$,  $K\in C_p^I$. but it contradicts to the minimality of $J^*$ on $C_p^{I}$. Thus, $ \displaystyle \bigcap_{n=1}^{\infty}K_n =\{p\}$.
\

Finally, we want to show that if there is a leaf $\ell$ such that $p\in v(\ell)$, $p$ has no rainbow. Suppose that there are a rainbow $\{I_n\}_{n=1}^{\infty}$, and a leaf $\ell$ such that $p\in v(\ell)$. Since for all $n \in \NN$, $p\in I_n$, so $\ell$ lies on $I_n$. Choose $n\in \NN$. Then there is an element $I$ in $\ell$ such that $I\subsetneq I_n$. If $I^*\subsetneq I_{n+1}$, then $I^*\subsetneq I_{n+1} \subseteq I_n$, but it is a contradiction. So, $I\subsetneq I_{n+1}$. Therefore, $I\subseteq \displaystyle \bigcap_{n=1}^{\infty} I_n $ , but it is not possible since $\displaystyle \bigcap_{n=1}^{\infty} I_n = \{p\}$.   
\end{proof}

\begin{cor}[\cite{BaikFuchsian}]\label{ed}%endpoints set are dense in S^1
Let $\LL$ be a very full lamination system of $S^1$. Then, $E(\LL)$ is dense on $S^1$.
\end{cor}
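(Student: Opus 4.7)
The plan is to derive the corollary directly from Theorem \ref{er} (the rainbow dichotomy). To show $E(\LL)$ is dense in $S^1$, it suffices to prove that every nondegenerate open interval $J \subseteq S^1$ meets $E(\LL)$. So I would fix such a $J$, pick any point $p \in J$, and split into cases.

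First, if $p \in E(\LL)$, we are already done. Otherwise, Theorem \ref{er} supplies a rainbow at $p$: a sequence $\{I_n\}_{n=1}^\infty$ in $\LL$ with $I_{n+1} \subseteq I_n$ and $\bigcap_{n=1}^\infty I_n = \{p\}$. I would then argue that $\overline{I_n} \subseteq J$ for all sufficiently large $n$. Write $I_n = (a_n, b_n)_{S^1}$; since the $I_n$ are nested and their intersection is the single point $p$, one checks that $a_n \to p$ and $b_n \to p$ in $S^1$. Because $J$ is an open neighborhood of $p$, eventually both endpoints, and hence $\overline{I_n}$, lie in $J$.

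For such an $n$, the endpoints of $I_n$ satisfy $\partial I_n = v(\ell(I_n)) \subseteq E(\LL)$ by the definition of $E(\LL)$, and $\partial I_n \subseteq \overline{I_n} \subseteq J$. Therefore $J \cap E(\LL)$ contains $\partial I_n$ and is in particular nonempty, completing the argument.

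The only nontrivial step to verify carefully is the claim that $\overline{I_n} \subseteq J$ for large $n$, i.e., that the endpoints of a rainbow converge to $p$. This is where the main work lies, though it is essentially a compactness/convergence observation for nested decreasing intervals on the circle whose intersection is a single point. Everything else is an immediate application of Theorem \ref{er} and the definition $E(\LL) = \bigcup_{I \in \LL} v(\ell(I))$; the very full hypothesis enters only through its use in Theorem \ref{er}.
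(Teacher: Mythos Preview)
Your argument is correct and follows essentially the same approach as the paper: both deduce the result directly from Theorem~\ref{er} by exploiting that a rainbow at $p$ has endpoints in $E(\LL)$ and shrinks to $\{p\}$. The only difference is cosmetic---you argue directly that $\overline{I_n}\subseteq J$ for large $n$ via convergence of endpoints, while the paper argues by contradiction that if the endpoints of every $I_n$ avoided a neighborhood $\overline{K}$ of $p$ then $K\subseteq I_n$ for all $n$, contradicting $\bigcap I_n=\{p\}$; these are contrapositive to one another.
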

\begin{proof}
Suppose that $E(\LL)$ is not dense. 
Then, there is a point $p$ in $S^1$ which has an open neighborhood $K$ which is a nondegenerate open interval with $E(\LL)\cap \overline{K}=\phi$. And by Theorem \ref{er}, there is a rainbow $\{I_n\}_{n=1}^{\infty}$ at $p$. Fix $n\in \NN$ and denote $I_n=(u_n,v_n)_{S^1}$ and $K=(s,t)_{S^1}$. Let $\phi$ be the circular order of $S^1$. Note that $\phi(u_n,p,v_n)=1.$
 Since  $E(\LL)\cap \overline{K}=\phi$, $\phi(s, u_n, t)=\phi(s, v_n, t)=-1$ and so $\phi(t, u_n, s)=\phi(t, v_n, s)=1$. Since $\phi(s, p, t)=1$ and $\phi(t, s, p)=1$, so $\phi(t, u_n, p)=(t, v_n, p)=1$. Since by the cocycle condition on the four points $(t,u_n,v_n,p)$,
$$\phi(u_n,v_n,p)-\phi(t, v_n, p)+\phi(t,u_n,p)-\phi(t,u_n, v_n)=0,$$ so  $$-1-1+1-\phi(t,u_n,v_n)=0,$$ Hence $\phi(t, u_n, v_n)=-1$. Therefore, $\phi(u_n,t,v_n)=1$. Likewise, $\phi(s, u_n, t)=\phi(s, v_n, t)=-1$ and so $\phi(s, t, u_n)=\phi(s, t, v_n )=1$. Since $\phi(s,p,t)=1$, so $\phi(s,p,u_n)=\phi(s, p, v_n)=1$. Since by the cocycle condition on the four points $(s, p, u_n , v_n )$,
 $$\phi(p,u_n, v_n)-\phi(s,u_n, v_n)+\phi(s, p, v_n)-\phi(s, p,u_n)=0,$$
  
so $$-1-\phi(s,u_n,v_n)+1-1=0.$$ Hence  $\phi(s,u_n, v_n)=-1$. Therefore, $\phi(u_n,s,v_n)=1$. 
\\
We have shown that $\phi(u_n, s, v_n)=\phi(u_n, t,v_n)=1$ and we have that $\phi(s,u_n, t)=\phi(s, v_n, t)=-1$ sine $E(\LL)\cap \overline{K}=\phi$.
From now on, we show that $K\subseteq I_n$. Let $q$ be a point in $K$. Then $\phi(s,q,t)=1$ and since $\phi(s,u_n, t)=\phi(s, v_n, t)=-1$ and so $\phi(s,t, u_n)=\phi(s, t,v_n)=1$, we get that $\phi(s,q,u_n)=\phi (s,q, v_n)=1$. Then by applying the cocycle condition to four points $(u_n, s,q,v_n)$, 
$$\phi(s, q, v_n)-\phi(u_n, q, v_n)+\phi(u_n, s, v_n)-\phi(u_n, s, q)=0.$$
Since $\phi(u_n, s, q)=\phi(s, q, u_n)$, 
$$\phi(s, q, v_n)-\phi(u_n, q, v_n)+\phi(u_n, s, v_n)-\phi(u_n, s, q)=1-\phi(u_n, q, v_n)+1-1=0.$$
Therefore, $\phi(u_n, q, v_n)=1$ and so $q\in I_n$. We are done.
 It implies that for all $n\in \NN$, $K\subseteq I_n$, so $K\subseteq \displaystyle \bigcap_{n=1}^{\infty} I_n$, but it is a contradiction by the definition of a rainbow. Thus, $E(\LL)$ is dense. 
\end{proof}

Indeed, very fullness does not guarantee the existence of non-leaf gaps. More precisely, a lamination system, of which the geometric realization is a geodesic lamination which foliates the whole hyperbolic plane, is very full, but there is no non leaf gap. So, we need some notions to rule out this situation and to guarantee the existence of a gap on a lamination system. So, the following  definitions on a lamination system describe the situation  which is analogous to that in $\overline{\HH^2}$, there is no open disk foliated by  leaves on a given geodesic lamination which is a geometric realization of a lamination on $S^1$ .
\begin{defn}
Let $\LL$ be a lamination system and $\{I, J\}$ be a subset of $\LL$. Then, $\{I, J\}$ is called a \textsf{distinct pair} if $I\cap J =\phi$, and $\{I,J\}$ is not a leaf. 
A distinct pair $\{I,J\}$ is \textsf{separated} if there is a non-leaf gap $\GG$ such that $I\subseteq K$ and $J\subseteq L$ for some $K, L\in \GG$,not necessarily, $K\neq L$. And  $\LL$ is \textsf{totally disconnected} if every distinct pair is separated. 
\end{defn}

Two lamination systems $\LL_1$ and $\LL_2$ have \textsf{distinct endpoints} if $E(\LL_1)\cap E(\LL_2)=\phi$. When we study with two lamination systems, the distinct endpoints condition enforces  totally disconnectedness to lamination systems.

\begin{lem}[\cite{alonso2019laminar}]\label{tot dis}
If two dense lamination systems have distinct endpoints, then each of the lamination systems is totally disconnected. 
\end{lem}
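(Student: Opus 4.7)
The plan is to prove $\LL_1$ is totally disconnected; the statement for $\LL_2$ follows by a symmetric argument. Given a distinct pair $\{I, J\}$ in $\LL_1$, the conditions $I \cap J = \emptyset$ and $J \neq I^*$ together force $J \subsetneq I^*$, so the open set $S^1 \setminus (\overline{I} \cup \overline{J})$ contains a nonempty open arc $A$. Using density of $\LL_2$ I will pick a point $p \in A \cap E(\LL_2)$. The hypothesis $E(\LL_1) \cap E(\LL_2) = \emptyset$ ensures $p$ is not an endpoint of any leaf of $\LL_1$, so for every $L \in \LL_1$ exactly one of $p \in L$ or $p \in L^*$ holds, and moreover $p$ lies in $I^*$ and in $J^*$ by the choice of $A$.

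The key construction is the set $\mathcal{B}_p = \{L \in \LL_1 : p \notin L\}$, which contains both $I$ and $J$. I plan to apply Zorn's lemma to $\mathcal{B}_p$ ordered by inclusion, producing for each of $I$ and $J$ a maximal element of $\mathcal{B}_p$ containing it. Let $\GG_p$ denote the set of all maximal elements of $\mathcal{B}_p$. I then claim $\GG_p$ is a non-leaf gap in which both $I$ and $J$ sit inside some element, establishing that $\{I, J\}$ is separated. Pairwise disjointness of $\GG_p$ is settled by the unlinking axiom: if $L_1 \cap L_2 \neq \emptyset$ for distinct $L_1, L_2 \in \GG_p$, one of the inclusions $L_1 \subseteq L_2$, $L_2 \subseteq L_1$, $L_1^* \subseteq L_2$, $L_2^* \subseteq L_1$ must hold; the starred ones would place $p$ inside $L_1$ or $L_2$, while the unstarred ones force $L_1 = L_2$ by maximality. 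The second gap axiom is a quick case analysis: for any $M \in \LL_1$, either $M \in \mathcal{B}_p$ (and $M$ lies inside some element of $\GG_p$), or else $p \in M$ so that $M^* \in \mathcal{B}_p$ is contained in some element of $\GG_p$, showing $\ell(M)$ lies on it. Finally $\GG_p$ cannot be a leaf, since otherwise $\GG_p = \{L, L^*\}$ would force $p \in \partial L \subseteq E(\LL_1)$, contradicting $p \in E(\LL_2)$.

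The step I expect to be the main technical obstacle is verifying the Zorn hypothesis, namely that each chain in $\mathcal{B}_p$ has an upper bound in $\mathcal{B}_p$. For a chain $\{L_\alpha\}$, the union $U = \bigcup_\alpha L_\alpha$ is a connected open subset of $S^1$ missing $p$, hence a nondegenerate open interval, but condition (3) in the definition of a lamination system only grants membership in $\LL_1$ for unions of countable ascending sequences. I will bridge this gap using the second countability of $S^1$: cover $U$ by a countable increasing sequence of compact subarcs, each contained in some $L_{\alpha_n}$ by total ordering of the chain, and then apply condition (3) to $\{L_{\alpha_n}\}$. Once the maximal elements are in hand, the remaining verification that $\GG_p$ separates $\{I, J\}$ is immediate from $I, J \in \mathcal{B}_p$.
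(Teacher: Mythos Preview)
Your proof is correct and takes a genuinely different, more direct route than the paper's.  The paper also picks a point $p\in I^*\cap J^*\cap E(\LL_2)$, but then splits into two cases according to whether the totally ordered family $C_p^{I^*}\cap C_p^{J^*}$ (taken in $\LL_1$) is nonempty.  In each case it singles out one extremal interval (either the union $M$ of $C_p^{I^*}\cap C_p^{J^*}$, or the minimal element $N$ of $C_p^{I^*}$), proves by a side-sequence contradiction that the appropriate side of that interval is isolated, and then invokes Lemma~\ref{eog} to manufacture a non-leaf gap containing that interval; a final check shows this gap separates $I$ and $J$.

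Your construction of $\GG_p$ as the set of maximal elements of $\mathcal B_p=\{L\in\LL_1:p\notin L\}$ replaces all of this with a single Zorn step and an immediate verification of the gap axioms.  This is cleaner: it avoids both the case split and the side-sequence argument, and it makes no use of Lemma~\ref{eog}.  The price is the appeal to Zorn's lemma, but your reduction to a countable cofinal subchain via second countability shows the step is essentially constructive anyway.  Both arguments use the hypothesis $p\notin E(\LL_1)$ at the same critical moment, to rule out the possibility that the candidate gap is a leaf.
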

\begin{proof}
Let $\LL_1$ and $\LL_2$ be two dense lamination systems with distinct endpoints. First, we show that  $\LL_1$ is totally disconnected. Suppose that a subset $\{I,J\}$ of $\LL_1$ is a distinct pair. $I^* \cap J^*$ is a non-empty open set. Since $E(\LL_2)$ is dense on $S^1$, so we can choose $p\in I^*\cap J^*\cap E(\LL_2)$. If there is $K\in \LL_1$ such that $p\in K \subseteq I^*\cap J^*$, then $K\in C_p^{I^*} \cap C_p^{J^*}$  and so $C_p^{I^*}\cap C_p^{J^*}$ is nonempty where we consider  $C_p^{I^*}$ and $C_p^{J^*}$ on $\LL_1$ . Note that $C_p^{I^*}\cap C_p^{J^*}$ is totally ordered by $\subseteq$. Let $M$ be the union of elements of $C_p^{I^*}\cap C_p^{J^*}$. Then $M$ is a nondegenerate open interval with $p\in  M \subseteq I^*\cap J^*$. So, $C_p^M$ is equal to $C_p^{I^*}\cap C_p^{J^*}=\{ K \in \LL_1 : p\in K\subseteq I^*\cap J^*\}$. Moreover, by Lemma \ref{loc}, $M\in \LL_1$. 
 
  We want to show that $M^*$ is isolated on $\LL_1$. Suppose that  $\{\ell_n\}_{n=1}^{\infty}$ be a $M^*$-side sequence of leaves of $\LL_1$. Then, there is $\{I_n\}_{n=1}^{\infty}$ such that for all $n\in \NN$, $\ell_n=\ell(I_n)$, and 
$$M^*\subseteq \liminf I_n \subseteq \limsup I_n\subseteq  \overline{M^*}.$$  Choose  $p_I\in I$ and $p_J\in J$. Since $M^*\subseteq \liminf I_n $ and $\{p_I, p_J\}\subseteq I\cup J \subseteq M^*$, there is  $m\in \NN$ such that $\displaystyle \{p_I, p_J\} \subseteq \bigcap_{n=m}^{\infty} I_n$. Therefore, for all $n\geq m$, $\{p_I,p_J\} \subseteq I_n$ and so $I_n\nsubseteq M$. Also, since  $\limsup I_n\subseteq  \overline{M^*}$ and $p \in M$, there is $m'$ such that $\displaystyle p\notin \bigcup_{n=m'}^{\infty}I_n$. Therefore, for all $n\geq m'$, $p\notin I_n$ and so $M\nsubseteq I_n$. Hence, for all $n\geq \max \{m, m'\}$, $I_n\subseteq M^*$ or $M^*\subseteq I_n$. Moreover, for $n\geq \max\{m,m'\}$, since $\ell_n\neq \ell(M^*)$, $I_n\subsetneq M^*$ or $M^*\subsetneq I_n$ and since $\ell_n$ lies on $M^*$,   $I_n\subsetneq M^*$ is the possible case. Thus, for all $n\geq \max \{m, m'\}$, $\{p_I,p_J\}\subseteq I_n \subsetneq M^*$. 

 Then, fix $n\geq \max \{m, m'\}$. If $I^*\subseteq I_n$ or $J^*\subseteq I_n$, then $M\subseteq I_n$, and it contradicts to $I_n\subseteq M^*$. If $I_n\subseteq I^*$ or $I_n \subseteq J^*$, then $\{p_I, p_J\} \subseteq I^*$ or $\{p_I, p_J\} \subseteq J^*$, respectively, and so it is also a contradiction since $I\cap I^*= \phi$ and $J\cap J^*=\phi$.   If $I_n\subseteq I$ or $I_n \subseteq J$, then $\{p_I, p_J\} \subseteq I$ or $\{p_I, p_J\} \subseteq J$, respectively, and so it is also a contradiction since $I\cap J =\phi$. Therefore, $I\cup J \subseteq I_n$. Hence $M\subsetneq I_n^*\subseteq I^*\cap J^*$. It contradicts to the maximality of $M$ on $C_p^{I^*}\cap C_p^{J^*}$. Thus, $M^*$ is isolated. 
 
 Finally, by Lemma \ref{eog}, there is a non leaf gap $\GG$ such that $M\in \GG$. If there is $L\in\GG$ such that $I^*\subseteq L$ or $J^*\subseteq L$, then $M\subseteq I^*\subseteq  L$ or $M\subseteq J^*\subseteq  L$ and so $M=I^*=L$ or  $M=J^*=L$, respectively. Then, $J\subseteq I^*= M$ or $I\subseteq J^* = M$, respectively. However, it is a contradiction since $M\cap \{p_I,p_J\}=\phi$. Therefore, so by the definition of gap, there are $L$ and $L'$ in $\GG$ such that $I\subseteq  L$ and $J\subseteq L'$. So, $\{I, J\}$ is separated.
\

Next, assume that $C_p^{I^*}\cap C_p^{J^*}=\phi$. Choose $K\in C_p^{I^*}$. If $K \subseteq J$, then it contradicts to $p\in J^*$. If $K \subseteq J^*$, then $K\in C_p^{J^*}$, and so it is a contradiction by the assumption. If $J^* \subseteq K$, then $J^* \subseteq K \subseteq I^*$, and $I\subseteq J$, so it is a contradiction by the definition of distinct pairs. Therefore, $J\subsetneq K$, and so $(\{p\}\cup J )\subseteq K$. By  Lemma \ref{loc}, there is a sequence $\{F_n\}_{n=1}^{\infty}$ such that for all $ n\in \NN$, $F_{n+1}\subseteq F_n$ and $\displaystyle \bigcap_{n=1}^{\infty}F_n=\bigcap_{F\in C_p^{I^*} } F$ and since $(\{p\}\cup J )\subseteq F$ for all $F\in C_p^{I^*}$, $\displaystyle \bigcup_{n=1}^{\infty}F_n^*$ is a nondegenerate open interval and so $\displaystyle \bigcup_{n=1}^{\infty}F_n^*\in \LL_1$. Therefore, by Lemma $\ref{lods}$, $Int \Big(\displaystyle \bigcap_{n=1}^{\infty} F_n \Big)=Int \Big( \bigcap_{F\in C_p^{I^*}} F \Big)$ is a nondegenerate open interval $N$ with $p\in \overline{N}$, $J\subseteq N$  and $N\in \LL_1$. Since $p$ is not in $E(\LL_1)$, so $p\in N$. 

 Then, we want to show that $N$ is isolated. Suppose that there is a $N$-side sequence $\{\ell_n\}_{n=1}^{\infty}$ on $\LL_1$. So, there is a sequence $\{I_n\}_{n=1}^{\infty}$ on $\LL_1$ such that for all $n\in \NN$, $\ell_n=\ell(I_n)$ and 
$$N\subseteq \liminf I_n \subseteq \limsup I_n \subseteq \overline{N}.$$
Choose $q\in I$. Since $\limsup I_n\subseteq \overline{N}$, there is $m\in \NN$ such that $q \notin \displaystyle \bigcup_{n=m}^{\infty} I_n$. Also, since $N \subseteq \liminf I_n $, there is $m'\in \NN$ such that $p \in \displaystyle \bigcap_{n=m'}^{\infty}I_n$. Therefore, for all $n\geq \max \{m,m'\}$, $p\in I_n$ and $q\in I_n^c$. 

Fix $n\geq \max \{m, m'\}$. If $I_n \subseteq N^*$, then $p\in I_n \subseteq N^*$ and it is a contradiction since $p\in N$. If $N^*\subseteq I_n$, then $q\in I_n^c \subseteq (N^*)^c$ and it is a contradiction since $q\in I \subseteq N^*$. Therefore, $I_n\subseteq N$ or $N\subseteq I_n$. Moreover, since $\ell_n\neq \ell(N)$,  $I_n\subsetneq N$ or $N\subsetneq I_n$ and  since $\ell_n$ lies on $N$, $I_n\subsetneq N$ is the possible case. But $p \subseteq I_n \subsetneq N$, and it contradicts to the minimality of $N$ on $C_p^{I^*}$. So, by Lemma \ref{eog}, there is a non-leaf gap $\GG$ such that $N^*\in \GG$. It is enough to show that there is $K$ in $\GG$ such that $J \subseteq K$. Suppose that there is $K'$ in $\GG$ such that $J^* \subseteq K'$. then $N^* \subseteq J^*\subseteq K'$ and so $N^* =J^*=K'$. But, it implies $N=J$ and since $p\in N$ and $p\notin J$, it is a contradiction.  Therefore, there is $K$ in $\GG$ such that $J\subseteq K$. Thus, $\{I,J\}$ is separated and so $\LL_1$ is totally disconnected. In the same reason, $\LL_2$ is also totally disconnected.  
\end{proof}

 %If $J^* \subseteq I_n$, then $I_n \in C_p^{I^*}\cap C_p^{J^*}$ and so it is a contradiction by the assumption. If $J^*\subseteq I_n$, then $ p \in I_n^c\subseteq (J^*)^c=\overline{J}$ and so it is a contradiction since $p\in J^*$ and so $p\notin \overline{J}$. If $I_n \subseteq J$, then $p \in I_n \subseteq J$ and so it is also a contradiction since $p\in J^*$. Therefore, $J\subsetneq I_n$. 

We have introduced lamination systems as a
model for laminations on the circle. In this perspective, laminar
groups are groups acting on the circle with invariant lamination
systems. We end this section, discussing about actions on lamination systems . A homeomorphism $f$ on $S^1$ is \textsf{orientation
  preserving} if for any positively oriented triple $(z_1,z_2, z_3)$,
$(f(z_1),f(z_2), f(z_3))$ is a positively oriented triple.
We denote the set of orientation preserving homeomorphisms on $S^1$ as $\Homeop (S^1)$ and the set of fixed points of $f$ as $\Fix_f$. 
Note that if $f \in \Homeop(S^1)$, for $u,v\in S^1$, we have $f((u,v)_{S^1})=(f(u), f(v))_{S^1}$ and if $u\neq v$, then $f((u,v)_{S^1}^*)=f((u,v)_{S^1})^*$. 

\begin{defn}
Let $\LL$ be a lamination system, and $G$ be a subgroup of $\Homeop(S^1)$. $\LL$ is called a \textsf{G-invariant lamination system} if for any $I\in \LL$ and $g\in G$, $g(I)\in \LL$.
When $\LL$ is a $G$-invariant lamination system, the action of $G$ on
$\LL$ is said to be \textsf{minimal} if for any two leaves $\ell, \ell' \in \LL$, there is a sequence $\{g_n\}_{n=1}^{\infty}$ on $G$ such that $g_n(\ell')\rightarrow \ell$.
\end{defn}

First, note that on a $G$-invariant lamination system, every gap is mapped to a gap and the converging property is preserved under the given $G$-action. And when we consider a minimal action on a lamination system which has a non-leaf ideal polygon, the orbit of end points of the ideal polygon is usually dense in $S^1$. This denseness gives the following lemma which is useful to analyze the action.

\begin{lem}\label{mg}%we can move a gap into a element of the gap.
Let $G$ be a subgroup of $\Homeop(S^1)$, and $\LL$ be a $G$-invariant lamination system. Assume that there is an ideal polygon $\GG$ which is not a leaf, and $v_G(\GG)=\displaystyle \bigcup_{g\in G} v(g(\GG))$ is dense in $S^1$. Then, for each $I\in \GG$, there is an element $g_I$ in $G$ such that for any $J\in g_I(\GG)$, $\ell(J)$ properly lies on $I$, equivalently  $v(g_I(\GG))\subseteq I$. 
\end{lem}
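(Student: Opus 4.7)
My plan is to apply Lemma \ref{tg} in a two-step bootstrap driven by the density of $v_G(\GG)$. Denote $\partial I = \{u_1, u_2\}$; since $I \in \GG$, one has $u_1, u_2 \in v(\GG)$ and $v(\GG) \cap I = \emptyset$. The first step produces a translate $g(\GG)$ with $v(g(\GG)) \subseteq \overline{I}$, and the second refines to strict containment by placing a vertex of a further translate $g_1(\GG)$ inside an inner element $J_*$ of $g(\GG)$ whose closure already lies strictly inside $I$.

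For the first step, pick a nonempty open subinterval $K \subset I$ with $\overline{K} \subseteq I$; by density of $v_G(\GG)$ there exist $g \in G$ and $v \in v(\GG)$ with $g(v) \in K$. Since $g(v) \in I$ while $v(\GG) \cap I = \emptyset$, we have $g(v) \notin v(\GG)$, so $g(\GG) \neq \GG$. Lemma \ref{tg} then yields $I_0 \in \GG$ and $I_0' \in g(\GG)$ with $I_0^* \subseteq I_0'$ and $v(g(\GG)) \subseteq \overline{I_0}$. Pairwise disjointness of the elements of $\GG$, together with $v(\GG) \cap I = \emptyset$ (which gives $\partial I_0 \cap I = \emptyset$ for each $I_0 \in \GG$), implies $\overline{I_0} \cap I = \emptyset$ whenever $I_0 \in \GG \setminus \{I\}$; combined with $g(v) \in v(g(\GG)) \cap I \subseteq \overline{I_0} \cap I$, this forces $I_0 = I$. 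Thus $v(g(\GG)) \subseteq \overline{I}$, with the outer element $I_0' \in g(\GG)$ satisfying $I^* \subseteq I_0'$.

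For the second step, I seek $J_* \in g(\GG) \setminus \{I_0'\}$ with $\overline{J_*} \subset I$, and then repeat the argument above with $g(\GG)$ in place of $\GG$ and $J_*$ in place of $I$: density (which still applies since $v_G(g(\GG)) = v_G(\GG)$) gives $g_1 \in G$ and $v_1 \in v(\GG)$ with $g_1(v_1) \in J_*$; since $g_1(v_1) \in v(g_1(\GG))$ is disjoint from $\bigcup g_1(\GG)$ while $J_* \in g(\GG)$, one has $g_1(\GG) \neq g(\GG)$, and Lemma \ref{tg} applied to these distinct gaps, by the same element-localization argument as above, forces $v(g_1(\GG)) \subseteq \overline{J_*} \subset I$, so that $g_I := g_1$ is the desired element. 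The gap $g(\GG)$ has $|v(\GG)| - 1$ inner elements, at most two of which touch $\partial I$ (the ones adjacent to $u_1$ or $u_2$, when these lie in $v(g(\GG))$), so such a $J_*$ exists automatically when $|v(\GG)| \geq 4$. The main obstacle is the triangular case $|v(\GG)| = 3$: all inner elements of $g(\GG)$ touch $\partial I$ exactly when $\{u_1, u_2\} \subseteq v(g(\GG))$, equivalently $I^* \in g(\GG)$, equivalently $g^{-1}(I^*) \in \GG$. To handle this I re-select $g$: the set of ``bad'' $g$ producing this configuration is the finite union $\bigcup_{J \in \GG} \{g \in G : g(J) = I^*\}$ of cosets of point-stabilizers in $G$, which cannot exhaust the dense supply of $g$ placing a vertex in $K$, so a non-degenerate $g$ is available and the bootstrap completes.
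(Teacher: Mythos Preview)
Your overall two–step bootstrap via Lemma~\ref{tg} and the density of $v_G(\GG)$ is sound and close in spirit to the paper's argument. The first step and the localisation $I_0=I$ are correct, and your second step (applying the same reasoning inside an inner element $J_*$ with $\overline{J_*}\subset I$) is fine; in particular the case $|v(\GG)|\ge 4$ is completely handled.

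The gap is in your treatment of the triangular case $|v(\GG)|=3$. You correctly identify that the only obstruction is the ``bad'' configuration $I^*\in g(\GG)$, but the sentence ``the set of bad $g$ \ldots\ is a finite union of cosets of point-stabilizers, which cannot exhaust the dense supply of $g$ placing a vertex in $K$'' is not an argument: nothing prevents a finite union of cosets from containing every $g$ that lands a vertex in $K$. Those cosets can be infinite, and you give no mechanism linking their size to the density of $v_G(\GG)$. What actually rules out the bad case being unavoidable is a lamination (unlinkedness) obstruction: if two bad translates had different third vertices $w\neq w'$ in $I$, then (ordering $u_1,w,w',u_2$) the intervals $(w,u_2)\in g(\GG)$ and $(u_1,w')\in g'(\GG)$ would be linked, contradicting axiom~(2) of a lamination system. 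Hence at most one point of $v_G(\GG)\cap K$ can arise from bad translates, and since this set is dense (hence infinite) in $K$, a non-bad $g$ exists. You should replace the coset remark with this short argument.

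For comparison, the paper avoids the case split entirely: it does not insist on finding $J_*$ with $\overline{J_*}\subset I$ at the second step but takes any inner element $I'\subsetneq I$, applies the localisation a second time to get $v(g'(\GG))\subseteq\overline{I'}$, and if $\overline{I'}$ still touches $\partial I$ at a single point $x$, it observes that at most one inner element of $g'(\GG)$ can contain $x$ in its closure, so a third iteration inside a remaining inner element finishes. This three-step bootstrap is uniform in $|v(\GG)|$ and sidesteps the re-selection issue you ran into.
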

\begin{proof}
Choose $I\in \GG$. Since $v_G(\GG)$ is dense in $S^1$, there is $p\in v_G(\GG) \cap I$. By the definition of $v_G(\GG)$, there is an element $g$ in $G$ such that $p\in v(g(\GG))$. Note that $\GG\neq g(\GG)$ since $v(\GG)\neq v(g(\GG))$. Since $\GG$ is an ideal polygon and so $3\leq |\GG|$, by Lemma \ref{tg}, there are $J$ in $\GG$, and $J'$ in $g(\GG)$ such that $(J')^*\subseteq J$. Hence, for all $K\in \GG$, $\ell(K)$ lies on $J'$, and for all $K'\in g(\GG)$, $\ell(K')$ lies on $J$. Since for all $K'\in g(\GG)$, $\ell(K')$ lies on $J$, so $v(g(\GG)) \subseteq \bar{J}$. Therefore, since $p \in I$, so $J$ should be $I$. Since $(J')^*\subseteq J$, for each $I'\in g(\GG)-\{ J' \}$, $I'\subseteq J$. Choose $I'$ in $g(\GG)-\{ J'\}$.  If $I'=J$, then $J'\subseteq (I')^* =J^* $ and so $J'=(I')^* = J^* $since  $(J')^*\subseteq J$. But in this case, $g(\GG)$ is the leaf $\ell(I')$, and so it contradicts to the assumption. Therefore, $I'\subsetneq J= I$. 
\

Then we do the same process in $I'$. Since $v_G(\GG)$ is dense in $S^1$, there is $p'\in v_G(\GG)\cap I'$.
By the definition of $v_G(\GG)$, there is an element $g'$ in $G$ such that $p'\in v(g'(\GG))$. By Lemma \ref{tg}, there are $L$ in $g(\GG)$ and $L'$ in $g'(\GG)$ such that $(L')^*\subseteq L$. Hence, for all $K\in g(\GG)$, $\ell(K)$ lies on $L'$ and for all $K'\in g'(\GG)$, $\ell(K')$ lies on $L$. Since for all $K'\in g'(\GG)$, $\ell(K')$ lies on $L$, so $v(g'(\GG))\subseteq \overline{L}$. Therefore, since $p'\in I'$, so $L$ should be $I'$. If $\overline{I'}\subseteq I$, then $g'$ is the element that we want. Assume that $\overline{I'} \nsubseteq I$. Since $I'\subsetneq I$, so $\partial I'\nsubseteq I$, that is, there is an element $x$ in $\partial I'$ such that $x\notin I$. Since $\overline{I'}\subseteq \overline{I}$ and so $\partial I' \subseteq \overline{I}$. Therefore, $x\in \partial I$ since $x\notin I$. So, $x\in \partial I \cap \partial I' $. If $\partial I = \partial I'$, then $I'=I$ or $I'=I^*$ since $I$ and $I'$ are nondegenerate open intervals. However, it is a contradiction since $I'\subsetneq I$. Thus,  $\partial I' \cap \partial I =\{x\}$. 

In this case, if $x\notin v(g'(\GG))$, then we are done. Assume that $x\in v(g'(\GG))$. Note that $v(g'(\GG))\subseteq \overline{(L')^*}$. Since $(L')^*\subseteq I'$, $x\in \overline{(L')^*}\subseteq \overline{I'}$ and $x\in \partial I'$, so $x\in \partial L'$. There is a unique element $M$ in $g'(\GG)-\{ L' \}$ such that $x\in \partial M$. Therefore, for any $N$ in $g'(\GG)-\{ L', M \}$, $\overline{N}\subseteq I$. Choose $I''$  in $g'(\GG)-\{ L', M \}$. Finally,  since $v_G(\GG)$ is dense in $S^1$, there is $p''\in v_G(\GG)\cap I''$.
By the definition of $v_G(\GG)$, there is an element $g''$ in $G$ such that $p''\in v(g''(\GG))$. Like the previous argument, we can conclude that $v(g''(\GG))\subseteq \overline{I''}$. Therefore, $v(g''(\GG))\subseteq I$.
\end{proof}

\section{Not virtually abelian laminar groups}
In this section, we prove the following theorem which gives a condition which guarantees that a laminar group is not virtually abelian.
\begin{thm}
Let $G$ be a subgroup of $\Homeop(S^1)$ and $\LL$ be a $G$-invariant lamination system. Suppose that there is an ideal polygon $\GG$ on $\LL$ which is not a leaf. If $v_G(\GG)$ is dense in $S^1$, then $G$ is not virtually abelian.   
\end{thm}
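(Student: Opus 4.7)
The plan is to produce two elements $\alpha,\beta\in G$ whose dynamics on $S^{1}$ satisfy Klein's ping–pong criterion on four pairwise disjoint open arcs. This gives a non-abelian free subgroup of rank two inside $G$, so $G$ cannot be virtually abelian (every virtually abelian group satisfies a law, hence contains no $F_{2}$).

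The building blocks come from Lemma~\ref{mg}: for each edge $I\in\GG$ there is $g_{I}\in G$ with $v(g_{I}\GG)\subseteq I$. Analyzing the edges of $g_{I}\GG$ as arcs in $S^{1}$, one sees that exactly one edge $B_{I}$ of $g_{I}\GG$ contains $\overline{I^{\ast}}$ (the ``big edge''), while the remaining $|\GG|-1$ edges are disjoint sub-arcs of $I$. Setting $I^{-}:=g_{I}^{-1}(B_{I})$, orientation preservation gives
\[
g_{I}\bigl((I^{-})^{\ast}\bigr)\subseteq I,\qquad g_{I}^{-1}(I^{\ast})\subseteq I^{-},
\]
and when $I\neq I^{-}$ also $g_{I}(I)\subseteq I$ and $g_{I}^{-1}(I^{-})\subseteq I^{-}$, so iterating yields $g_{I}^{n}((I^{-})^{\ast})\subseteq I$ and $g_{I}^{-n}(I^{\ast})\subseteq I^{-}$ for every $n\geq 1$ (Schottky dynamics with attractor $I$ and repeller $I^{-}$). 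To build the first Schottky element: if some $g_{I}$ has $I\neq I^{-}$, take $\alpha=g_{I}$ with $I_{a}=I$ and $I_{b}=I^{-}$; otherwise $I=I^{-}$ for every choice, and a direct computation from the two displayed containments shows that $\alpha=g_{I_{a}}g_{I_{b}}$ for any two distinct edges $I_{a},I_{b}\in\GG$ satisfies $v(\alpha\GG)\subseteq I_{a}$, $v(\alpha^{-1}\GG)\subseteq I_{b}$ and the analogous Schottky containments. To build the second Schottky element $\beta$, use that $\GG$ is a non-leaf ideal polygon, so $|\GG|\geq 3$ and one may pick a third edge $I_{c}\neq I_{a},I_{b}$. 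Using Lemma~\ref{mg} (possibly applied to an element of the orbit $G\!\cdot\!\GG$), produce $h\in G$ with $h(I_{a}),h(I_{b})\subseteq I_{c}$ as two disjoint sub-arcs, and set $\beta=h\alpha h^{-1}$. Conjugation transports the Schottky conditions: $\beta^{n}((h(I_{b}))^{\ast})\subseteq h(I_{a})$ and $\beta^{-n}((h(I_{a}))^{\ast})\subseteq h(I_{b})$ for $n\geq 1$. The four arcs $I_{a},I_{b},h(I_{a}),h(I_{b})$ are pairwise disjoint (the first two are distinct edges of $\GG$, the last two are disjoint sub-arcs of $I_{c}$, and $I_{c}$ is disjoint from $I_{a}\cup I_{b}$), so Klein's ping–pong lemma applies and $\langle\alpha,\beta\rangle\cong F_{2}$.

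The main obstacle is the construction of the conjugator $h$ in the previous step, i.e.\ producing $h\in G$ with $h(I_{a})$ and $h(I_{b})$ inside $I_{c}$. The cleanest route is to find $h$ with $h((I_{c})^{\ast})\subseteq I_{c}$, because then $h(I_{a}),h(I_{b})\subseteq h((I_{c})^{\ast})\subseteq I_{c}$ follows automatically from $I_{a},I_{b}\subseteq (I_{c})^{\ast}$. Such an $h$ is provided directly by Lemma~\ref{mg} whenever some $g_{I_{c}}$ satisfies $(I_{c})^{-}=I_{c}$, and in general it is not hard to arrange $(I_{c})^{-}\notin\{I_{a},I_{b}\}$ for some choice of $I_{c}$ when $|\GG|\geq 4$. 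The delicate case is $|\GG|=3$ together with an arrangement of $\sigma$-values (e.g.\ a $3$-cycle $I_{k}\mapsto I_{\sigma(k)}$) that forces $(I_{c})^{-}\in\{I_{a},I_{b}\}$; one then builds $h$ as a product of Lemma~\ref{mg} elements applied to $\GG$ and to a translate $g\GG$ in the $G$-orbit, using density of $v_{G}(\GG)$ to shrink the attractors into arbitrary sub-arcs. Verifying that this construction succeeds uniformly in all $\sigma$-configurations is the technical heart of the proof.
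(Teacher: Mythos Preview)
Your strategy is genuinely different from the paper's and aims for strictly more: you try to exhibit a free subgroup $F_2\le G$ via a ping--pong argument, whereas the paper never produces a free subgroup at this stage. Instead, the paper first proves that $G$ itself is non-abelian (Theorem~\ref{thm:nonabelian}) by picking a nontrivial $g\in G$ with $\Fix_g\neq\emptyset$ (Lemma~\ref{eone}) and then deriving a contradiction from commutativity using Proposition~\ref{preserve fp} and a case analysis on how many closures $\overline{I}$, $I\in\GG$, meet $\Fix_g$. For the virtual statement it passes to an arbitrary finite-index $H\le G$, uses Lemma~\ref{ndg} to locate a translate of $\GG$ at least three of whose edges have interior meeting $\overline{v_H(\GG)}$, collapses the complementary arcs of $\overline{v_H(\GG)}$ by a monotone map, and applies the non-abelian theorem to the induced action of a quotient of $H$. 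Your route, if it worked, would bypass all of this and in particular would make the later Corollary~\ref{cor: a free group in a tight pair} (which needs the tight-pair hypotheses and Margulis' theorem) redundant in this setting.

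However, your proposal has a real gap exactly where you flag it. The construction of the conjugator $h$ with $h(I_a),h(I_b)\subseteq I_c$ is not carried out; you only indicate that ``one then builds $h$ as a product of Lemma~\ref{mg} elements'' and call this ``the technical heart.'' Concretely, take $|\GG|=3$ with the configuration $I_a^{-}=I_b$, $I_b^{-}=I_a$, $I_c^{-}=I_a$. Then $g_{I_c}(I_a)$ is the big edge, so $g_{I_c}$ does not send $I_a$ into $I_c$; the obvious two-letter products $g_{I_c}g_{I_a}$, $g_{I_c}g_{I_b}$, $g_{I_c}^2$, $g_{I_c}g_{I_a}^{-1}$ all fail for the same reason (some image still contains $\overline{I_c^*}$). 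One can check, for instance, that $g_{I_c}g_{I_b}(I_b)\supseteq I_c^*$ in this configuration. There may well be a composite that works (and there is in the genuine $3$-cycle case you mention, e.g.\ $h=g_{I_c}g_{I_b}$ when $I_b^{-}=I_c$), but nothing in your outline distinguishes these sub-cases or produces $h$ uniformly. Since Lemma~\ref{mg} gives no control over which edge becomes ``big,'' you cannot simply assert that a suitable product exists; this step needs an explicit construction or a separate lemma, and without it the ping--pong does not close.
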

The denseness of $v_G(\GG)$ allows some movements of  intervals by an element of $G$. So,  the strategy of the proof of the above theorem is analyzing the fixed point set of some element of $G$ and making  contradictory configurations of fixed points by using the denseness.  
Before proving the theorem, we define some notions about non-leaf ideal polygons.

\begin{defn}
Let $G$ be a subgroup of $\Homeop(S^1)$ and $\LL$ a $G$-invariant lamination system. For each $g\in G$ and an ideal polygon $\GG$ of $\LL$, we define \textsf{$g$-types} of $\GG$ as the followings : 
\begin{enumerate}
	\item The g-type of $\GG$ is \textsf{$g$-free} if $|v(\GG)\cap Fix_g|=0$
	\item The g-type of $\GG$ is \textsf{$g$-sticky} if $|v(\GG)\cap Fix_g|=1$
	\item The g-type of $\GG$ is \textsf{$g$-fixed} if $v(\GG)\subseteq Fix_g$
\end{enumerate}  
\end{defn}
In the following proposition, we can see that for each element $g$ of $G$ and any non-leaf gap $\GG$ of a lamination system $\LL$, $\GG$  is one of these $g$-types. 
\begin{prop}\label{fixed gap}
Let $G$ be a subgroup of $\Homeop(S^1)$ and $\LL$ a $G$-invariant lamination system. Suppose that there is an ideal polygon $\GG$ on $\LL$ which is not a leaf. For $g\in G$, if there are three distinct elements 
$I_1,I_2 \ and \  I_3$ in $\GG$ such that for all $i \in \ZZ_3$ ($i\in \ZZ_n$ means that the indices are modulo $n$), $\overline{I_i}$ contains a fixed point of $g$, then $\GG$ is $g$-fixed. 
\end{prop}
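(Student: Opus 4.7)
Plan: The strategy has two stages: first, show that $g(\GG) = \GG$; second, use a rotation-number-style argument to conclude that every vertex of $\GG$ is fixed by $g$.

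For stage one, I would assume toward contradiction that $g(\GG) \ne \GG$. Since $\GG$ and $g(\GG)$ are both non-leaf ideal polygons with at least $3$ elements, Lemma~\ref{tg} yields distinguished sides $J_0 \in \GG$ and $J'_0 \in g(\GG)$ with $J_0^* \subseteq J'_0$ and $(J'_0)^* \subseteq J_0$: every side of $\GG$ other than $J_0$ lies in $J'_0$, and every side of $g(\GG)$ other than $J'_0$ lies in $J_0$. Writing $J_0 = (v_a, v_{a+1})_{S^1}$, each fixed point satisfies $p_k = g(p_k) \in \overline{I_{s_k}} \cap \overline{g(I_{s_k})}$; if simultaneously $I_{s_k} \ne J_0$ and $g(I_{s_k}) \ne J'_0$, then this intersection sits in $\overline{J'_0} \cap \overline{J_0}$, forcing $I_{s_k}$ to be adjacent to $J_0$ (i.e., $I_{a-1}$ or $I_{a+1}$) and $p_k$ to be the shared vertex. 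Thus each $I_{s_k}$ falls in one of three types: (a) $I_{s_k} = J_0$; (b) $g(I_{s_k}) = J'_0$; (c) $I_{s_k}$ is adjacent to $J_0$. The constraints that at most one $I_{s_k}$ is of type (a), at most one is of type (b), and at most two are of type (c) drastically limit the admissible configurations.

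Splitting on whether $g(J_0) = J'_0$: if yes, types (a) and (b) merge to the single option $I_{s_k} = J_0$, forcing the remaining two slots to realize both $I_{a-1}$ and $I_{a+1}$ in type (c); then $g(v_a) = v_a$ and $g(v_{a+1}) = v_{a+1}$, giving $g(J_0) = J_0$ as an arc, which contradicts $g(J_0) = J'_0 \ne J_0$. If instead $g(J_0) \ne J'_0$, then $g(J_0) \subseteq (J'_0)^* \subseteq J_0$, and the admissible distributions are $(a,b,c) = (1,0,2)$, $(0,1,2)$, or $(1,1,1)$. In the first two, type (c) again realizes both adjacent sides, so $g$ fixes $\{v_a, v_{a+1}\}$ and $g(J_0) = J_0$ as arcs; then $J_0 \in g(\GG)$ is a side distinct from $J'_0$, and disjointness of sides of a gap forces $J'_0 = J_0^*$, so $g(\GG) \supseteq \{J_0, J_0^*\}$ — impossible when $|g(\GG)| \ge 3$, as the remainder of $S^1$ is just the two vertex points. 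For the $(1,1,1)$ subcase, the unique type-(c) member, say $I_{a-1}$, gives $g(v_a) = v_a$, while the condition $g(I_{a-1}) \subseteq (J'_0)^* \subseteq J_0$ reads: the CCW arc $(g(v_{a-1}), v_a)_{S^1}$ ending at $v_a$ is contained in the CCW arc $(v_a, v_{a+1})_{S^1}$ starting at $v_a$. These arcs lie on opposite sides of $v_a$ and are disjoint, so such an inclusion is impossible. All cases yield a contradiction, so $g(\GG) = \GG$.

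For stage two, since $g$ is orientation-preserving and maps the cyclically ordered finite set $v(\GG) = \{v_1, \ldots, v_n\}$ to itself, it acts as a cyclic rotation $v_j \mapsto v_{j+r} \pmod{n}$. Each $p_k \in \overline{I_{s_k}} \cap \overline{g(I_{s_k})} = \overline{I_{s_k}} \cap \overline{I_{s_k + r}}$ must be nonempty for three distinct $s_k$; for $n \ge 3$ this forces $r \in \{-1, 0, 1\} \pmod{n}$. If $r = \pm 1$, then each such $p_k$ would have to equal the unique vertex shared by $I_{s_k}$ and $I_{s_k+r}$, but that vertex is moved nontrivially by the rotation, contradicting $g(p_k) = p_k$. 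Hence $r = 0$ and $v(\GG) \subseteq \Fix_g$, so $\GG$ is $g$-fixed. The main obstacle is the $(1,1,1)$ subcase of stage one, where the contradiction rests on the subtle orientation mismatch that a CCW arc terminating at $v_a$ cannot lie inside a CCW arc emanating from $v_a$.
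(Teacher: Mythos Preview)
Your proof is correct and follows the same two-stage skeleton as the paper (first $g(\GG)=\GG$, then the rotation must be trivial), but the execution of Stage~1 is considerably more elaborate than necessary. The paper's argument is shorter: from Lemma~\ref{tg} one obtains $I\in\GG$ and $I'\in g(\GG)$ with $I^*\subseteq I'$, and then for every $K\in\GG\setminus\{I,\,g^{-1}(I')\}$ one has $g(K)\subseteq (I')^*\subseteq I\subseteq K^*$, which immediately gives $\overline{K}\cap\Fix_g=\emptyset$. Thus at most the two sides $I$ and $g^{-1}(I')$ can meet $\Fix_g$, contradicting the hypothesis of three. In other words, the paper argues where fixed points \emph{cannot} be, while you classify where they \emph{can} be and then run a case split $(1,0,2)/(0,1,2)/(1,1,1)$; the orientation-mismatch trick you use in the $(1,1,1)$ subcase is valid but unnecessary once one observes the single inclusion $g(K)\subseteq K^*$. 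Similarly in Stage~2, the paper notes that a nontrivial rotation $k\neq 0$ sends every open side into its dual, so $\Fix_g=\emptyset$ outright; your detour through $r\in\{-1,0,1\}$ and the vertex analysis reaches the same conclusion by a longer route. Both arguments are sound; the paper's is simply more economical.
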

\begin{proof}
By Lemma \ref{tg}, $g(\GG)=\GG$ or there are $I$ in $\GG$ and $I'$ in $g(\GG)$ such that $I^*\subseteq I'$. First, we consider the later case. If $I'=g(I)$, then $I^* \subseteq g(I)$. Denote $I=(a,b)_{S^1}$. Then $(b,a)_{S^1}\subseteq (g(a),g(b))_{S^1}$ and so $a$ and $b$ are not fixed points of $g$. Moreover, for $z\in (b,a)_{S^1}$, $z\in (b,a)_{S^1} \subseteq (g(a),g(b))_{S^1}$ and so $g^{-1}(z)\in (a,b)_{S^1}$. Then since $(a,b)_{S^1}$ and $(b,a)_{S^1}$ are disjoint, $z\neq g^{-1}(z)$ and so $g(z)\neq z$. Therefore, $(b,a)_{S^1}\subseteq S^1-Fix_g$ and so $[b,a]_{S^1}\subseteq S^1-Fix_g$. However, it implies that there is the only one element in $\GG$ of which the closure contains a fixed point of $g$ and it is a contradiction by the assumption. 
\\
So, we assume that $I'\neq g(I)$. Choose $K$ in $\GG-\{I, g^{-1}(I')\}$. Denote $K=(x,y)_{S^1}$. Since $(I')^*\subseteq I$ and $g(K)$ and $I'$ are disjoint, $g(K)\subseteq (I')^*\subseteq I$ and since $I\subseteq K^*$, $g(K)\subseteq K^*$, that is, $(g(x),g(y))_{S^1} \subseteq (y,x)_{S^1}$. It implies that $x$ and $y$ are not fixed points of $g$. And for all $w\in (x,y)_{S^1}$, $g(w)\in (g(x),g(y))_{S^1}\subseteq (y,x)_{S^1}$ and since $(x,y)_{S^1}$ and $(y,x)_{S^1}$ are disjoint, $g(w)\neq w$. Therefore, $(x,y)_{S^1}\subseteq S^1-Fix_g$ and so $[x,y]_{S^1}\subseteq S^1-Fix_g$. However, there are exactly two elements $I$ and $g^{-1}(I')$ of which the closures can contains fixed points of $g$ and it is a contradiction by the assumption. Thus, $g(\GG)=\GG$  is the only possible case. 
\\
We denote $\GG=\{(x_1,x_2)_{S^1}, (x_2, x_3)_{S^1},\cdots, (x_{n-1},x_n)_{S^1}, (x_n, x_1)_{S^1}\}$ and  use $\ZZ_n$ as the index set. Note that since $g(\GG)=\GG$, there is $k\in \ZZ_n$ such that $(g(x_i),g(x_{i+1}))_{S^1}=(x_{i+k},x_{i+1+k})_{S^1}.$ If $k\neq 0$ on $\ZZ_n$, then there is no fixed point of $g$ since for all $i\in \ZZ_n$, $g(x_i)=x_{i+k}\neq x_i$ and $g((x_i,x_{i+1})_{S^1})=(x_{i+k},x_{i+1+k})_{S^1}\subseteq (x_i,x_{i+1})_{S^1}^* .$ It is a contradiction since $Fix_g\neq \phi$ by the assumption. Therefore, $k=0$ on $\ZZ_n$. Thus, for all $i\in \ZZ_n$, $g(x_i)=x_i$ and so $\GG$ is $g$-fixed. 
\end{proof}

\begin{cor}\label{cor : g-fixed}
Let $G$ be a subgroup of $\Homeop(S^1)$ and $\LL$ be a $G$-invariant lamination system. Let $g$ be a nontrivial element of $G$ with $Fix_g\neq \phi$. Suppose that there is an ideal polygon $\GG$ of $\LL$ with $2\leq |v(G)\cap Fix_g|$. Then, $\GG$ is $g$-fixed. 
\end{cor}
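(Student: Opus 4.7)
The plan is to reduce the statement to Proposition \ref{fixed gap} by producing three distinct edges of $\GG$ whose closures each meet $\Fix_g$. First I would split into two cases depending on whether $\GG$ is a leaf, since Proposition \ref{fixed gap} explicitly assumes $\GG$ is not a leaf.

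If $\GG$ is a leaf, then $\GG = \{I, I^*\}$ and $v(\GG)$ consists of exactly the two endpoints of $I$. The hypothesis $|v(\GG) \cap \Fix_g| \geq 2$ then forces both endpoints to lie in $\Fix_g$, which is by definition the statement that $\GG$ is $g$-fixed, and we are done.

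If $\GG$ is not a leaf, I would write $\GG = \{(x_1, x_2)_{S^1}, (x_2, x_3)_{S^1}, \ldots, (x_n, x_1)_{S^1}\}$ with $n \geq 3$ and pick two distinct vertices $x_i, x_j \in v(\GG) \cap \Fix_g$. Each vertex $x_k$ is the common boundary point of exactly two consecutive intervals of $\GG$, namely $(x_{k-1}, x_k)_{S^1}$ and $(x_k, x_{k+1})_{S^1}$ (indices modulo $n$). Thus the intervals of $\GG$ whose closures contain either $x_i$ or $x_j$ form a set of size $3$ (when $x_i, x_j$ are cyclically adjacent) or $4$ (otherwise), and in either situation I can select three distinct such intervals $I_1, I_2, I_3 \in \GG$ with $\overline{I_k} \cap \Fix_g \neq \emptyset$ for $k = 1, 2, 3$. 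Applying Proposition \ref{fixed gap} then yields that $\GG$ is $g$-fixed.

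The argument is essentially a combinatorial counting observation once Proposition \ref{fixed gap} is in hand, so there is no serious obstacle; the only mild care needed is to verify that when $n = 3$ and the two fixed vertices are adjacent the three listed intervals are genuinely distinct (which they are, since the indices $i-1, i, i+1$ are pairwise distinct modulo $3$), and to separate out the trivial leaf case where Proposition \ref{fixed gap} does not directly apply.
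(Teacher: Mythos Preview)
Your argument is correct and is precisely the intended derivation: the paper states this corollary without proof, as an immediate consequence of Proposition~\ref{fixed gap}, and your counting observation that two distinct fixed vertices of a non-leaf ideal polygon lie in the closures of at least three distinct sides is exactly what is needed. The leaf case you separated out is handled correctly and is indeed necessary since Proposition~\ref{fixed gap} assumes $\GG$ is not a leaf.
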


By Corollary \ref{cor : g-fixed}, we can see that a non-leaf gap is classified as the definition of $g$-types.
When we deal with the fixed points of two commute elements of $\Homeop(S^1)$, the following proposition will be frequently used. 

\begin{prop}\label{preserve fp}
Let $g$ and $h$ be two elements of $\Homeop(S^1)$ and $x$ an element of $S^1$. Suppose that $gh=hg$. Then, $x$ is a fixed point of $g$ if and only if $h(x)$ is  a fixed point of $g$. 
\end{prop}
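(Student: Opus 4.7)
The plan is to use only the commutativity relation $gh = hg$ together with the fact that $h \in \Homeop(S^1)$ is in particular a bijection. Both directions of the equivalence follow from a one-line computation, so the main work is just being careful about which direction uses invertibility of $h$.

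For the forward direction, I would assume $g(x) = x$ and compute $g(h(x)) = (gh)(x) = (hg)(x) = h(g(x)) = h(x)$, which shows that $h(x) \in \Fix_g$. No invertibility is needed here; only the commutativity and the hypothesis.

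For the converse, I would assume $g(h(x)) = h(x)$ and again apply commutativity: $h(g(x)) = (hg)(x) = (gh)(x) = g(h(x)) = h(x)$. Now, since $h \in \Homeop(S^1)$, $h$ is injective, so from $h(g(x)) = h(x)$ we conclude $g(x) = x$, i.e.\ $x \in \Fix_g$.

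There is no real obstacle; the only subtle point worth flagging explicitly in the write-up is that the converse direction requires $h$ to be injective, which is why the statement is formulated for $h \in \Homeop(S^1)$ rather than for an arbitrary self-map commuting with $g$. The same argument of course applies with $h$ replaced by $h^{-1}$ or any iterate $h^n$, so as a remark one gets that $\Fix_g$ is invariant under the cyclic group generated by $h$ whenever $g$ and $h$ commute.
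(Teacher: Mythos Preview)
Your proof is correct and is essentially identical to the paper's: both directions are handled by the same one-line commutativity computation, with injectivity of $h$ used for the converse. Your additional remarks about injectivity and invariance under $\langle h\rangle$ are fine but not part of the paper's argument.
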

\begin{proof}
Suppose that  $x$ is a fixed point of $g$. $g(h(x))=h(g(x))=h(x)$ and so $h(x)$ is a fixed point of $g$. Conversely, suppose that $h(x)$ is a fixed point of $g$. Then, $h(g(x))=g(h(x))=h(x)$ and since $h$ is a bijection, $g(x)=x$. And so $x$ is a fixed point of $g$.
\end{proof}
To start the proof of the main theorem, we should take a non-trivial element of $G$ which has a fixed points. The following lemma shows that there is a non-trivial element of $G$ under the condition of the main theorem. 
\begin{lem}\label{eone} %existance of nontrivial element
Let $G$ be a subgroup of $\Homeop(S^1)$ and $\LL$ be a $G$-invariant lamination system. Suppose that there is an ideal polygon $\GG$ on $\LL$ such that $\GG$ is not a leaf and $v_G(\GG)$ is dense in $S^1$. Then, there is a nontrivial element $g$ of $G$ such that $Fix_g\neq \phi$.
\end{lem}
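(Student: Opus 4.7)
The plan is to apply Lemma \ref{mg} to produce an explicit element of $G$ which maps some closed arc of $S^1$ into itself, then invoke the intermediate value theorem on a lift to $\mathbb{R}$. Write $v(\GG)=\{x_1,\dots,x_n\}$ in positive cyclic order, with $n\geq 3$ because $\GG$ is an ideal polygon which is not a leaf, and set $I_k=(x_k,x_{k+1})_{S^1}$. By Lemma \ref{mg} applied to $I_1$ there is $g\in G$ with $v(g(\GG))\subseteq I_1$, and this $g$ is nontrivial because $g(x_3)\in I_1$ while $x_3\notin I_1$. If the positive closed arc $g(\overline{I_1})=[g(x_1),g(x_2)]_{S^1}$ is contained in $\overline{I_1}$, then $g$ restricts to a continuous self-map of the closed arc $\overline{I_1}$; lifting to $\mathbb{R}$ and applying the intermediate value theorem to $\widetilde{g}(x)-x$ then yields a fixed point of $g$ in $\overline{I_1}$, and we are finished.

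Otherwise $g(\overline{I_1})$ wraps around through $I_1^*$. Since $(g(x_1),\dots,g(x_n))$ is positively cyclically ordered, this wrap-around forces the images to appear in $I_1$ in the left-to-right order $g(x_2),g(x_3),\dots,g(x_n),g(x_1)$; in particular $g(I_2)=(g(x_2),g(x_3))_{S^1}$ is a short arc contained in $I_1$. Apply Lemma \ref{mg} a second time, to the interval $I_2$, to obtain a nontrivial $h\in G$ with $v(h(\GG))\subseteq I_2$. If $h(\overline{I_2})\subseteq \overline{I_2}$ the same IVT argument applied to $h$ finishes the proof; otherwise the analogous analysis places the $h(x_i)$ in $I_2$ in left-to-right order $h(x_3),h(x_4),\dots,h(x_n),h(x_1),h(x_2)$.

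In this remaining case consider the composition $gh\in G$. Since $h(x_i)\in I_2$ we have $gh(x_i)\in g(I_2)\subset I_1$, and because $g|_{I_2}$ is an orientation-preserving homeomorphism onto $g(I_2)$, the $gh(x_i)$ appear in $g(I_2)$ in the same left-to-right order $gh(x_3),gh(x_4),\dots,gh(x_n),gh(x_1),gh(x_2)$. In particular $gh(x_1)$ lies immediately to the left of $gh(x_2)$ within $g(I_2)$, so the positive closed arc $gh(\overline{I_1})=[gh(x_1),gh(x_2)]_{S^1}$ is a short arc contained in $\overline{g(I_2)}\subseteq \overline{I_1}$. The intermediate value theorem therefore furnishes a fixed point of $gh$ in $\overline{I_1}$, and $gh$ is nontrivial since $gh(x_1)\in I_1$ while $x_1\notin I_1$. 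The main obstacle is precisely the wrap-around situation where neither $g$ alone nor $h$ alone preserves its own closed arc; the crucial observation is that composing the two wrap-around elements cancels the wrap just enough for $gh$ to send $\overline{I_1}$ into itself.
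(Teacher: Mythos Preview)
Your proof is correct and follows essentially the same strategy as the paper: apply Lemma \ref{mg} twice and use the Brouwer/intermediate-value argument on a closed arc, with a composition of the two elements handling the recalcitrant wrap-around case. The only cosmetic difference is that the paper applies the second instance of Lemma \ref{mg} to an element $J\in g(\GG)$ (rather than to $I_2\in\GG$) and obtains the composition $hg$ instead of your $gh$; your explicit tracking of the cyclic order of the $g(x_i)$ plays the same role as the paper's appeal to Lemma \ref{tg}.
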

\begin{proof}
Assume that there is no non-trivial element of $G$  which has a fixed point. Choose $I\in \GG$. By Lemma \ref{mg}, there is an element $g$ in $G$ such that for any  $K\in g(\GG)$, $\ell(K)$ properly lies on $I$. By Lemma \ref{tg}, there is the element $I'$ in $g(\GG)$ such that $(I')^*\subseteq I$ and since $\ell(I')$ properly lies on $I$, $\overline{(I')^*}\subseteq I$. If $I'\neq g(I)$, then $g(I)\subseteq (I')^*$ and so $g(\overline{I})\subseteq \overline{(I')^*}\subseteq I\subseteq \overline{I}.$ It implies that there is a fixed point of $g$ in $I$, but it is a contradiction by the assumption. So, $I'=g(I)$ is the possible case. Then, choose $J$ in $g(\GG)$ such that $\overline{J} \subset I.$ By Lemma \ref{mg}, there is an element $h$ in $G$ such that for any $K\in hg(\GG)$, $\ell(K)$ properly lies on $J$. By Lemma \ref{tg}, there is the element $J'$ in $hg(\GG)$ such that $(J')^*\subseteq J$ and since $\ell(J')$ properly lies on $J$, $\overline{(J')^*}\subseteq J$. If $J'\neq h(J)$, then $h(J) \subseteq (J')^*$ and so  $h(\overline{J}) \subseteq \overline{(J')^*} \subseteq J\subseteq \overline{J}$. It implies that there is a fixed point of $h$ in $J$, but it is a contradiction by the assumption. So $J'=h(J)$. Since $g(I)=I'$, $g(I)\subseteq J^*$ and so $h(g(I))\subseteq h(J^*)=h(J)^*=(J')^*$. Then, 
$$h(g(\overline{I}))=\overline{h(g(I))}\subseteq \overline{(J')^*}\subseteq J \subseteq \overline{J} \subseteq I\subseteq \overline{I}.$$
It implies that the nontrivial element $hg$ has a fixed point in $I$ but it is a contradiction by the assumption. Thus, there is a nontrivial element of $G$ which has a fixed point. 
\end{proof}

First, before proving the virtual case, we show that $G$ is non-abelian under the condition of the main theorem. 

\begin{thm}\label{thm:nonabelian}
Let $G$ be a subgroup of $\Homeop(S^1)$ and $\LL$ be a $G$-invariant lamination system. Suppose that there is an ideal polygon $\GG$ on $\LL$ which is not a leaf. If $v_G(\GG)$ is dense in $S^1$, then $G$ is non-abelian.   
\end{thm}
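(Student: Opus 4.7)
The plan is a proof by contradiction. Suppose $G$ is abelian. By Lemma~\ref{eone} there is a non-trivial $g\in G$ with $Fix_g\neq\emptyset$, and abelianness together with Proposition~\ref{preserve fp} makes $Fix_g$ a $G$-invariant closed set. First I would rule out $\GG$ being $g$-fixed: if $v(\GG)\subseteq Fix_g$ then by $G$-invariance $v_G(\GG)=\bigcup_{h\in G}h(v(\GG))\subseteq Fix_g$, and since $v_G(\GG)$ is dense and $Fix_g$ is closed this would force $Fix_g=S^1$, contradicting the non-triviality of $g$. By Corollary~\ref{cor : g-fixed} we may therefore assume $\GG$ is $g$-free or $g$-sticky.

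The main construction is to produce two elements of $G$ whose commutativity contradicts the setup. Since $n=|v(\GG)|\ge 3$, I would choose two distinct intervals $I_1,I_2\in\GG$; as open arcs they are disjoint. By Lemma~\ref{mg}, pick $g_1,g_2\in G$ with $v(g_i(\GG))\subseteq I_i$ for $i=1,2$. If $g_1g_2=g_2g_1$, then $g_1g_2(\GG)=g_2g_1(\GG)$, and
\[
v(g_1g_2(\GG))=g_1(v(g_2(\GG)))\subseteq g_1(I_2),
\]
while symmetrically $v(g_2g_1(\GG))\subseteq g_2(I_1)$. Hence the (non-empty) common vertex set would lie in $g_1(I_2)\cap g_2(I_1)$. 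Each image $g_i(I_j)$ is an interval of the polygon $g_i(\GG)$, so it is either a \emph{small} interval contained in $I_i$ or the unique \emph{big} interval of $g_i(\GG)$ containing $I_i^*$. The plan is to arrange that both $g_1(I_2)$ and $g_2(I_1)$ are small; then they lie in the disjoint arcs $I_1$ and $I_2$, so $g_1(I_2)\cap g_2(I_1)\subseteq I_1\cap I_2=\emptyset$, contradicting non-emptiness of the vertex set and hence contradicting $g_1g_2=g_2g_1$.

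The hard part will be ensuring that such $g_1,g_2$ can actually be chosen. One needs enough flexibility in Lemma~\ref{mg} to steer the ``big'' side of $g_i(\GG)$ away from the prescribed interval $I_{3-i}$. I expect this to follow by iterating Lemma~\ref{mg}: if a first application produces a $g_i$ for which $g_i(I_{3-i})$ happens to be the big interval of $g_i(\GG)$, then applying Lemma~\ref{mg} a second time inside a carefully chosen small interval of $g_i(\GG)$ (rather than inside $\GG$) shifts the combinatorial role of the big side. The density of $v_G(\GG)$ supplies enough translates of $\GG$ with vertices in any target region to make such a selection possible, and the condition $n\ge 3$ guarantees that at each iteration there is at least one small interval whose use moves the big side off of the bad index. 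Carrying this steering argument out rigorously, tracking how the ``big'' position is determined by the linear order of $g_i$'s vertices inside $I_i$, is the main technical content of the proof.
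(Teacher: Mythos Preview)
Your overall plan is sound and the endgame you describe is precisely the paper's final case: produce $g_1,g_2$ with $v(g_i(\GG))\subseteq I_i$ and both $g_1(I_2)\subseteq I_1$, $g_2(I_1)\subseteq I_2$, then observe $v(g_1g_2(\GG))\subseteq I_1$ while $v(g_2g_1(\GG))\subseteq I_2$, contradicting commutativity. The genuine gap is the ``steering'' you flag yourself. Lemma~\ref{mg} gives no control over which side of $g_i(\GG)$ is the big one, and your proposed remedy---reapply Lemma~\ref{mg} inside a small interval of $g_i(\GG)$---does not obviously help: after the second application the big interval of the new translate may again be the image of $I_{3-i}$. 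Nothing in that iteration forces a shift of the bad index, so as written the argument is incomplete.

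The paper closes this gap in a different way, and this is exactly where the element $g$ you set aside after the first paragraph does the real work. Since $G$ is assumed abelian, Proposition~\ref{preserve fp} makes $Fix_g$ a $G$-invariant closed set; the paper's case split is on how many $I\in\GG$ satisfy $\overline{I}\cap Fix_g\neq\emptyset$ (not on $|v(\GG)\cap Fix_g|$, so your invocation of the $g$-types via Corollary~\ref{cor : g-fixed} is slightly off target). If three such $I$ exist, Proposition~\ref{fixed gap} gives the $g$-fixed contradiction you already have. If exactly two exist, one pushes $\GG$ into an interval $K$ with $\overline{K}\cap Fix_g=\emptyset$ via Lemma~\ref{mg}; at least one of the two ``fixed-point'' intervals must become small and land in $\overline{K}$, contradicting invariance of $Fix_g$. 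In the crucial remaining case a unique $M\in\GG$ has $\overline{M}\cap Fix_g\neq\emptyset$, so there are two $O_1,O_2\in\GG$ with $\overline{O_i}\cap Fix_g=\emptyset$; for \emph{any} $f_i$ from Lemma~\ref{mg} with $v(f_i(\GG))\subseteq O_i$, the big interval of $f_i(\GG)$ is forced to be $f_i(M)$, since otherwise $f_i(\overline{M})\subseteq\overline{O_i}$ would carry a point of $Fix_g$ into a region disjoint from $Fix_g$. That is your steering, obtained for free from the abelian hypothesis rather than from a combinatorial iteration.
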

\begin{proof}
Assume that $G$ is abelian. By Lemma \ref{eone}, there is a nontrivial element $g$ in $G$ with $Fix_g\neq \phi.$ First, if there are three distinct elements in $\GG$ such that the closure of each element contains a fixed point, then by Proposition \ref{fixed gap}, $\GG$ is $g$-fixed. Since for all $h\in G$, $h(v(\GG))\subseteq Fix_g$, so  by Proposition \ref{preserve fp}, $v_G(\GG)\subseteq Fix_g$. By the assumption, $v_G(\GG)$ is dense and so $Fix_g$ is dense in $S^1$. Since $Fix_g$ is closed on $S^1$, $Fix_g=S^1$ , but it implies that $g$ is the trivial element of $G$ and so it is a contradiction. 

If there are exactly two distinct elements $I$ and $J$ in $\GG$ such that $\overline{I}\cap Fix_g \neq \phi$ and $\overline{J}\cap Fix_g\neq \phi$, then there is an element $K$ in $\GG$ such that $\overline{K} \cap Fix_g=\phi$. By Lemma \ref{mg}, there is an element $h$ in $G$ such that for any $L\in h( \GG)$,  $\ell(L)$ properly lies on $K$. By Lemma \ref{tg}, there is the element $K'$ in $h(\GG)$ such that $(K')^*\subseteq K$ and since $\ell(K')$ properly lies on $K$, $\overline{(K')^*}\subseteq K$. Then at least one of $h(I)$ and $h(J)$ is not $K'$. Without loss of generality, we may assume that $h(I)\neq K'$. Then $h(I)\subseteq (K')^* \subseteq K$ and so $h(\overline{I}) \subseteq \overline{K}$. However, by Proposition \ref{preserve fp}, 
$h(\overline{I})\cap Fix_g \neq \phi$ since $\overline{I}\cap Fix_g \neq \phi$ , and so $\overline{K}\cap Fix_g\neq \phi$. It is a contradiction since $\overline{K}\cap Fix_g=\phi$.

Finally, if there is a unique element $M$ in $\GG$ such that $\overline{M}\cap Fix_g\neq \phi$, that is, $Fix_g\subseteq \overline{M}$, then there are two  distinct elements $O_1$ and $O_2$ in $\GG$ such that $\overline{O_1}\cap Fix_g=\phi$ and  $\overline{O_2}\cap Fix_g=\phi.$ For each $i\in \ZZ_2$, by Lemma \ref{mg}, there is an element $f_i$ in $G$ such that for any $P\in f_i(\GG)$, $\ell(P)$ properly lies on $O_i$. Fix $i\in \ZZ_2$.
By Lemma \ref{tg}, there is the element $O_i'$ in $f_i(\GG)$ such that $(O_i')^*\subseteq O_i$. 
%If $O_i'=f_i(O_i)$, then $f_i(M)\subseteq f(O_i^*)=f(O_i)^*=(O_i')^*\subseteq O_i$ and so $f_i(\overline{M})=\overline{f_i(M)}\subseteq \overline{O_i}$. However, by Proposition \ref{preserve fp}, $f_i(\overline{M})\cap Fix_g\neq\phi$ since $\overline{M}\cap Fix_g \neq \phi$ and so it is a contradiction since $\overline{O_i}\cap Fix_g=\phi.$ Therefore,  $O_i'\neq f_i(O_i)$ and so $f_i(O_i)\subseteq (O_i')^*\subseteq O_i.$ Moreover, 
If $O_i'\neq f_i(M)$, then $f_i(M)\subseteq (O_i')^*\subseteq O_i$ and so $f_i(\overline{M})=\overline{f_i(M)}\subseteq \overline{O_i}$. However, by Proposition \ref{preserve fp}, $f_i(\overline{M})\cap Fix_g\neq\phi$ since $\overline{M}\cap Fix_g \neq \phi$ and so it is a contradiction since $\overline{O_i}\cap Fix_g=\phi.$ Therefore, for all $i\in \ZZ_2$, $f_i(M)=O_i'$. Then, we can get the following relations:
\begin{enumerate}
	\item $f_1(O_1)\subseteq f_1(M^*)=f_1(M)^*=(O_1')^*\subseteq O_1$
	\item $f_1(O_2)\subseteq f_1(M^*)=f_1(M)^*=(O_1')^*\subseteq O_1$
	\item $f_2(O_1)\subseteq f_2(M^*)=f_2(M)^*=(O_2')^*\subseteq O_2$
	\item $f_2(O_2)\subseteq f_2(M^*)=f_2(M)^*=(O_2')^*\subseteq O_2$
\end{enumerate}
 Let us consider two elements $f_1f_2$ and $f_2f_1$.
 $$f_1f_2(O_1)\subseteq f_1(O_2)\subseteq O_1$$
 and 
 $$f_2f_1(O_1)\subseteq f_2(O_1)\subseteq O_2 .$$
 However, it implies $f_1f_2(O_1)\neq f_2f_1(O_1)$ since $O_1$ and $O_2$ are disjoint, and so it is a contradiction by the assumption that $G$ is abelian. Thus, $G$ is non-abelian.  
\end{proof}

To improve this theorem, we need to the following lemma. When we prove the virtual case, we will take a finite index subgroup $H$ of $G$ and construct new lamination system which is preserved by $H$. In this construction of the $H$-invariant lamination system, we will collapse the original circle on which the $G$-invariant lamination system is defined. The following lemma guarantees that there is a non-leaf gap of the $H$-invariant lamination system. 

\begin{lem}\label{ndg} %nondegenerate gap
Let $G$ be a subgroup of $\Homeop(S^1)$ and $\LL$ be a $G$-invariant lamination system in which there is a non-leaf ideal polygon $\GG_0$. Suppose that $v_G(\GG_0)$ is dense in $S^1$. If $H$ is a finite index subgroup of $G$, then there is a non-leaf ideal polygon $\GG$ in $\LL$ which is $g(\GG_0)$ for some $g\in G$ and has  three elements $I_1$, $I_2$ and $I_3$ such that for all $i\in \ZZ_3$,
$I_i\cap \overline{ v_H(\GG)}$ has nonempty interior. 
\end{lem}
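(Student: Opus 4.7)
The proof combines a Baire category argument exploiting the finite index of $H$ with an iterative zooming-in argument in the spirit of Lemma \ref{mg}. First, write $G = \bigsqcup_{j=1}^{n} H g_j$ as a disjoint union of right cosets, where $n = [G:H]$. A direct computation gives $v_G(\GG_0) = \bigcup_{j=1}^n v_H(g_j \GG_0)$, so passing to closures yields a finite closed cover $\bigcup_{j=1}^n \overline{v_H(g_j \GG_0)} = S^1$. Since $S^1$ is a Baire space, at least one of these closed sets has nonempty interior; fix $j_0$ so that $U$, the interior of $\overline{v_H(g_{j_0} \GG_0)}$, is nonempty, and set $\GG_1 := g_{j_0}(\GG_0)$. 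Because $h' h$ ranges over $H$ as $h'$ does, one has $v_H(h \GG_1) = v_H(\GG_1)$ for every $h \in H$, so $\overline{v_H(h \GG_1)} = \overline{v_H(\GG_1)}$ always contains $U$, and $U$ is itself $H$-invariant.

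The technical heart of the argument is to produce $h \in H$ such that $v(h \GG_1) \subseteq W$ for some open interval $W$ with $\overline{W} \subsetneq U$. I establish this by running the proof of Lemma \ref{mg} with $G$ replaced by $H$ and with density of $v_H(\GG_1)$ in $U$ (rather than in $S^1$) driving the iteration. Concretely, at each stage one selects a point $p_k \in W \cap v_H(\GG_1)$ (nonempty by density), picks an $H$-translate $h_k(\GG_1)$ containing $p_k$ as a vertex, and applies Lemma \ref{tg} to successive pairs of translates, thereby trapping the vertex sets inside progressively smaller ``short'' sides of the preceding translate. Because each intermediate point lies in $W \subseteq U$, the required density is always available, and by forcing each enclosing side to be a short one, the vertex regions strictly shrink into $W$. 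Carrying through this adaptation --- in particular ensuring at each step that Lemma \ref{tg} returns a short side rather than the long wrap-around side --- is the principal technical obstacle.

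Having fixed such an $h$, set $\GG := h \GG_1 = (h g_{j_0})(\GG_0)$. Then $\GG$ is a $G$-translate of $\GG_0$, one has $U \subseteq \overline{v_H(\GG)}$, and the $m := |v(\GG_0)| \geq 3$ vertices of $\GG$ all lie in $W$. These vertices partition $S^1$ into $m - 1$ ``short'' arcs between consecutive vertices --- each contained in $W \subseteq U$ --- together with one ``long'' arc containing $S^1 \setminus \overline{W}$. Since $\overline{W} \subsetneq U$, the long arc meets the nonempty open set $U \setminus \overline{W}$, so every one of the $m$ sides of $\GG$ intersects $U$. Because $U$ is open and $U \subseteq \overline{v_H(\GG)}$, for each such side $I$ the intersection $I \cap \overline{v_H(\GG)}$ contains the nonempty open set $I \cap U$, hence has nonempty interior. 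Choosing any three of these sides as $I_1, I_2, I_3$ completes the proof.
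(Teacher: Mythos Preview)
Your opening Baire step matches the paper's, but the heart of your argument has a genuine gap. After fixing $\GG_1=g_{j_0}(\GG_0)$ and letting $U$ be the interior of $\overline{v_H(\GG_1)}$, you assert that one can find $h\in H$ with $v(h\GG_1)\subseteq W$ for some interval $W$ with $\overline W\subsetneq U$. This need not be possible. The set $U$ is $H$-invariant, hence so is $S^1\setminus U$; if some vertex of $\GG_1$ lies in $\overline{v_H(\GG_1)}\setminus U$ (and nothing you have proved rules this out---Baire only gives $U\neq\emptyset$, not $v(\GG_1)\subseteq U$), then that vertex stays in $S^1\setminus U$ under every $h\in H$, so $v(h\GG_1)\not\subseteq U$ for all $h$. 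Concretely, if $\GG_1$ is a triangle with one vertex in $U$ and two on the boundary, then for every $h\in H$ the side opposite $h(a)$ joins two points outside the arc $U$ and misses $U$ entirely. Your adaptation of Lemma~\ref{mg} cannot repair this: that lemma iterates inside a \emph{side of a gap}, trapping vertices in the closure of that side, but it gives no control over whether the trapped vertices land in $U$.

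The paper's proof proceeds quite differently. It argues by contradiction, assuming that \emph{every} $g(\GG_0)$ has at most two sides meeting $\overline{v_H(g(\GG_0))}$ in a set with interior, and then iterates over the cosets $Hg_1,\ldots,Hg_n$. At step $m$ one has a side $K_m$ of some translate in which $v_H(g_i(\GG_0))$ is nowhere dense for all $i\le m$; Baire applied inside $K_m$ then forces some new coset $Hg_{m+1}$ to have dense orbit on a subinterval, and the contradiction hypothesis produces a further side $K_{m+1}\subseteq K_m$ where that coset's orbit is also nowhere dense. After $n$ steps one has an interval on which \emph{all} coset orbits are nowhere dense, contradicting $\bigcup_i\overline{v_H(g_i(\GG_0))}=S^1$. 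The essential difference is that the paper does not commit to a single coset: your argument lives entirely in the $H$-orbit of one $\GG_1$, which, as shown above, may simply fail to contain any translate with three good sides.
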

\begin{proof}
Since the case $G=H$ is obvious, we assume that $H$ is a proper subgroup of $G$.
Assume that for each $g\in G$, there are at most two elements in $g(\GG_0)$ which contain interior points of $\overline{v_H(g(\GG_0))}$. Since $H$ has a finite index, 
we can denote $H\backslash G=\{Hg_1, Hg_2, \cdots, Hg_n\}$ for some $\{g_1, g_2, \cdots ,g_n\}\subseteq G.$ Then, $v_G(\GG_0)=\displaystyle \bigcup_{i=1}^n v_H(g_i(\GG_0)).$ So,
$$S^1=\overline{v_G(\GG_0)}=\overline{\displaystyle \bigcup_{i=1}^n v_H(g_i(\GG_0))}=\displaystyle \bigcup_{i=1}^{n} \overline{v_H(g_i(\GG_0))}$$
since $v_G(\GG_0)$ is dense in $S^1$. Since a finite union of nowhere dense sets is nowhere dense and $S^1$ is not nowhere dense, there is $\alpha_1 \in \{1,2, \cdots, n\}$ such that $\overline{v_H(g_{\alpha_1}(\GG_0))}$ has non-empty interior. Without loss of generality, we may assume $\alpha_1=1$. Since $\overline{v_H(g_1(\GG_0))} $ has non-empty interior, there is a nondegenerate interval $J_1$ on $S^1$ such that $J_1\subseteq \overline{v_H(g_1(\GG_0))}$. Denote $J_1=(u_1,v_1)_{S^1}$. Since $J_1 \cap v_H(g_1(\GG_0))$ is dense in $J_1$, there is a gap $\GG_1$ such that $\GG_1=h_1g_1(\GG_0)$ for some $h_1\in H$ and $J_1\cap v(\GG_1) \neq \phi.$ 
Choose $p_1\in J_1 \cap v(\GG_1).$ By the assumption, there are exactly two elements in $\GG_1$ which contain $(u_1,p_1)_{S^1}$ or $(p_1,v_1)_{S^1}.$ Then, we can choose an elements $K_1$ in $\GG_1$ such that $K_1\cap v_H(g_1(\GG_0))$ is  nowhere dense in $K_1$. Then, 

$$K_1=K_1\cap S^1= K_1\cap \overline{v_G(\GG_0)}=K_1 \cap \overline{\displaystyle \bigcup_{i=1}^n v_H(g_i(\GG_0))}=K_1 \cap \displaystyle \bigcup_{i=1}^n \overline{v_H(g_i(\GG_0))}=\displaystyle \bigcup_{i=1}^n K_1 \cap \overline{v_H(g_i(\GG_0))}.$$

Since a finite union of nowhere dense sets is nowhere dense and $K_1$ is not nowhere dense, there is $\alpha_2 \in \{2, \cdots, n\}$ such that $K_1\cap \overline{v_H(g_{\alpha_2}(\GG_0))}$ has non-empty interior. Without loss of generality, we may assume $\alpha_2=2$. Since $K_1 \cap \overline{v_H(g_2(\GG_0))} $ has non-empty interior, there is a nondegenerate interval $J_2$ on $K_1$ such that $J_2\subseteq K_1 \cap \overline{v_H(g_2(\GG_0))}$. Denote $J_2=(u_2,v_2)_{S^1}.$ Since $v_G(\GG_0)$ is dense in $S^1$ and so $E(\LL)$ is dense in $S^1$ , there is a point $q_1$ in $E(\LL)\cap J_2$. There is a leaf $\ell_1$ such that $q_1\in v(\ell_1)$. By Lemma \ref{tg}, there is $L_1$ in $\ell_1$ such that $L_1\subseteq K_1$. Then, one of $L_1 \cap (u_2,q_1)_{S^1}$ and $L_1\cap (q_1, v_2)_{S^1}$ is non-empty and so $J_2\cap L_1$ is non-empty. Likewise, $J_2\cap L_1^*$ is also non-empty. Since $J_2\cap v_H(g_2(\GG_0))$ is dense in $J_2$, there is a gap $\GG_2$ such that $\GG_2=h_2g_2(\GG_0)$ for some $h_2\in H$ and $ J_2\cap L_1 \cap v(\GG_2)\neq \phi$. 
By Lemma \ref{tg}, there is $M_1$ in $\GG_2$ such that $M_1^*\subseteq L_1.$ Since $J_2\cap L_1^*$ is non-empty, $J_2\cap L_1^*\subseteq L_1^* \subseteq M_1$ and so $M_1\cap \overline{v_H(\GG_2)}$ has non-empty interior. By the assumption, it implies that there is $K_2$ in $\GG_2$ such that  $K_2 \subseteq M_1^*$ and   $K_2\cap v_H(\GG_2)$ is nowhere dense in $K_2.$ Moreover, $K_2 \subseteq M_1^* \subseteq L_1 \subseteq K_1$. Therefore,  $K_2\cap v_H(\GG_1)$ and $K_2\cap v_H(\GG_2)$ are nowhere dense in $K_2$. If $n=2$, then 
\begin{align*}
K_2&=K_2\cap S^1\\
&= K_2\cap \overline{v_G(\GG_0)}\\
&=K_2 \cap \overline{\displaystyle \bigcup_{i=1}^2 v_H(g_i(\GG_0))}\\
&=K_2 \cap \displaystyle \bigcup_{i=1}^2 \overline{v_H(g_i(\GG_0))}\\
&=\displaystyle \bigcup_{i=1}^2 K_2 \cap \overline{v_H(g_i(\GG_0))}\\
&=[K_2 \cap \overline{v_H(g_1(\GG_0))} ] \cup [K_2 \cap \overline{v_H(g_2(\GG_0))} \big]\\
&=\big[K_2 \cap \overline{v_H(\GG_1)} ] \cup [K_2 \cap \overline{v_H(\GG_2)} ].
\end{align*}
However, it is a contradiction since a finite union of nowhere dense sets is nowhere dense. 

If $n$ is greater than $3$, choose  $m \in \{2,\cdots, n-1\} .$ Assume that for each $i \in \{1,2, \cdots ,  m\}$, there is a gap $\GG_i$ which is $h_i g_i(\GG_0)$ for some $h_i\in H$ and there is $K_m$ in $\GG_m$ such that 
for all $i \in \{1,2, \cdots, m\}$, $K_m \cap v_H(\GG_i)$ are nowhere dense in $K_m$.  Then,
\begin{align*}
K_m&=K_m\cap S^1\\
&= K_m\cap \overline{v_G(\GG_0)}\\
&=K_m \cap \overline{\displaystyle \bigcup_{i=1}^n v_H(g_i(\GG_0))}\\
&=K_m \cap \displaystyle \bigcup_{i=1}^n \overline{v_H(g_i(\GG_0))}\\
&=\displaystyle \bigcup_{i=1}^n K_m \cap \overline{v_H(g_i(\GG_0))}
\end{align*}

Since a finite union of nowhere dense sets is nowhere dense and $K_m$ is not nowhere dense, there is $\alpha_{m+1} \in \{m+1, \cdots, n\}$ such that $K_m\cap \overline{v_H(g_{\alpha_{m+1}}(\GG_0))}$ has non-empty interior. Without loss of generality, we may assume $\alpha_{m+1}=m+1$. Since $K_m \cap \overline{v_H(g_{m+1}(\GG_0))} $ has non-empty interior, there is a nondegenerate interval $J_{m+1}$ on $K_m$ such that $J_{m+1}\subseteq K_m \cap \overline{v_H(g_{m+1}(\GG_0))}$. Denote $J_{m+1}=(u_{m+1},v_{m+1})_{S^1}.$ Since $E(\LL)$ is dense in $S^1$ , there is a point $q_m$ in $E(\LL)\cap J_{m+1}$. There is a leaf $\ell_m$ such that $q_m\in v(\ell_m)$. By Lemma \ref{tg}, there is $L_m$ in $\ell_m$ such that $L_m\subseteq K_m$. Then, one of $L_m \cap (u_{m+1},q_m)_{S^1}$ and $L_m\cap (q_m, v_{m+1})_{S^1}$ is non-empty and so $J_{m+1}\cap L_m$ is non-empty. Likewise, $J_{m+1}\cap L_m^*$ is also non-empty. Since $J_{m+1}\cap v_H(g_{m+1}(\GG_0))$ is dense in $J_{m+1}$, there is a gap $\GG_{m+1}$ such that $\GG_{m+1}=h_{m+1}g_{m+1}(\GG_0)$ for some $h_{m+1}\in H$ and $ J_{m+1}\cap L_m \cap v(\GG_{m+1})\neq \phi$.  
By Lemma \ref{tg}, there is $M_m$ in $\GG_{m+1}$ such that $M_m^*\subseteq L_m.$ Since $J_{m+1}\cap L_m^*$ is non-empty, $J_{m+1}\cap L_m^*\subseteq L_m^* \subseteq M_m$ and so $M_m\cap \overline{v_H(\GG_{m+1})}$ has non-empty interior. By the assumption, it implies that there is $K_{m+1}$ in $\GG_{m+1}$ such that  $K_{m+1} \subseteq M_m^*$ and   $K_{m+1}\cap v_H(\GG_{m+1})$ is nowhere dense in $K_{m+1}.$ Moreover, $K_{m+1} \subseteq M_m^* \subseteq L_m \subseteq K_m$. Therefore,  for all $i\in \{1,2, \cdots, m+1\}$, $K_{m+1}\cap v_H(\GG_{i})$ are nowhere dense in $K_{m+1}$. 

Finally, for each $i \in \{1,2, \cdots ,  n\}$, there is a gap $\GG_i$ which is $h_i g_i(\GG_0)$ for some $h_i\in H$ and there is $K_n$ in $\GG_n$ such that 
for all $i \in \{1,2, \cdots, n\}$, $K_n\cap v_H(\GG_i)$ are nowhere dense in $K_n$. However, 
\begin{align*}
K_n&=K_n\cap S^1\\
&= K_n\cap \overline{v_G(\GG_0)}\\
&=K_n \cap \overline{\displaystyle \bigcup_{i=1}^n v_H(g_i(\GG_0))}\\
&=K_n \cap \displaystyle \bigcup_{i=1}^n \overline{v_H(g_i(\GG_0))}\\
&=\displaystyle \bigcup_{i=1}^n K_n \cap \overline{v_H(g_i(\GG_0))}\\
&=\displaystyle \bigcup_{i=1}^n K_n \cap \overline{v_H(\GG_i)}
\end{align*}
 and so it is a contradiction since a finite union of nowhere dense sets is nowhere dense. We are done.

\end{proof} 
Let us prove the main theorem. 
\begin{thm}\label{thm:virtuallynonabelian}
Let $G$ be a subgroup of $\Homeop(S^1)$ and $\LL$ be a $G$-invariant lamination system. Suppose that there is an ideal polygon $\GG_0$ on $\LL$ which is not a leaf. If $v_G(\GG_0)$ is dense in $S^1$, then $G$ is not virtually abelian.    
\end{thm}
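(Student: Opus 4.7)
Suppose for contradiction that $G$ contains a finite-index abelian subgroup $H$. The plan is to reduce to Theorem \ref{thm:nonabelian} by constructing a circle $\tilde{S^1}$, an orientation-preserving $H$-action on $\tilde{S^1}$, an $H$-invariant lamination system $\tilde{\LL}$ on $\tilde{S^1}$, and a non-leaf ideal polygon $\tilde{\GG}$ in $\tilde{\LL}$ with $v_H(\tilde{\GG})$ dense in $\tilde{S^1}$. Applying Theorem \ref{thm:nonabelian} to $(H,\tilde{\LL},\tilde{\GG})$ will then force $H$ to be non-abelian, contradicting the choice of $H$.

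First invoke Lemma \ref{ndg} for the finite-index subgroup $H$ to obtain a non-leaf ideal polygon $\GG = g(\GG_0)$ of $\LL$ with three elements $I_1, I_2, I_3$ for which $I_j \cap \overline{v_H(\GG)}$ has nonempty interior. Set $C := \overline{v_H(\GG)}$, a closed $H$-invariant subset of $S^1$, and let $\tilde{S^1}$ be the quotient of $S^1$ obtained by collapsing the closure of each connected component of $S^1 \setminus C$ to a point, with quotient map $\pi \colon S^1 \to \tilde{S^1}$. The conclusion of Lemma \ref{ndg} ensures $C$ contains three disjoint nondegenerate open sub-intervals, so $\tilde{S^1}$ inherits a genuine circle structure; $\pi$ preserves cyclic order, and because $H$ preserves $C$, the $H$-action on $S^1$ descends to an orientation-preserving action on $\tilde{S^1}$.

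Define $\tilde{\LL}$ to be the smallest lamination system on $\tilde{S^1}$ containing $\pi((u,v)_{S^1})$ for every leaf $\ell((u,v)_{S^1}) \in \LL$ with $u, v \in C$ and $\pi(u) \neq \pi(v)$. This family is $H$-invariant since $\LL$ and $C$ are, and unlinkedness is inherited from $\LL$ via cyclic-order preservation of $\pi$. Set $\tilde{\GG} := \pi(\GG)$; Lemma \ref{ndg} guarantees each $\pi(I_j)$ is a nondegenerate interval in $\tilde{S^1}$, so $\tilde{\GG}$ has at least three distinct arcs and is therefore a non-leaf ideal polygon in $\tilde{\LL}$. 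Finally, $v_H(\GG)$ is dense in $C$ by construction and $\pi(C) = \tilde{S^1}$, so $v_H(\tilde{\GG}) = \pi(v_H(\GG))$ is dense in $\tilde{S^1}$. Theorem \ref{thm:nonabelian} applied to this data supplies the contradiction.

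The principal technical obstacle is verifying that $\tilde{\LL}$ is actually a lamination system. The second axiom follows immediately from cyclic-order preservation of $\pi$, but the closure axiom under ascending unions requires lifting ascending chains in $\tilde{\LL}$ back to ascending chains in $\LL$ modulo the collapsed complementary arcs of $C$. One must also confirm that the quotient $\tilde{S^1}$ has the correct topology (a circle rather than a degenerate or non-Hausdorff quotient), but this is secured by the three-arc density guarantee of Lemma \ref{ndg}, which ensures $C$ is rich enough that the identifications only collapse disjoint closed arcs of $S^1$.
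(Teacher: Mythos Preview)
Your proposal is correct and follows essentially the same strategy as the paper's proof: invoke Lemma~\ref{ndg} to find a suitable translate $\GG$ of $\GG_0$, collapse $S^1$ along the complement of $\overline{v_H(\GG)}$ via a monotone map, push the lamination system and the polygon forward, and apply Theorem~\ref{thm:nonabelian} to conclude that $H$ (or its quotient acting on the collapsed circle) is non-abelian. The paper's write-up is terser---it simply asserts that the pushed-forward family $\LL_H$ is a lamination system ``by construction''---whereas you explicitly flag the closure axiom as the point requiring care; but the architecture is identical.
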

\begin{proof}
Suppose that $H$ is a finite index subgroup of $G$. By Lemma \ref{ndg}, there is a gap $\GG$ which is $g(\GG_0)$ for some $g\in G$ and has three elements $I_1,I_2$ and $I_3$ such that for all $i\in \ZZ_3$, $I_i\cap \overline{v_H(\GG)}$ has non empty interior on $S^1$. Since $\overline{v_H(\GG)}$ has non-empty interior, we can define $m:S^1\rightarrow S^1$ as the monotone map which collapses each closure of connected component of $S^1-\overline{v_H(\GG)}.$ Then,  for each element $h\in H$, there is a unique element $g_h$ in $\Homeop(S^1)$ which makes the following diagram commute :  
\[
\begin{tikzcd}
 S^1 \arrow[d,"h"] \arrow[r, "m"] & S^1 \arrow[d, "g_h"]\\
 S^1 \arrow[r,"m"] & S^1 
\end{tikzcd}
\]
since $\overline{v_H(S^1)}$ is preserved by the action of $H$. Define $G_H\equiv \{ g_h \in \Homeop(S^1): h\in H  \}$. Then, $G_H$ is isomorphic to some quotient group of $H.$ 
Let us define $\LL_H$ as the family of nondegenerate open intervals $(u,v)_{S^1}$ such that there is $I$ in $\LL$ such that $m(v(\ell(I)))=\{u,v\}.$ By the construction of $\LL_H$, $\LL_H$ is a $G_H$-invariant lamination system. Moreover, since $\GG$ has three elements $I_1,I_2$ and $I_3$ such that for all $i\in \ZZ_3$, $I_i\cap \overline{v_H(\GG)}$ has non-empty interior, there is a non-leaf ideal polygon $\GG_H$ in $\LL_H$ such that $m(v(\GG))=v(\GG_H)$. By the construction of $G_H$ and $\LL_H$, $v_{G_H}(\GG_H)$ is dense in $S^1$. Therefore, by Theorem \ref{thm:nonabelian}, $G_H$ is non-abelian and so $H$ is also non-abelian. Thus, $G$ is not virtually abelian. 
	  
\end{proof}

\section{Existence of a non-abelian free subgroup in the tight pairs}
In 2001, Calegari wrote a lecture note entitled 'Foliations and the
geometrization of 3-manifolds' \cite{calegari2001foliations}, and
later a large chunk of this note became the book \cite{Calebook}. In
this note, Calegari introduced the notion of a tight pair to study
special types of laminar groups. We rephrase the definition below in
terms of lamination systems. 

\begin{defn}
Let $G$ be a subgroup of $\Homeop(S^1)$, and $\LL$ be a $G$-invariant lamination system.  The pair $(\LL,G)$ is \textsf{tight} if $\LL$ is very full and totally disconnected and for each $I\in \LL$, $\ell(I)$ is not isolated, $G$ acts on $\LL$ minimally and the set of non-leaf gaps consists of finitely many orbit classes under this action.
\end{defn}

In  \cite{calegari2001foliations}, Calegari showed that there are two
types of tight pairs, sticky pairs and slippery pairs. $(\LL, G)$ is a
sticky pair if every gap of $\LL$ has a vertex shared with another
non-leaf gap, and is a slippery pair if no non-leaf gap of $\LL$ shares a vertex with
other gaps. He constructed a dual $\mathbb{R}$-tree to $\LL$ in case of the sticky
pairs and by analyzing the $G$-action on this dual tree, the following
theorem was obtained.

\begin{thm}[Calegari  \cite{calegari2001foliations}]
  Suppose $(\LL, \pi_1(M))$ is a sticky pair for some closed
  irreducible 3-manifold $M$. Then $M$ is Haken. 
\end{thm}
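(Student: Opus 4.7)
The plan is to construct, from the sticky pair $(\LL, \pi_1(M))$, a simplicial tree $T$ carrying a non-trivial $\pi_1(M)$-action, and then to feed the resulting Bass--Serre splitting into the classical machinery that extracts an embedded incompressible surface from a splitting of a closed irreducible 3-manifold group.

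First I would build $T$. Its vertex set is the disjoint union of the non-leaf gaps of $\LL$ (of which there are finitely many $\pi_1(M)$-orbits by tightness) and the \emph{sticking points}, i.e.\ points of $S^1$ that serve as vertices of at least two distinct non-leaf gaps. Join a gap-vertex $\GG$ to a sticking-point vertex $p$ by an edge whenever $p \in v(\GG)$. Connectedness of $T$ follows directly from stickiness: starting from any non-leaf gap one can hop, via a shared sticking point, to another non-leaf gap. Absence of cycles follows from Lemma~\ref{tg}: two distinct non-leaf gaps are unlinked, so any loop in $T$ would force crossing configurations of gaps on $S^1$, a contradiction. The $\pi_1(M)$-action on $\LL$ permutes both types of vertex and induces a simplicial action on $T$ without inversions.

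Next I would rule out a global fixed point for this action. A fixed gap-vertex $\GG$ would mean that $\pi_1(M)$ preserves the finite set $v(\GG)$ of vertices of an ideal polygon. By Corollary~\ref{ed} together with minimality of the tight action, the orbit $v_{\pi_1(M)}(\GG)$ is dense in $S^1$, so Lemma~\ref{mg} yields an element $g \in \pi_1(M)$ with $v(g(\GG)) \subsetneq I$ for some single edge $I \in \GG$, forcing $g(\GG) \ne \GG$, a contradiction. A fixed sticking-point vertex is ruled out similarly, using the non-isolation of leaves. Bass--Serre theory then produces a non-trivial splitting of $\pi_1(M)$ as an amalgamated free product or an HNN extension over an edge stabilizer $C$. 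Because $M$ is closed and irreducible, the standard correspondence between such a splitting of the fundamental group and embedded two-sided incompressible surfaces in aspherical 3-manifolds (via Stallings' ends theorem and its 3-manifold-topological consequences) produces an embedded essential surface in $M$. Hence $M$ is Haken.

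The main obstacle is verifying that the resulting splitting is genuinely non-degenerate: the edge stabilizers should be proper subgroups, and the extracted surface should be incompressible rather than, say, a sphere. Three ingredients of the tight-pair hypothesis combine to secure this. Very fullness guarantees that each non-leaf gap is a finite-sided ideal polygon with non-trivial stabilizer. Total disconnectedness forces the edge groups in the splitting to be proper subgroups of the vertex groups, so that the splitting is non-trivial in the Bass--Serre sense. The assumption that no leaf of $\LL$ is isolated prevents edge stabilizers from being trivial, which would correspond to spheres rather than essential surfaces in $M$. Once these non-degeneracy properties are in hand, the passage from the group splitting to a Haken surface is classical for closed irreducible 3-manifolds.
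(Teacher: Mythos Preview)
The paper does not prove this theorem; it is quoted from Calegari's lecture notes, with full details deferred to Te Winkel's thesis. The only description of the method is the sentence just before the statement: Calegari ``constructed a dual $\RR$-tree to $\LL$'' and analyzed the $\pi_1(M)$-action on it. So your outline---produce a tree, rule out a fixed point, extract a splitting and hence an incompressible surface---agrees with the indicated strategy, but the tree in Calegari's argument is an $\RR$-tree dual to the geometric realization of the lamination, not the bipartite simplicial graph on gaps and sticking points that you build.

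Several steps in your construction are not justified. Stickiness says each gap shares a vertex with \emph{some} other gap; it does not say any two gaps are joined by a chain of shared vertices, so connectedness of $T$ needs an argument. More seriously, ``non-isolation of leaves'' does not rule out a fixed sticking-point vertex: such a vertex is a global fixed point of the circle action, and the paper explicitly leaves this open for tight pairs (the Corollary after Theorem~\ref{im} gives only \emph{at most one} global fixed point, and in the final section the authors assert that sticky pairs have none but defer the proof to a forthcoming paper). Finally, the chain of implications in your last paragraph---very fullness giving nontrivial gap stabilizers, total disconnectedness forcing proper edge groups, non-isolation forcing nontrivial edge stabilizers---is asserted without proof, and none of these is immediate. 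These are exactly the places where the real content of the $\RR$-tree analysis lives.
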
 

Full detail of the proof of above theorem is also presented in Master's thesis of Te
Winkel \cite{Winkel16}. In this section, we study a general feature of
tight pairs.

\begin{prop}\label{dol}% denseness of orbit of leaf
Let $(\LL, G)$ be a tight pair. Then for any leaf $\ell$, $v_G(\ell)=\displaystyle \bigcup_{g\in G} v(g(\ell))$ is dense on $S^1$. 
\end{prop}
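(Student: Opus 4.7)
The plan is to argue by contradiction, combining the minimality of the $G$-action on $\LL$ with the density of the endpoint set $E(\LL)$ in $S^1$.

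First I would suppose $v_G(\ell)$ is not dense in $S^1$. Then the open set $S^1 \setminus \overline{v_G(\ell)}$ contains a nondegenerate open interval $J$, and by construction $J \cap v_G(\ell) = \emptyset$. Since $(\LL, G)$ is tight, $\LL$ is very full, so by Corollary \ref{ed} the set $E(\LL)$ is dense in $S^1$. In particular there exists a leaf $\ell' \in \LL$ with an endpoint $a \in J$; pick $I' \in \ell'$ with $a \in \partial I'$, and write $I' = (a,b)_{S^1}$.

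Next, by minimality of the $G$-action on $\LL$, there is a sequence $\{g_n\}_{n=1}^{\infty}$ in $G$ with $g_n(\ell) \to \ell'$. Unwinding the definition, there exist $I_n \in g_n(\ell)$ with
$$I' \subseteq \liminf_{n \to \infty} I_n \subseteq \limsup_{n \to \infty} I_n \subseteq \overline{I'}$$
(possibly after replacing $I'$ by $(I')^*$, a case handled identically by Proposition \ref{prop:symmetry of sequences}). The key step is to deduce that the unordered pair of endpoints of $I_n$ converges to $\{a,b\}$. For this, given any small open neighborhood $U$ of $a$, I would pick $a' \in U \cap I'$ and $r \in U \setminus \overline{I'}$ lying on opposite sides of $a$. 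For all sufficiently large $n$, the liminf condition forces $a' \in I_n$ while the limsup condition forces $r \notin I_n$; connectedness of $I_n$ then forces some endpoint of $I_n$ to lie in the short arc between $r$ and $a'$ through $a$, and hence in $U$. Symmetrically one endpoint of $I_n$ tends to $b$.

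In particular, for all sufficiently large $n$, some endpoint of $I_n$ lies in $J$, since $a \in J$ and $J$ is open. But endpoints of $I_n$ are endpoints of the leaf $g_n(\ell)$, so they belong to $v(g_n(\ell)) \subseteq v_G(\ell)$, contradicting $v_G(\ell) \cap J = \emptyset$. This contradiction shows $v_G(\ell)$ must be dense.

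The main obstacle is the endpoint-convergence step in the third paragraph. Although geometrically obvious, it requires care because the paper's notion of convergence of leaves is phrased in terms of $\liminf$ and $\limsup$ of the intervals, not in terms of endpoints directly; one must rule out the pathology that an endpoint of $I_n$ clusters at an interior point of $\overline{I'}$. Once this is in hand, the rest is a direct combination of minimality with Corollary \ref{ed}.
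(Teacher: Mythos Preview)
Your proof is correct and follows the same overall strategy as the paper: assume $v_G(\ell)$ is not dense, use Corollary~\ref{ed} to find a leaf $\ell'$ with an endpoint in the gap, and use minimality to bring translates of $\ell$ close to $\ell'$, producing a contradiction.

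The difference lies in how the contradiction is extracted. You argue directly that the endpoints of $I_n$ converge to those of $I'$, via the $\liminf/\limsup$ sandwich and a connectedness argument on a small arc through $a$; this is correct and your sketch of the endpoint-convergence step is sound. The paper instead takes $K$ to be a full connected component of $S^1 \setminus \overline{v_G(\ell)}$ and observes that, since $K$ contains no vertex of any $g_n(\ell)$, for each $n$ the whole interval $K$ lies in $J_n$ or in $J_n^*$. Picking $q \in K \cap I'$, the $\liminf$ condition forces $q \in J_n$ for large $n$, hence $K \subseteq J_n$, hence $K \subseteq \liminf J_n \subseteq \overline{I'}$; but $K$ is open and contains the boundary point $p$ of $I'$, so $K \not\subseteq \overline{I'}$. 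This sidesteps the endpoint-convergence computation entirely by a single structural observation, making the argument somewhat slicker; your route is more explicit and perhaps more transparent about why the geometry forces the contradiction.
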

\begin{proof}
Suppose that there is a leaf $\ell$ of $\LL$ such that $v_G(\ell)$ is not dense in $S^1$. Then there is a connected component $K$ of $S^1-\overline{v_G(\ell)}$. Since, by Corollary \ref{ed}, $E(\LL)$ is dense on $S^1$, there is $p$ in $E(\LL)\cap K$ and so there is a leaf $\ell'$ of $\LL$ with $p\in v(\ell')$. Since the action of $G$ is minimal, there is a sequence $\{g_n\}_{n=1}^{\infty}$ of $G$ such that $g_n(\ell)\rightarrow \ell'$. Then there is a sequence $\{J_n\}_{n=1}^{\infty}$ of $\LL$ such that for all $n\in \NN$, $g_n(\ell)=\ell(J_n)$ and 
$$I'\subseteq \liminf J_n \subseteq \limsup J_n \subseteq \overline{I'}$$
for some $I'\in \ell'$.
For each $n\in \NN$, it is either $K\subseteq J_n$ or $K\subseteq J_n^*$ by the choice of $K$. Note that $I'\cap K$ is not empty. Choose $q\in I'\cap K$. Since $I'\subseteq \liminf J_n$, there is $N$ in $\NN$ such that 
$q\in \displaystyle \bigcap_{n=N}^{\infty}J_n$. Therefore, for any $n\geq N$, $q\in K\subseteq J_n$ and so $K\subseteq \displaystyle \bigcap_{n=N}^{\infty}J_n\subseteq \liminf J_n$. However, $K$ is not contained in $\overline{I'}$ and so it is a contradiction. 
\end{proof}

\begin{cor}\label{dog} %denseness of the orbit of vertices of a gap
Let $(\LL, G)$ be a tight pair. Then for any non leaf gap $\GG$, $v_G(\GG)$ is dense on $S^1$. 
\end{cor}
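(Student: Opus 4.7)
The plan is to reduce the statement for non-leaf gaps to the statement for leaves, which has already been established in Proposition \ref{dol}. Given a non-leaf gap $\GG$, I would choose any element $I \in \GG$. Since $\GG \subseteq \LL$, we have $I \in \LL$, and so $\ell(I) = \{I, I^*\}$ is a leaf of $\LL$ in the sense of the definition of a leaf of a lamination system.

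The key observation is that the two endpoints of $I$ (which form the vertex set $v(\ell(I))$) are automatically contained in $v(\GG)$. This is immediate from the definition of the vertex set $v(\GG) = S^1 - \bigcup_{J \in \GG} J$: the boundary of any $J \in \GG$ must avoid every other $J' \in \GG$ because elements of a gap are pairwise disjoint, and hence $\partial J \subseteq v(\GG)$. Because the $G$-action permutes gaps (each $g \in G$ sends $\GG$ to another gap $g(\GG)$, which is again an ideal polygon with vertex set $g(v(\GG))$), the same inclusion $v(\ell(g(I))) = \partial g(I) \subseteq v(g(\GG))$ holds for every $g \in G$. Taking the union over all $g$ yields
\[
v_G(\ell(I)) \;=\; \bigcup_{g\in G} v(g(\ell(I))) \;\subseteq\; \bigcup_{g\in G} v(g(\GG)) \;=\; v_G(\GG).
\]

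Now by Proposition \ref{dol}, $v_G(\ell(I))$ is dense in $S^1$, and the containment above immediately forces $v_G(\GG)$ to be dense as well. There is no serious obstacle to overcome here, since all the analytic content is carried by Proposition \ref{dol}; the corollary is essentially a bookkeeping step that promotes density from leaves to arbitrary non-leaf gaps by exploiting the fact that each element of a gap already produces a leaf whose vertices lie in the gap's vertex set.
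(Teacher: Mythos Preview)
Your proof is correct and follows exactly the approach the paper intends: the corollary is stated without proof immediately after Proposition~\ref{dol}, and your argument---picking any $I \in \GG$, noting $v(\ell(I)) = \partial I \subseteq v(\GG)$, and pushing this inclusion through the $G$-action---is precisely the routine deduction the paper leaves to the reader.
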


\begin{prop}\label{eg}% There is a gap. 
Let $(\LL, G)$ be a tight pair.  There is a non leaf gap $\GG$. 
\end{prop}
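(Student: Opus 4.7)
The plan is to argue by contradiction, exploiting the total disconnectedness and the non-isolation hypotheses; the minimality and very fullness axioms will not be needed. First I would assume $\LL$ has no non-leaf gap and derive a contradiction by showing this forces $\LL$ to contain essentially a single leaf, which collides with non-isolation.

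The first step is to produce two distinct leaves. Since $\LL$ is a lamination system, it is nonempty, so pick any $I_0 \in \LL$. By hypothesis, the leaf $\ell(I_0)$ is not isolated, so there exists an $I_0$-side (or $I_0^*$-side) sequence of leaves distinct from $\ell(I_0)$. In particular, $\LL$ contains at least two distinct leaves $\ell(I)$ and $\ell(J)$, meaning $J \notin \{I, I^*\}$.

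The second step is to extract a distinct pair. By the second axiom of lamination systems, $\{I, I^*\}$ lies on $J$ or on $J^*$, so, after replacing $I$ by $I^*$ or $J$ by $J^*$ if necessary, we may assume $I \subseteq J$. Since $\ell(I) \neq \ell(J)$ forces $I \neq J$, this inclusion is strict, hence $J^* \subsetneq I^*$ and in particular $I \cap J^* = \emptyset$. Moreover $\{I, J^*\}$ cannot be a leaf, because that would require $J^* = I^*$, i.e., $J = I$. Therefore $\{I, J^*\}$ is a distinct pair in the sense of the definition preceding the proposition.

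The third and final step is to invoke total disconnectedness: the distinct pair $\{I, J^*\}$ must be separated by some non-leaf gap of $\LL$. This directly contradicts our standing assumption that no non-leaf gap exists, completing the proof. I do not anticipate a real obstacle here; the only thing to be careful about is keeping track of the four symmetric containment cases from axiom (2) so that the reduction to $I \subsetneq J$ is clearly justified, and confirming that the distinct-pair condition is verified (both disjointness and non-leafness), which are both immediate from $I \subsetneq J$.
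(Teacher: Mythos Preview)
Your proof is correct and follows the same overall strategy as the paper's: produce a distinct pair in $\LL$ and then invoke total disconnectedness, which by definition furnishes a non-leaf gap. The only difference is in how a second leaf is obtained. The paper uses very fullness via Corollary~\ref{ed} (density of $E(\LL)$): starting from any $I \in \LL$, it picks $p \in E(\LL) \cap I^*$ and a leaf through $p$; the location of $p$ immediately forces $I \subsetneq J$ or $I \subsetneq J^*$, yielding the distinct pair $\{I, J^*\}$ or $\{I, J\}$. You instead appeal to the non-isolation hypothesis to exhibit a leaf distinct from $\ell(I_0)$ and then sort out the strict containment by axiom~(2). Both routes are short and valid; yours has the mild advantage of confirming, as you note, that very fullness and minimality play no role in this particular step, though both are part of the tight-pair definition anyway.
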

\begin{proof}
Since $\LL$ is not empty, there is an element $I \in \LL$. By the definition of lamination system, $I^*\in \LL$. By Corollary \ref{ed}, there is $p$ in $E(\LL) \cap I^*$. So there is $J\in \LL$ such that $p\in v(\ell(J))$. If $J\subseteq I$, then $p\in \overline{J} \subseteq \overline{I}$ and it is a contradiction since $p\in I^*$. If $J^* \subseteq I$, then $p\in \overline{J^*} \subseteq \overline{I}$ and it is also a contradiction since $p\in I^*$. Therefore, either $I \subsetneq J$ or $I\subsetneq J^*$. So, $\{I,J^*\}$ or $\{I, J\}$ is a distinct pair, respectively. Thus, since $\LL$ is totally disconnected, there is a non leaf gap which makes the distinct pair  be separated. 
\end{proof}

By Theorem \ref{thm:virtuallynonabelian}, tight pairs are
not virtually abelian. Our goal here is to show that a tight pair
actually contains a non-abelian free
subgroup as long as it does not admits a global fixed point. We will use the following famous theorem of Margulis which is 
an analogy of the Tits alternative. 

\begin{thm}[Margulis \cite{margulis2000free}]\label{margulis}
Let $G$ be a subgroup of $\Homeop(S^1)$. At least one of the following properties holds : 
\begin{enumerate}
\item $G$ contains a non abelian free subgroup.
\item There is a Borel probability measure on the circle which is $G$-invariant. 
\end{enumerate}
\end{thm}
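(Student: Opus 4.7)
The plan is to prove the dichotomy by assuming there is no $G$-invariant Borel probability measure on $S^1$ and producing a non-abelian free subgroup by a ping-pong argument. First I would dispose of easy cases: if $G$ has a finite orbit, the normalized counting measure on that orbit is $G$-invariant, so case $(2)$ holds. Hence I may assume every $G$-orbit is infinite. Passing to a minimal nonempty closed invariant set $K\subseteq S^1$, the standard classification says $K$ is either all of $S^1$ or a Cantor set; in the latter case, collapsing each connected component of $S^1\setminus K$ yields a new circle on which $G$ acts minimally. So I reduce to the setting of a minimal $G$-action on $S^1$ with no invariant probability measure.

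Next I would establish a proximality statement: there exist elements $g_n\in G$ and distinct points $p, q\in S^1$ such that $g_n$ pushes all but a shrinking neighborhood of $q$ arbitrarily close to $p$. The key idea is that if no such ``contracting'' sequence existed, one could construct an invariant measure by an averaging / Furstenberg-style argument on the convex compact set of probability measures on $S^1$, contradicting the assumption. From such a proximal sequence one extracts a single element $h\in G$ of ``north--south'' type: $h$ has exactly two fixed points, an attractor $p^+$ and a repeller $p^-$, with $h^n(x)\to p^+$ for every $x\neq p^-$. (Concretely, one can take $h=g_n$ for large $n$; iteration concentrates mass near the attractor.)

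Having one such element $h$, I would produce a second element $h'$ whose fixed-point pair is disjoint from $\{p^+,p^-\}$. Since the $G$-action is minimal and no orbit is finite, I can find $f\in G$ with $f(\{p^+,p^-\})\cap\{p^+,p^-\}=\emptyset$, and then $h':=fhf^{-1}$ has fixed points $\{f(p^+),f(p^-)\}$ disjoint from those of $h$. Now I choose four small pairwise disjoint open arcs $U^\pm, V^\pm$ around $p^\pm$ and $f(p^\pm)$ respectively. By the north--south dynamics of $h$ and $h'$, a sufficiently high power $N$ guarantees $h^{\pm N}(S^1\setminus U^\mp)\subseteq U^\pm$ and $(h')^{\pm N}(S^1\setminus V^\mp)\subseteq V^\pm$. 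The ping-pong lemma then implies $\langle h^N, (h')^N\rangle$ is free of rank $2$.

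The main obstacle is the existence of a proximal element under the no-invariant-measure hypothesis; this is the heart of the argument, and is where Margulis's original proof does the real work. The cleanest route is to study the action of $G$ on the space of probability measures on $S^1$: either one finds a fixed point (giving case $(2)$), or one shows that the barycenter map and convexity force a measure to be pushed off itself by group elements that must therefore contract arcs. The remaining steps, namely enriching one proximal element to a north--south pair and then applying ping-pong, are essentially formal once proximality is in hand.
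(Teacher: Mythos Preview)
The paper does not prove this theorem. It is stated as a known result of Margulis, cited from \cite{margulis2000free}, and used as a black box in the proof of Corollary~\ref{cor: a free group in a tight pair}. There is therefore no proof in the paper to compare your proposal against.

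As for the content of your sketch: it captures the overall architecture of Margulis's argument (no invariant measure $\Rightarrow$ contraction/proximality $\Rightarrow$ ping-pong), and the reductions in your first paragraph are standard and correct. The weak point is the passage from ``a proximal sequence $g_n$'' to ``a single element $h$ with north--south dynamics''. A sequence of elements contracting most of the circle toward a point does not automatically produce one element with exactly two fixed points, and taking $h=g_n$ for large $n$ need not give that. Margulis's actual argument (and Ghys's streamlined version) bypasses this by working directly with pairs of contracting elements whose attracting and repelling arcs can be made disjoint, and then applying ping-pong to suitable powers; one does not first need a genuine hyperbolic element. Your acknowledgment that the proximality step under the no-invariant-measure hypothesis is ``the heart of the argument'' is accurate, and that part would require real work to make rigorous.
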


Let $\mu$ be a Borel probability measure on $S^1$. We define the \textsf{support} of $\mu$ as the complement of the union of measure zero open sets and denote it as $supp(\mu)$.  
we can get the following facts : 
\begin{enumerate}
\item $supp(\mu)$ is a closed subset of $S^1$.
\item For each $p\in supp(\mu)$ and each open neighborhood $U$ of $p$, $\mu(U)>0$.
\item If $\mu$ is also $G$-invariant where $G$ is a subgroup of $\Homeop(S^1)$, then $supp(\mu)$ is also $G$-invariant, that is,  for each $g\in G$, $g(supp(\mu))=supp(\mu).$ 
\end{enumerate}

\begin{lem}\label{mig}%mass in a gap
Let $(\LL, G)$ be a tight pair. Suppose that there is a Borel probability measure $\mu$ on $S^1$ which is $G$-invariant. Then for each non leaf gap $\GG$ of $\LL$, there is a unique element $I$ in $\GG$ such that $\mu(I)=1$.
\end{lem}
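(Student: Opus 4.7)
The plan is to exploit the strong nesting provided by Lemma \ref{mg} together with $G$-invariance of $\mu$ to pin down the measures of the sides of $\GG$. Since $(\LL,G)$ is a tight pair and $\GG$ is a non-leaf gap, $\LL$ is very full, so $\GG$ is an ideal polygon with $n=|\GG|\ge 3$ sides; and by Corollary \ref{dog}, $v_G(\GG)$ is dense in $S^1$, so Lemma \ref{mg} applies.

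Write $\GG=\{I_1,\dots,I_n\}$, set $m_j=\mu(I_j)$, and let $T=\sum_j m_j = 1-\mu(v(\GG))$. For each $j$, Lemma \ref{mg} supplies $g_j\in G$ with $v(g_j\GG)\subseteq I_j$, from which I extract two pieces of information. First, $v(g_j\GG)=g_j(v(\GG))$ and $G$-invariance of $\mu$ give
$$m_j=\mu(I_j)\ge \mu(v(g_j\GG))=\mu(v(\GG)) \quad \text{for every } j, \qquad (\ast)$$
which I label $(\ast)$ for later use. Second, the $n$ elements of $g_j\GG$ split into $n-1$ ``small'' arcs contained in $I_j$ together with one ``big'' arc $J_j$ containing $I_j^*$. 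By $G$-invariance, $\{\mu(K):K\in g_j\GG\}=\{m_1,\dots,m_n\}$ as multisets, so there is an index $k_j$ with $\mu(J_j)=m_{k_j}$, while the $n-1$ small elements are pairwise-disjoint subsets of $I_j$ carrying the measures $\{m_l:l\ne k_j\}$. Disjointness yields
$$T-m_{k_j}=\sum_{l\ne k_j} m_l \le m_j, \qquad \text{i.e., } m_j+m_{k_j}\ge T.$$

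From this second inequality I extract the combinatorial dichotomy that is the heart of the argument: either $k_j=j$, so that $m_j\ge T/2$ (``Case A''), or $k_j\ne j$, in which case $m_j+m_{k_j}\ge T=\sum_l m_l$ together with $m_l\ge 0$ forces $m_l=0$ for all $l\notin\{j,k_j\}$ (``Case B''). Using $n\ge 3$ I rule out every configuration except ``exactly one $m_{j_0}$ is positive''. If at least three $m_l$ are positive, Case B holds at no index, so Case A holds at every $j$; summing $m_j\ge T/2$ over $j$ gives $T\ge nT/2\ge 3T/2$, hence $T\le 0$, contradicting three positive $m_l$. If exactly two $m_l$ are positive, pick any $i$ with $m_i=0$ (which exists since $n\ge 3$): Case A would give $0=m_i\ge T/2>0$, impossible, while Case B would leave only $m_i$ and one other $m_{k_i}$ potentially nonzero, i.e., at most one positive, again contradictory. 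If no $m_l$ is positive then $T=0$, so $\mu(v(\GG))=1$; but $(\ast)$ then gives $0=m_j\ge 1$, absurd.

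Thus exactly one $m_{j_0}$ is positive. For each $l\ne j_0$, $(\ast)$ gives $0=m_l\ge \mu(v(\GG))$, so $\mu(v(\GG))=0$, whence $m_{j_0}=T=1$, and uniqueness is automatic. The main obstacle will be the combinatorial dichotomy above---in particular the Case-B branch---together with the final use of $(\ast)$ to kill the vertex mass $\mu(v(\GG))$; both steps lean critically on $n\ge 3$, which is precisely what the very-fullness of $\LL$ and the non-leaf hypothesis on $\GG$ guarantee.
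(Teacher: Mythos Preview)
Your argument is correct and rests on the same key input as the paper's proof, Lemma \ref{mg} together with $G$-invariance of $\mu$, but it is organized somewhat differently. The paper proceeds in two stages: it first assumes three sides $I_0,I_1,I_2$ have positive measure, applies Lemma \ref{mg} to each, and (after disposing of the case $g_i(I_i)\neq L_i'$) obtains the cyclic inequalities $\mu(I_{i+1})+\mu(I_{i+2})\le\mu(I_i)$, which sum to a contradiction; having secured a measure-zero side $J$, it applies Lemma \ref{mg} once more to $J$ to produce a side whose complement has measure at most $\mu(J)=0$. Your route replaces the first stage by the single uniform inequality $m_j+m_{k_j}\ge T$ and the Case A/B dichotomy, and replaces the second by the separate observation $(\ast)$ that every side dominates the vertex mass $\mu(v(\GG))$. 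The paper's endgame is marginally quicker once a zero side is in hand, while your version is more symmetric and makes the vanishing of $\mu(v(\GG))$ explicit rather than implicit.
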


\begin{proof}
Let $\GG$ be a non leaf gap. First, we want to show that there is at most two positive measure elements in $\GG$.  Suppose that  there are three elements $I_0, I_1 \ and \ I_2$  in $\GG$ which are positive measure. Say that $\{I_i\}_{i\in \ZZ_3}$ and choose $i\in \ZZ_3$. By Lemma \ref{mg}, there is $g_i\in G$ such that for any $J\in g_i(\GG)$, $\ell(J)$ properly lies on $I_i$ and  by Lemma \ref{tg}, there are $L_i$ in $\GG$ and $L_i'$ in $g_i(\GG)$ such that $ (L_i')^*\subseteq L_i $. Since for all $J\in g_i(\GG)$ which is not $L_i'$, $J\subseteq L_i$ and so $\ell(J)$ lies on $L_i$, so $L_i=I_i$. If $g_i(I_i)\neq L_i'$, then $g_i(I_i)\subseteq (L_i')^*\subseteq   I_i $. Then, at least one of $g_i(I_{i+1})$ and $g_i(I_{i+2})$ is contained in $I_i$.  If, for some $j \in \ZZ_3-\{i\}$, $g(I_i)\cup g(I_j) \subseteq I_i$, then  
$\mu(g(I_i))+\mu(g(I_{j}))\leq \mu(I_i)$ and since $\mu$ is $G$-invariant, $\mu(I_i)+\mu(I_{j})\leq \mu(I_i)$ , and so $\mu(I_j)\leq 0$. It is a contradiction since $0<\mu(I_j)$. Therefore, $g_i(I_i)=L_i'$. Then for all $i\in \ZZ_3$, $g_i(I_{i+1})\cup g_i(I_{i+2}) \subseteq g_i(I_i)^*=(L'_i)^* \subseteq L_i =I_i$ and so 
$\mu(I_{i+1})+\mu(I_{i+2})=\mu(g_i(I_{i+1}))+\mu(g_i(I_{i+2})) \leq \mu(I_i)$. However, 
$$\mu(I_1)\geq \mu(I_2)+\mu(I_3)\geq \{ \mu(I_3)+\mu(I_1)\} +\{ \mu(I_1)+\mu(I_2) \}$$ 
and so
$$0\geq \mu(I_1)+\mu(I_2)+\mu(I_3).$$
It is a contradiction since $ \mu(I_1)+\mu(I_2)+\mu(I_3)>0.$ Therefore, there are at most two positive measure elements in $\GG$. It implies that there is  at least one measure-zero element $J$ in $\GG$ since $\GG$ is a non leaf gap. 
Now, we show that there is a element $I$ such that $\mu(I)=1$. By Lemma \ref{mg}, there is $g\in G$ such that for any $K\in g(\GG)$, $\ell(K)$ properly lies on $J$ and  by Lemma \ref{tg}, there are $L$ in $\GG$ and $L'$ in $g(\GG)$ such that $ (L')^*\subseteq L $. Then, $L=J$ and $\overline{(L')^*} \subseteq L=J$. So 
$\mu((L')^c)=\mu(\overline{(L')^*}) \leq \mu(J)=0$ and it implies that $\mu(L')=1$. Thus, $L'$ is the element $I$ which we want.
\end{proof}

Then, we can get the same result in a leaf as the following lemma.

\begin{lem}\label{mil}% mass in a leaf
Let $(\LL, G)$ be a tight pair. Suppose that there is a Borel probability measure $\mu$ on $S^1$ which is $G$-invariant. Then for each leaf $\ell$, there is a unique element $I$ in $\ell$ such that $\mu(I)=1$. 
\end{lem}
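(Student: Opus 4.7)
The plan is to combine Lemma \ref{mig} with the minimality of the $G$-action on $\LL$ and a Fatou-type argument. By Proposition \ref{eg}, $\LL$ contains a non-leaf gap $\GG$, and by Lemma \ref{mig} there is a unique element $K\in\GG$ with $\mu(K)=1$. Consequently every other element of $\GG$ and every vertex of $\GG$ is $\mu$-null, since $v(\GG)\subseteq S^1\setminus K$. For an arbitrary leaf $\ell=\{I,I^*\}$, minimality of the $G$-action on $\LL$ provides a sequence $\{g_n\}\subseteq G$ with $g_n(\ell(K))\to \ell$. After passing to a subsequence and, if necessary, swapping $K$ with $K^*$ by Proposition \ref{prop:symmetry of sequences}, we may assume $g_n(K)\to I$; that is, $I\subseteq \liminf_n g_n(K)\subseteq \limsup_n g_n(K)\subseteq \overline{I}$.

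Since $\mu$ is $G$-invariant, $\mu(g_n(K))=\mu(K)=1$ for every $n$. The reverse Fatou inequality for sets in the probability space $(S^1,\mu)$ then gives
\[
1 \;=\; \limsup_n \mu(g_n(K)) \;\leq\; \mu\bigl(\limsup_n g_n(K)\bigr) \;\leq\; \mu(\overline{I}) \;=\; \mu(I) + \mu(\partial I).
\]
Since $\mu(I)+\mu(I^*)+\mu(\partial I)=1$, we conclude $\mu(I^*)=0$ and $\mu(I)+\mu(\partial I)=1$.

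The step I expect to be the main obstacle is the final task of showing $\mu(\partial I)=0$, so that $\mu(I)=1$. By Proposition \ref{dol}, $v_G(\ell)=G\cdot p\cup G\cdot p'$ is dense in $S^1$, where $\{p,p'\}=\partial I$. Suppose for contradiction that some endpoint, say $p$, is an atom of $\mu$. Then every point of the orbit $G\cdot p$ is an atom of the same positive mass by $G$-invariance, and since $\mu$ is a probability measure this orbit must be finite. If $\mu(\{p'\})$ is also positive then the same reasoning forces $G\cdot p'$ to be finite, and $v_G(\ell)$ is then a finite union of finite sets---contradicting its density in $S^1$. The remaining case $\mu(\{p'\})=0$ is more delicate: using Corollary \ref{dog} and iterating Lemma \ref{mg}, one pushes the vertices of $G$-translates of $\GG$ into successively smaller neighborhoods of $p$, producing some $g\GG$ with two consecutive vertices straddling $p$. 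Then $p$ lies inside a small (measure-zero) side of $g\GG$, forcing $\mu(\{p\})=0$ and completing the argument. The technical point is arranging that, via the freedom provided by Lemma \ref{mg} to push the vertex set of a $G$-translate into any prescribed element of $\GG$, we can obtain vertices of a single $G$-translate on both sides of $p$.

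Finally, uniqueness is immediate: $I\cap I^*=\emptyset$ forces $\mu(I)+\mu(I^*)\leq 1$, so at most one of them can equal $1$.
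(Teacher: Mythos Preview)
Your Fatou argument is clean and correctly yields $\mu(\overline I)=1$, hence $\mu(I^*)=0$, after the subsequence step (note: what you really swap is $I$ with $I^*$, not $K$ with $K^*$, but the conclusion is the same). The difficulty you flag at the end is real, and your sketch for it does not work.

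The problem is the claim that by pushing $G$-translates of $\GG$ close to $p$ one produces some $g\GG$ with $p$ in a \emph{measure-zero} side. This is impossible whenever $\mu(\{p\})>0$. By Lemma~\ref{mig}, every translate $g(\GG)$ has a unique element $M_g$ of measure $1$, and every other element has measure $0$. If $p$ is an atom and $p\notin v(g(\GG))$, then the element of $g(\GG)$ containing $p$ has measure at least $\mu(\{p\})>0$, so it \emph{must} be $M_g$. Thus no amount of iterating Lemma~\ref{mg} will place $p$ inside a measure-zero side; the atom always sits in the full-measure side. Your reduction therefore stalls precisely in the case you identified as delicate.

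The paper avoids this obstacle by a different, purely geometric manoeuvre: instead of analyzing atoms at $\partial I$, it nests a $G$-translate of the leaf $\ell$ so that the \emph{closure} of one of its sides lands inside an already-known measure-zero open interval. Concretely, one first fits $\ell(I)$ inside an element $J$ of a non-leaf gap $\GG$, then uses Lemma~\ref{mg} and Proposition~\ref{dol} to find $g'$ with $\overline{g'(I)}\subseteq K$ or $\overline{g'(I^*)}\subseteq K$ for some $K\in\GG\setminus\{J\}$. A short case analysis on whether $\mu(J)=0$ or $\mu(J)=1$ then forces $\mu(\overline I)=0$ or $\mu(\overline{I^*})=0$, giving $\mu(I)=1$ or $\mu(I^*)=1$ directly. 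The point is that by arranging a containment of closures inside an open null set, the endpoint mass is killed automatically; no separate atom argument is needed. If you want to salvage your approach, you can graft this step onto your Fatou conclusion $\mu(I^*)=0$: the same containment argument then rules out $\mu(\overline I)=0$ and forces $\mu(\overline{I^*})=0$, finishing the proof.
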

\begin{proof}
Let $\ell$ be a leaf of $\LL$. By Proposition \ref{eg}, there is a non leaf gap $\GG$ of $\LL$. By the definition of gaps,  $\ell$ lies on an element $J$ of $\GG$. Say that $\ell=\ell(I)$ and $I\subseteq J$. Choose $K$ in $\GG$ which is not $J$. By Corollary \ref{dog} and Lemma \ref{mg}, there is $g$ in $G$ such that $v(g(\GG))\subseteq K$. There is $K'$ in $g(\GG)$ such that $\overline{K'}\subseteq K$. By Proposition \ref{dol}, there is $g'$ such that $v(g'(\ell))\cap K'\neq \phi$. Choose $p$ in $v(g'(\ell))\cap K'$. If $K'\subseteq g'(I)$, then $p\in \overline{g'(I^*)}\subseteq \overline{(K')^*}=(K')^c$ and it is a contradiction since $p\in K'$. If $K'\subseteq g'(I^*)$, then $p
\in \overline{g'(I)}\subseteq \overline{(K')^*}$ and it is also a contradiction since $p\in K'$. Therefore, it is either 
$ g'(I)\subsetneq K'$ or $ g'(I^*)\subsetneq K'$. So,   it is either $ \overline{g'(I)}\subseteq K$ or $ \overline{g'(I^*)}\subseteq K$. 

First, if $I$ is positive measure, then $J$ is also positive measure and, by Lemma \ref{mig}, $\mu(J)=1$. Moreover, $K$ is measure zero. So  $ \overline{g'(I^*)}\subseteq K$ is the case and 
 $\mu(\overline{I^*})=\mu(g'(\overline{I^*}))=\mu(\overline{g'(I^*)})\leq \mu(K)=0$. Therefore, $\mu(\overline{I^*})=0$ and so $\mu(I)=1$. 
 
 Next, assume that $\mu(I)=0$. If $\mu(J)=0$, then by Lemma \ref{mig} $\mu(\overline{J})=0$, and so $\mu(\overline{I}) \leq \mu(\overline{J})=0$. Therefore, $\mu(\overline{I})=0$ and so $\mu(I^*)=1$. If $\mu(J)=1$, then $\mu(K)=0$ by Lemma \ref{mig}. 
 Since $1=\mu(\overline{I^*})=\mu(g'(\overline{I^*}))=\mu(\overline{g'(I^*)})$, $ \overline{g'(I^*)}\subseteq K$ is not possible and so $\overline{g'(I)}\subseteq K$ is the possible case. Therefore, $\mu(\overline{g'(I)})\leq \mu(K)=0$, and so $\mu(\overline{I})=0$. Thus, $\mu(I^*)=1$.
\end{proof}

Finally, we prove the main theorem. 

\begin{thm}\label{im}%invariant measure
Let $(\LL,G)$ be a tight pair. Suppose that there is a Borel probability measure $\mu$ on $S^1$ which is $G$-invariant. Then, the support $supp(\mu)$  of the measure $\mu$ is a one point set. 
\end{thm}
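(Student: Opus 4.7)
I will argue by contradiction: assume $\mathrm{supp}(\mu)$ contains two distinct points $p$ and $q$, and derive a contradiction by exhibiting either a leaf that strictly separates $p$ from $q$ or a non-leaf gap that places them in two distinct open faces. The first observation is that neither can exist under our hypotheses. If $\ell(K)$ were a leaf with $p\in K$ and $q\in K^{*}$, then Lemma \ref{mil} would force one of $K,K^{*}$ to have $\mu$-measure $1$ and the other $0$; the zero-measure side is an open neighborhood of $p$ or $q$, contradicting membership in $\mathrm{supp}(\mu)$. By the exactly analogous argument using Lemma \ref{mig}, no non-leaf gap $\GG$ can have $p$ and $q$ lying in two distinct open faces of $\GG$. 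Thus, for every leaf $\ell(K)$ both $p$ and $q$ lie in the closure of the measure-one side, and for every non-leaf gap both lie in the closure of the measure-one face.

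Next I dispose of the case when at least one of $p,q$ admits a rainbow. By Theorem \ref{er}, each of $p,q$ is either an endpoint of some leaf or admits a rainbow. Suppose $p$ admits a rainbow $\{I_{n}\}$ with $I_{n+1}\subseteq I_{n}$ and $\bigcap_{n}I_{n}=\{p\}$. Since $q\ne p$, there exists $N$ with $q\notin I_{n}$ for all $n\ge N$; and since the boundary points of $I_{n}$ also converge to $p$, for $n$ sufficiently large we further have $q\notin\partial I_{n}$, hence $q\in I_{n}^{*}$ while $p\in I_{n}$. This contradicts the first observation. The same argument applies if $q$ has a rainbow, so we may assume that both $p$ and $q$ are endpoints of leaves of $\LL$.

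For this remaining case I will use the non-leaf gap $\GG_{0}$ provided by Proposition \ref{eg}, the density of $v_{G}(\GG_{0})$ in $S^{1}$ from Corollary \ref{dog}, and Lemma \ref{mg}, to produce a $G$-translate $g(\GG_{0})$ with at least one vertex in each of the arcs $(p,q)_{S^{1}}$ and $(q,p)_{S^{1}}$. Any such translate places $p$ and $q$ in different open faces of $g(\GG_{0})$, contradicting the first observation. The main obstacle is exactly this construction: Lemma \ref{mg} provides, for any face $I$ of $\GG_{0}$, a translate whose vertices all lie inside a single face $I$, which by itself never puts vertices in both arcs at once. Bridging this requires an iterative combination of Lemma \ref{mg} applied to faces of successive translates, together with the non-isolation of leaves in a tight pair and the abundance of leaves with endpoints confined to any proper open sub-arc (a consequence of Theorem \ref{er} together with Corollary \ref{ed} and the very-fullness of $\LL$), to ensure that a translate whose vertex set straddles both arcs can indeed be produced in $G\cdot\GG_{0}$.
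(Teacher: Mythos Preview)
Your first two steps are sound: the observation that no leaf and no non-leaf gap can strictly separate $p$ from $q$ follows directly from Lemmas \ref{mil} and \ref{mig}, and the rainbow case is handled cleanly. The genuine gap is in your final step, and you essentially acknowledge it yourself: you need a $G$-translate $g(\GG_{0})$ with a vertex in each of $(p,q)_{S^{1}}$ and $(q,p)_{S^{1}}$, but you do not construct one. Lemma \ref{mg} only pushes the entire vertex set of a translate \emph{into} a single face; iterating it produces ever more deeply nested translates, not translates that straddle both arcs. Density of $v_{G}(\GG_{0})$ gives a translate with a vertex in $(p,q)_{S^{1}}$, but nothing prevents all its vertices from lying in $(p,q)_{S^{1}}$, in which case the measure-one face contains both $p$ and $q$ in its closure and there is no contradiction. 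The vague appeal to non-isolation of leaves and very-fullness does not bridge this; you would need an additional argument, and it is not clear one exists along these lines.

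The paper's proof avoids this difficulty by a different maneuver. Rather than fixing two support points and trying to separate them, it takes a single $p\in\mathrm{supp}(\mu)$ and shows directly that $p\notin E(\LL)$: if $p$ were an endpoint of a leaf $\ell$, then the measure-zero side $I^{*}$ of $\ell$ is disjoint from $\mathrm{supp}(\mu)$, and one then uses density of $v_{G}(\GG)$ and $v_{G}(\ell)$ (Corollary \ref{dog}, Proposition \ref{dol}, Lemma \ref{mg}, Lemma \ref{tg}) to find $k\in G$ with $k(p)\in I^{*}$. Since $\mathrm{supp}(\mu)$ is $G$-invariant, $k(p)\in\mathrm{supp}(\mu)$, a contradiction. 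Hence every support point has a rainbow, and then Lemma \ref{mil} forces $\mu(\{p\})=1$. The key idea you are missing is to exploit the $G$-invariance of $\mathrm{supp}(\mu)$ to \emph{move the support point}, rather than to move the gap.
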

\begin{proof}
Let $p$ be a point in $supp(\mu)$. First, if $p\in E(\LL)$, then there is a leaf $\ell$ with $p\in v(\ell)$. By Lemma \ref{mil}, there is a unique element $I$ in $\ell$ such that $\mu(I)=1$. So, $supp(\mu)\cap I^* =\phi$ by the definition of the support. 

By Proposition \ref{eg}, there is a non leaf gap $\GG$ and by Corollary \ref{dog}, $v_G(\GG)$ is dense in $S^1$. So, there is $g$ in $G$ such that $v(g(\GG))\cap I^*\neq \phi$. Moreover, by Lemma \ref{tg}
, there is $J$ in $g(\GG)$ such that $J^* \subseteq I^*$.  Since $I^*$ is measure zero, $\mu(J)=1$ by Lemma \ref{mil}. And since $g(\GG)$ is a non-leaf gap, there is $K$ in $g(\GG)$ such that $K\subseteq J^*$ and $\mu(K)=0$. Then by Corollary \ref{dog} and Lemma \ref{mg}, there is $h$ in $G$ such that $v(h(g(\GG)))\subseteq K$. Therefore, we can choose $L$ in $h(g(\GG))$ such that $\overline{L}\subset K$ and so $\overline{L}\subset I^*$.

 By Proposition \ref{dol}, $v_G(\ell)$ is dense so there is $k$ in $G$ such that $v(k(\ell))\cap L \neq \phi$. Then by Lemma \ref{tg}, $M\subseteq L$ for some $M\in k(\ell)$ and it implies $k(p)\in v(k(\ell))\subset \overline{L} \subset I^*$. However, since $k(p) \in supp (\mu)$, $0<\mu(I^*)$ and it is a contradiction. Thus $p\notin E(\LL).$

So, by Lemma \ref{er}, there is a rainbow $\{I_n\}_{n=1}^{\infty}$ at $p$. Applying Lemma \ref{mil} to each $\ell(I_n)$, since $p\in I_n$, $\mu(I_n)=1$ for all $n\in \NN$. Therefore,  $\mu(\{p\})=\mu(\displaystyle \bigcap _{n=1}^{\infty}I_n)=\lim_{n\rightarrow \infty} \mu(I_n)=1$ since $\mu$ is a finite measure. Thus, $supp(\mu)=\{p\}$.
\end{proof}

The following is an immediate corollary of the above theorem, since if
there are more than one global fixed point, one can find an invariant
probability measure supported on those points. 
\begin{cor}
Let $(\LL,G)$ be a tight pair. There is at most one global fixed point. 
\end{cor}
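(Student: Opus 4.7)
The plan is to argue by contradiction, making direct use of Theorem \ref{im}. Suppose, toward a contradiction, that $G$ admits two distinct global fixed points $p$ and $q$ on $S^1$. I would then construct the Borel probability measure
$$\mu = \tfrac{1}{2}\delta_p + \tfrac{1}{2}\delta_q,$$
where $\delta_x$ denotes the Dirac measure at $x$. Since every element $g \in G$ satisfies $g(p)=p$ and $g(q)=q$ by assumption, the pushforward $g_*\mu$ coincides with $\mu$, so $\mu$ is a $G$-invariant Borel probability measure on $S^1$.

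Now I would invoke Theorem \ref{im}, which asserts that for a tight pair $(\LL,G)$, any $G$-invariant Borel probability measure on $S^1$ has support consisting of a single point. On the other hand, by construction $\mathrm{supp}(\mu) = \{p,q\}$ is a two-point set. This contradiction shows that $G$ can have at most one global fixed point.

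There is essentially no obstacle here: the substantive work has already been done in Theorem \ref{im}, and the only thing one has to check is that the two-point convex combination of Diracs is genuinely $G$-invariant, which is immediate from the global fixed-point hypothesis. Thus the corollary reduces to a short application of the preceding theorem, exactly as indicated by the paragraph preceding the statement.
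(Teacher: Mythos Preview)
Your proof is correct and follows exactly the approach indicated in the paper: the sentence preceding the corollary already says that if there were more than one global fixed point one could build an invariant probability measure supported on those points, and you have simply made that explicit by taking $\mu = \tfrac{1}{2}\delta_p + \tfrac{1}{2}\delta_q$ and invoking Theorem~\ref{im}.
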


Now we state the main result of this section. 
\begin{cor}\label{cor: a free group in a tight pair}
Let $(\LL,G)$ be a tight pair without global fixed points. Then, $G$ contains a non abelian free subgroup. 
\end{cor}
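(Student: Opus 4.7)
The plan is to apply Margulis's dichotomy (Theorem \ref{margulis}) to $G$ and rule out the measure-theoretic alternative using the results we have just established about tight pairs.

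First, I would invoke Theorem \ref{margulis}: either $G$ contains a non-abelian free subgroup (which is exactly what we want to prove), or there exists a $G$-invariant Borel probability measure $\mu$ on $S^1$. So it suffices to show that the second alternative cannot occur under the hypotheses of the corollary.

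Next, suppose for contradiction that such a $\mu$ exists. Since $(\LL, G)$ is a tight pair, Theorem \ref{im} applies and tells us that $\mathrm{supp}(\mu)$ consists of a single point, say $\mathrm{supp}(\mu) = \{p\}$. Because the support of a $G$-invariant measure is itself a $G$-invariant subset of $S^1$, for every $g \in G$ we have $g(\{p\}) = \{p\}$, i.e., $g(p) = p$. Thus $p$ is a global fixed point of the $G$-action on $S^1$. This directly contradicts our assumption that $(\LL, G)$ has no global fixed points.

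Therefore the second alternative of Margulis's theorem is impossible, and $G$ must contain a non-abelian free subgroup. The only real content is the combination of Margulis's theorem with Theorem \ref{im}; the absence of a global fixed point is precisely what is needed to eliminate the invariant-measure alternative, so there is no substantive obstacle beyond correctly citing these two ingredients.
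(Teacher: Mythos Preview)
Your proof is correct and follows essentially the same argument as the paper: assume a $G$-invariant probability measure exists, apply Theorem \ref{im} to see its support is a single point, observe this point must be a global fixed point (contradiction), and then conclude via Theorem \ref{margulis}.
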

\begin{proof}
Suppose that there is a $G$-invariant Borel probability measure $\mu$. Then $supp(\mu)$ is a one point set by Lemma \ref{im}. Since $supp(\mu)$ is $G$-invariant, so the element of $supp(\mu)$ is a global fixed point. By the assumption, it is a contradiction. Therefore, there is no such a measure. By Theorem \ref{margulis}, $G$ contains a non abelian free subgroup. 
\end{proof}

\section{Loose Laminations}

A very full lamination system is \textsf{loose} if for any two non-leaf  gaps
$\GG$ and $\GG'$ with $\GG \neq \GG' $, $v(\GG)\cap v(\GG')=\phi$. There are equivalent conditions in totally disconnected very full lamination systems. 
\begin{lem}[\cite{alonso2019laminar}] \label{loose}
Let $\LL$ be a totally disconnected very full lamination system. Then $\LL$ is loose if and only if the following conditions are satisfied :
\begin{enumerate}
\item for each $p\in S^1$, at most finitely many leaves of $\LL$ have $p$ as an endpoint. 
\item There are no isolated leaves.
\end{enumerate}
\end{lem}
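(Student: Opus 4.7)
The plan is to prove both implications by contrapositive, in each direction explicitly constructing the obstruction (two distinct non-leaf gaps at a common vertex, respectively an isolated leaf).

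For the forward direction ($\LL$ loose $\Rightarrow$ (1), (2)), condition (2) would follow quickly from Lemma \ref{eog}: if $\ell(I)$ were isolated, both $I$ and $I^*$ would be isolated, yielding non-leaf gaps $\GG_1 \ni I$ and $\GG_2 \ni I^*$. These gaps must be distinct, since a common gap would contain both $I$ and $I^*$, and any further element would be a nondegenerate open interval disjoint from $S^1 \setminus \partial I$, which is impossible, collapsing the gap to the leaf $\{I, I^*\}$; both then share $\partial I$ as vertices, contradicting looseness. For (1), I would prove the sharper claim that each $p \in S^1$ is the endpoint of at most two leaves. Given three distinct leaves $\ell(I_j)$ through $p$, after dualizing each $I_j$ I could arrange all three intervals on one side of $p$, so $I_1 \supsetneq I_2 \supsetneq I_3$. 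The pairs $\{I_1^*, I_2\}$ and $\{I_2^*, I_3\}$ are distinct pairs, so total-disconnectedness provides non-leaf gaps $\GG_a, \GG_b$ with $K_a \supseteq I_1^*$, $L_a \supseteq I_2$ in $\GG_a$ and $K_b \supseteq I_2^*$, $L_b \supseteq I_3$ in $\GG_b$. If $\GG_a = \GG_b$, disjointness inside a single gap forces $K_a = K_b$ and $L_a = L_b$, whose union covers $S^1 \setminus \partial I_2$, reducing the gap to the leaf $\ell(I_2)$--a contradiction. Moreover $p \in v(\GG_a) \cap v(\GG_b)$: any element containing $p$ would, by openness, meet both $I_1^*$ and $I_2$ near $p$, violating disjointness with $K_a$ and $L_a$. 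The two distinct non-leaf gaps thus share the vertex $p$, contradicting looseness.

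For the backward direction ((1), (2) $\Rightarrow$ $\LL$ loose), suppose $\GG \neq \GG'$ are non-leaf gaps sharing a vertex $p$. Condition (1) lets me list the finitely many leaves through $p$ as $\ell(J_1), \ldots, \ell(J_k)$ with $J_j = (p, q_j)_{S^1}$, where $q_1, \ldots, q_k$ are ordered on $S^1 \setminus \{p\} \cong \RR$. For each $1 \leq i \leq k-1$, the distinct pair $\{J_i, J_{i+1}^*\}$ is separated by a non-leaf gap $\GG_i$ with $K_i \supseteq J_i$, $L_i \supseteq J_{i+1}^*$; the openness argument at $p$ gives $p \in v(\GG_i)$, and a case analysis forces $K_i = J_i$ and $L_i = J_{i+1}^*$ exactly (any strict enlargement either violates $p \in v(\GG_i)$, introduces a leaf through $p$ with endpoint not on the list, overlaps the other separating element, or collapses $\GG_i$ to a leaf). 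Consecutive gaps $\GG_i, \GG_{i+1}$ then share the leaf $\ell(J_{i+1})$, since $J_{i+1} \in \GG_{i+1}$ and $J_{i+1}^* \in \GG_i$. Since $\GG, \GG'$ are among the $\GG_i$, one has $k \geq 3$, and an adjacent pair $(\GG_s, \GG_{s+1})$ of non-leaf gaps shares an edge $\ell = \ell(A)$ at $p$ with $A \in \GG_s$ and $A^* \in \GG_{s+1}$. I would then show $\ell$ is isolated: any $A$-side sequence $\ell(K_n) \to A$ with $\ell(K_n) \neq \ell$ has $\ell(K_n)$ lying on some $J \in \GG_{s+1}$. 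The case $J = A^*$ is ruled out by convergence (the subcase $K_n^* \subseteq A^*$ forces $\limsup K_n \supseteq A^*$, incompatible with $\limsup K_n \subseteq \overline{A}$); hence $K_n \subseteq B$ for some $B \in \GG_{s+1} \setminus \{A^*\}$ with $B \subsetneq A$. Finiteness of $\GG_{s+1}$ and pigeonhole give a subsequence inside a fixed $B_0 \subsetneq A$, contradicting $\liminf K_n \supseteq A$. Symmetrically $A^*$ is isolated using $\GG_s$, so $\ell$ is isolated, contradicting (2).

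The hardest step will be justifying in the backward direction that the ``middle-sector'' gap $\GG_i$ has $K_i = J_i$ and $L_i = J_{i+1}^*$ exactly, rather than proper supersets. Completeness of the leaf list through $p$ (from (1)) is what prevents a strict enlargement of $K_i$ from spawning a new leaf with endpoint strictly between $q_i$ and $q_{i+1}$, and it is the interplay of $p \in v(\GG_i)$, pairwise disjointness of elements in $\GG_i$, and non-leafness of $\GG_i$ that together pin down the identifications. Once these are in place, the remaining ingredients--existence of an adjacent pair among the $\GG_i$ and the gap-property pigeonhole yielding an isolated leaf--are routine.
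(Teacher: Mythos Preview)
The paper does not prove this lemma; it is simply cited from \cite{alonso2019laminar}, so there is no in-paper argument to compare against. Evaluating your proposal on its own merits: the overall architecture is right, but there is a recurring gap. When you invoke total disconnectedness to separate a distinct pair $\{I,J\}$ whose closures meet at $p$, the definition in the paper explicitly allows the containing elements $K\supseteq I$ and $L\supseteq J$ of the separating gap $\GG$ to coincide. If $K=L$ then this common element is a connected open set containing $I\cup J$, hence contains $p$ in its interior, so $p\notin v(\GG)$. Your sentence ``any element containing $p$ would, by openness, meet both $I_1^*$ and $I_2$ near $p$, violating disjointness with $K_a$ and $L_a$'' tacitly assumes $K_a\neq L_a$; the same hidden assumption underlies the backward-direction claim that $p\in v(\GG_i)$ and hence that $K_i=J_i$, $L_i=J_{i+1}^*$ exactly. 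This also means your assertion that the given $\GG,\GG'$ are ``among the $\GG_i$'' is not justified.

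Both directions can be repaired without appealing to total disconnectedness at the delicate step. For the forward implication~(1), with $I_1\supsetneq I_2\supsetneq I_3$ all through $p$, argue directly that $\ell(I_2)$ is isolated: for an $I_2$-side sequence $K_n\subset I_2$ with $K_n\to I_2$, eventually $q_3\in K_n$ while $p\notin K_n$, so $\{p,q_3\}$ links $\partial K_n$ and $\ell(K_n)$ is linked with $\ell(I_3)$, which is impossible; the $I_2^*$-side uses $\ell(I_1)$ symmetrically. Your own argument for~(2) then produces two distinct non-leaf gaps through $\partial I_2\ni p$. For the backward implication, first observe (using only the gap axioms, not total disconnectedness) that any non-leaf gap with vertex $p$ has exactly $J_c$ and $J_{c+1}^*$ as its two elements at $p$ for some consecutive pair; thus the given $\GG,\GG'$ sit at sectors $c<c'$, with no need to manufacture intermediate $\GG_i$. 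Your pigeonhole argument applied to $\GG$ then shows $J_{c+1}$ is isolated. For $J_{c+1}^*$, take a putative $J_{c+1}^*$-side sequence $K_n=(a_n,b_n)\subset J_{c+1}^*$ with $a_n\to q_{c+1}$ and $b_n\to p$; for large $n$ one has $a_n\in J_{c'}$ and $b_n\in J_{c'+1}^*$, and a direct check shows that neither $K_n$ nor $K_n^*$ is contained in any element of $\GG'$, contradicting the gap property. This yields the isolated leaf contradicting~(2).
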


A group acting on the circle with two loose invariant laminations
with certain conditions is called a pseudo-fibered triple. It was
observed in the first author's PhD thesis
\cite{baikthesis} that each nontrivial element in
the pseudo-fibered triple has at most finitely many fixed
points under the assumption that the fixedpoint set is
countable, hence countability of the fixedpoint sets is an underlying
assumption in \cite{alonso2019laminar}. This section should serve as
an appendix to \cite{alonso2019laminar} in which we prove that
additional assumption that the fixedpoint sets are countable is not
necessary.

%\begin{rmk}
%Indeed, when $\LL$ is a totally disconnected loose lamination system, for each point in $E(\LL)$, there are at most two leaves on which the point is an endpoint.  
%\end{rmk}

In this section, we consider a pseudo-fibered triple which is a triple $(\LL_1,\LL_2, G)$ in which $G$ is a finitely generated subgroup of $\Homeop(S^1)$, each nontrivial element of $G$ has at most countably many fixed points in $S^1$ and $\LL_i$ are $G$-invariant very full  loose  lamination systems  with $E(\LL_1)\cap E(\LL_2)=\phi$. Indeed, without the fixed point condition of $G$, we can induce the original definition, that is, each nontrivial element of $G$ has finitely many fixed points. Let's begin with a weaker version of the definition of a pseudo-fibered triple.

\begin{defn}
Let $G$ be a finitely generated subgroup of $\Homeop(S^1)$, and $\LL_1$ and $\LL_2$ be two $G$-invariant lamination system. Then a triple $(\LL_1, \LL_2, G)$ is \textsf{pseudo-fibered} if $\LL_1$ and $\LL_2$ are  very full loose lamination systems with $E(\LL_1)\cap E(\LL_2)=\phi$.   
\end{defn}
The disjoint endpoints condition of two lamination systems enforces totally disconnectedness on lamination systems. 

\begin{prop}[\cite{alonso2019laminar}]\label{tot}
Let $(\LL_1, \LL_2, G)$ be a pseudo-fibered triple. Then $\LL_1$ and $\LL_2$ are totally disconnected. 
\end{prop}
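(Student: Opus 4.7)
The plan is to deduce this proposition as a direct consequence of two results already established in the excerpt: Corollary \ref{ed} on endpoint density in very full lamination systems, and Lemma \ref{tot dis} on totally disconnectedness of dense lamination systems with distinct endpoints.

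First, I would unpack the definition of a pseudo-fibered triple. By hypothesis, $\LL_1$ and $\LL_2$ are very full $G$-invariant lamination systems with $E(\LL_1) \cap E(\LL_2) = \emptyset$. The key features needed downstream are (i) very fullness of each $\LL_i$ and (ii) the distinct endpoints condition. The looseness condition and the $G$-action play no role here, so I would not invoke them.

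Second, by Corollary \ref{ed}, every very full lamination system has dense endpoint set in $S^1$. Applying this to both $\LL_1$ and $\LL_2$ shows that each is a dense lamination system in the sense of the excerpt. Combined with the hypothesis $E(\LL_1) \cap E(\LL_2) = \emptyset$, the pair $(\LL_1, \LL_2)$ satisfies exactly the hypotheses of Lemma \ref{tot dis}: two dense lamination systems with distinct endpoints.

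Third, I would conclude by directly invoking Lemma \ref{tot dis}, which yields that each of $\LL_1$ and $\LL_2$ is totally disconnected. There is essentially no obstacle in this proof, as the argument is a two-line chain of citations; the substantive content has already been extracted into the previously proved Lemma \ref{tot dis} (whose proof was the lengthy part, involving the analysis of $C_p^{I^*} \cap C_p^{J^*}$ and the isolation argument for the minimal element). The only thing worth explicitly remarking in the write-up is that a very full lamination system is automatically dense in the sense required by Lemma \ref{tot dis}, which is precisely the content of Corollary \ref{ed}.
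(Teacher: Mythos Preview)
Your proposal is correct and matches the paper's own proof exactly: the paper simply writes ``It comes from Corollary \ref{ed} and Lemma \ref{tot dis},'' which is precisely the two-step citation chain you describe.
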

\begin{proof}
It comes from Corollary $\ref{ed}$ and Lemma $\ref{tot dis}$.
\end{proof}
The following proposition says that  there is no sticky leaf on two lamination systems. 
\begin{prop}\label{same type endpoints}
Let $(\LL_1, \LL_2, G)$ be a pseudo-fibered triple, and $\GG$ be a leaf in $\LL_1$. For each $g\in G$, it is either  $v(\GG)\subseteq Fix_g$ or $v(\GG)\subseteq S^1-Fix_g$.
\end{prop}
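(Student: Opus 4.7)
The plan is to show that if a leaf $\GG = \{I,I^*\}$ with $v(\GG) = \{x,y\}$ has exactly one endpoint fixed by $g$, then I contradict the first condition of Lemma \ref{loose}. So I would assume without loss of generality that $g(x) = x$ but $g(y) \neq y$, and aim to produce infinitely many leaves of $\LL_1$ through $x$.

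The first step is to observe that, since $g \in \Homeop(S^1)$ fixes $x$, its restriction to $S^1 \setminus \{x\}$ is an order-preserving homeomorphism of $S^1 \setminus \{x\} \cong \RR$ (via any orientation-preserving identification). A self-homeomorphism of $\RR$ is either order-preserving or order-reversing; the latter would reverse the cyclic orientation of triples in $S^1 \setminus \{x\}$, contradicting the fact that $g$ is orientation-preserving on $S^1$.

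Using this, I would produce the required infinite orbit: since $g(y) \neq y$ in $S^1 \setminus \{x\}$, we have either $y < g(y)$ or $g(y) < y$ under the $\RR$-identification, and iterating $g$ while using order preservation yields a strictly monotone sequence $\bigl(g^n(y)\bigr)_{n \geq 0}$, so these points are pairwise distinct. For each $n$, the leaf $g^n(\GG) \in \LL_1$ (here using $G$-invariance of $\LL_1$) has vertex set $\{x, g^n(y)\}$, so distinctness of the second coordinates forces the leaves $g^n(\GG)$ themselves to be pairwise distinct. This yields infinitely many leaves of $\LL_1$ with $x$ as an endpoint, directly contradicting Lemma \ref{loose}(1). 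The main obstacle, though minor, is justifying the order-preservation step; once that is settled the argument is a short monotonicity play, and it uses neither the countable-fixed-point hypothesis nor the second lamination system $\LL_2$.
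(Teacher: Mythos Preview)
Your proof is correct and follows essentially the same argument as the paper: assume one endpoint is fixed and the other is not, iterate $g$ to produce infinitely many leaves through the fixed endpoint, and contradict Lemma~\ref{loose}(1). One small caveat on your closing remark: the argument does implicitly use $\LL_2$, since invoking Lemma~\ref{loose} requires $\LL_1$ to be totally disconnected, and the paper obtains this from Proposition~\ref{tot} via the disjoint-endpoints condition $E(\LL_1)\cap E(\LL_2)=\emptyset$.
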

\begin{proof}
Fix $g$ in $G$. If $Fix_g=\phi$ or $Fix_g=S^1$, then it is obvious. Assume that $Fix_g\neq \phi$ and $Fix_g \neq S^1$. Denote $\GG=\ell((u,v)_{S^1})$. If $u\in Fix_g$ and $v\in S^1-Fix_g$, then for each $n\in \ZZ$, $g^n(\ell((u,v)_{S^1}))=\ell((g^n(u),g^n(v))_{S^1})=\ell((u,g^n(v) )_{S^1})$. Since $g(v)\neq v$, it is either $(u, v)_{S^1}\subsetneq g((u,v)_{S^1})=(u,g(v))_{S^1}$ or $(u, g(v))_{S^1}=g((u,v)_{S^1})\subsetneq (u,v)_{S^1}$. Therefore, there are infinitely many leaves $\{g^{n}(\GG)| \ n\in \ZZ \}$ in which $u$ is an endpoint. However, by Proposition \ref{tot}, $\LL_1$ and $\LL_2$ are totally disconnected and so we can apply Lemma \ref{loose} to $\LL_1$. It is a contradiction. If $v\in Fix_g$ and $u\in S^1-Fix_g$, we can make a same argument with $\GG=\ell((v,u)_{S^1})$. Thus, we are done.
\end{proof}
With this proposition, we analyze the complement of the fixed points set of a non-trivial element of $G$. First, we recall the following lemma.

\begin{lem}[\cite{BaikFuchsian}]\label{3fix}
Let $g$ be a non-trivial orientation preserving homeomorphism on $S^1$ with $3\leq | Fix_g|$. Then any very full lamination system $\LL$ which is $\langle g \rangle$-invariant has a leaf $\ell$ in $\LL$ such that $v(\ell)\subseteq Fix_g$. Moreover, for any connected component $I$  of $S^1-Fix_g$ with $I=(a,b)_{S^1}$, at least one of $a$ and $b$ is an endpoint of a leaf of $\LL$. 
\end{lem}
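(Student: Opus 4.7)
The plan is to establish the ``moreover'' clause first and then deduce the existence of a leaf with endpoints in $\Fix_g$ by a short orbit argument. Note that $g$ preserves $\Fix_g$ (which is closed) and is orientation-preserving, so $g$ preserves each complementary arc $(a,b)_{S^1}$ of $S^1-\Fix_g$; since $g$ has no interior fixed point on such an arc, standard one-dimensional dynamics show that on $(a,b)_{S^1}$ either $g^n(z)\to a$ for every $z$ or $g^n(z)\to b$ for every $z$. Replacing $g$ by $g^{-1}$ when convenient we may thus choose the direction of attraction on any specified arc.

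For the moreover clause I argue by contradiction, assuming both $a,b\notin E(\LL)$; Theorem \ref{er} then supplies a rainbow at each of these points. Assume $g^n(z)\to a$ on $(a,b)_{S^1}$ and fix a rainbow $B_n=(\xi_n,\eta_n)_{S^1}$ at $b$, so that for large $n$, $\xi_n\in(a,b)_{S^1}$ with $g(\xi_n)$ strictly closer to $a$ than $\xi_n$, while $\eta_n$ approaches $b$ from the opposite side. The key step is a trichotomy on how $g$ acts on $\eta_n$, after passing to a subsequence. Case (i): $g$ pushes $\eta_n$ strictly toward $b$; then the four endpoints $g(\xi_n),\xi_n,g(\eta_n),\eta_n$ interleave in cyclic order, so $\ell(B_n)$ and $g(\ell(B_n))$ are linked, violating axiom~(2) of a lamination system. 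Case (ii): $\eta_n\in\Fix_g$, so $g$ fixes $\eta_n$. Case (iii): $g$ pushes $\eta_n$ strictly away from $b$ toward some fixed point $c\neq b$. In cases (ii) and (iii) the iterates $\{g^k(B_n)\}_{k\geq 0}$ form a nested increasing family in $\LL$ whose union is a nondegenerate open interval of the form $(a,\eta_n)_{S^1}$ or $(a,c)_{S^1}$; by axiom~(3) this union lies in $\LL$, forcing $a\in E(\LL)$ and contradicting our standing assumption. Here the hypothesis $|\Fix_g|\geq 3$ is used to guarantee that the right endpoint of the resulting leaf-interval is genuinely distinct from $a$.

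For the first assertion, the moreover clause forces $\Fix_g\cap E(\LL)$ to be nonempty, so we may pick $p\in\Fix_g\cap E(\LL)$ and a leaf $\ell((p,q)_{S^1})\in\LL$. If $q\in\Fix_g$ we are done; otherwise $q$ lies in some complementary arc $(\alpha,\beta)_{S^1}$, and since $\alpha\neq\beta$ at least one of these endpoints, call it $r$, satisfies $r\neq p$. Exactly one of $g$ and $g^{-1}$ attracts the arc toward $r$, and iterating it sends $q$ monotonically to $r$; the leaves $\ell((p,g^{\pm n}(q))_{S^1})$ form a nested family whose intervals converge to $(p,r)_{S^1}$, so Proposition~\ref{c} (or axiom~(3), depending on the direction of nesting) places $(p,r)_{S^1}$ in $\LL$ and produces a leaf with both endpoints in $\Fix_g$, as required. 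The main technical hurdle is the trichotomy in the moreover clause when $\Fix_g$ accumulates at $b$ from the far side of $(a,b)_{S^1}$: the points $\eta_n$ can then drift through countably many tiny complementary arcs on which $g$ behaves differently, so one must first sort the rainbow according to the three behaviour types above and extract an infinite subsequence of a single type before applying the relevant argument.
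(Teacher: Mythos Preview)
The paper does not give its own proof of Lemma~\ref{3fix}; it simply quotes the result from \cite{BaikFuchsian}. So there is nothing in the present paper to compare your argument against.

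Your argument is correct. The linking contradiction in Case~(i), the ascending-chain argument via axiom~(3) in Cases~(ii)--(iii), and the final orbit argument producing a leaf with both endpoints fixed all go through as written. A few small remarks. First, the ``main technical hurdle'' you flag is not really a hurdle: you do not need to pass to a subsequence of the rainbow, because for any single sufficiently large $n$ the point $\eta_n$ falls into exactly one of the three cases, and each case separately yields a contradiction. Second, in Case~(iii) you should note explicitly why the limiting interval $(a,c)_{S^1}$ is nondegenerate: since $|\Fix_g|\ge 3$ there is a fixed point $d\in(b,a)_{S^1}$, and for $n$ large $\eta_n\in(b,d)_{S^1}$, so the far endpoint $c$ of the complementary arc through $\eta_n$ lies in $(b,d]_{S^1}$ and hence $c\neq a$. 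Third, in the first assertion, when $p=\beta$ forces the nested family to be decreasing, it is cleanest to invoke Proposition~\ref{c} directly (as you do), or equivalently to switch to the dual intervals $(g^{\pm n}(q),p)_{S^1}$, which then increase and fall under axiom~(3).
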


We can interpret this lemma in a pseudo-fibered triple as the following proposition.

\begin{prop}\label{end point}
Let $(\LL_1, \LL_2, G)$ be a pseudo-fibered triple and $g$ a nontrivial element of $G$ with $3\leq |Fix_g|$. For any connected component $(u,v)_{S^1}$ of $S^1-Fix_g$, $u\in E(\LL_i)$ and $v\in E(\LL_j)$ with $i\neq j \in \{1,2\}$.
\end{prop}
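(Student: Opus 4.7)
The plan is to reduce the statement directly to Lemma \ref{3fix} applied separately to the two lamination systems, and then to exploit the disjoint endpoints condition of the pseudo-fibered triple.

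First, I would note that both $\LL_1$ and $\LL_2$ are $G$-invariant very full lamination systems, hence in particular $\langle g \rangle$-invariant very full lamination systems, and the hypothesis $3 \leq |Fix_g|$ is exactly what Lemma \ref{3fix} requires. Applying Lemma \ref{3fix} to $\LL_1$ with the component $(u,v)_{S^1}$ of $S^1 - Fix_g$, I conclude that at least one of the endpoints $u, v$ lies in $E(\LL_1)$. Applying the same lemma to $\LL_2$, I conclude that at least one of $u, v$ lies in $E(\LL_2)$.

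Second, I would invoke the defining property of a pseudo-fibered triple that $E(\LL_1) \cap E(\LL_2) = \phi$. This disjointness forbids a single point from being an endpoint in both lamination systems. Hence the endpoint in $E(\LL_1)$ supplied by the first application of Lemma \ref{3fix} must be different from the endpoint in $E(\LL_2)$ supplied by the second. Since $\{u,v\}$ has only two elements, this forces one of them to lie in $E(\LL_1) \setminus E(\LL_2)$ and the other in $E(\LL_2) \setminus E(\LL_1)$. Labeling the first $i$ and the second $j$ (so $\{i,j\} = \{1,2\}$), we obtain $u \in E(\LL_i)$ and $v \in E(\LL_j)$ with $i \neq j$.

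There is essentially no obstacle once Lemma \ref{3fix} is in hand: the entire argument is a two-line application of that lemma combined with the disjoint endpoints hypothesis. The only subtlety worth stating carefully is that Lemma \ref{3fix} is applied to each lamination system separately (it is a statement about a single very full $\langle g \rangle$-invariant lamination), and the conclusions are combined using the pseudo-fibered structure; no appeal to looseness or totally disconnectedness (Proposition \ref{tot}) is needed for this particular statement.
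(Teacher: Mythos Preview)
Your proof is correct and follows exactly the same approach as the paper, whose proof reads in full: ``By Lemma \ref{3fix} and the condition $E(\LL_1)\cap E(\LL_2)=\phi$, it is obvious.'' You have simply spelled out the two applications of Lemma \ref{3fix} and the disjointness step that the paper leaves implicit.
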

\begin{proof}
By Lemma \ref{3fix} and the condition $E(\LL_1)\cap E(\LL_2)=\phi$, it is obvious.  
\end{proof}

\begin{prop}\label{disjoint intervals}
Let $(\LL_1, \LL_2, G)$ be a pseudo-fibered triple and $g$ a nontrivial element of $G$. Suppose that there are two distinct connected components $I_1$ and $I_2$ of $S^1-Fix_g$ such that $\overline{I_1}\cap \overline{I_2}=\phi$. Then, for each $i\in \{1,2\}$, there is no leaf $\ell$ of $\LL_i$ such that $|v(\ell)\cap I_1 |=|v(\ell)\cap I_2 |=1$.  
\end{prop}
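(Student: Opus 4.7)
The plan is a proof by contradiction: suppose $\ell \in \LL_i$ has one endpoint $p_1 \in I_1$ and the other endpoint $p_2 \in I_2$. I will produce two additional leaves of $\LL_i$ as limits of the $g$-orbit of $\ell$; their four endpoints will exhaust $\partial I_1 \cup \partial I_2$, contradicting Proposition \ref{end point} together with the disjoint-endpoints condition $E(\LL_1) \cap E(\LL_2) = \phi$.

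First, I will set up the dynamics of $g$ on each $I_j$. Since $I_j$ is a connected component of $S^1 - \Fix_g$, its boundary $\partial I_j$ lies in $\Fix_g$, so $g$ fixes each endpoint of $I_j$ and, being orientation-preserving, maps $I_j$ to itself. Hence $g|_{I_j}$ is an orientation-preserving, fixed-point-free self-homeomorphism of the open interval $I_j$, and every orbit in $I_j$ is strictly monotone, converging to one endpoint of $I_j$ under forward iteration of $g$ and to the other under backward iteration. Writing $I_1 = (a,b)_{S^1}$ and $I_2 = (c,d)_{S^1}$ with cyclic order $a, b, c, d$, the hypothesis $\overline{I_1} \cap \overline{I_2} = \phi$ makes these four points distinct, so $|\Fix_g| \geq 4 \geq 3$ and Proposition \ref{end point} applies: one endpoint of $I_1$ lies in $E(\LL_1)$ and the other in $E(\LL_2)$, and similarly for $I_2$.

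Next, I will consider the family $\{g^n(\ell)\}_{n \in \ZZ} \subseteq \LL_i$ and take limits in both directions. Because $g$ preserves $I_1$ and $I_2$ setwise, $g^n(p_1) \in I_1$ and $g^n(p_2) \in I_2$ for all $n$, and these sequences converge monotonically to some $\alpha_1 \in \partial I_1$ and $\alpha_2 \in \partial I_2$ as $n \to +\infty$, and to the opposite endpoints $\beta_1 \in \partial I_1$ and $\beta_2 \in \partial I_2$ as $n \to -\infty$. Taking $I = (p_1, p_2)_{S^1}$, a direct circular-order computation shows that the open arcs $g^n(I) = (g^n(p_1), g^n(p_2))_{S^1}$ satisfy
\[
J_+ \subseteq \liminf_{n \to +\infty} g^n(I) \subseteq \limsup_{n \to +\infty} g^n(I) \subseteq \overline{J_+}
\]
for an appropriate nondegenerate open arc $J_+$ with endpoints $\alpha_1$ and $\alpha_2$; symmetrically, there is an arc $J_-$ with endpoints $\beta_1$ and $\beta_2$ playing the same role for the backward limit. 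By the closure property for lamination systems (Proposition \ref{c}), both $J_+$ and $J_-$ are leaves of $\LL_i$.

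Finally, since $\{\alpha_1, \beta_1\} = \partial I_1 = \{a, b\}$ and $\{\alpha_2, \beta_2\} = \partial I_2 = \{c, d\}$, the two limit leaves together witness $\{a, b, c, d\} \subseteq E(\LL_i)$. But Proposition \ref{end point} forces one of $\{a, b\}$ to lie in $E(\LL_j)$ with $j \neq i$, and this contradicts its membership in $E(\LL_i)$ via $E(\LL_1) \cap E(\LL_2) = \phi$. The main obstacle is the bookkeeping in the third paragraph: one must carry out the $\liminf / \limsup$ computation in each of the four cases determined by the direction $g$ moves points of $I_1$ and of $I_2$, and verify that the putative limit arc $J_+$ is a single well-defined nondegenerate open interval with the claimed endpoints (and similarly for $J_-$). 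In every case, however, the two limit leaves together account for all four points of $\partial I_1 \cup \partial I_2$, which is what closes the argument.
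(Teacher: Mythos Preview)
Your proof is correct and follows essentially the same strategy as the paper: iterate the hypothetical leaf under $g$ in both directions, take limits via Proposition~\ref{c}, and obtain leaves of $\LL_i$ whose endpoints together exhaust $\partial I_1\cup\partial I_2$, contradicting Proposition~\ref{end point}.

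The only difference is bookkeeping. You leave four cases according to the direction of $g$ on each $I_j$ and verify the $\liminf/\limsup$ condition in each. The paper instead observes that $I$ and $g(I)$ both lie in $C_{v_1}^{(u_1,v_2)_{S^1}}$, which is totally ordered by Proposition~\ref{a}; this forces $I\subseteq g(I)$ or $g(I)\subseteq I$, so the orbit is monotone and the limits are automatically $(v_1,u_2)_{S^1}$ and $(u_1,v_2)_{S^1}$. In effect the unlinkedness of $\LL_i$ rules out your two ``mixed'' cases a priori, so the paper's route is shorter; your route is more elementary in that it avoids Proposition~\ref{a} and would go through even without noticing that those cases are vacuous.
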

\begin{proof}
Denote $I_1=(u_1,v_1)_{S^1}$ and $I_2=(u_2,v_2)_{S^1}$. Since $\overline{I_1}\cap \overline{I_2}=\phi$, $|\{u_1,v_1,u_2,v_2\}|=4$ and since $\{u_1,v_1,u_2,v_2\}\subseteq Fix_g$, $4 \leq |Fix_g|$. Then we can apply Proposition \ref{end point} to $(u_1,v_1)_{S^1}$ and so $u_1\in E(\LL_i)$ and $v_1\in E(\LL_j)$ with $i\neq j\in \{1,2\}$. Assume that there is an element $I$ in $\LL_1 \cup \LL_2$ such that $[v_1,u_2]_{S^1}\subseteq I\subset \overline{I}\subset (u_1,v_2)_{S^1}$. Since $I\in C_{v_1}^{(u_1,v_2)_{S^1}}$ and $C_{v_1}^{(u_1,v_2)_{S^1}}$ is preserved by $g$ and linearly ordered by the set inclusion by Proposition \ref{a}, so it is either $I\subseteq g(I)$ or $g(I)\subseteq I$. Since $\partial I \subseteq S^1-Fix_g$, it is either $\overline{I}\subseteq g(I)$ or $g(\overline{I})\subseteq I$. So one of two sequences $\{g^n(\ell(I))\}_{n=1}^{\infty}$ and $\{g^{-n}(\ell(I))\}_{n=1}^{\infty}$ converges to $(v_1,u_2)_{S^1}$ and the other converges to $(u_1,v_2)_{S^1}$. It implies that $\{u_1, v_1\}\subseteq E(\LL_i)$ for some $i\in \{1,2\}$. But it is a contradiction since $E(\LL_1)\cap E(\LL_2)=\phi$. Therefore there is no such $I$. So we are done.
\end{proof}

\begin{prop}\label{epi}%end point of interval is isolated
Let $(\LL_1, \LL_2, G)$ be a pseudo-fibered triple and $g$ a nontrivial element of $G$ with $3\leq |Fix_g|$. For any connected component $I$ of $S^1-Fix_g$, each point of $\partial I$ is isolated in $Fix_g$. 
\end{prop}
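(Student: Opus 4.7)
The proof will proceed by contradiction. Suppose some endpoint of $I$---call it $u$, and write $I=(u,v)_{S^1}$---fails to be isolated in $Fix_g$. By Proposition \ref{end point} we may assume $u\in E(\LL_1)$ and $v\in E(\LL_2)$. Because the two endpoint sets are disjoint, $u\notin E(\LL_2)$, and since $\LL_2$ is very full, Theorem \ref{er} produces a rainbow $\{R_n\}=\{\ell(K_n)\}$ at $u$ in $\LL_2$, with $K_n=(\alpha_n,\beta_n)_{S^1}\ni u$ and $\bigcap_n K_n=\{u\}$. After relabeling, $\beta_n\to u$ from within $I$ and $\alpha_n\to u$ from within $(v,u)_{S^1}$, so that $\beta_n\in I$ and $\alpha_n\in(v,u)_{S^1}$ for all $n$ large enough.

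Proposition \ref{same type endpoints} applied to $R_n$ forces $\alpha_n$ and $\beta_n$ to lie in $Fix_g$ together or not at all; since $\beta_n\in I\subseteq S^1\setminus Fix_g$, neither is a fixed point. Consequently $\alpha_n$ lies in some connected component $J_n$ of $S^1\setminus Fix_g$ with $J_n\neq I$, and $J_n\subseteq(v,u)_{S^1}$ since $J_n$ cannot cross the fixed points $u$ and $v$.

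The technical heart of the argument will be to verify that, after passing to a subsequence, $\overline{I}\cap\overline{J_n}=\emptyset$. Writing $J_n=(a_n,b_n)_{S^1}$ with $a_n,b_n\in Fix_g\cap\overline{(v,u)_{S^1}}$, everything in $\overline{I}\cup\overline{J_n}$ other than $\{u,v,a_n,b_n\}$ lies in disjoint open arcs, so $\overline{I}\cap\overline{J_n}\subseteq\{u,v\}$; hence it suffices to rule out $u\in\overline{J_n}$ and $v\in\overline{J_n}$ for infinitely many $n$. The non-isolation hypothesis at $u$ directly forbids any component of $S^1\setminus Fix_g$ from having $u$ as an endpoint on the $(v,u)_{S^1}$ side---such a component together with $I$ would provide a fixed-point-free neighborhood of $u$, making $u$ isolated. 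As for $v$, at most one component of $S^1\setminus Fix_g$ can abut $v$ on the $(v,u)_{S^1}$ side, and any such component is bounded away from $u$, so it can contain only finitely many of the $\alpha_n$'s, which tend to $u$. Discarding these finitely many bad indices leaves $\overline{I}\cap\overline{J_n}=\emptyset$.

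Once the disjointness of closures is in hand for some $n$, Proposition \ref{disjoint intervals} applied to the pair $(I,J_n)$ forbids any leaf of $\LL_2$ from having exactly one endpoint in $I$ and one in $J_n$. But $R_n\in\LL_2$ satisfies precisely $v(R_n)\cap I=\{\beta_n\}$ and $v(R_n)\cap J_n=\{\alpha_n\}$, yielding the desired contradiction. Isolation of $v$ then follows by swapping the roles of $\LL_1$ and $\LL_2$ (equivalently, of $u$ and $v$) and repeating the argument using a rainbow in $\LL_1$ at $v$. The main obstacle is exactly the housekeeping described above: guaranteeing that the components $J_n$ pulled out of the rainbow truly avoid both endpoints of $\overline{I}$, which is the one place where the non-isolation assumption is actually spent.
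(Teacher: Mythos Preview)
Your argument is correct and uses the same three ingredients as the paper (the rainbow from Theorem~\ref{er}, Proposition~\ref{same type endpoints}, and Proposition~\ref{disjoint intervals}), but the logic is organized contrapositively to the paper's. You assume $u$ is \emph{not} isolated and then spend the non-isolation hypothesis to arrange $\overline{I}\cap\overline{J_n}=\emptyset$, so that Proposition~\ref{disjoint intervals} is violated by the rainbow leaf. The paper instead argues directly: it fixes a third point $w\in Fix_g\cap(v,u)_{S^1}$ (available since $|Fix_g|\geq 3$), chooses a rainbow interval small enough that its left endpoint lies in $(w,u)_{S^1}$, and observes that the component $J$ containing that endpoint is trapped in $(w,u)_{S^1}$. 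Proposition~\ref{disjoint intervals} is then applied in the contrapositive direction: the rainbow leaf has one vertex in $I$ and one in $J$, so $\overline{I}\cap\overline{J}\neq\emptyset$, and since $\overline{J}\subseteq[w,u]_{S^1}$ with $w\neq v$, the intersection must be $\{u\}$. Thus $J$ abuts $u$ from the $(v,u)_{S^1}$ side and $u$ is isolated.

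The paper's route is a bit cleaner: the auxiliary fixed point $w$ eliminates your subsequence bookkeeping (ruling out $J_n$ abutting $v$) in one stroke, and there is no need to invoke the non-isolation hypothesis at all. Your version has the minor advantage of making explicit exactly where a hypothetical accumulation of fixed points at $u$ would go wrong.
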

\begin{proof}
Denote $I=(u,v)_{S^1}$. Then by Proposition \ref{end point}, $u\in E(\LL_i)$ and $v\in E(\LL_j)$ with $i\neq j \in \{1,2\}$. Let's say that $u\in E(\LL_1)$ and $v\in E(\LL_2)$. Since $E(\LL_1)\cap E(\LL_2)=\phi$, $u\notin E(\LL_2)$ and so by Lemma \ref{er}, $u$ has a rainbow $\{I_n\}_{n=1}^{\infty}$ in $\LL_2$. Since $3\leq |Fix_g|$, there is a fixed point $w$ in $Fix_g-\{u,v\}$. Note that $w\in (v,u)_{S^1}$. Since $\displaystyle \bigcap_{n=1}^{\infty}I_n =\{u\}$, there is $I_N$ in $\{I_n\}_{n=1}^{\infty}$ such that $\{v,w\}\subseteq I_N^*$. 

So, we can assume that $u\in I_n$ and $\{v,w\} \subseteq I_n^*$ for all $n\in \NN$ and denote $I_n=(u_n,v_n)_{S^1}$. Then,  we want to show that for all $n\in \NN$, $v_n\in (u,v)_{S^1}$ and $u_n\in (w,u)_{S^1}$. Since $\phi(u_n,u,v_n)=1$ and $\phi(v_n,v,u_n)=\phi(u_n,v_n,v)=1$,  by the cocycle condition on the four points $(u_n,u,v_n,v)$, 
$$\phi(u,v_n,v)-\phi(u_n,v_n,v)+\phi(u_n,u,v)-\phi(u_n, u,v_n)=\phi(u,v_n,v)-1+\phi(u_n,u,v)-1=0$$ and so the only possible case is $\phi(u,v_n,v)=\phi(u_n,u,v)=1$. On the other hand, since $\phi(u_n, u,v_n)=1$ and $\phi(v_n,w,u_n)=\phi(u_n,v_n,w)=1$, by the cocycle conditions on the four points $(u_n,u,v_n,w)$,
$$\phi(u,v_n,w)-\phi(u_n,v_n,w)+\phi(u_n,u,w)-\phi(u_n,u,v_n)=\phi(u,v_n,w)-1+\phi(u_n,u,w)-1=0$$
and so the only possible case is $\phi(u,v_n,w)=\phi(u_n,u,w)=1$. Therefore, for all $n\in \NN$, $v_n\in (u,v)_{S^1}$ and $u_n\in (w,u)_{S^1}$ since $\phi(u,v_n,v)=1$ and $\phi(w,u_n,u)=\phi(u_n,u,w)=1.$

Fix $n$ in $\NN$. Since $v_n\in(u,v)_{S^1} \subseteq S^1-Fix_g$ , by Proposition \ref{same type endpoints}, $u_n\in S^1-Fix_g$ and so there is a unique connected component $J$ of $S^1-Fix_g$ which contains $u_n$. Since $u_n\in (w,u)_{S^1}$, so $J\subseteq (w,u)_{S^1}$ and by Proposition \ref{disjoint intervals}, $\overline{J}\cap [u,v]_{S^1}\neq \phi$. Since $v\neq w$, $\overline{J}\cap [u,v]_{S^1}\subseteq [w,u]_{S^1}\cap [u,v]_{S^1}=\{u\}$ and so $\overline{J}\cap [u,v]_{S^1}=\{u\}$. Therefore, $u$ is isolated in $Fix_g$. Likewise, $v$ is isolated in $Fix_g$.
\end{proof}

From now on, we prove lemmas which will be used in the proof of the main theorem.

\begin{lem} \label{sl}%there exist a special leaf
Let $(\LL_1, \LL_2, G)$ be a pseudo-fibered triple and $g$ a nontrivial element of $G$ with $4\leq |Fix_g|$. Suppose that there is an isolated fixed point $p$ of $g$. Then there is an element $I$ in $\LL_1 \cup \LL_2$ such that $p\in v(\ell(I))$ and $|I\cap Fix_g|=1$.
\end{lem}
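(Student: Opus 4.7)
The plan is to exploit the alternating $E(\LL_1)/E(\LL_2)$-pattern on $Fix_g$ forced by Proposition \ref{end point}, together with looseness and the local dynamics of $g$ on the connected components of $S^1\setminus Fix_g$, to locate the desired leaf-interval.

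First I establish the alternating structure. Since $p$ is isolated in $Fix_g$, iterating Proposition \ref{epi} around $S^1$ shows every fixed point is isolated, so $Fix_g$ is finite. Listing $Fix_g=\{p=v_0,v_1,\ldots,v_m\}$ in positive cyclic order and applying Proposition \ref{end point} to each arc $(v_i,v_{i+1})_{S^1}$ places $v_i,v_{i+1}$ in different $E(\LL_j)$'s, so by induction (after assuming without loss of generality $p\in E(\LL_1)$) the pattern is $v_{2k}\in E(\LL_1)$ and $v_{2k+1}\in E(\LL_2)$; closing the cycle forces $m$ odd, consistent with $|Fix_g|\geq 4$ being even. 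By Proposition \ref{same type endpoints}, every leaf of $\LL_1$ at $p$ has its other endpoint equal to some $v_{2j}$ with $j\in\{1,\ldots,(m-1)/2\}$; let $j_1$ and $j_n$ be the minimum and maximum such $j$, both of which exist because $p\in E(\LL_1)$.

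If $j_1=1$ then $I:=(p,v_2)_{S^1}\in\LL_1$ satisfies $I\cap Fix_g=\{v_1\}$; symmetrically, if $j_n=(m-1)/2$ then $I:=(v_{m-1},p)_{S^1}\in\LL_1$ satisfies $I\cap Fix_g=\{v_m\}$, and the lemma holds in either case. It therefore suffices to rule out the remaining possibility $j_1\geq 2$ and $j_n<(m-1)/2$. Assuming this, consider the leaf $\ell_{i_1}=\ell(J_1)\in\LL_1$ with $J_1=(p,v_{2j_1})_{S^1}$. By looseness (Lemma \ref{loose}) $\ell_{i_1}$ is not isolated, so a side sequence $\ell_n=\ell(I_n)\in\LL_1\setminus\{\ell_{i_1}\}$ exists on either the $J_1$-side or the $J_1^*$-side. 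In either case the endpoints of $\ell_n$ converge to $p$ and $v_{2j_1}$, and Proposition \ref{same type endpoints} combined with the isolation of $p$ and $v_{2j_1}$ (Proposition \ref{epi}) forces them, for all large $n$, into the two components of $S^1\setminus Fix_g$ adjacent to $\{p,v_{2j_1}\}$ on the chosen side --- namely $(p,v_1)_{S^1}$ and $(v_{2j_1-1},v_{2j_1})_{S^1}$ on the $J_1$-side, or $(v_m,p)_{S^1}$ and $(v_{2j_1},v_{2j_1+1})_{S^1}$ on the $J_1^*$-side. The bounds $j_1\geq 2$ and $j_n<(m-1)/2$ ensure each pair of components is genuinely distinct.

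Since $g$ is fixed-point-free on each such component, it acts as an orientation-preserving shift attracting to one of the two boundary fixed points. Passing to the limits $\lim_{k\to\infty}g^k(\ell_n)$ and $\lim_{k\to\infty}g^{-k}(\ell_n)$ and invoking the closedness of lamination systems (Proposition \ref{c}) produces limit leaves in $\LL_1$ whose endpoints are drawn from the relevant set of four boundary fixed points. A case analysis over the four possible attractor configurations then shows that in every configuration at least one of the two limit leaves has an endpoint in the odd-index set $\{v_1,v_{2j_1-1}\}$ on the $J_1$-side, or in $\{v_m,v_{2j_1+1}\}$ on the $J_1^*$-side; since these all lie in $E(\LL_2)$, this contradicts $E(\LL_1)\cap E(\LL_2)=\emptyset$. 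Both sides therefore yield contradictions, looseness is violated, and the remaining case cannot occur. The main obstacle will be this final case analysis: one must verify that each limit interval is nondegenerate so that Proposition \ref{c} genuinely produces a new leaf of $\LL_1$ (and not just a rainbow point), pair each attractor/repeller scenario with the right $g^{\pm k}$-limit, and then cleanly extract the parity-based contradiction uniformly on both sides.
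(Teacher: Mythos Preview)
Your argument has a genuine gap at the very first step. You claim that ``iterating Proposition \ref{epi} around $S^1$ shows every fixed point is isolated, so $Fix_g$ is finite.'' But Proposition \ref{epi} only tells you that the boundary points of each connected component of $S^1\setminus Fix_g$ are isolated. Starting from the isolated point $p=v_0$ and iterating does produce a monotone sequence $v_0,v_1,v_2,\ldots$ of isolated fixed points with $(v_i,v_{i+1})_{S^1}$ components of $S^1\setminus Fix_g$, but nothing you have cited prevents this sequence from accumulating at some $z\in Fix_g$ which is a two-sided limit point and hence not on the boundary of any component. Proposition \ref{epi} says nothing about such points. More seriously, the finiteness of $Fix_g$ is exactly the main theorem of this section, and Lemma \ref{sl} is one of the lemmas used to prove it; invoking finiteness here is circular. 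The entire remainder of your argument (the global alternating enumeration, the indices $j_1,j_n$, the parity contradiction) rests on this enumeration being finite, so the proof does not stand.

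The paper avoids this problem by working purely locally. It only needs the three consecutive fixed points $p,q,r$ obtained from the two components $(p,q)_{S^1}$ and $(q,r)_{S^1}$ adjacent to $p$; the hypothesis $|Fix_g|\ge 4$ guarantees $r\neq p$, and Proposition \ref{epi} guarantees $q$ and $r$ are isolated. By Proposition \ref{end point} the point $q$ lies in $E(\LL_j)$ for the \emph{opposite} lamination, so $q\notin E(\LL_i)$ and Theorem \ref{er} gives a rainbow at $q$ in $\LL_i$. Choosing a rainbow interval $I_N$ with $\overline{I_N}\subset (p,r)_{S^1}$ and iterating $g^{\pm 1}$ (using that $C_q^{(p,r)_{S^1}}$ is totally ordered and $g$-invariant) forces one of the sequences $g^{\pm k}(\ell(I_N))$ to converge to $(p,r)_{S^1}$, so $(p,r)_{S^1}\in\LL_i$, and this interval contains exactly the single fixed point $q$. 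This is short, constructive, and never touches the global structure of $Fix_g$.
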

\begin{proof}
Since $p$ is an isolated fixed point and $4\leq |Fix_g|$, there is the connected component $(p,q)_{S^1}$ of $S^1-Fix_g$ which is a nondegenerate open interval. By Proposition \ref{epi}, $q$ is also an isolated fixed point. So there is the connected component $(q,r)_{S^1}$ of $S^1-Fix_g$. Since $4\leq |Fix_g|$, $r\neq p$. By Proposition \ref{end point}, $\{p, r\} \subseteq E(\LL_i)$ and $q\in E(\LL_j)$ with $i\neq j \in \{1,2\}$. Say that  $\{p, r\} \subseteq E(\LL_1)$ and $q\in E(\LL_2)$. Since $E(\LL_1)$ and $E(\LL_2)$ are disjoint, there is a rainbow $\{I_n\}_{n=1}^{\infty}$ at $q$ in $\LL_1$ by Theorem \ref{er}. Since $\displaystyle \bigcap_{n=1}^{\infty} I_n=\{q\}$, there is $I_N$ in $\{I_n\}_{n=1}^{\infty}$ such that $\{p,r\}\subseteq I_N^*$. We can get that $q\in I_N \subset \overline{I_N} \subseteq (p,r)_{S^1}$.   Since $C_q^{(p,r)_{S^1}}$ on $\LL_1$ is linearly ordered and preserved by $g$, $g(I_N)\subseteq I_N$ or $I_N\subseteq g(I_N)$. Note that $\partial I_N$ is contained in $S^1-Fix_g$. So, $g(\overline{I_N})\subset I_N$ or $\overline{I_N}\subset g(I_N).$ Then, one of two sequence $\{g^k(\ell(I_N))\}_{k=1}^{\infty}$ and $\{g^{-k}(\ell(I_N))\}_{k=1}^{\infty}$ converges to $(p,r)_{S^1}$ on $\LL_1$. Therefore, $(p,r)_{S^1}\in \LL_1$.  
\end{proof}

\begin{lem}\label{sli}
Let $(\LL_1, \LL_2, G)$ be a pseudo-fibered triple and $g$ a nontrivial element of $G$ with $5 \leq |Fix_g|$. Suppose that there is an isolated fixed point $p$ of $g$. If   $I$  is an element in $\LL_1 \cup \LL_2$ such that $p\in v(\ell(I))$ and $|I\cap Fix_g|=1$. Then $I^*$ is isolated.  
\end{lem}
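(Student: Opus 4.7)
The approach will be to assume $I^*$ is not isolated and derive a contradiction with $E(\LL_1)\cap E(\LL_2)=\emptyset$.

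First I will set up the geometry. Writing $I=(p,r)_{S^1}$ and using Proposition \ref{same type endpoints} applied to $\ell(I)$ together with $p\in\Fix_g$, both endpoints lie in $\Fix_g$; without loss of generality $I\in\LL_1$, so $\{p,r\}\subseteq E(\LL_1)$. Let $q$ denote the unique fixed point of $g$ inside $I$. The hypothesis $|\Fix_g|\geq 5$ leaves at least two fixed points in $I^*$, so I can select $s,t\in\Fix_g\cap I^*$ adjacent to $p$ and $r$ respectively, so that $(s,p)_{S^1}$ and $(r,t)_{S^1}$ are connected components of $S^1-\Fix_g$, with $s\neq t$. Applying Proposition \ref{end point} to these two components yields $s,t\in E(\LL_2)$, since their other endpoints $p,r$ lie in $E(\LL_1)$.

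Next I analyze a putative $I^*$-side sequence $\ell_n=\ell(J_n)$ in $\LL_1$ with $J_n=(u_n,v_n)_{S^1}\subsetneq I^*$ exhausting $I^*$ and $u_n\to r$, $v_n\to p$. For $n$ large, $u_n\in(r,t)_{S^1}$ and $v_n\in(s,p)_{S^1}$, and in particular $u_n,v_n\notin\Fix_g$. Since $g$ fixes $p,r$ and preserves orientation, $g(I^*)=I^*$, so $g(J_n)\in\LL_1$ also lies in $I^*$ and contains $s,t\in\Fix_g\cap J_n$. Thus $J_n$ and $g(J_n)$ both belong to $C_s^{I^*}$, which is totally ordered by Proposition \ref{a}; they are distinct because otherwise $g$ would fix the non-fixed points $u_n,v_n$. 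Up to replacing $g$ by $g^{-1}$, I may assume $g(J_n)\subsetneq J_n$. Combined with the fact that $g$ preserves each of the components $(r,t)_{S^1}$ and $(s,p)_{S^1}$ and has no fixed points in them, this nesting forces $g(u_n)\in(u_n,t)_{S^1}$ and $g(v_n)\in(s,v_n)_{S^1}$, and iteration gives $g^k(u_n)\to t$ and $g^k(v_n)\to s$ monotonically.

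Finally I extract the contradiction: the descending chain $\{g^k(J_n)\}_{k\geq 0}$ in $\LL_1$ has complements $\{g^k(J_n)^*\}_{k\geq 0}$ forming an ascending chain whose union is the nondegenerate open arc $(s,t)_{S^1}$ going through $p$ and $r$. By axiom (3) of lamination systems this union belongs to $\LL_1$, and Lemma \ref{lods} then gives $(t,s)_{S^1}=\mathrm{Int}\big(\bigcap_k g^k(J_n)\big)\in\LL_1$. Hence the leaf $\ell((t,s)_{S^1})\in\LL_1$ has endpoints $s$ and $t$, forcing $s,t\in E(\LL_1)$ and contradicting $s,t\in E(\LL_2)$ together with $E(\LL_1)\cap E(\LL_2)=\emptyset$. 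The main delicate point I expect is verifying that $g$ really acts inward consistently on both components $(r,t)_{S^1}$ and $(s,p)_{S^1}$; here the total-order property of $C_s^{I^*}$ rules out the mixed-direction configurations in which $g$ would push $u_n$ and $v_n$ in opposite senses, since those scenarios would make $J_n$ and $g(J_n)$ incomparable.
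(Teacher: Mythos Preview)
Your argument is correct and follows essentially the same strategy as the paper. Both proofs reach the configuration where, for a single leaf $J_n=(u_n,v_n)_{S^1}\subsetneq I^*$ in the putative $I^*$-side sequence, one has $u_n\in(r,t)_{S^1}$ and $v_n\in(s,p)_{S^1}$ with $\overline{(r,t)_{S^1}}\cap\overline{(s,p)_{S^1}}=\emptyset$; from here the contradiction with $E(\LL_1)\cap E(\LL_2)=\emptyset$ is obtained by iterating $g$ on $J_n$ and passing to the limit leaf.

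The only organizational difference is that the paper has already isolated this last step as Proposition~\ref{disjoint intervals} (no leaf of $\LL_i$ can have exactly one endpoint in each of two components of $S^1-\Fix_g$ with disjoint closures), and simply invokes it once $u_n$ and $v_n$ have been located. You instead reprove that proposition inline: your use of the total order on $C_s^{I^*}$ to force comparability of $J_n$ and $g(J_n)$, and your iteration $g^k(J_n)\to [t,s]_{S^1}$ giving $(s,t)_{S^1}\in\LL_1$, is exactly the content of the proof of Proposition~\ref{disjoint intervals}. So your route is slightly longer but self-contained; the paper's is shorter because it reuses a lemma that is also needed elsewhere. Two small points you leave implicit but which are easily supplied: that $r$ is isolated (hence $(r,t)_{S^1}$ exists) follows from Proposition~\ref{epi} applied to the component $(q,r)_{S^1}$ of $S^1-\Fix_g$, and the phrase ``exhausting $I^*$'' should be read in the $\liminf/\limsup$ sense of the definition rather than as a monotone exhaustion.
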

\begin{proof}
Without loss of generality, we can assume that there is such an element $I$ in $\LL_1$.
By Proposition \ref{same type endpoints}, $v(\ell(I))\subset Fix_g$ since $p\in Fix_g$. Say $I=(u,v)_{S^1}$ and denote  the fixed point in $I$ by $q$. By Proposition \ref{epi}, there are two connected components $(x,u)_{S^1}$ and $(v,y)_{S^1}$ of $S^1-Fix_g$. Since $5\leq |Fix_g|$, $x\neq y$.

Suppose that there is a $I^*$-side sequence $\{\ell_n\}_{n=1}^{\infty}$ on $\LL_1$. There is a sequence $\{I_n\}_{n=1}^{\infty}$ on $\LL_1$ such that $\ell_n=\ell(I_n)$ for all $n\in \NN$  and $I^*\subseteq \liminf I_n\subseteq \limsup I_n \subseteq \overline{I^*}$. Since $I^*\subseteq \liminf I_n $, there is $N$ such that $\displaystyle \{x,y\} \subseteq \bigcap_{n=N}^{\infty}I_n$ and since $\limsup I_n\subseteq \overline{I^*}$, there is $N'$ such that  $\displaystyle q \notin \bigcup_{n=N'}^{\infty}I_n$. 

Fix $n$ with $n>\max \{N,N'\}$. Then by the choice of $n$, $\{x,y\} \subseteq I_n  $ and $q \notin I_n$. From now on, we show $I_n \subsetneq I^*$. If $I_n^*\subseteq I^*$, then $q \in I\subseteq I_n$ and it is a contradiction since $q\notin I_n$. If $I^*\subseteq I_n^*$, then $\{x,y\}\subseteq I_n \subseteq I$ and it is a contradiction since $\{x,y\} \subseteq I^*$. Hence, it is $I_n \subseteq I^*$ or $I^*\subseteq I_n$. Then, since $\ell(I)\neq \ell(I_n)$, it is $I_n \subsetneq I^*$ or $I^*\subsetneq I_n$. Therefore, since $\ell(I_n)$ is lies on $I^*$, $I_n \subsetneq I^*$ is the only possible case. 

Say $I_n=(a,b)_{S^1}$. By the assumption, $\{x,y\}\subseteq I_n$. So, we want to show that $[y,x]_{S^1}\subset (a,b)_{S^1}$. Choose $z\in (y,x)_{S^1}$. 
Since $z\in (y,x)_{S^1}$ and $q\in (x,y)_{S^1}$, we get $\phi(y,z,x)=1$ and $\phi(x,q,y)=1$, respectively. So,
it implies that $\phi(x,q,z)=1$ and $\phi(y,z,q)=1$. Since $\{x,y\}\subseteq I_n$ and $q\in I\subseteq I_n^*$, we get $\phi(a,x,b)=1$ and $\phi(b,q,a)=1$, respectively. Then, it implies $\phi(b,q,x)=1$. Likewise, since $\{x,y\}\subseteq I_n$ and $q\in I_n^*$, we get $\phi(a,y,b)=1$ and $\phi(b,q,a)=1$, respectively. Then, it implies $\phi(a,y,q)=1$. Therefore, since $\phi(x,q,z)=\phi(b,q,x)=1$, $\phi(q,z,b)=1$ and since $\phi(y,z,q)=\phi(a,y,q)=1$, $\phi(q,a,z)=1$. Finally, by applying the cocycle condition to four points $(a,z,q,b)$,
$$\phi(z,q,b)-\phi(a,q,b)+\phi(a,z,b)-\phi(a,z,q)=0$$ 
and so 
$$(-1)-(-1)+\phi(a,z,b)-1=0 .$$
Hence, $\phi(a,z,b)=1$ and so we can conclude $(y,x)_{S^1}\subset (a,b)_{S^1}.$ Thus, since $\{x,y\}\subseteq I_n$, 
$[y,x]_{S^1}\subset (a,b)_{S^1}.$

We have shown that $[y,x]_{S^1}\subset (a,b)_{S^1}\subsetneq (v,u)_{S^1}.$ Then $a\in [v,y)_{S^1}$ and $b\in (x,u]_{S^1}$. By Proposition \ref{same type endpoints}, $\{a,b\}=\{u,v\}$ or $a\in (v,y)_{S^1}$ and $b\in (x,u)_{S^1}$. If  $\{a,b\}=\{u,v\}$, it is a contradiction since $(a,b)_{S^1}\subsetneq (v,u)_{S^1}.$ 
Therefore, $a\in (v,y)_{S^1}$ and $b\in (x,u)_{S^1}$. However, by Proposition \ref{disjoint intervals}, it is a contradiction. Thus, $I^*$ is isolated. 
\end{proof} 
     
\begin{lem}\label{finite fixed point}
Let $(\LL_1, \LL_2, G)$ be a pseudo-fibered triple and $g$ a nontrivial element of $G$ with $Fix_g\neq \phi$. Suppose that there is a non-leaf gap $\GG$ of $\LL_i$ for some $i\in \{1,2\}$. If there is an isolated fixed point in $v(\GG)$, then $v(\GG) \subseteq Fix_g$ and for all $I\in \GG$, $|I\cap Fix_g|=1$.
\end{lem}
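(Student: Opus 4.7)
The plan is to write $v(\GG) = \{x_1, \ldots, x_n\}$ in positive cyclic order with edges $I_j = (x_j, x_{j+1})_{S^1} \in \GG$ (indices mod $n$), and to establish the two conclusions in turn. For the first, since $I_j \in \LL_i$ forces $I_j^* \in \LL_i$, each $\ell(I_j)$ is a leaf of $\LL_i$ with vertex set $\{x_j, x_{j+1}\}$, so Proposition \ref{same type endpoints} forces this pair to lie entirely in $Fix_g$ or entirely outside it; propagating cyclically from the assumed $p = x_k \in Fix_g$ then yields $v(\GG) \subseteq Fix_g$. As a preliminary for the second conclusion, I would show each edge meets $Fix_g$: if some $I_j$ were disjoint from $Fix_g$, then $I_j$ would be a connected component of $S^1 \setminus Fix_g$ with both endpoints in $E(\LL_i)$, contradicting Proposition \ref{end point}. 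Combined with $v(\GG) \subseteq Fix_g$, this gives $|Fix_g| \geq 2n \geq 6$.

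The heart of the argument is a cyclic induction whose step reads: \emph{if $x_j \in v(\GG)$ is isolated in $Fix_g$, then $|I_j \cap Fix_g| = 1$ and $x_{j+1}$ is also isolated in $Fix_g$}. To prove the step, apply Lemma \ref{sl} at $x_j$ (legitimate since $|Fix_g| \geq 5$) in its forward form, which by the construction in its proof produces $J = (x_j, r)_{S^1} \in \LL_1 \cup \LL_2$ with $|J \cap Fix_g| = 1$, where $r$ is the second fixed point of $g$ after $x_j$; since $x_j \in E(\LL_i)$ and $E(\LL_1) \cap E(\LL_2) = \phi$, in fact $J \in \LL_i$. Lemma \ref{sli} then gives that $J^*$ is isolated, and Lemma \ref{eog} produces a non-leaf gap $\HH$ of $\LL_i$ with $J \in \HH$; since $x_j \in \partial J$ cannot lie in any other element of $\HH$ by disjointness, $x_j \in v(\HH)$. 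Looseness of $\LL_i$ forces $\HH = \GG$, so $J \in \GG$, and as $J$ has left endpoint $x_j$ we must have $J = I_j$ and $r = x_{j+1}$. Thus $|I_j \cap Fix_g| = 1$; since $(y_+, x_{j+1})_{S^1}$, where $y_+$ is the unique fixed point in $I_j$, is now a connected component of $S^1 \setminus Fix_g$, Proposition \ref{epi} delivers the isolation of $x_{j+1}$. Cyclic induction starting from $p = x_k$ then closes the polygon and yields the conclusion for every edge.

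The key obstacle is the identification $\HH = \GG$: this is precisely where the looseness of $\LL_i$ is indispensable, since without it Lemma \ref{eog} would only supply \emph{some} non-leaf gap adjacent to $x_j$ rather than the given $\GG$. A secondary subtlety is that Lemma \ref{sl} a priori produces $J$ in either $\LL_1$ or $\LL_2$; the disjointness $E(\LL_1) \cap E(\LL_2) = \phi$ is exactly what forces $J$ into the correct lamination system so that the looseness argument matches it against $\GG$ rather than a gap of the other system.
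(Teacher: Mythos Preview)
Your proof is correct and follows essentially the same route as the paper's: propagate cyclically from the given isolated vertex, at each step produce the interval $(x_j,r)_{S^1}\in\LL_i$ containing exactly one fixed point, apply Lemma~\ref{sli} and Lemma~\ref{eog} to obtain a non-leaf gap through $x_j$, and invoke looseness to identify it with $\GG$. The paper chooses to redo the rainbow construction of Lemma~\ref{sl} inline (building $(x_i,x_i'')_{S^1}$ explicitly) and phrases the gap identification as a contradiction (if $x_i''\neq x_{i+1}$ one gets $\GG'\neq\GG$ violating looseness), whereas you cite Lemma~\ref{sl}'s proof directly and argue the equality $\HH=\GG$ straight; these are cosmetic differences only.
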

\begin{proof}
Without loss of generality, we can assume that $\GG$ is a gap on $\LL_1$. Denote the isolated fixed point by $p$. By Proposition \ref{same type endpoints} we can derive $v(\GG)\subseteq Fix_g$. Since $\GG$ is a non-leaf gap, $3\leq |Fix_g|$. Then, for each $J\in \GG $, $1\leq |J\cap Fix_g|$. If not, there is an element $J$ in $\GG$ such that $J\cap Fix_g=\phi$. It implies that $J$ is a connected component of $S^1-Fix_g$. But it is a contradiction by Proposition \ref{end point}. So, we also conclude  $6\leq |Fix_g|$. 

Let us denote $p=x_1$ and $\GG=\{(x_1,x_2)_{S^1},(x_2,x_3)_{S^1}, \cdots, (x_{n-1},x_n)_{S^1}, (x_n,x_1)_{S^1}\}.$ We use $\ZZ_n$ as the index set. First, assume that $x_i$ is an isolated fixed point for some $i\in \ZZ_n$.   Since $x_i$ is isolated, there is the connected component $(x_i,x_i')_{S^1}$. Since $(x_i, x_i')_{S^1}\subseteq (x_i,x_{i+1})_{S^1}$ and $1\leq |(x_i,x_{i+1})_{S^1}\cap  Fix_g|$, so $x_i\neq x_i'$ and $(x_i, x_i')_{S^1}\subsetneq (x_i,x_{i+1})_{S^1}$. By Proposition \ref{epi}, $x_i'$ is also an isolated fixed point and so there is the connected component $(x_i',x_i'')$ of $S^1-Fix_g$. Then there are two cases. One is $(x_i,x_i'')_{S^1}= (x_i,x_{i+1})_{S^1}$ and the other is $(x_i,x_i'')_{S^1}\subsetneq  (x_i,x_{i+1})_{S^1}$. Assume that $(x_i,x_i'')_{S^1}\subsetneq (x_i,x_{i+1})_{S^1}$. By Proposition \ref{end point}, $\{x_i,x_i''\}\subseteq E(\LL_1)$ and $x_i'\in E(\LL_2)$. Then by Theorem \ref{er}, there is a rainbow $\{I_n\}_{n=1}^{\infty}$ at $x_i'$ on $\LL_1$ since $E(\LL_1)\cap E(\LL_2)=\phi$. Since $\displaystyle \bigcap_{n=1}^{\infty}I_n=\{x_i'\}$, there is $I_N$ in $\{I_n\}_{n=1}^{\infty}$ such that $\{x_i,x_i''\}\subseteq I_N^*$. Then $I_N \in C_{x_i'}^{(x_i, x_i'')_{S^1}}$ with $\overline{I_N}\subset (x_i,x_i'')_{S^1}.$ Since $ C_{x_i'}^{(x_i, x_i'')_{S^1}}$ is preserved by $g$ and $v(\ell(I_N))\subseteq S^1-Fix_g$, $g(\overline{I_N})\subset I_N$ or $\overline{I_N}\subset g(I_N)$. So, one of two sequences $\{\ell(g^k(I_N))\}_{k=1}^{\infty}$ and $\{\ell(g^{-k}(I_N))\}_{k=1}^{\infty}$ converges to $(x_i,x_i'')_{S^1}$ on $\LL_1$. Therefore, $(x_i,x_i'')\in \LL_1$. %이부분에서 사실 I^* side isolated leaf 인걸 보이고 모순을 쓰자. 
By Lemma \ref{sli}, $(x_i, x_i'')_{S^1}^*$ is isolated. Therefore, by Lemma \ref{eog}, there is  a non-leaf gap $\GG'$ of $\LL_1$ which contains $(x_i,x_i'')_{S^1}$, However, by the definition of looseness, it is a contradiction since $\GG'\neq\GG$ and $x_i\in v(\GG)\cap v(\GG')$. Thus, $(x_i,x_i'')_{S^1}= (x_i,x_{i+1})_{S^1}$  is the possible case. So, $(x_i, x_{i+1})_{S^1}$ contains only one fixed point and $x_{i+1}$ is an isolated fixed point by Proposition \ref{epi}. Therefore, since $x_1$ is an isolated fixed point, we are done.

%Then we can apply Lemma \ref{sl} to $p$ and so there is an element $I$ in $\LL_1$ such that $p\in v(\ell(I))$ and $|I\cap Fix_g|=1$. Since $\LL_1$ is a totally disconnected loose lamination system, there are at most two leaves on which $p$ is an endpoint and so there is a unique element $J$ in $\GG$ such that $\ell(I)=\ell(J)$. We want to show that $I=J$. Assume $I=J^*$. Since $v(\GG)\subseteq J^c$, $3 \leq |J^*\cap Fix_g|$. It contradicts to $|I\cap Fix_g|=1$.  
\end{proof}

Let's prove the main theorem. 

\begin{thm}
Let $(\LL_1,\LL_2,G)$ be a pseudo-fibered triple and $g$ a nontrivial element of $G$. Then $|Fix_g|<\infty$. 
\end{thm}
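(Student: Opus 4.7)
The plan is to argue by contradiction: I will assume $|Fix_g|=\infty$, so in particular $|Fix_g|\geq 5$, and cascade the lemmas already established in this section to derive a \emph{finite} upper bound on $|Fix_g|$. First I would observe that since $g$ is nontrivial, $S^1\setminus Fix_g$ is a nonempty proper open subset of $S^1$, hence has at least one connected component, which is a nondegenerate open interval $(u,v)_{S^1}$. Since $|Fix_g|\geq 3$, Proposition \ref{epi} then gives that $u$ is an isolated point of $Fix_g$.

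Having produced an isolated fixed point, I would next apply Lemma \ref{sl} (whose hypothesis $|Fix_g|\geq 4$ is certainly met) to obtain some $I\in\LL_1\cup\LL_2$ with $u\in v(\ell(I))$ and $|I\cap Fix_g|=1$. Say $I\in\LL_i$ for some $i\in\{1,2\}$. Lemma \ref{sli} (whose hypothesis $|Fix_g|\geq 5$ also holds) then yields that $I^*$ is isolated in $\LL_i$, and so Lemma \ref{eog} produces a non-leaf gap $\GG$ of $\LL_i$ containing $I$.

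The final step is to invoke Lemma \ref{finite fixed point}. Since $u\in\partial I\subseteq v(\GG)$ is an isolated point of $Fix_g$, that lemma forces $v(\GG)\subseteq Fix_g$ and $|J\cap Fix_g|=1$ for every $J\in\GG$. Because $\LL_i$ is very full, $\GG$ is an ideal polygon, so $|v(\GG)|=|\GG|$ is finite. Using the partition $S^1=v(\GG)\cup\bigcup_{J\in\GG}J$, this yields
\[
|Fix_g|=|v(\GG)|+\sum_{J\in\GG}|J\cap Fix_g|=2|v(\GG)|<\infty,
\]
contradicting the standing assumption. The substantive work has already been packaged into Lemmas \ref{sl}, \ref{sli}, and \ref{finite fixed point}; the only obstacle in proving the theorem itself is the purely bookkeeping one of verifying that their hypotheses chain together, which is immediate once infinitude of $Fix_g$ is translated into $|Fix_g|\geq 5$. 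This is precisely the step where the countability assumption used in \cite{alonso2019laminar} becomes superfluous: the argument produces a non-leaf gap forcing the fixed-point set to be finite without ever invoking a cardinality hypothesis on $Fix_g$ other than being at least $5$.
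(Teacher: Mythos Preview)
Your proof is correct and follows essentially the same route as the paper's: reduce to the case $|Fix_g|\geq 5$, use Proposition \ref{epi} to find an isolated fixed point, then chain Lemmas \ref{sl}, \ref{sli}, \ref{eog}, and \ref{finite fixed point} to trap $Fix_g$ inside the vertex set and interiors of a single ideal polygon. Your explicit count $|Fix_g|=2|v(\GG)|$ at the end is a nice touch the paper leaves implicit, and your framing as a contradiction from $|Fix_g|=\infty$ is equivalent to (though slightly less direct than) the paper's ``assume $|Fix_g|\geq 5$ and show finiteness''.
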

\begin{proof}
It is enough to show the case $5\leq |Fix_g|$. Assume $5\leq |Fix_g|$. Since $g$ is nontrivial, $S^1-Fix_g$ is nonempty and so there is a connected component $I$ of $S^1-Fix_g$ which is nondegenerate open interval. By Proposition \ref{epi}, for each $p\in \partial I$, $p$ is an isolated fixed point. Choose $p\in \partial I$. By Lemma \ref{sl}, there is an element $J$ in $\LL_1\cup \LL_2$ such that $p\in v(\ell(J))$ and $|J\cap Fix_g|=1$. Without loss generality, say $J\in \LL_1$. Then, by Lemma \ref{sli}, $J^*$ is isolated and so by Lemma \ref{eog}, there is a non-leaf gap $\GG$ such that $J\in \GG$. Therefore, by Lemma \ref{finite fixed point}, $v(\GG)\subseteq Fix_g$ and for all $K\in \GG$, $|K\cap Fix_g|=1$. Thus, it implies $|Fix_g|< \infty$.
\end{proof}

\section{Future directions}

We conclude the paper by suggesting some future directions. As we saw
in Corollary \ref{cor: a free group in a tight pair}, if $(\LL, G)$ is
a tight pair, then $G$ contains a nonabelian free subgroup as long as
it does not admit a global fixed point. Indeed, one can show that a
sticky pair has no global fixed point, hence the group of the sticky
pair necessarily contains a nonabelian free subgroup (the proof will
be contained in an upcoming paper of the authors). 
Although it seems more difficult to determine if a slippery pair has no
global fixed point, we propose the following conjecture.

\begin{conj} Suppose $(\LL, G)$ is
a tight pair, then $G$ admits no global fixed point (therefore, it
contains a nonabelian free subgroup). 
\end{conj}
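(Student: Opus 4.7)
The plan is to argue by contradiction: assume $p \in S^1$ is a global fixed point of $G$, and derive a contradiction in three stages.

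\textbf{Stage 1: $p$ is not a vertex of any non-leaf gap.} Suppose $p \in v(\GG)$ for some non-leaf gap $\GG$ with $|\GG| = k \geq 3$. Then $p$ lies on the closure of exactly two intervals of $\GG$, say $I_1$ and $I_2$, and there exists $I_c \in \GG$ with $c \notin \{1,2\}$ satisfying $p \notin \overline{I_c}$. By Corollary \ref{dog}, $v_G(\GG)$ is dense in $S^1$, so Lemma \ref{mg} provides $g \in G$ with $v(g(\GG)) \subseteq I_c$. Since $p = g(p) \in g(v(\GG)) = v(g(\GG)) \subseteq I_c$, this contradicts $p \notin I_c$.

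\textbf{Stage 2: Ruling out $p \in E(\LL)$.} Suppose $p \in \partial I_0$ for some leaf $\ell(I_0)$. By minimality of the $G$-action on $\LL$, for every leaf $\ell(J) \in \LL$ there is a sequence $g_n \in G$ with $g_n(\ell(I_0)) \to \ell(J)$. Each translate $g_n(I_0)$ has $p$ as a boundary point because $g_n(p) = p$. The convergence of leaves implies that the finite boundary sets $\partial g_n(I_0)$ (two points each) converge to $\partial J$ as subsets of $S^1$; since $p \in \partial g_n(I_0)$ for all $n$, we obtain $p \in \partial J$. Hence every leaf of $\LL$ has $p$ as an endpoint. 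But Proposition \ref{eg} gives a non-leaf gap $\GG$, whose sides are leaves connecting consecutive vertices of $\GG$; Stage 1 forces $p \notin v(\GG)$, so no side of $\GG$ has $p$ as endpoint, a contradiction.

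\textbf{Stage 3: Ruling out $p \notin E(\LL)$---the main obstacle.} By Theorem \ref{er}, $p$ admits a rainbow $\{R_n\}$ in $\LL$. By Stage 1, for every non-leaf gap $\GG$ there is a unique $I_\GG \in \GG$ containing $p$ in its interior. For each other $I_j \in \GG$, Lemma \ref{mg} yields $g_j \in G$ with $v(g_j(\GG)) \subseteq I_j$; the unique interval of $g_j(\GG)$ containing $p$ is $g_j(I_\GG)$, with boundary in $I_j$ and going the long way around through $I_j^* \supseteq I_\GG$. Hence $g_j(I_\GG) \supsetneq I_\GG$, so $g_j^{-1}$ strictly contracts $I_\GG$ into itself with $p$ as an attracting fixed point; in fact $g_j$ behaves as a North--South homeomorphism of $S^1$ with $p$ as repelling fixed point and some $q_j \in \overline{I_j}$ as attracting fixed point.

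The hard part is exploiting these contractions to force a contradiction. The plan is to combine (i) the finitely many orbit classes of non-leaf gaps (which yield only finitely many conjugacy types of such $g_j$), (ii) the infinite rainbow scales at $p$, and (iii) the minimality of the $G$-action on $\LL$. The idea is that the $G$-orbit of each $q_j$ must, by minimality of leaf orbits and the density properties of Proposition \ref{dol}, accumulate densely on $S^1$; in particular it must accumulate at $p$. But a sequence $h_n \in G$ with $h_n(q_j) \to p$ yields (via conjugation) a sequence of North--South elements $h_n g_j h_n^{-1}$ with attracting fixed points $h_n(q_j)$ converging to $p$, which under iteration should construct a $G$-invariant probability measure on $S^1$ whose support is strictly larger than $\{p\}$, contradicting Theorem \ref{im}. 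The main technical difficulty is making this measure-theoretic argument precise---the tightness hypothesis guarantees enough combinatorial uniformity of the contractions, but converting dynamical convergence at different scales into a genuine invariant measure requires careful averaging (e.g., via Krylov--Bogolyubov on the compact space of probability measures) while ruling out degeneration to $\delta_p$.
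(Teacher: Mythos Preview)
The statement you are attempting to prove is explicitly listed as an open \emph{conjecture} in the paper's final section; the paper provides no proof, and indeed remarks that even the special case of slippery pairs ``seems more difficult.'' So there is no proof in the paper against which to compare your attempt.

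Your Stages 1 and 2 are essentially correct and cleanly argued. Stage~1 is a direct application of Lemma~\ref{mg} together with $G$-invariance of $p$. In Stage~2 the key claim---that if $p\in\partial g_n(I_0)$ for all $n$ and $g_n(\ell(I_0))\to\ell(J)$ then $p\in\partial J$---follows from the definition of leaf convergence and Proposition~\ref{prop:symmetry of sequences}: if $p\in J$ then eventually $p\in I_n'$, contradicting $p\in\partial I_n'$, and symmetrically for $p\in J^*$.

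Stage 3, however, is not a proof but a sketch with genuine gaps. First, the assertion that $g_j$ is a ``North--South homeomorphism'' is unjustified: from $g_j^{-1}(I_\GG)\subsetneq I_\GG$ and $g_j^{-1}(p)=p$ you cannot conclude that $p$ is attracting for $g_j^{-1}$ (you would need $\bigcap_n g_j^{-n}(I_\GG)=\{p\}$), nor that $g_j$ has only two fixed points. Second, and more seriously, the proposed construction of a $G$-invariant probability measure with support strictly larger than $\{p\}$ is not carried out. Krylov--Bogolyubov averaging of orbit measures will certainly produce \emph{some} invariant measure, but you give no mechanism to prevent it from being $\delta_p$; indeed, since $p$ is the unique global fixed point, $\delta_p$ is the obvious candidate limit, and ruling it out is precisely the heart of the problem. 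The finitely-many-orbit-classes hypothesis and the rainbow at $p$ are suggestive ingredients, but you have not shown how they combine to force mass away from $p$. As it stands, Stage~3 identifies the difficulty without resolving it, which is consistent with the paper's assessment that this remains open.
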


 Another direction is to study further properties of pseudo-fibered
 triples. In \cite{alonso2019laminar}, the following conjectured based
 on observations in \cite{baikthesis, BaikFuchsian} was proposed.

 \begin{conj}[\cite{alonso2019laminar}]
   \label{conj:promotionconj}
   Let $(G, \LL_1, \LL_2)$ be a pseudo-fibered triple. Suppose $G$ is
   finitely generated, torsion-free, and freely indecomposable. Then one of the following
   three possibilities holds:
   \begin{itemize}
   \item[1.] $G$ is virtually abelian.
   \item[2.] $G$ is topologically conjugated into $\PR$.
     \item[3.] $G$ is isomorphic to a closed hyperbolic 3-manifold
       group. 
   \end{itemize}
 \end{conj}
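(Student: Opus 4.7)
The plan is to treat the three alternatives as three regimes along a spectrum measuring how genuinely ``3-dimensional'' the combinatorial data of $(\LL_1,\LL_2)$ is. First I would combine the finite fixed-point theorem just established with Margulis's alternative (Theorem \ref{margulis}) to split into two subcases depending on whether $G$ admits an invariant Borel probability measure on $S^1$. If such a measure $\mu$ exists, an analysis parallel to Lemmas \ref{mig}--\ref{mil} (using that both $\LL_1$ and $\LL_2$ are very full, loose, and have disjoint endpoints) should force $supp(\mu)$ to be either a singleton or a finite orbit. A torsion-free, freely indecomposable subgroup of $\Homeop(S^1)$ stabilizing such a small set must then be either trivial or infinite cyclic, placing it in case 1.

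If no invariant measure exists, $G$ contains a nonabelian free subgroup and I would try to verify the convergence group property on $S^1$. Given a sequence $g_n\in G$ and triples $(x_n,y_n,z_n)$ with $(g_nx_n,g_ny_n,g_nz_n)$ also staying off the diagonal, one pulls back gaps and rainbows of $\LL_1$ and $\LL_2$ through the $g_n$. The finiteness of $\Fix_{g}$, together with the forced unlinking of $\LL_1$-leaves with $\LL_2$-leaves across disjoint endpoint sets, should contradict failure of proper discontinuity unless the two laminations are ``compatible'' in a precise combinatorial sense (roughly, their combined gap structure fails to produce a third transverse direction). In the compatible case, $G$ acts as a convergence group and the Convergence Group Theorem yields case 2.

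In the remaining \emph{incompatible} case, $\LL_1$ and $\LL_2$ should behave as the stable and unstable laminations of a pseudo-Anosov flow, and the strategy is to \emph{construct} a closed hyperbolic 3-manifold with fundamental group $G$. Concretely, one builds a candidate $G$-equivariant 3-complex from the combinatorial product of the two leaf spaces $L_1,L_2$ (after collapsing cataclysms as in Section \ref{sec:flows}), with $S^1$ serving as an ideal boundary; one must then verify that this complex carries a cocompact, properly discontinuous $G$-action with contractible universal cover. Combined with the freely indecomposable hypothesis and Perelman's geometrization, this would identify $G$ as a closed hyperbolic 3-manifold group.

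The main obstacle is the third step: extracting a 3-manifold from the two lamination systems reverses the universal circle construction of Sections \ref{sec:circlebundle}--\ref{sec:flows}, for which no general procedure is known. A realistic first attack would bypass direct 3-manifold construction and instead show $G$ is word hyperbolic with $\partial G\cong S^2$, building a boundary from $S^1$ together with the transverse combinatorial data of $(\LL_1,\LL_2)$ and running a convergence-group argument there; one could then invoke known cases of the Cannon conjecture or Bowditch-type characterizations of 3-manifold groups. Producing the $S^2$-boundary (or equivalently, a genuine ``third direction'' transverse to both laminations) is where essentially all the difficulty will be concentrated, and it is likely to require substantial new input beyond the techniques developed in this paper.
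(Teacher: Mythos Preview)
The statement you are attempting is explicitly labeled a \emph{conjecture} in the paper (Conjecture \ref{conj:promotionconj}, cited from \cite{alonso2019laminar}); the paper does not prove it and contains no proof to compare against. What the paper offers in Section 11 is a discussion of possible strategies and partial evidence (Theorem \ref{thm:ABSmainthm}), together with the further Conjecture 11.4 that $G$ should be word-hyperbolic. So the right assessment is not ``correct vs.\ incorrect'' but whether your outline is a reasonable program.

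On that score, your proposal is broadly consonant with the paper's own discussion, and you are candid that it is not a proof. A few specific comments. For case 1, your Margulis-measure argument is plausible in spirit, but the step ``torsion-free, freely indecomposable subgroup of $\Homeop(S^1)$ stabilizing a finite set must be trivial or infinite cyclic'' is false as stated (take any such group fixing a point; nothing forces it to be cyclic), so you would need to feed in the lamination hypotheses more seriously there. For case 2, the paper's route is via Theorem \ref{thm:baikmainthm}: one looks for a \emph{third} compatible very full lamination and then applies the convergence group theorem on $S^1$; your ``compatible'' dichotomy is gesturing at the same phenomenon but is not yet a precise condition. For case 3, the paper's suggested strategy is exactly the one you converge to at the end: pass to an action on $S^2$ (via a Cannon--Thurston--type quotient of $S^1$), upgrade to a uniform convergence action, invoke Bowditch to get word-hyperbolicity with sphere boundary, and then face Cannon's conjecture. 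Your alternative idea of directly building a $3$-complex from the product of leaf spaces is more ambitious and, as you note, amounts to inverting the universal circle construction; no such procedure is known, and this is precisely where the conjecture remains open.
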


By Theorem \ref{thm:baikmainthm} (or its simplified version), the
second possibility of Conjecture \ref{conj:promotionconj} holds if
there exists the third invariant lamination which is compatible with
other $\LL_1, \LL_2$. The following theorem is a combination of two
main theorems of \cite{alonso2019laminar} on pseudo-fibered triples.

\begin{thm}[\cite{alonso2019laminar}]
\label{thm:ABSmainthm}
  Suppose $G$ is a group as in
  Conjecture \ref{conj:promotionconj}.
  \begin{itemize}
    \item[1.] $G$ satisfies a type of Tits alternative. Namely, each
      subgroup of $G$ either contains a nonabelian free subgroup or is
      virtually abelian.
     \item[2.] If $G$ purely consists of hyperbolic elements, then $G$
       acts on $S^2$ as a convergence group. 
    \end{itemize} 
\end{thm}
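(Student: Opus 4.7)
My plan is to handle the two parts of Theorem~\ref{thm:ABSmainthm} separately. Both arguments hinge on the finite fixed-point theorem just established, the Margulis alternative (Theorem~\ref{margulis}), and the rigidity imposed by having two disjoint-endpoint loose very full laminations.

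For part (1), let $H\le G$ be an arbitrary subgroup. Then $\LL_1$ and $\LL_2$ are still $H$-invariant, and every nontrivial $h\in H$ satisfies $|\Fix_h|<\infty$ by the finite fixed-point theorem applied to $h$ as an element of the ambient triple $(\LL_1,\LL_2,G)$. If $H$ contains a nonabelian free subgroup we are done, so by Margulis's theorem we may assume $H$ preserves a Borel probability measure $\mu$ on $S^1$. The next step is to analyze $\mathrm{supp}(\mu)$ in the spirit of Lemmas~\ref{mig} and~\ref{mil} and Theorem~\ref{im}: for a non-leaf gap $\GG$ of $\LL_i$ whose vertex orbit $v_G(\GG)$ is dense in $S^1$, the density-and-nesting argument of Lemma~\ref{mig} should force $\mu$ to concentrate on a single element of $\GG$. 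Propagating this through leaves and rainbows, as in the proof of Theorem~\ref{im}, should show $\mathrm{supp}(\mu)$ is finite. Thus a finite-index subgroup $H_0\le H$ fixes $\mathrm{supp}(\mu)$ pointwise; combined with $|\Fix_h|<\infty$ for each $h\in H_0$ and a case analysis of the dynamics on the arcs complementary to $\mathrm{supp}(\mu)$, I would conclude $H_0$ is abelian, which gives the Tits alternative.

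For part (2), the first task is to construct the sphere. Motivated by the Cannon--Thurston picture for fibered hyperbolic 3-manifolds, I would form the quotient $\Sigma$ of $S^1$ by the equivalence relation identifying the two endpoints of each leaf of $\LL_1\cup\LL_2$ and collapsing each non-leaf gap to a single point. Looseness together with the disjoint-endpoint hypothesis are tailored precisely to make $\Sigma$ Hausdorff, and viewing $\LL_1,\LL_2$ as transverse singular foliations whose combined cell decomposition has Euler characteristic $2$ should identify $\Sigma$ with $S^2$. The $G$-action on $S^1$ descends continuously to $\Sigma$. For the convergence property, take a sequence $(g_n)$ of pairwise distinct elements of $G$. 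Each hyperbolic $g\in G$ has finitely many fixed points displaying north--south dynamics on $S^1$, and this passes to north--south dynamics on $\Sigma$. An Arzel\`a--Ascoli compactness argument on $\Homeo(S^2)$ then extracts a subsequence converging uniformly on compact subsets of $S^2\setminus\{a\}$ to a constant $b\in S^2$ for some $a,b$, which is the convergence group condition.

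The main obstacle I expect is in part (2): rigorously showing that the quotient $\Sigma$ is a topological sphere. Looseness kills sticky leaves and rules out infinitely many leaves through a single endpoint, but handling the mutual accumulation of non-leaf gaps of one lamination inside leaves or gaps of the other, and verifying local finiteness of the resulting cell structure, is delicate. A secondary obstacle is the endgame of part (1): deducing abelianness of a torsion-free subgroup whose elements all fix a common finite set and have finite fixed-point sets. This likely requires a local rigidity argument on each complementary arc, exploiting that a noncommuting pair of such elements would create joint dynamics incompatible with the constraints imposed by the two laminations meeting the common fixed-point set.
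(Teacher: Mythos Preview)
The paper does not contain a proof of Theorem~\ref{thm:ABSmainthm}. This theorem appears only in Section~11 (``Future directions'') as a statement cited from \cite{alonso2019laminar}; the present paper establishes the finite fixed-point theorem for pseudo-fibered triples (Section~10) precisely so that the countability hypothesis in \cite{alonso2019laminar} can be dropped, but the Tits alternative and the convergence-group statement themselves are not reproved here. So there is nothing in this paper to compare your proposal against.

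That said, your outline for part~(1) is in the right spirit and close to what is actually done in \cite{alonso2019laminar}: one invokes Margulis, and the real work is the analysis of the invariant-measure case. Note, however, that the machinery of Lemmas~\ref{mig}, \ref{mil} and Theorem~\ref{im} in this paper is built for \emph{tight pairs}, which come with a minimality hypothesis on the $G$-action on $\LL$; a pseudo-fibered triple carries no such minimality assumption, so you cannot simply quote those lemmas. The argument in \cite{alonso2019laminar} instead uses the finite fixed-point theorem together with the rotation-number homomorphism and H\"older's theorem to handle the invariant-measure case. Your ``secondary obstacle'' is exactly where the work lies.

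For part~(2), your plan to build $S^2$ as a Moore quotient and then verify convergence dynamics is the right shape, and this is indeed how \cite{alonso2019laminar} proceeds (following Cannon--Thurston and Fenley). The obstacle you flag --- that the quotient is actually a sphere --- is genuine and is handled there via Moore's theorem on cellular decompositions; your Euler-characteristic heuristic is not how it is done.
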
 

The 2-sphere appears in the second part of Theorem
\ref{thm:ABSmainthm} is obtained as a quotient of the circle on which
the group $G$ acts on. The quotient map is the map collapsing
laminations $\LL_1$ and $\LL_2$ which is analogous to the famous
Cannon-Thurston map constructed in their seminal paper
\cite{CannonThurston}. The study of the induced action on $S^2$ in our work was largely influenced by Fenley's work \cite{fenley2012ideal}.
%In general, when $M$ is a closed hyperbolic 3-manifold, $\mathcal{F}$ is a taut foliation in $M$, $\widetilde{M}$ is the universal cover of $M$, and $\widetilde{\mathcal{F}}$ is the lifted foliation, Fenley's program is to show that each leaf of $\widetilde{\mathcal{F}}$ 

One strategy to achieve the third possibility of Conjecture
\ref{conj:promotionconj} is first strengthening the second part of Theorem
\ref{thm:ABSmainthm}. Namely, one may try to show that if $G$ contains
both hyperbolic and non-hyperbolic elements, then $G$ acts on $S^2$ as
a uniform convergence group. Then by a theorem of Bowditch
\cite{Bowditch98}, $G$ is word-hyperbolic and $S^2$ is equivariantly
homeomorphic to its boundary. Hence, if one can prove the Cannon's
conjecture in this setting, one ends up with the third possibility of Conjecture
\ref{conj:promotionconj}. Perhaps as an intermediate step, one may try
the following conjecture.

\begin{conj} Suppose $G$ is a group as in
  Conjecture \ref{conj:promotionconj}, and assume $G$ is not virtually
  abelian. Then $G$ is word-hyperbolic. 
\end{conj}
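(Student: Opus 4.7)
The plan is to carry out the two-step program sketched in the paragraph immediately preceding the conjecture: first upgrade part (2) of Theorem~\ref{thm:ABSmainthm} so that the convergence action of $G$ on $S^2$ no longer requires every element of $G$ to be hyperbolic, and then upgrade the resulting convergence action to a \emph{uniform} convergence action, at which point Bowditch's theorem \cite{Bowditch98} gives that $G$ is word-hyperbolic (with $\partial G \cong S^2$).

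First I would work with the quotient $q : S^1 \to S^2$ that collapses the two laminations $\LL_1$ and $\LL_2$, and classify the dynamics of an arbitrary nontrivial $g \in G$ on $S^1$ using the finite-fixed-point theorem for pseudo-fibered triples proved in the previous section. Combined with the fact that $v(\ell) \subseteq \Fix_g$ or $v(\ell) \subseteq S^1 \setminus \Fix_g$ for every leaf $\ell$ (Proposition~\ref{same type endpoints}), one should be able to show that every nontrivial element descends to a homeomorphism of $S^2$ whose dynamics is either North--South (loxodromic-like) or has a single attracting/repelling fixed point (parabolic-like). This is where the disjoint endpoint condition $E(\LL_1) \cap E(\LL_2) = \emptyset$ does the serious work: it prevents an element from having ``mixed'' dynamics with respect to the two laminations that would obstruct the convergence property after collapsing. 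Once this classification is established, verifying the convergence property for the induced $G$-action on $S^2$ amounts to a careful bookkeeping of limits of translates and should proceed along the lines of the proof of Theorem~\ref{thm:ABSmainthm}(2).

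For the uniformity step, I would show that every point of $S^2$ is a conical limit point. For a point $p \in S^2$ that is not the image of an endpoint of a leaf, Theorem~\ref{er} produces a rainbow at each preimage in $S^1$ in each $\LL_i$; combined with minimality of the $G$-action on each $\LL_i$ and the Tits alternative from Theorem~\ref{thm:ABSmainthm}(1), one can arrange translates in $G$ to approach $p$ from two transverse directions, giving conicality. The remaining points (those in the image of $E(\LL_1) \cup E(\LL_2)$) are either conical or bounded parabolic, and one must rule out genuine parabolic points in order to get a \emph{uniform} convergence group rather than merely a geometrically finite one: this is where the hypotheses that $G$ is freely indecomposable and not virtually abelian enter, because a nontrivial parabolic stabilizer would be virtually abelian by Theorem~\ref{thm:ABSmainthm}(1), and a Bass--Serre / splitting argument together with freely indecomposability would force $G$ itself into a restricted structural form incompatible with the pseudo-fibered data.

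The main obstacle is the first step in the presence of non-hyperbolic elements. The existing proof of Theorem~\ref{thm:ABSmainthm}(2) uses hyperbolic dynamics of every element to control convergence of translates, and removing that hypothesis requires a different mechanism --- most plausibly a direct argument from the laminar structure, exploiting how $\LL_1$ and $\LL_2$ jointly cut $S^1$ to force sufficient attracting behavior on $S^2$ even for non-hyperbolic $g$. A secondary obstacle, in the uniformity step, is the argument ruling out genuine parabolic behavior on $S^2$: this seems to demand a nontrivial use of the freely indecomposable hypothesis and may well be the delicate point where the global structure of pseudo-fibered triples has to be understood substantially more deeply than it is in \cite{alonso2019laminar}.
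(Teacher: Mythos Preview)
The statement you are attempting to prove is a \emph{conjecture} in the paper, not a theorem: it appears in the ``Future directions'' section as an open problem, and the paper contains no proof of it. There is therefore no proof in the paper to compare your proposal against.

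What you have written is not a proof but a strategy outline, and you are candid about this (``The main obstacle is\ldots'', ``this seems to demand\ldots''). Moreover, your outline is essentially a fleshed-out restatement of the strategy the paper itself sketches in the paragraph immediately preceding the conjecture: strengthen Theorem~\ref{thm:ABSmainthm}(2) to drop the all-hyperbolic hypothesis, upgrade the convergence action on $S^2$ to a uniform one, and then invoke Bowditch \cite{Bowditch98}. The additional details you supply (classifying the induced dynamics on $S^2$ via the finite-fixed-point theorem and Proposition~\ref{same type endpoints}, using rainbows for conicality, ruling out parabolics via free indecomposability) are plausible heuristics but none of them is carried through, and you correctly identify the two steps---removing the purely-hyperbolic hypothesis, and excluding genuine parabolic points---as the substantive gaps. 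Until those are filled, this remains a program, not a proof; the conjecture is, as far as the paper is concerned, open.
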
 

\bibliographystyle{abbrv}
\bibliography{biblio}
\end{document}